\pgfplotsset{compat=1.15}
\theoremstyle{plain}
\newtheorem{theorem}{Theorem}[section]
\newtheorem{proposition}[theorem]{Proposition}
\newtheorem*{theorem*}{Theorem}
\newtheorem{lemma}[theorem]{Lemma}
\theoremstyle{definition}
\newtheorem{notation}[theorem]{Notation}
\newtheorem{ex}[theorem]{Example}
\theoremstyle{remark}
\newtheorem{rem}[theorem]{Remark}
\newcommand{\vu}{\underline{v}}
\newcommand{\tu}{\underline{t}}
\newcommand{\con}{\varsigma}
\newcommand{\obeta}{\overline{\beta}}
\newcommand{\osigma}{\overline{\sigma}}
\newcommand{\pr}{\mathrm{pr}}
\newcommand{\ind}{\mathtt{I}}
\newcommand{\jnd}{\mathtt{J}}
\numberwithin{equation}{section}
\title[Plane curve singularities and algebraic knots]{The intrinsic topological nature of the Poincaré series of a plane curve singularity}
\author{Patricio Almir\'on}
\author{Julio-Jos\'e Moyano-Fern\'andez}
\subjclass[2020]{Primary: 14H20; Secondary: 32S05, 14B05, 57K14.}
\keywords{Plane curve singularity; Poincaré series; Alexander polynomial; algebraic link; dual resolution graph.}
\thanks{The first author is supported by Spanish Ministerio de Ciencia, Innovaci\'{o}n y Universidades PID2020-114750GB-C32 and by the IMAG–Maria de Maeztu grant CEX2020-001105-M / AEI /10.13039/501100011033, through a postdoctoral contract in the ‘Maria de Maeztu Programme for Centres of Excellence’. The second author was partially supported by
	MCIN/AEI/10.13039/501100011033 and by ``ERDF -- A way of making Europe", grant PGC2018-096446-B-C22, as well as by Universitat Jaume I, grants UJI-B2021-02 and GACUJIMA/2023/06.}
\address{Instituto de Matemáticas\\
Universidad de Granada\\
18001, Granada, Spain.}
\email{patricioalmiron@ugr.es}
\address{Universitat Jaume I, Campus de Riu Sec, Departamento de Matem\'aticas \& Institut Universitari de Matem\`atiques i Aplicacions de Castell\'o, 12071
Caste\-ll\'on de la Plana, Spain}
\email{moyano@uji.es}
\begin{document}

\begin{abstract}
In this paper we provide some factorization theorems of the Poincaré series $P_C$  of a plane curve singularity $C$ depending on some key values of the semigroup of values of \(C\). These results yield an iterative computation of $P_C$ in purely algebraic terms from the dual resolution graph of $C$. On the other hand, Campillo, Delgado and Gusein-Zade showed in 2003 the equality between $P_C$ and the Alexander polynomial $\Delta_L$ of the corresponding link $L$. Our procedure supplies a new proof of this coincidence. More concretely, we show that our algebraic construction can be translated to the iterated toric structure of the link $L$. Additionally we show that the semigroup algebra can be defined from the fundamental group of the link exterior in the irreducible case. This gives in particular a conceptual reason for the coincidence of $P_C$ and $\Delta_L$.

%Campillo, Delgado and Gusein-Zade showed in 2003 the equality between the Poincaré series $P_C$ of a plane curve singularity $C$ and the Alexander polynomial $\Delta_L$ of the corresponding link $L$; this proof lacks of a conceptual explanation for this coincidence. In this paper we provide an iterative computation of $P_C$ in purely algebraic terms from the dual resolution graph of $C$. In addition we show that this construction can be translated to the iterated toric structure of the link $L$. This yields in particular a conceptual reason for the coincidence of $P_C$ and $\Delta_L$.
\end{abstract}

%We propose a new perspective on the relationship between the Alexander polynomial and Poincaré series. By studying the interplay between these two invariants, we aim to gain a deeper understanding of the topological properties of knots and their relationship to curve singularities. Focalize in the topological nature of the algebraic properties of the
\maketitle

\tableofcontents

\section{Introduction}

Let $C$ be a germ of complex plane curve singularity with \(r\geq 1\) branches. Campillo, Delgado and Kiyek \cite{CDKmanuscr} attached a series 

\[
P_C(\underline{t})=P_C(t_1,\ldots, t_r)=\frac{\displaystyle\prod_{i=1}^r (t_i-1) \cdot\bigg(\displaystyle\sum_{\underline{w} \in \mathbb{Z}^r_{\geq 0}} \dim_{\mathbb{C}}J(\underline{w})/J(\underline{w}+\underline{1})\cdot  \underline{t}^{\underline{w}}\bigg)}{t_1\cdots t_r -1},
\]

where, for every $\underline{w}=(w_1,\ldots, w_r)\in \mathbb{Z}^r$, the ideal \(J_C(\underline{w})=J(\underline{w}):=\{g\in\mathcal{O}~:~\vu(g)\geq \underline{w}\}\) defines a multi-index filtration associated to the valuation \(\vu=(v_1,\dots,v_r)\) at the local ring \(\mathcal{O}:=\mathcal{O}_C\) of \(C\), and $\underline{1}=(1,\ldots , 1)$ and $\underline{t}^{\underline{w}}:=t_1^{w_1}\cdots t_r^{w_r}$. Observe that $P_C(\underline{t})$ is formal power series if $C$ is irreducible, i.e. if $r=1$, and a polynomial if $r>1$. The dimensions of the $\mathbb{C}$-vector spaces $J(\underline{w})/J(\underline{w}+\underline{1})$ are finite and depend on the value semigroup $\Gamma(C)=\{\vu(g): g\in \mathcal{O}, g \neq 0 \}$ of $C$, see e.g. \cite[(3.5)]{MFjpaa}.
\medskip

The interest of this series---called for brevity the Poincar\'e series of $C$---became apparent when Campillo, Delgado and Gusein-Zade \cite{CDG99a} proved its coincidence with the zeta function of the monodromy transformation of an irreducible singularity. They developed a research line to compute the Poincaré series through some invariants of the singularity, not only for plane curves \cite{CDG99a, CDG03a, CDGduke, CDG07} (even in a motivic setting \cite{CDG07}; see also \cite{Mams}) but also for rational surface singularities \cite{CDG04} and curves on them \cite{CDG05}. In these papers, they proposed alternative definitions for $P_C(\underline{t})$ involving techniques of integration with respect to the Euler characteristic, which led to an A'Campo type formula \cite[Theorems 3 and 4]{CDGduke} in terms of the dual graph \(G(C)\) of the minimal embedded resolution of the singularity, namely

\[
P_C(\underline{t})=
\prod_{Q \in G(C)} (\underline{t}^{\underline{v}^Q}-1)^{-\chi (E_Q^{\circ})}. 
\]

This formula à la A'Campo seamlessly blends information which can be read either from the topology or from the algebraic point of view, namely: From the topological side, the Euler characteristic $\chi (E_Q^{\circ})$ of the smooth part of the irreducible component $E_Q$ of the exceptional divisor created in the resolution process and, from the algebraic side, the valuation $\underline{v}^Q$ of the points $Q$ of the dual graph \(G(C).\) Moreover, the well established relation between $\underline{v}^Q$ and the linking invariants of the algebraic link \(L:=C\cap S^3_\varepsilon\) in the \(3\)--sphere \(S^3_\varepsilon\) with radius \(\varepsilon >0\) small enough, allowed them to apply a result by Eisenbud and Neuman \cite[Theorem 12.1]{EN} in order to deduce the connection between \(P_C(\underline{t})\) and the Alexander polynomial $\Delta_L(\underline{t})$ of $L$: 
\[P_C(\underline{t})=\Delta_L(\underline{t})\quad\text{if}\quad r>1\quad\text{and}\quad (t-1)\cdot P_C(t)=\Delta_L(t)\quad \text{if}\quad r=1.\]

\medskip

However, it seems as though this outcome is merely a fortuitous occurrence resulting from two a priori unrelated mathematical entities; paraphrasing the own authors, ``up to now this coincidence has no conceptual explanation. It is obtained by direct computations of both objects in the same terms and comparison of the results" \cite[p.~450]{CDG15}; see also \cite[pp.~271--272]{CDGDocumenta}. To date, this sentence is still valid and one of the aims of this paper is to provide a new proof of this coincidence which proposes a conceptual explanation for it.
\medskip

%Besides that, the main contribution of this paper is to give a purely algebraic proof of some theorems of factorization of the Poincaré series of $C$ depending on some key values of the semigroup of values of $C$ which can be read off from the dual resolution graph associated to $C$. These theorems can be understood as an algebraic analogous of the topological results of Eisenbud-Neumann \cite{EN} and Sumners-Woods \cite{SW} on the decomposition theorems of the Alexander polynomial.
%\medskip
%%%%%%%%%%%%%%%%%%%%%
%\textcolor{red}{parrafo finsl alternativo}

The main contribution of this paper is to give a purely algebraic proof of some factorization theorems of the Poincar\'e series of \(C\) depending on key values of the semigroup $\Gamma(C)$ which can be read off from the dual resolution graph associated to $C$. The description of the link associated to $C$ as an iterated cabling operation naturally produces a Mayer-Vietoris decomposition which allows to prove decomposition theorems of the Alexander polynomial as in Eisenbud-Neumann \cite{EN} and Sumners-Woods \cite{SW}. Thus, our results can be understood as the algebraic analogues of those purely topological results; as a consequence, they provide a natural setting to extend our results to more general situations from the valuative point of view.

\subsection{Summary of our approach} 

The value semigroup of an irreducible plane curve singularity is a complete intersection numerical semigroup, which means that it can be constructed from a process called gluing \cite{Delormegluing} (see also Section \ref{subsec:iterativepoincare}). The link $C$ is in this case an iterated torus knot. The main idea behind our approach is to realize that the gluing construction of the value semigroup of an irreducible plane curve singularity mimics the construction leading to the description of \(L\) as an iterated torus knot. More concretely, the satellization process describing the iterated toric structure of \(L\) is in one-to-one correspondence with the gluing construction in the value semigroup. Thus, one can see the algebraic operation as a topological operation. This fusion between algebra and topology gives us the hint that a purely algebraic recursive computation of the Poincaré series may provide the conceptual explanation to its coincidence with the Alexander polynomial. Therefore, our main result provides a purely algebraic recursive computation of the Poincaré series. 
\medskip

Our starting point is then to set aside the application of the Eisenbud-Neumann Theorem \cite[Theorem 12.1]{EN} and deepen on the algebraic calculations. To do so, the \textbf{first step} (Theorem \ref{thm:p1p2p3}) is to write the Poincaré series as a product

\begin{equation}\tag{\(\ast\)}\label{eqn:star1}
    P_C(\underline{t})=\frac{1}{\underline{t}^{\underline{v}^{\mathbf{1}}}-1} \cdot  \prod_{i=1}^q \frac{\underline{t}^{\underline{v}^{\sigma_{i}}}-1}{\underline{t}^{\underline{v}^{\rho_{i}}}-1}\cdot (\underline{t}^{\underline{v}^{\sigma_0}}-1)\cdot \prod_{\rho \in \widetilde{\mathcal{E}}}  \frac{\underline{t}^{(n_{\rho}+1)\underline{v}^{\rho}}-1}{\underline{t}^{\underline{v}^{\rho}}-1} \cdot \prod_{s(\alpha) >1}  (\underline{t}^{\underline{v}^{\alpha}}-1)^{s(\alpha)-1},
\end{equation}

where the factors depend on relevant vertices of the dual graph, which are the so-called star points and the fist vertex of the dual graph (cf. Subsection~\ref{subsub:dualgraph}). This yields a pure algebraic, valuative expression for the Poincar\'e series. 
\medskip

In view of the expression \eqref{eqn:star1}, the \textbf{second step} is to establish a suitable ordering of those relevant vertices of $G(C)$ which makes it possible to compute the Poincaré series in an iterative way. This ordering will base on a one-to-one correspondence between the star points of the dual graph and the topologically relevant exponents of the Puiseux series of the branches of the curve. Since we are interested in the topological properties of the curve, we first define a \emph{topological Puiseux series} for each of the branches, which provides us a simplified expression encoding the necessary information (Section \ref{subsec:topologicalpuiseux}). The ordering of the star points will only depend on the minimal generators of the value semigroups of every branch and the contact between the branches. As a consequence, this ordering yields a canonical ordering in the blowing-ups centers of the minimal embedded resolution of the plane curve. Moreover, this allows us to define a sequence of plane curves \(C_{\alpha_1},C_{\alpha_2},\dots,C_{\alpha_l}=C\) depending on the ordered star points (cf.~Subsection \ref{subsubsec:truncationstar}) which approximate \(C,\) so that the last curve in that sequence is \(C.\)
\medskip

Now, in a \textbf{third step}, we provide a method to compute the Poincar\'e series \(P_{\alpha_i}\) of each approximating curve $C_{\alpha_i}$ from the series of the previous approximating curve as products of the form 
\begin{equation}\tag{\(\dagger\)}\label{eqn:dag1}
P_{\alpha_i}(\bullet)=P_{\alpha_{i-1}}(\bullet) \cdot Q(\bullet) \cdot \prod B(\bullet) \cdot Q(\bullet),
\end{equation}
where the polynomials \(Q,B\) are defined in \eqref{eqn:defkeypoly} and the ``\(\bullet\)'' depend on the contact between branches and the minimal generators of the individual semigroups of the branches (cf.~Section \ref{subsec:iterativepoincare}). These polynomials arise naturally following the ordered multiplicity sequence provided by the suitable order of the blowing-ups centers of the minimal embedded resolution of the plane curve (Remark \ref{rem:QB1}). This recursive expression yields a purely algebraic procedure to compute iteratively the Poincaré series \(P_C(\underline{t})\). Moreover, this expression provides the announced decomposition theorems of the Poincaré series. Observe that up to this point, only algebraic methods have been used and they could be naturally extended to more general contexts.
\medskip

In the \textbf{fourth} and last \textbf{step}, we show that this purely algebraic, iterative construction can be translated step by step to the iterated toric structure of the link, confirming the guess motivated by the construction in the irreducible case. To do so, we show first that, in the irreducible case, we can construct the semigroup algebra associated to the semigroup of values from the fundamental group of the knot exterior (Section \ref{subsec:alexirreducible}). Second, we observe that our ordering in the set of branches is equivalent to the study of the algebraic link \(L\) from its innermost to its outermost component; it contrasts with the customary description of the Waldhausen decomposition, which uses to be formulated from outermost to inner (see also \cite{LeCarousels}). This is not at all surprising, as the study of the closed complement of the link ``from outer to inner'' is the natural point of view, appropriate to obtain such a decomposition. We recall here that the vertices of the plumbing diagram correspond to Seifert pieces in the Waldhausen decomposition of the link exterior \cite[Section 22]{EN}.
\medskip

The ordering from innermost to outermost is imposed by the algebraic construction, otherwise it is inconvenient to produce the iterative procedure to compute the Poincaré series. This is reasonable, since we are studying the algebraic properties of the link, and not \emph{a priori} those of its exterior. Here we find the ultimate reason to avoid the theorem of Eisenbud and Neumann: their splicing construction perfectly allows to describe the Waldhausen decomposition, hence this is closer to the perspective of the link from its exterior. A posteriori, one might check with a bit of effort that both procedures encode the same information, but the connection between the algebra and the topology will be lost.
\medskip

A crucial point in the fourth step is the work of Sumners and Woods \cite{SW}. They indicate a recursive way to compute the Alexander polynomial of \(L\) once the components are ordered as in our case. Then, we can show that each step in our procedure coincides with the topological description. As a consequence, we can provide a recursive proof of the coincidence between \(P_C(\underline{t})\) and \(\Delta_L(\underline{t})\) without using the results of Eisenbud-Neumann \cite{EN}, hence without topological guidance. Incidentally, thanks to our algebraic procedure we provide a more explicit expression of the Alexander polynomial in the case of more than three branches than the one given by Sumners and Woods \cite[Section VII]{SW}. Besides, our recursive argument has the advantage that it reveals an intrinsic topological nature of the algebraic operation, as the title of this paper aims at pointing out.

\subsection{Outline}

We will now indicate the parts of the manuscript where each of the above steps, as well as the necessary auxiliary results, are realized.
\medskip

Whereas Section 2 is devoted to introduce the main tools (and notation) which are imperative to understand the remainder of the article (Puiseux series, embedded resolutions of the curve and their dual graphs in Subsection \ref{subsec:dualgraph}, the Noether formula \ref{noehterformula}, and both the value semigroup and the extended semigroup in Subsection \ref{sec:semigroupvalues}), the technical core of the proofs in Section 4 lies in Section 3; observe that Section 4 encloses the first, and third steps explained above; the second step is cleared in Section 3, and the last step is given in Section 5.
\medskip

Indeed, in Subsections 3.1 and 3.3 we recall the main results about maximal contact values and the values of the star points attached to the dual graph $G(C)$ of the minimal embedded resolution of $C$. Subsection 3.2 is devoted to define the topological Puiseux series of the branches. The above mentioned second step is solved in Section 3.4: here we define the ordering on the set of star point in the dual graph and the sequence of approximating curves of $C$; in addition, this subsection gives a detailed description of how to construct the sequence of the approximating curves that will allow us to prove the iterative computation of their Poincar\'e series.
\medskip

Section 4 starts with the definition of the Poincaré series of the curve. After, we work out the first step, namely the proof of \eqref{eqn:star1} in Theorem \ref{thm:p1p2p3}. Subsection 4.2 addresses the step-by-step method of the recursive computation of the Poincaré series, cf.~(\ref{eqn:dag1}). First of all we recall the irreducible case in Proposition \ref{prop:poincareirreduciblecase}, and then we prove the two base cases, in which the only approximating curve is the curve itself (Proposition \ref{prop:poincarelemm1} and Proposition \ref{prop:poincarelemm2}). Eventually in Subsection \ref{subsec:generalprocedure} we present the general process, which completes the third step.
\medskip

Section 5 focuses on topology: We review the topological counterpart of the algebraic constructions of the previous sections. First we describe the process of satellization for the construction of an algebraic link attached to a curve. Second we present the gluing operation as a topological feature in the irreducible case. Then we describe its generalization to reducible curves following the exposition by Sumners and Woods \cite{SW}; here we supply further details missing in their exposition. 
\medskip

A closing, short section with historical remarks has been included for the convenience of the reader. 
\medskip

%The Milnor fibration for an irreducible plane curve singularity can be described from the fact that  XXXXXXXXXXX

\subsection{General assumptions and notation}

We will denote by $\mathbb{N}$ the set of nonnegative integers. The cardinality of a finite set $A$ will be denoted by $|A|$. For an element $x$ of a ring $R$, we will write $(x)$ the principal $R$-ideal generated by $x$. 
\medskip

We understand for a \emph{curve} a germ of holomorphic function $f:(\mathbb{C}^2,0)\to (\mathbb{C},0)$ with isolated singular point at $0$. We will write $C:f=0$ and say that $C$ is a curve given by a power series $f\in \mathbb{C}\{x,y\}$, where $\mathbb{C}\{x,y\}$ stands for the ring of convergent power series (in two indeterminates $x$ and $y$). We will assume $f$ (hence $C$) to be reduced. If $f$ is not irreducible, the factorization $f=f_1\cdots f_r$ with $f_i\neq f_j$ if $i\neq j$ into irreducible germs induces irreducible curves $C_1, \ldots , C_r$ (given by the factors 
$f_1,\ldots , f_r$) called branches of $C$. We will write $C=\bigcup_{i=1}^{r} C_i$. We set $\mathtt{I}:=\{1,\ldots , r\}$.
\medskip

A parametrization of a branch $C:f=0$ at $0$ is given by power series $x(t), y(t) \in \mathbb{C}\{t\}$ such that $f(x(t), y(t)) = 0 \in \mathbb{C}\{t\}$ and, if $\tilde{x}(t), \tilde{y}(t)$ satisfy $f(\tilde{x}(t), \tilde{y}(t)) = 0$, then there is a unique unit $u \in \mathbb{C}\{t\}$ such that $\tilde{x}(t) = x(u \cdot t)$ and $\tilde{y}(t) = y(u \cdot t)$. We define therefore the intersection multiplicity of two branches as the total order of one curve on the parametrizations of the branches of the other curve, namely
\[
\big [C,\{g=0\}\big ]_0:= [f, g]_0: = \mathrm{ord}_t g(x(t), y(t)) = \mathrm{sup}\{m \in \mathbb{N} : t^m \ \mbox{divides} \ g(x(t), y(t))\};
\]

we will omit the dependence on $0$ in the writing if this is clear from the context.
\medskip

A branch of a curve can be parametrized by a Puiseux series, which we understand as a formal power series with rational exponents of the form

\[
y=\sum_{j>0} a_j x^{j/n},
\]

for $a_j \in \mathbb{C}$. We agree to have a sort of normal form for the Puiseux series by taking exponents with common denominator $n$ coprime to $\mathrm{gcd}\{j : a_j \neq 0\}$; this $n$ is called the polydromy order of the series.
\medskip

\noindent \textbf{Acknowledgements}. The authors wish to express their gratitude to Prof. F\'elix Delgado de la Mata for many stimulating conversations and helpful suggestions during the preparation of the paper.
\medskip

%%%%%%%%%%%%%%%%%%%%%%%%%%%%%%%%%%%%%%%%%%%% Resolution of singularities

\section{Invariants associated to a resolution of plane curve singularities}\label{sec:invariantsofresolution}

Let $f \in \mathbb{C}\{x,y\}$ be an irreducible power series defining a plane branch \(C.\) The origin of the Puiseux series goes back to Newton: he looked for solutions of a polynomial equation $f(x,y)=0$ clearing $y$ as a function of $x$ and approximating successively; the $j$th step of his algorithm reads off as

\[
y=x^{q_1/p_1}(a_1+x^{q_2/p_1p_2}( \cdots + \cdots +x^{q_{j-2}/p_1\cdots p_{j-1}}(a_{j}+a_jx^{q_{j-1}/p_1\cdots p_j})\cdots )),
\]

where $a_j\in \mathbb{C}$ and $\mathrm{gcd}(p_j,q_j)=1$ for every $j$. The couples $(p_j,q_j)$ are called the Newton pairs of $C$. Puiseux extended the Newton method to reducible curves and condensed Newton's writing into a formal power series with fractional exponents: the approximations turned to be partial sums of a power series 

\[
y=b_1 x^{m_1/p_1} +b_2 x^{m_2/p_1p_2} +\cdots + b_j x^{m_j/p_1\cdots p_j} +\cdots
\]

The couples $(p_i,m_i)$ are called the Puiseux pairs of $C_i$ for every branch $C_i$ of $C$. They are related to the Newton pairs by the recursion $q_1=m_1, q_i=m_{i}-m_{i-1}p_i$. It is well known that there are only finitely many topologically meaningful terms in the Puiseux development, therefore we will assume without loss of generality that a branch has a Puiseux expansion of the form 

$$
y=b_1 x^{m_1/p_1} +b_2 x^{m_2/p_1p_2} +\cdots + b_k x^{m_k/p_1\cdots p_k}.
$$

\medskip

If we now consider a non-irreducible reduced power series \(f\in\mathbb{C}\{x,y\}\) defining a plane curve \(C\) with branches \(C_1,\dots,C_r\), then we will assume that all the Puiseux developments are of the form 
\[
y=s_i(x)=b_{1,i} x^{m_{1,i}/p_{1,i}} +b_{2,i} x^{m_{2,i}/p_{1,i}p_{2,i}} +\cdots + b_{k_i,i} x^{m_{k_i,i}/p_{1,i}\cdots p_{k_i,i}},
\]

for sufficiently large \(k_i\) with $i\in \texttt{I}$. This means that we may have some branches satisfying \(m_{1,i}/p_{1,i}<1\). Moreover, all along this paper we will assume an ordering in the (set of) branches of a plane curve as follows.
\medskip

Suppose that all the Puiseux developments coincide up to the term \(\alpha,\) so there exists at least one branch whose Puiseux development is different at the (\(\alpha+1\))-th term. Without loss of generality we have then 
\[
\frac{m_{\alpha+1,1}}{p_{\alpha+1,1}}\geq \frac{m_{\alpha+1,2}}{p_{\alpha+1,2}}\geq\cdots \geq\frac{m_{\alpha+1,r}}{p_{\alpha+1,r}}.
\]
Recursively, if \(\{j_1<j_2<\dots<j_s\}=:\jnd\subsetneq \ind\) with \(|\jnd|\geq 2\) and \(C_\jnd=\bigcup_{i\in\jnd}C\) is a subset of branches whose Puiseux developments coincide up to a term \(\alpha'>\alpha\) and at least one of them is different at \(\alpha'+1\), then 
\[
\frac{m_{\alpha'+1,j_1}}{p_{\alpha'+1,j_1}}\geq \frac{m_{\alpha'+1,j_2}}{p_{\alpha'+1,j_2}}\geq\cdots\geq \frac{m_{\alpha'+1,j_s}}{p_{\alpha'+1,j_s}}.
\]

\begin{ex}\label{example:goodorder}
  Observe that this ordering in the set of branches depends on the choice of the Puiseux series and it may vary on the topological class. Consider the curve \(C=\bigcup_{i=1}^{5} C_i\) where the Puiseux series of the branches \(C_i\) are 
  \(y_1=x^4,\) \(y_2=x^{5/2},\)  \(y_3=2x^2+x^{14/3},\) \(y_4=2x^2\) and \(y_5=x^2\) and  \(C'=\bigcup_{i=1}^{5} C'_i\) with Puiseux series 
  \(y'_1=x^4,\) \(y'_2=x^{5/2},\) \(y'_3=2x^2+x^{14/3},\) \(y'_4=2x^2+x^5\) and \(y'_5=x^2\). It is easy to see that \(C\) and \(C'\) are topologically equivalent, but the branches of \(C\) are good ordered and the branches of \(C'\) are not.
\end{ex}
      
  Since we are interested in the study of the topological class, we will introduce in Subsection \ref{subsec:totalorderstar} a refinement in the good order of the set of branches which is canonical in the equisingularity class of the curve and that does not depend on the choice of the Puiseux series of the branches.

\subsection{Embedded resolution of plane curves}\label{subsec:dualgraph}

Let $C=\bigcup_{i=1}^{r} C_i \subseteq (\mathbb{C},0)$ be a (complex) plane curve singularity given by the equation $f=0$, where $f=f_1\cdots f_r$ and $f_i$ is the equation of the branch $C_i$ for every $i=1,\ldots , r$. Consider a simple sequence of blowing-ups of points whose first center is $0=:p_1$:
\begin{equation}\label{seq}
\pi : \;\;\; \cdots \longrightarrow X_n \longrightarrow X_{n-1} \longrightarrow \cdots \longrightarrow X_1 \longrightarrow X_0=(\mathbb{C}^2,0);
\end{equation}
here \emph{simple} means that we only blow-up points $p_i$, for $i >1$, belonging to the exceptional divisor created last. The \emph{cluster of centers} of $\pi$ will be denoted by  $\mathcal{C}_{\pi}=\{p_1=0, p_2, \ldots \}.$ Recall that the pull-back \(\overline{C}=\pi^{\ast}(C)=(f\circ \pi)^{-1}(0)\) of \(C\) is called the total transform of \(C\) by \(\pi.\) The strict transform of \(C\) is defined to be  \(\widetilde{C}:=\overline{\pi^\ast(C\setminus \{0\})}\), where the bar denotes the Zariski closure. Each center \(p\in\mathcal{C}_\pi\) is assigned to an intersection multiplicity with the strict transform of \(C\) at \(p\). Recall also that the strict transform resp. the total transform of \(C\) at \(p\) is \(\widetilde{C}_p:=\overline{\pi_p^\ast(C\setminus \{0\})}\) resp. \(\overline{C}=\pi_p^{\ast}(C)\), where \(\pi_p\) denotes the blowing-up of \(p\). If we write \(E_0\) for the first neighborhood of \(0\), then the \(i\)-th neighborhood of \(0\) is defined as the set of points on the first neighborhood of any point on the \((i-1)\)-th neighborhood of \(0\) for any $i>1$. The points in any neighborhood of \(0\) are called points infinitely near to \(0\), and we denote the set of them by \(\mathcal{N}_0\). Also, \(\mathcal{N}_0\) is endowed with a natural ordering: we write \(p<q\) whenever \(q\) is infinitely near to \(p\). 
\medskip

For every $i\geq 1$ we denote by $E_i$ the exceptional divisor on $X_i$ obtained by blowing-up $p_i$. For $i > j$ we say that a point $p_i$ is proximate to $p_j$, written $p_i \rightarrow p_j$, if $p_i$ belongs to the strict transform of the exceptional divisor $E_j$ obtained by blowing-up $p_j$. We say that $p_i$ and $E_i$ are \emph{satellite} if $p_i$ is proximate to two points of $\mathcal{C}_{\pi}$; otherwise we say that they are \emph{free}.
\medskip

Every point in \(\mathcal{N}_0\) is associated to two integer values corresponding to the intersection multiplicities of both the strict and the total transform at that point. The multiplicity of \(C\) at a point \(p\in\mathcal{C}_\pi\) is defined to be \(e_p:=e_p(\widetilde{C}_p)\). On the other hand, we call the value of \(C\) at \(p\) to \(v_p(C):=e_p(\overline{C}_p)\). Observe that a curve goes through \(p\) if and only if \(e_p\geq 1\). We will denote by \(\mathcal{N}_0(C)\) the set of infinitely near points to \(0\) lying on \(C\). Obviously, the set \(\mathcal{N}_0(C)\) is an infinite set with finitely many satellite points and points with multiplicity strictly bigger than 1 (see \cite[Chapter~3]{casas} for a more detailed treatment).
\medskip

Moreover, thanks to the proximity relations \cite[Theorem~3.5.3]{casas} one can compute the values (and multiplicities) recursively with the aid of Noether's formula (cf.~\cite[Theorem~3.3.1]{casas}).

\begin{proposition}\label{noehterformula}
(Noether's Formula) Let $C_1,C_2$ be germs of curve in $0$. The intersection multiplicity $[C_1,C_2]_0$ is finite if and only if $C_1$ and $C_2$ share finitely many points infinitely near to $0$, and in such a case 

$$[C_1, C_2]_0=\sum_{p \in  \mathcal{N}_0(C_1)\cap\mathcal{N}_{0}(C_2)}e_p(C_1)e_p(C_2).$$

\end{proposition}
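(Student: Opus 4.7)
The plan is to reduce the formula to the behaviour of intersection multiplicities under a single blowing-up and then iterate. Concretely, let $\pi\colon X_1\to X_0=(\mathbb{C}^2,0)$ be the blowing-up of $0$ with exceptional divisor $E$, and let $\widetilde{C_i}$ denote the strict transform of $C_i$. The cornerstone is the \emph{one-blowing-up identity}
$$[C_1,C_2]_0 \;=\; e_0(C_1)\cdot e_0(C_2) \;+\; \sum_{p\,\in\, E\cap \widetilde{C_1}\cap\widetilde{C_2}} [\widetilde{C_1},\widetilde{C_2}]_p,$$
which one derives by writing $\pi^{\ast}f_i=u^{e_0(C_i)}\widetilde{f_i}$ near each $p\in E$ (with $u$ a local equation of $E$), using $\sum_{p\in E}[E,\widetilde{C_i}]_p=e_0(C_i)$, and comparing $\dim_{\mathbb{C}}\mathcal{O}_{X_0,0}/(f_1,f_2)$ with the analogous lengths upstairs along the fibre of $\pi$ over $0$.

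For the finiteness statement, I would first recall that $[C_1,C_2]_0<\infty$ if and only if $C_1$ and $C_2$ share no common branch, since the intersection multiplicity equals the length of $\mathcal{O}_{X_0,0}/(f_1,f_2)$ and both $f_1,f_2$ are reduced. A common branch clearly produces infinitely many infinitely near points lying on both curves. Conversely, if $\mathcal{N}_0(C_1)\cap \mathcal{N}_0(C_2)$ is infinite, then since the multiplicity of a curve at a point bounds the number of points in its first neighborhood lying on that curve, the set of common infinitely near points forms a rooted tree of finite branching; by König's lemma it contains an infinite totally ordered chain, which determines a common branch via Puiseux parametrization.

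For the equality itself, I would argue by induction on $N:=|\mathcal{N}_0(C_1)\cap \mathcal{N}_0(C_2)|$, which is finite by the previous step. In the base case $N=1$ only $0$ is shared: then $\widetilde{C_1}$ and $\widetilde{C_2}$ must have disjoint tangent cones, forcing $\widetilde{C_1}\cap\widetilde{C_2}\cap E=\emptyset$, so the one-blowing-up identity reduces to $[C_1,C_2]_0=e_0(C_1)e_0(C_2)$, matching the right-hand side. For the inductive step, each $p\in E\cap \widetilde{C_1}\cap\widetilde{C_2}$ satisfies $|\mathcal{N}_p(\widetilde{C_1})\cap \mathcal{N}_p(\widetilde{C_2})|<N$ because that set omits $0$, so the induction hypothesis applies to each summand $[\widetilde{C_1},\widetilde{C_2}]_p$. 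Combining with the decomposition $\mathcal{N}_0(C_i)=\{0\}\cup\bigcup_{p\in E}\mathcal{N}_p(\widetilde{C_i})$ and the identity $e_q(C_i)=e_q(\widetilde{C_i})$ for every $q\neq 0$ (since $\pi$ is an isomorphism off $E$), the one-blowing-up identity sums to Noether's formula.

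The principal obstacle is the one-blowing-up identity: it is not merely bookkeeping but a genuine intersection-theoretic computation, which can be carried out either by a direct analysis with pullbacks of local equations and lengths of artinian quotients, or via the projection formula for $\pi_{\ast}$ on the smooth surface $X_1$. Once this identity is in place, the induction and the tree-of-proximities argument for the finiteness criterion are straightforward, and the formula follows transparently.
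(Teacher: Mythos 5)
The paper does not prove this proposition; it is stated as a classical result with a citation to Casas-Alvero's \emph{Singularities of Plane Curves}, Theorem~3.3.1, so there is no proof in the paper to compare against. Your argument is correct and, in fact, follows essentially the same route as the cited textbook proof: establish the one--blowing--up identity
\[
[C_1,C_2]_0 = e_0(C_1)e_0(C_2) + \sum_{p \in E \cap \widetilde{C_1} \cap \widetilde{C_2}} [\widetilde{C_1},\widetilde{C_2}]_p,
\]
characterize finiteness of the intersection multiplicity via the proximity tree of shared infinitely near points (the K\"onig-lemma argument is a clean way to extract the common branch from an infinite set of shared points), and then induct on $|\mathcal{N}_0(C_1)\cap\mathcal{N}_0(C_2)|$.

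One small imprecision worth flagging: the justification you give for $e_q(C_i)=e_q(\widetilde{C_i})$ when $q\neq 0$ --- that ``$\pi$ is an isomorphism off $E$'' --- does not apply to the points $q$ in the first neighborhood, which lie \emph{on} $E$. The correct reason, already implicit in the paper's conventions, is that $e_q(C)$ is \emph{defined} as the multiplicity of the strict transform at $q$ (the paper sets $e_p := e_p(\widetilde{C}_p)$), and the iterated strict transform of $C_i$ past the first blow-up coincides with the iterated strict transform of $\widetilde{C_i}$. With that bookkeeping corrected, your induction is sound, and the heavy lifting sits (as you correctly note) in the one--blowing--up identity, whose proof by the projection formula or by length comparison of artinian quotients you have sketched adequately.
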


\subsubsection{The dual graph of the minimal embedded resolution of $C$}\label{subsub:dualgraph}

It is customary to provide the minimal embedded resolution in form of a weighted graph called the dual (resolution) graph \(G(C)\) of the embedded resolution. The vertices (or points) $P$ of $G(C)$ represent the components of the total transform $\overline{C}$ of the curve $C$ i.e. both the irreducible components $E_P$ of the exceptional divisor $E = \pi^{-1}(0)$ of $\pi$ and the strict transforms $\widetilde{C}_i$ of the branches $C_i$ of $C$; in the latter case they are depicted by arrows. Abusing of notation, we will write the vertex corresponding to the branch $C_i$ as $C_i$. Two vertices (or a vertex and an arrow) of $G(C)$ are connected by an edge if the corresponding components intersect. The resulting graph is an oriented tree, with starting point (i.e. the starting divisor of the resolution) denoted by $\mathbf{1}$. For $i\in \ind$, the geodesic in $G(C)$ joining $\mathbf{1}$ with the arrow corresponding to $f_i$ will be denoted by $\Gamma_i$. The set of vertices of the graph $G(C)$ can be endowed with a partial ordering: we set $P^{\prime} < P$ if and only if the geodesic in $G(C)$ from the vertex $\mathbf{1}$ to the vertex $P$ passes through the vertex $P^{\prime}$. Figure \ref{figA} shows how the dual graph of a reducible curve looks like. 
\medskip

The dual graph is labeled as follows. For $P \in G(C)$ with $P \neq \mathbf{1}$, let $\nu (P)$ be the number of vertices (or arrows) in $G(C)$ connected with $P$. The valence $\nu (\mathbf{1})$ may be different from $1$ in some special situation explained in a paragraph below. The points $P\in G(C)$ with $\nu (P)=2$ are called \emph{ordinary}, those with $\nu(P)=1$ are the \emph{end points} of the graph, and those points satisfying  $\nu(P)\geq 3$ are said to be star points; star points will be denoted by $\alpha$. The set of end points of $G(C)$ will be denoted by $\mathcal{E}$ (we let drop the dependence on $G(C)$ out in the notation for the sake of simplicity).
\medskip

\begin{figure}[h]
$$
\unitlength=0.50mm
\begin{picture}(120.00,110.00)(0,-30)
\thinlines
\put(-20,30){\line(1,0){40}}
\put(38,30){\line(1,0){22}}
\put(60,30){\circle*{2}}

\put(60,30){\line(1,1){40}}
\put(85,55){\line(1,-1){10}}
\put(85,55){\circle*{2}}
\put(100,70){\circle*{2}}

\put(100,70){\line(2,1){20}}
%\put(120,80){\circle{4}}
\put(120,80){\vector(0,1){10}}
\put(110,75){\line(1,-2){4}}
\put(100,70){\line(2,-1){20}}
%\put(120,60){\circle{4}}
\put(120,60){\vector(1,1){10}}
\put(110,65){\line(-1,-2){4}}

\put(60,30){\line(1,-1){40}}
\put(100,-10){\circle*{2}}
\put(85,5){\line(-1,-1){10}}
\put(85,5){\circle*{2}}
\put(86,6){{}}
\put(75,-5){\circle*{2}}
\put(73,-13){{}}

%\put(98,-5){{\scriptsize$\rho_2$}}
\put(100,-10){\line(2,1){20}}
%\put(120,0){\circle{4}}
\put(120,0){\vector(0,1){10}}
\put(110,-5){\line(1,-2){4}}
\put(100,-10){\line(2,-1){20}}
%\put(120,-20){\circle{4}}
\put(120,-20){\vector(1,0){10}}
\put(134,-21){{\scriptsize$\tilde{C}_r$}}
\put(110,-15){\line(-1,-2){4}}
\put(102,-17){{}}
\put(110,-15){\circle*{2}}
\put(106,-23){\circle*{2}}
\put(104,-31){{}}

\put(60,30){\line(1,0){40}}
\put(85,30){\line(0,-1){13}}
\put(85,30){\circle*{2}}
\put(100,30){\circle*{2}}

\put(100,30){\line(2,1){20}}
%\put(120,40){\circle{4}}
\put(120,40){\vector(0,1){10}}
\put(110,35){\line(1,-2){4}}
\put(100,30){\line(2,-1){20}}
%\put(120,20){\circle{4}}
\put(120,20){\vector(1,1){7.5}}
\put(110,25){\line(-1,-2){4}}

\put(-20,30){\circle*{2}}
\put(-5,30){\line(0,-1){15}}
\put(-5,30){\circle*{2}}
\put(-5,15){\circle*{2}}
\put(15,30){\line(0,-1){20}}
\put(15,30){\circle*{2}}
\put(15,10){\circle*{2}}
\put(45,30){\line(0,-1){20}}
\put(45,30){\circle*{2}}
\put(45,10){\circle*{2}}

\put(60,30){\line(0,-1){15}}
\put(60,15){\circle*{2}}

\put(-27,28){{\scriptsize ${\bf 1}$}}
\put(-3,5,10){{\scriptsize$\rho_1$}}
\put(16.5,7){{\scriptsize$\rho_2$}}
\put(58,10){{\scriptsize$\rho_h$}}
\put(-6,33){{\scriptsize$\alpha_1$}}
\put(14,33){{\scriptsize$\alpha_2$}}
\put(25,30){$\ldots$}

\put(53,33){\scriptsize{$\sigma_0$}}
\end{picture}
$$
\caption{The dual graph $G(C)$ of a curve $C$.}
\label{figA}
\end{figure}
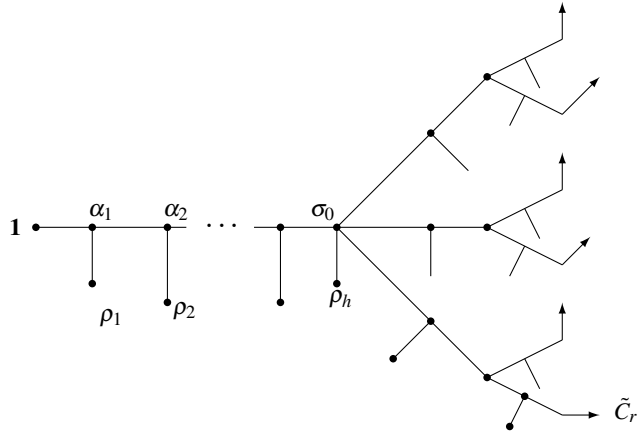

An \emph{arc} in the graph $G(C)$ is a sequence of vertices connected each other in a consecutive order and satisfying that all its vertices are ordinary up to the extremes, i.e. a geodesic joining two nonordinary points. A \emph{dead arc} (or \emph{tail}) of $G(C)$ is an arc with one end point. Hence the tail of $G(C)$ corresponding to the end vertex $L$ consists of all vertices $L^{\prime}$ with $\alpha_{L}< L^{\prime} \le L$. The set of dead arcs is denoted by $\mathcal{D}$. The extreme of a dead arc $L$ will be denoted by $P(L)$, and called dead end. For every end point $L$ of $G(C)$, there is a nearest star point $\alpha_{L}$ such that $\alpha_{L} < L$. 
\medskip

A star point $P$ is said to be \emph{proper} if it does not belong to any dead arc or it does with $\nu(P) \geq 4$. The set of proper star points will be denoted by $\mathcal{R}$, and a proper star point will be denoted by $\sigma$ (the letter $\alpha$ is kept for star points in general).
\medskip

A vertex $R$ is said to be a \emph{separation point} of the graph $G(C)$ if there exist two branches $C_i$ and $C_j$ of the curve $C$ such that $R < C_i$, $R < C_j$, and $R$ is the maximal vertex with these properties; we will also say that $R$ is the separation point between the branches $C_i$ and $C_j$. The first (i.e. minimal) separation point of $G(C)$ will be written $\sigma_0$; in other words, $\sigma_0$ is the last point in $\bigcap_{i=1}^{r} \Gamma_i$. Observe also that the separation points are proper star points. By convention, $\nu (\mathbf{1})$ is the number of edges (or arrows) incident in $P$ plus one, so that $\nu (\mathbf{1})\geq 2$, in the case $\mathbf{1}=\sigma_0$.
\medskip

We can describe the set $\mathcal{R}$ of proper star points in an alternative way: consider the ``smooth part'' $E^{\circ}_P$ of the component $E_P$ i.e. $E_P$ minus intersection points with other components of the total transform of the curve $C$. The cardinality of the set of connected components of the complement $(f \circ \pi)^{-1}(0) \setminus E^{\circ}_P$ is denoted by $s(P)$. Observe that

\begin{align*}
s(P)\geq 1& \Longleftrightarrow \ P \in  \{\sigma_0\} \cup \Big ( \bigcup_{i=1}^r \Gamma_i \setminus \bigcap_{i=1}^{r} \Gamma_i  \Big) \\
s(P)=0 & \Longleftrightarrow \ P\in \bigcup_{L\in \mathcal{D}} \big (L\setminus \{\alpha_L\} \}\big ) \cup \big (\bigcap_{i=1}^r \Gamma_i \setminus \{\sigma_0\}\big).
\end{align*}

\medskip

%In the same way, let $\Gamma_i^k$ be the geodesic in $G(C)^{i,k}$ joining the origin $\mathbf{1}$ of the graph with the arrow corresponding to $C_i$ and let $\Gamma_i^{\infty}=\bigcup_{k \ge 0} \Gamma_i^k$. Note that $\Gamma_i^{\infty}$ is an infinity completely ordered set, and if $P \in \Gamma_i^{\infty}$, then there exists $k_0$ such that $P \in \Gamma_i^k$ for every $k \ge k_0$.
\medskip

%Finally, let us denote by $\sigma^0$ , and set

%\begin{align}\label{eqn:ht}
%\tilde{G}&= \{\sigma^0\} \cup \Big ( \bigcup_{i=1}^r \Gamma_i^{\infty} \setminus \bigcap_{i=1}^{r} \Gamma_i  \Big) \\ \label{eq:gtilde}
%\mathcal{H}&=\mathcal{E} \cup \tilde{G} \label{eq:h}
%\end{align}

%Observe that $s(P)\geq 1$ if and only if $P\in \tilde{G}$, and $s(P)=0$ if and only if 

%$$
%P\in \bigcup_{L\in \mathcal{D}} \big (L\setminus \{st_L\} \}\big ) \cup \big (\bigcap_{i=1}^r \Gamma_i \setminus \{\sigma^0\}\big).
%$$

The number $s(P)$ is related to $\nu (P)$ in the following manner: if $P\neq \sigma_0$ with $s(P)\geq 1$, then
\[
s(P)=\left \{
\begin{array}{ll}
  \nu(P)-1,   & \mbox{if there is no dead arc starting with } P, \\
  \nu(P)-2,   &  \mbox{otherwise}.
\end{array}
\right.
\]
Moreover, $s(\sigma_0)=\nu(\sigma_0)-2$ in the first situation, and $s(\sigma_0)=\nu(\sigma_0)-3$ in the second one.
\medskip

From this discussion it is easily deduced that the set of proper star points is
\[
\mathcal{R}=\{ P \in G(C) : s(P)>1\} \cup \{\sigma_0\}.
\]

%%%%%%%%%%%%%%%%%%%%%%%%%%%%%%%%%%%%%%%%%%%%%%%%%%%%%%%%
%%%%%%%%%%%%%%%%%%%%%%%%%%%%%%%%%%%%%%%%%%%%%%%%%%%%%%%%
%%%%%%%%%%%%%%%%%%%%%%%%%%%%%%%%%%%%%%%%%%%%%%%%%%%%%%%%

\subsection{The semigroup of values of a plane curve}\label{sec:semigroupvalues}

Let \(C\) be a (reduced) germ of complex plane curve singularity with equation $f=\prod_{i=1}^{r}f_i=0$.
\medskip

For every branch \(C_i\) there is a discrete valuation \(v_i\) associated to the local ring \(\mathcal{O}_i:=\mathbb{C}\{x,y\}/(f_i)\) of the branch. This valuation can be defined as \(v_i(h):=[f_i,h]_0,\) the intersection multiplicity at the origin. Therefore, we have a multivaluation, say \(\vu,\) in the local ring of the plane curve \(\mathcal{O}:=\mathbb{C}\{x,y\}/(f)\) defined as \(\vu(h)=(v_1(h),\dots,v_r(h))\) for \(h\in\mathbb{C}\{x,y\}.\) The semigroup of values of \(C\) (or \(f\)) is the additive submonoid of \(\mathbb{N}^r\) defined by
\[
\Gamma(C):=\{\vu(h)=(v_1(h),\ldots,v_r(h))\in\mathbb{N}^r\; :\;h\in\mathcal{O},\,h\neq 0\};
\]

we write \(\Gamma=\Gamma(C)\) if no risk of confusion arises. The semigroup \(\Gamma\) has a conductor \(\con=\con(\Gamma),\) which is defined to be the minimal element of \(\Gamma\) such that $\gamma\in\Gamma$ whenever \(\gamma\geq \con\).
\medskip

%Moreover, Noether's formula together provides the following restriction for the multiplicities of a germ:
%\[e_p(C)=\sum e_q(C)\]
%where the summation runs on all points \(q\) proximate to \(p\) on \(C.\)

\subsubsection{The irreducible case.} In the case of \(r=1\) we have a single branch, and the semigroup of values is a numerical semigroup whose minimal generating set is finite and can be computed from the characteristic exponents of \(f\), \cite{ZariskiModuli}. First of all, let us recall the definition of the Puiseux characteristics of the branch. The \emph{Puiseux characteristics} is a finite sequence of natural numbers defined as follows: let $(n)\subseteq \mathbb{Z}$ be the set of multiples of $n$, and set $\beta_1:=\min \{j : a_j \neq 0, j \notin (n)\}$, and recursively
\begin{align*}
e_{i-1} &=\mathrm{gcd}\{ n, \beta_1, \ldots , \beta_{i-1}\}>1\\
\beta_i &= \min \{j : a_j \neq 0, j \notin (e_{i-1})\} \ \ \mbox{for}\;i=1,\dots, g\;\mbox{and}\;e_g=1.
\end{align*}
    
The finite sequence given by the $e_i$ is called the $e$-sequence. According to these numbers, the Puiseux series of the branch can be decomposed as 

\[
s(x)=\sum_{\tiny\begin{array}{c}
		j\in (\beta_0)\\1\leq j<\beta_1
\end{array}}a_jx^{j/\beta_0}+\cdots+\sum_{\tiny\begin{array}{c}
		j\in (e_{i-1})\\\beta_{i-1}\leq j<\beta_i
\end{array}}a_jx^{j/\beta_0}+\cdots+\sum_{\tiny\begin{array}{c}
		j\in \mathbb{Z}\\\beta_{g}\leq j
\end{array}}a_jx^{j/\beta_0}.
\]

Assume that \(C\) has characteristic exponents \(\{\beta_0,\dots,\beta_g\}\) and write \(n_i=e_{i-1}/e_{i}\) for \(i=1,\dots,g.\) Let us define 
\begin{equation}\label{eq:definbetabarra}
\obeta_0=\beta_0,\;\obeta_1=\beta_1\quad\text{and}\quad \obeta_{i+1}=n_i\obeta_i+\beta_{i+1}-\beta_i\quad \mbox{for}\ 1\leq i\leq g.
\end{equation}

\begin{rem}
This recursion provides a relation between the elements \(\obeta_i\) and the Puiseux pairs.
\end{rem}

The semigroup \(\Gamma(C)\) is minimally generated by the elements \(\obeta_0,\dots,\obeta_g,\) i.e. 
\[
\Gamma(C)=\langle\obeta_0,\dots,\obeta_g\rangle=\big \{\gamma\in\mathbb{N}\;: \;\gamma=a_0\obeta_0+\cdots+a_g\obeta_g\ \ \mbox{with} \ \ a_i\in\mathbb{N},\ \ \mbox{for}\; \ i=0,\dots,g\big \}.
\]

It is customary to call the elements \(\obeta_0,\dots,\obeta_g\) the maximal contact elements (or values) of $C$; this terminology comes from the fact that they coincide with the intersection multiplicities of a certain truncation of the Puiseux series of \(C\), as we will see in Subsection \ref{subsec:maximalcontact}. 
\medskip

The main combinatorial properties of the semigroup of values of a plane branch are the following (see e.g. \cite{ZariskiModuli}:
\begin{enumerate}
    \item \label{prop1:irreducsemigroup}  \(n_i\obeta_i<\obeta_{i+1}\) for \(i=1,\dots,g-1.\)
    \item\label{prop2:irreducsemigroup} \(n_i\obeta_i\in\langle\obeta_0,\dots,\obeta_{i-1} \rangle\) for \(i=1,\dots,g.\)
    \item\label{prop3:irreducsemigroup} If \(\gamma\in\Gamma(C),\) then \(\gamma\) can be written in a unique way as \(\gamma=\sum_{i=0}^{g}a_i\obeta_i\) with \(a_0\geq 0\) and \(0\leq a_i\leq n_i-1\) for \(i=1,\dots,g\).
\end{enumerate}
In addition, the value semigroup \(\Gamma\) is symmetric, i.e. \(\gamma\in\Gamma\) if and only if \(\con-1-\gamma\notin \Gamma.\) The symmetry property is the combinatorial counterpart of the Gorenstein property of the local ring of the branch \cite{Kunz70}.

\subsubsection{The case of several branches} If \(r>1,\) the semigroup of values is no longer finitely generated, but it is finitely determined (see \cite{Delgmanuscripta1,CDGlondon}); moreover, \(\Gamma\subsetneq \Gamma(C_1)\times\cdots\times\Gamma(C_r)\).  Before continuing let us establish some notation:

\begin{notation}
    For an index subset \(\jnd\subset \ind=\{1,\dots,r\}\) we set \(f_\jnd:=\prod_{j\in \jnd}f_j\) and \(C_\jnd=\sum_{j\in \jnd}C_j\) for the plane curve with equation \(f_\jnd\).
	We denote by \(\pr_{\jnd}:\mathbb{N}^r\rightarrow\mathbb{N}^{|\jnd|}\) the projection on the indices of \(\jnd\), and \(\alpha_\jnd:=\pr_\jnd(\alpha)\). 
	 Finally, for each \(i\in \ind\) we write \(\Gamma^{(i)}:=\Gamma(C_i).\)
\end{notation}

We will assume that \(\mathbb{N}^r\) is partially ordered: For $\alpha=(\alpha_1,\dots,\alpha_r), \beta=(\beta_1,\dots,\beta_r)\in \mathbb{Z}^r$,
\[
\alpha\leq \beta\Longleftrightarrow\;\alpha_i\leq\beta_i\;\,\mbox{for all}\,\ i \in \ind.
\]

Some elementary properties of the semigroup of values \(\Gamma\) are the following (see \cite{Delgmanuscripta1}):
\begin{enumerate}
	\item If \(\alpha,\beta\in\Gamma\), then \[\alpha\wedge\beta:=\min\{\alpha,\beta\}:=(\min\{\alpha_i,\beta_i\})_{i\in \ind}\in\Gamma.\]
	\item If \(\alpha,\beta\in \Gamma\) and \(j\in \ind\) with \(\alpha_j=\beta_j\), then there exists an \(\epsilon\in \Gamma\) such that \(\epsilon_j>\alpha_j=\beta_j\) and \(\epsilon_i\geq\min\{\alpha_i,\beta_i\}\) for all \(i\in \ind\setminus\{j\}\), with equality if \(\alpha_i\neq\beta_j\). 
	\item The semigroup $\Gamma$ has a conductor, i.e. there exists an element $\varsigma\in \Gamma$ such that $\varsigma+\Gamma \subseteq \Gamma$.
\end{enumerate}

Now, for a given \(\alpha\in\mathbb{N}^r\) and an index subset $\jnd\subset \ind$, we set 
\[
\overline{\Delta}_\jnd(\alpha)=\big\{\beta\in\mathbb{N}^r\;:\;\beta_j=\alpha_j\quad\forall j\in \jnd\quad \text{and}\quad \beta_k>\alpha_k\quad\forall k\notin \jnd \big\},
\]
\[
\overline{\Delta}(\alpha)=\cup_{i=1}^{r}\overline{\Delta}_i(\alpha),\quad \Delta_\jnd(\alpha)=\overline{\Delta}_\jnd(\alpha)\cap \Gamma\quad\text{and}\quad \Delta(\alpha)=\overline{\Delta}(\alpha)\cap \Gamma.
\]

The sets \(\Delta(\alpha)\) are important in order to define those key elements of \(\Gamma\) which allow us to extend the symmetry property viewed in the irreducible case. An element \(\gamma\in \Gamma\) is called a maximal element of \(\Gamma\) if \(\Delta(\gamma)=\emptyset.\) If, moreover,  \(\Delta_\jnd(\gamma)=\emptyset\) for all \(\jnd\subset \ind\) such that \(\emptyset\neq \jnd\neq \ind\), then \(\gamma\) is said to be absolute maximal. On the other hand, if \(\gamma\) is a maximal and if \(\Delta_\jnd(\alpha)\neq \emptyset\) for all \(\jnd\subset \ind\) such that \(|\jnd|\geq 2\), then \(\gamma\) will be called relative maximal. It is easily checked that the set of maximal elements of \(\Gamma\) is finite.
\medskip

We would like to emphasize that, by definition, the element \(\gamma\in\Gamma\) is absolute maximal if and only if there exist absolute maximal elements \(\alpha\) and \(\beta\) such that \(\gamma=\alpha+\beta\). An absolute maximal element that cannot be decomposed as the sum of two nonzero elements of \(\Gamma\) is said to be an irreducible absolute maximal. As a consequence, any absolute maximal can be decomposed as a sum of irreducible absolute maximal elements. 
\medskip

To finish, observe that the semigroup \(\Gamma\) is also a symmetric semigroup \cite{Delgmanuscripta2} in the following sense: \(\gamma\in\Gamma\) if and only if $\Delta(\con-(1,\dots,1)-\gamma)=\emptyset$.

\subsection{The extended semigroup} The values $v_i(g)$ are orders of germs $g\circ \varphi_i$ at the origin of $\mathbb{C}^2$, where $\varphi_i$ denotes a parametrization of $C_i$ for every $i\in \ind$ and $g$ is an element of the ring $\mathcal{O}_{\mathbb{C}^2,0}$ of germs of holomorphic functions at the origin in $\mathbb{C}^2$; this allows us to write
\[
g\circ \varphi_i(t_i)=a_i(g)t_i^{v_i(g)}+\mathrm{terms~of~higher~degree}.
\]

Campillo, Delgado and Gusein-Zade \cite{CDGextended} considered an extension of the value semigroup $\Gamma(C)$ which they called the extended semigroup $\widehat{\Gamma}(C)=\widehat{\Gamma}$ associated to $C$: this is the subsemigroup of $\mathbb{N}^r \times (\mathbb{C}^{\ast})^r$ consisting of all tuples
\[
(\underline{v}(g),\underline{a}(g)):=\big (v_1(g),\ldots , v_r(g), a_1(g),\ldots , a_r(g)\big)
\]
for every $g\in \mathcal{O}_{\mathbb{C}^2,0}$ with $v_i(g)<\infty$ for all $i\in \ind$.
\medskip

They showed that the set $\mathbb{N}^r \times (\mathbb{C}^{\ast})^r$ may be endowed with the structure of semigroup. Although $\widehat{\Gamma}$ depends on the choice of the parametrizations of the branches $C_i$ of $C$, this is not an issue if one considers isomorphism classes induced by invertible changes of coordinates, as described in \cite[Remark 2]{CDGextended}.
\medskip

The relation between $\Gamma$ and $\widehat{\Gamma}$ is given by the surjective homomorphism $\pr:~\widehat{\Gamma} \to \Gamma$ defined by $\pr\big ((\underline{v}(g),\underline{a}(g))\big )=\underline{v}(g)$. The sets $F_{\underline{v}}:=\pr^{-1}(\underline{v})\subseteq \{\underline{v}\}\times (\mathbb{C}^{\ast})^r$ for every $\underline{v}\in \Gamma(C)$ are called fibres of $\widehat{\Gamma}(C)$. Therefore
\[
\widehat{\Gamma}=\bigcup_{\underline{v} \in \Gamma} F_{\underline{v}} \times \{\underline{v}\}.
\]

The extended semigroup will help to the understanding of the proof of Theorem \ref{thm:p1p2p3}.

\section{Star points: the topological guides in the dual graph}\label{sec:descriptionofstar}
In the case of an irreducible plane curve, the minimal generators of the semigroup are ordered by the inequalities \[\obeta_0<n_1\obeta_1<\obeta_2<n_2\obeta_2<\cdots<n_{g-1}\obeta_{g-1}<\obeta_g<n_g\obeta_g.\]
This ordering in the generators of the semigroup induces a total order in the star points in the dual graph and thus a canonical sequence of approximating curves. Unfortunately, in the non-irreducible case, since the semigroup is finitely determined, a priori there is no canonical order in the elements that determines the semigroup.
\medskip

Following the idea of the irreducible case, we will translate the problem of ordering the values determining the semigroup in terms of ordering the star points in the dual graph. In this section we will present a canonical total order of the star points of the dual graph of a plane curve with several branches. This constitutes the foremost algebraic tools in order to provide the iterative construction of the Poincaré series in Section \ref{sec:Poincareseries}. 
\medskip

The core of this section is to precisely describe a distinguished Puiseux series associated a plane curve $C$ expressed in terms of the star points of the dual graph \(G(C)\) of \(C.\) From this Puiseux series, we will provide a canonical total order in the star points \(G(C).\) This total order in the star points is key to provide an ordered sequence of approximating curves to $C$; these will play a central role in Subsection \ref{subsec:iterativepoincare}.

\subsection{The maximal contact }\label{subsec:maximalcontact}
We first recall the interpretation of both the minimal generators of the value semigroup of a branch and the irreducible absolute maximal elements ---for curves with several branches--- in terms of intersection multiplicities. In both cases, the values associated to those elements are called maximal contact values. We follow the exposition of Delgado \cite{Delgmanuscripta1, Delgadoari}.
\medskip

For an irreducible curve $C$, let \(C_q\) be the branch whose Puiseux series coincides with the following truncation of the Puiseux series of \(C\)

\[
\varphi_q(x)=\sum_{\tiny\begin{array}{c}
		j\in (\beta_0)\\\ 1\leq j<\beta_1
\end{array}}a_jx^{j/\beta_0}+\cdots+\sum_{\tiny\begin{array}{c}
		j\in (e_{i-1})\\\beta_{q-1}\leq j<\beta_q
\end{array}}a_jx^{j/\beta_0}.
\]

The germs \(C_q\) have the property that their intersection multiplicity with the curve \(C\) is exactly the \(q\)--th generator of the semigroup of values \mbox{\(\overline{\beta}_q:=[C_q,C]\)}. More generally, any \(\varphi\in\mathbb{C}\{x,y\}\) satisfying \([f,\varphi]=\obeta_q\) will be called a maximal contact element of genus \(q-1\) with \(f\).
\medskip

Consider now a plane curve \(C\) with \(r\) branches; for each \(i\in\ind\) let us denote by \(\{\beta^{i}_0,\dots,\beta^{i}_{g_i}\}\) the Puiseux exponents and \(\{\obeta^{i}_0,\dots,\obeta^{i}_{g_i}\}\) the maximal contact values of a branch \(C_i.\) Let us denote by \(\mathcal{B}^{i}_{n}\)  the set of curves having maximal contact of genus \(n\) with \(f_i\). We will say that \(\varphi\in\mathbb{C}\{x,y\}\) has maximal contact of genus \(n\) with \(f\) if the following two assertions hold:
\begin{itemize}
    \item[$\diamond$] The set \(J_\varphi=\{i\in\ind\;:\;\varphi\in\mathcal{B}^{i}_n\}\) is non empty. 
    \item[$\diamond$] \(J_\varphi\) is maximal for the inclusion ordering, i.e. there exists no branch \(\phi\) such that \(J_\varphi\subsetneq J_\phi.\)
\end{itemize}
The maximal contact values of a plane curve with \(r\) branches can be explicitly computed from the maximal contact values of each of its branches and their intersection multiplicities. For each \(n\in\mathbb{N}\) we set \(\mathcal{A}_n=\{\mathcal{B}^{i}_{n}\;:\;i\in\ind\}\) with the inclusion ordering. Define  
\[
\mathcal{M}_n=\{\jnd\subseteq \ind\;:\;\forall i,j\in\jnd\quad\mathcal{B}^{i}_{n}=\mathcal{B}^{j}_{n}\quad\text{and}\quad \mathcal{B}^{i}_{n}\quad\text{is minimal in}\;\mathcal{A}_n\};
\]
as in \cite[(3.7)]{Delgmanuscripta1}, given \(K\in\mathcal{M}_n\), for $\varphi\in \mathcal{B}^{K}_{n}$ we check that
\begin{equation}\label{eqn:maximalcontactvalues}
    v_j(\varphi)=\left\{\begin{array}{cl}
    \obeta^{j}_{n+1} &\text{if}\;j\in K \\[.3cm]
     \frac{[f_i,f_j]}{e^{i}_n}& \text{if}\;j\notin K\ \ \text{and}\ \ i\in K.
     
\end{array}\right.
\end{equation}

Write \(B_0=(\obeta^{1}_{0},\dots,\obeta^{r}_{0})\); the set of maximal contact values is
\begin{equation}\label{eq:b2}
\overline{\mathcal{B}}=\big \{\vu(\varphi)\;:\;\varphi\in\mathcal{B}^{\jnd}_n,\ \mbox{for} \ \jnd\in\mathcal{M}_n,\ \ n\in\mathbb{N}\big \}\cup \{B_0\}.
\end{equation}
With the above notation, the branches \(f_i\) of \(f\) appear as curves with maximal contact of genus \(g_i\) with \(f\), and obviously \(\vu(f_i)\notin \Gamma\) since \(v_i(f_i)=\infty\). Usually, we will use the maximal contact elements that are finite, i.e. 
\begin{equation}\label{eq:b}
\mathcal{B}=\left(\{\vu(\varphi)\;:\;\varphi\in\mathcal{B}^{\jnd}_n,\  \mbox{for}  \ \jnd\in\mathcal{M}_n,\ \ n\in\mathbb{N}\}\cap \Gamma\right)\cup \{B_0\}.
\end{equation}

The following result (\cite[(3.18)]{Delgmanuscripta1}) provides the announced identification of the maximal contact values and the irreducible absolute maximals of the semigroup of values.
\begin{theorem}[Delgado]
\(\gamma\in\Gamma\) is irreducible absolute maximal if and only if \(\gamma\) is a maximal contact value.
\end{theorem}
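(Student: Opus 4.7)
The plan is to split the biconditional into its two implications, tackling the easier direction first. For $(\Leftarrow)$, suppose $\gamma = \vu(\varphi)$ where $\varphi$ is a maximal contact element of genus $n$ attached to some index subset $K \in \mathcal{M}_n$; the coordinates of $\gamma$ are then explicitly given by the formulas in \eqref{eqn:maximalcontactvalues}. I would first check absolute maximality: assuming for contradiction that some proper nonempty $\jnd \subset \ind$ gives $\Delta_\jnd(\gamma) \neq \emptyset$, there exists $\psi \in \mathcal{O}$ with $v_j(\psi) = \gamma_j$ for $j \in \jnd$ and $v_k(\psi) > \gamma_k$ for $k \notin \jnd$. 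Working in the extended semigroup $\widehat{\Gamma}$, one can form a suitable combination $\varphi + c \psi$ with $c \in \mathbb{C}^\ast$ chosen to cancel the leading coefficient in the fibre $F_{\gamma}$ on the indices of $\jnd$, yielding a germ whose contact with $f$ is strictly finer than that of $\varphi$ on the branches in $\jnd$ while preserving it on $\ind \setminus \jnd$. This contradicts the maximality assertion in the two bullet conditions defining maximal contact.

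Next I would establish irreducibility. Suppose $\gamma = \alpha + \beta$ with $\alpha, \beta \in \Gamma \setminus \{0\}$. Projecting to the branches of $K$, property \eqref{prop1:irreducsemigroup} of each semigroup $\Gamma^{(j)}$ tells us that $\gamma_j = \obeta^{j}_{n+1}$ sits strictly above $n_n^{j} \obeta^{j}_n$, and property \eqref{prop3:irreducsemigroup} forces uniqueness of its normal form; this means a decomposition must concentrate the genus-$(n{+}1)$ contribution entirely on one summand, so (up to swapping) the projection $\pr_K(\beta) = 0$. But an element of $\Gamma$ whose projection to $K$ vanishes is constrained, via Noether's formula \ref{noehterformula}, to have coordinates on $\ind \setminus K$ strictly smaller than the contact thresholds $[f_i, f_j]/e^{i}_n$ appearing in \eqref{eqn:maximalcontactvalues}; these cannot combine with the coordinates of $\alpha$ to reproduce $\gamma$, whence the contradiction.

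For $(\Rightarrow)$, I would argue by induction on the number of branches $r$. The base case $r = 1$ is classical: the irreducible absolute maximals of the numerical semigroup $\Gamma(C)$ are precisely the minimal generators $\obeta_0, \dots, \obeta_g$, which are exactly the maximal contact values. For $r > 1$, given an irreducible absolute maximal $\gamma$, absolute maximality applied to all proper nonempty $\jnd \subsetneq \ind$ means each projection $\pr_\jnd(\gamma)$ is an absolute maximal of $\Gamma(C_\jnd)$; the inductive hypothesis then realizes $\pr_\jnd(\gamma)$ as the value of a maximal contact element for the subcurve $C_\jnd$. Comparing these partial realizations across all $\jnd$ and glueing them (up to invertible change of coordinates as in \cite[Remark 2]{CDGextended}) produces a Puiseux truncation $\varphi$ of maximal contact genus $n$ for a distinguished subset $K \in \mathcal{M}_n$ with $\vu(\varphi) = \gamma$. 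The main obstacle is precisely identifying the genus $n$ and the subset $K$ canonically from the numerical datum $\gamma$ alone, and verifying that the naive glueing of inductively-produced germs survives the maximality condition of $K$ in $\mathcal{A}_n$; this is where irreducibility of $\gamma$ (as opposed to mere absolute maximality) becomes indispensable, for it rules out the possibility that $\gamma$ is a sum of two maximal contact values of lower genera, and hence singles out a unique genus $n$ and index set $K$ realizing it.
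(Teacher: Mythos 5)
The paper imports this statement as a theorem of Delgado, citing \cite[(3.18)]{Delgmanuscripta1} without reproducing a proof, so there is no in-paper argument to compare against; evaluated on its own terms, your sketch has genuine gaps in both directions. In $(\Leftarrow)$, the absolute-maximality argument tries to kill the leading coefficients on all indices of $\jnd$ by forming $\varphi+c\psi$ with a single $c\in\mathbb{C}^\ast$. Each $j\in\jnd$ pins down $c=-a_j(\varphi)/a_j(\psi)$, and these prescriptions generically conflict whenever $|\jnd|\ge 2$ --- precisely the regime distinguishing absolute from ordinary maximality --- so the cancellation cannot be performed simultaneously; nor is it explained why such a cancellation, even if achievable, would contradict the maximality of $J_\varphi$ in $\mathcal{A}_n$. (Your irreducibility argument, by contrast, is sound but over-engineered: every nonzero element of $\Gamma$ has all coordinates $\ge 1$, so for any $j\in K$ the minimal-generator property of $\gamma_j=\obeta^{j}_{n+1}$ in $\Gamma^{(j)}$ already rules out a nontrivial splitting, with no need for Noether's formula or normal forms.)

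In $(\Rightarrow)$, the pivotal inductive claim --- that absolute maximality of $\gamma$ forces each $\pr_\jnd(\gamma)$ to be absolute maximal in $\Gamma(C_\jnd)$ --- does not follow as stated. An element witnessing nonemptiness of the relevant $\Delta$-set inside $\Gamma(C_\jnd)$ lifts to some $\beta\in\Gamma(C)$ whose coordinates on $\ind\setminus\jnd$ are uncontrolled, so $\beta$ need not lie in $\Delta_{\jnd'}(\gamma)$ and no contradiction with the hypothesis on $\gamma$ materializes. Beyond that, you yourself flag the remaining work --- canonically identifying the genus $n$ and the subset $K\in\mathcal{M}_n$ from the datum $\gamma$ alone, and showing that the glueing of the inductively produced germs survives the maximality condition in $\mathcal{A}_n$ --- as ``the main obstacle'' and leave it unresolved. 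That step is precisely the heart of Delgado's structure theory; as written, your argument is a plausible plan rather than a proof.
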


If the curve has only one branch, an irreducible absolute maximal element is nothing but a minimal generator of the semigroup of values. However, the generation of the semigroup of a curve with several branches from the irreducible absolute maximal elements is more subtle and one needs to exploit all the combinatorial properties of the semigroup (see \cite{Delgmanuscripta1} and \cite{CDGlondon}).

\subsubsection{The contact pair} 
According to \cite[(1.1.3),(1.1.4)]{Delgadoari}, the contact pair $(f\mid f')=(q,c)$ of two (irreducible) branches $f$ and $f'$ can be defined as follows. Let $\obeta'_1,\ldots , \obeta'_{g'}$ resp. $e'_0,\ldots , e'_{g'}$ be the maximal contact values resp. the $e$-sequence associated to the branch $f'$. Let $t$ be the minimum integer such that
\[
[f,f']\leq \mathrm{min}\big \{e'_t \obeta_{t+1},e_t \obeta'_{t+1} \big\}=p(t),
\]
setting $\obeta_{g+1}=\obeta'_{g'+1}=\infty$ and \(e_{-1}=e'_{-1}=0\) if necessary. Write $\ell_t$ resp. $\ell'_t$ for the integer part of $(\obeta_{t+1} - n_t\obeta_t)/e_t$ resp. of $(\obeta'_{t+1} - n'_t\obeta'_t)/e'_t$. We distinguish three complementary  cases:
\begin{itemize}
\item[$\diamond$] If $[f,f'] < p(t)$, then there exists an integer $c$ with $0 < c \leq \mathrm{min}(\ell_t,\ell'_t)$ such that
$$
[f,f']= e'_{t-1}\obeta_t + ce_te'_{t} = e_{t-1}\obeta'_t + ce_te'_t.
$$
In this case $q = t$ and $(f\mid f') = (t, c)$.
\item[$\diamond$] If $[f,f']=p(t)$ and $e'_t\obeta_{t+1}\neq e_t\obeta'_{t+1}$, then $(q,c)=(t,\mathrm{min}\{\ell_t+1,\ell'_t+1\})$. Recall that, if $[f, f'] = e'_t\obeta_{t+1} < e_t\obeta'_{t+1}$, then $\ell_t < \ell'_t$ and, if $\ell_q < \ell'_q$ then $[f,f'] = e'_t\obeta_{t+1} < e_t\obeta'_{t+1}$.
\item[$\diamond$] Finally, if $[f,f']=p(t)$ and $e'_t\obeta_{t+1}= e_t\obeta'_{t+1}$, then $(q,c)=(t+1,0)$.
\end{itemize}
\medskip

Similarly, if we have $r$ branches $f_1,\ldots, f_r$, then the contact pair of these is defined to be

$$
(f_1\mid f_2 \mid \cdots \mid f_r)=\mathrm{min}\{(f_i\mid f_j) : i \neq j \ \mbox{with} \ i,j\in \mathtt{I}\},
$$
where the minimum is understood to be with respect to the lexicographic ordering in $\mathbb{N}^2$. Obviously, for several branches this means that the Puiseux developments satisfy the previous conditions.
\begin{rem}
The contact pair measures the number of free infinitely near points shared by the branches. Recall that the coefficients of the Puiseux expansion can be seen as the projective coordinates of the free points of the branch (see \cite[Proposition 5.7.1]{casas}).
\end{rem}

\subsubsection{Maximal contact elements in terms of the dual graph}\label{subsec:maximalcontactdualgrapha}

For a point $P\in G(C)$, a curvette at $P$ is defined to be a smooth curve germ $\theta_P$ in the resolution space, transversal to the irreducible component \(E_P\) of the exceptional divisor in a regular point of the exceptional divisor $E$. If $\theta_P$ is given by an element $\varphi\in \mathbb{C}[\![X,Y]\!]$, and $\tilde{\varphi}$ stands for the strict transform of $\varphi$ by $\pi$, we say that $\varphi$ meets a subset $G$ of $G(C)$ if $\tilde{\varphi} \cap E_P$ is a regular point of $E$ for some $P\in G$; furthermore, if $\tilde{\varphi}$ meets $P\in G(C)$ and $\tilde{\varphi}$ is smooth, then we say that $\varphi$ becomes a curvette at $P$. 
\medskip

Since the dual graph $G(C_i)$ of a branch $C_i$ is known to have $g_i$ dead arcs, we write
\[
\mathcal{D}(C_i)=\{L_1^i,\ldots , L_{g_i}^i\},
\]

were the dead arcs are supposed to be ordered as $w(P(L_1^i))<w(P(L_2^i))<\cdots < w(P(L_{g_i}^i))$, were $w(P)$ is the number of blowing-ups needed to build the divisor $E_P$, for any $P\in G(C)$. Then the maximal contact values of the branches $f_i$ can be interpreted as intersection multiplicities as
\[
[f_i,\pi (\theta_{P(L_{j_i}^i)})]=\obeta_{j_i}^i
\]

for any $i\in \ind$ and $j_i\in \{0,\dots, g_i\}$. This means that $\varphi\in \mathbb{C}[\![X,Y]\!]$ has maximal contact of genus $n-1$ with $f_i$ if and only if $\varphi$ becomes a curvette at the end point $P(L^{i}_{j_i})$ of $L^{i}_{j_i}\in \mathcal{D}(C_i)$. Moreover, $\varphi$ becomes a curvette at the point $\mathbf{1}$ if and only if
\[
[f_i,\varphi]=\obeta_0^i,
\]

for any $i\in \mathtt{I}$. Following the exposition of Delgado \cite[(1.2.1)]{Delgadoari}, it can be shown that the maximal contact values can be realized by curvettes at the end points of $G(C)$ or at $\mathbf{1}$; indeed they can be read off in the dual graph as those elements in the set
\[
\mathcal{B}'=\{\vu\big ( \pi (\theta_{P(L)}) \big ) \in \Gamma : L \in \mathcal{D} \} \cup \{\vu (\pi(\theta_{\mathbf{1}}))\}.
\]

Observe that the points of $G(C)$ whose values are elements in $\mathcal{B}'$ are those in $\mathcal{E}\cup \{\sigma_0\}$. Moreover, they are ---by definition--- elements in the value semigroup $\Gamma$, hence $\mathcal{B}'=\mathcal{B}$ for the set $\mathcal{B}$ of eq.~(\ref{eq:b}).

\subsection{Topological Puiseux series}\label{subsec:topologicalpuiseux}
In our context, we are interested in the topological equivalence of plane curves. This equivalence relation can be described in terms of the semigroups of the branches and their intersection multiplicity as remarked by Zariski (see also \cite[Proposition 4.3.9]{wall}): \begin{proposition}\cite{zarsaturationII}\label{prop:equisingularity}
Let \(C,C'\) be curve singularities. They are equisingular if and only if the following conditions holds:
\begin{enumerate}
    \item There is a bijection \(C_i\leftrightarrow C_i'\) between the branches of \(C\) and \(C'\) such that \(\Gamma(C_i)=\Gamma(C_i')\), and
    \item For all \(i,j\in \ind\), we have  \([C_i,C_j]=[C_i',C_j']\) (if $\ind$ indexes the number of branches).
\end{enumerate}
\end{proposition}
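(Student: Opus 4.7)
The plan is to prove both implications by reducing everything to the level of the dual resolution graph $G(C)$, which is well known to classify the topological (equisingular) type of a reduced plane curve. Recall that equisingularity of $C$ and $C'$ amounts to an isomorphism of dual graphs $G(C)\cong G(C')$ respecting the labelling by arrows (branches) and by multiplicities of the exceptional components.

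For the forward implication, assume $C$ and $C'$ are equisingular. The dual graph isomorphism induces a bijection between branches; call the paired branches $C_i\leftrightarrow C_i'$. Restricting $G(C)$ to the subgraph carrying the arrow of $C_i$ yields $G(C_i)$, and likewise for $C_i'$, so $G(C_i)\cong G(C_i')$. By the description of the maximal contact values as valuations of curvettes at the end points and at $\mathbf 1$ (Subsection \ref{subsec:maximalcontactdualgrapha}), the set $\mathcal B'$, and hence the minimal generators $\overline\beta_0^i,\ldots,\overline\beta_{g_i}^i$ of $\Gamma(C_i)$, depend only on $G(C_i)$; therefore $\Gamma(C_i)=\Gamma(C_i')$. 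For condition (2), Noether's formula (Proposition~\ref{noehterformula}) yields
\[
[C_i,C_j]_0=\sum_{p\in\mathcal N_0(C_i)\cap\mathcal N_0(C_j)} e_p(C_i)\,e_p(C_j),
\]
and the set of shared infinitely near points together with the multiplicities $e_p$ is an invariant of the dual graph; so $[C_i,C_j]=[C_i',C_j']$.

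For the converse, assume (1) and (2). First reduce to the irreducible case: since $\Gamma(C_i)=\Gamma(C_i')$ and, for a single branch, the value semigroup is equivalent to the Puiseux characteristic (as follows from the description of $\Gamma(C_i)$ via the $\overline\beta_q^i$ in Subsection~\ref{sec:semigroupvalues} and the classical inversion recovering the Newton/Puiseux pairs from $\overline\beta_0^i,\ldots,\overline\beta_{g_i}^i$), each branch $C_i$ is equisingular to $C_i'$, that is, $G(C_i)\cong G(C_i')$. It remains to reconstruct the way in which the subgraphs $G(C_i)$ glue together to form $G(C)$; equivalently, for every pair $i\neq j$ one must identify the separation point between $C_i$ and $C_j$, and this is exactly the information carried by the contact pair $(f_i\mid f_j)$. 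The key observation is that, once the semigroups $\Gamma(C_i)$ and $\Gamma(C_j)$ (and hence the $\overline\beta_t^i,\overline\beta_t^j$ and the $e$-sequences) are fixed, the explicit formulas recalled above for the contact pair show that $(f_i\mid f_j)$ is determined by the single number $[C_i,C_j]$ together with the two sets of maximal contact values. Applying this to every pair and using (1)--(2), one concludes that the separation structure in $G(C)$ matches that in $G(C')$, so that the local graphs $G(C_i)\cong G(C_i')$ glue into an isomorphism $G(C)\cong G(C')$.

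The main obstacle is the converse, and within it the step that translates the numerical coincidence $[C_i,C_j]=[C_i',C_j']$ into the equality of contact pairs; the three-case distinction in the definition of $(f\mid f')$ must be checked to be intrinsic once the two semigroups are fixed. Once this is secured, the gluing of the subgraphs $G(C_i)$ along the separation points is forced, and the equisingularity of $C$ and $C'$ follows. The irreducible case, which underpins everything, is Zariski's classical theorem and will be quoted rather than reproved.
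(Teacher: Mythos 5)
The paper does not actually prove this proposition: it is cited directly from Zariski's saturation paper \cite{zarsaturationII}, with no argument supplied in the text (the statement is followed immediately by its application). So there is strictly speaking no ``paper's own proof'' to compare against, and your proposal should be judged as a standalone reconstruction.

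As such, your dual-graph route is reasonable, but it has two thin spots. First, you open by declaring that equisingularity ``amounts to an isomorphism of weighted dual graphs respecting arrows and multiplicities''; that is itself a nontrivial theorem (one of several equivalent characterizations of Zariski equisingularity), and if you take it as your starting definition you should say so explicitly, since otherwise the forward implication borders on circular. Second, in the converse you correctly identify that the crux is to show the contact pair $(f_i\mid f_j)$ is determined by $[C_i,C_j]$ together with the two branch semigroups. Your deferred check does in fact go through: every quantity entering the three-case definition ($p(t)$, $\ell_t$, $\ell_t'$, the $e$-sequences, the $\overline\beta$'s) is computed from $\Gamma(C_i)$, $\Gamma(C_j)$ and $[C_i,C_j]$, and the cases are mutually exclusive and exhaustive. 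But even granting this, your argument still leaves a gap: knowing all pairwise contact pairs (equivalently, all pairwise separation points and their positions on the individual $G(C_i)$) does not, by itself, give the global tree $G(C)$ without an additional tree-compatibility argument. One must observe that the separation data satisfies the appropriate ultrametric-type consistency (for any triple $i,j,k$, the two smallest of the three separation points coincide), which is what forces the $G(C_i)$ to glue into a single tree in a unique way. This is true and not hard, but it is the step your proposal elides entirely by saying the separation structure ``matches''. Stating and verifying that consistency would close the argument.
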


Proposition \ref{prop:equisingularity} yields a model of the Puiseux series to work with in terms of topological equiva\-lence. In the case of (plane) branches, Proposition \ref{prop:equisingularity} implies that two branches are topologically equivalent if and only if their semigroups are equal; in particular, this means that they have the same Puiseux characteristics. Thus, if \(B\) is a branch with Puiseux characteristics \(\{\beta_0,\dots,\beta_g\}\), then it is topologically equivalent to a branch with Puiseux series 
\begin{equation}\label{eq:toppuiseuxseriesirr}
    s(x)=\sum_{i=1}^{g}a_i x^{\beta_i/\beta_0}.
\end{equation}
The Puiseux series of eq.~ \eqref{eq:toppuiseuxseriesirr} will be called the topological Puiseux series of the branch \(B\); this obviously provides a relation between the Puiseux pairs and the Puiseux characteristics:
\[(p_{1},m_{1})=\bigg(\frac{e_0}{e_1},\frac{\beta_1}{e_1}\bigg),\;(p_{2},m_{2})=\bigg(\frac{e_1}{e_2},\frac{\beta_2}{e_2}\bigg),\dots,(p_{g},m_{g})=(e_{g-1},\beta_g).
\]

If the curve is irreducible, the terms in the topological Puiseux series can be described in terms of the star points in the geodesic joining \(\mathbf{1}\) with the unique arrow in the graph, so that all the star points correspond with curves of maximal contact. For non-irreducible curves, we need to deal with the intersection multiplicities between branches in order to construct a topological Puiseux series associated to the equisingularity type of the curve.
\medskip

We finish this section showing how to attach the topological Puiseux series to a plane curve, following the idea of the irreducible case. First of all, we need to characterize the contact pair in terms of the Puiseux series of the branches.

\begin{proposition}\label{prop:contactPuiseux}
Let \(C_1,C_2\) be two branches with contact pair \((q,c).\) Denote by \([s_i(x)]_{<k}\) the truncation of the Puiseux series of a branch up to order \(k-1\), and write \(\ell^{i}\) for the integer part of \((\obeta^{i}_q-n^{i}_{q-1}\obeta_{q-1}^{i})/e^{i}_{q-1}\), for \(i=1,2\). Consider also

\[k_i=\left\{\begin{array}{cc}
    \frac{\beta_q^i+(c-1)e_q^i}{\beta_0^i} & \text{if}\;c\neq 0 \\[.3cm]
    \frac{\beta_{q-1}^{i}+\ell^{i}e_{q-1}^{i}}{\beta_0^i}  & \text{if}\;c=0,
\end{array}\right. \]

for \(i=1,2\). Then, 
\[
[s_1(x)]_{<k_1}=[s_2(x)]_{<k_2},
\]

and the series \(s_1,s_2\) differ at the term \((\beta_q^i+ce_q^i)/\beta_0^i\); in fact this is the first term in which they differ. Conversely, if two Puiseux series satisfy these conditions, then the corresponding branches have contact \((q,c)\).

\end{proposition}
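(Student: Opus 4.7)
The plan is to reduce everything to the standard Halphen–Zariski formula for the intersection multiplicity of two branches via their Puiseux parametrizations: if $C_i$ admits the parametrization $(t^{\beta_0^i}, s_i(t^{\beta_0^i}))$, and if $s_1^{(\xi)}(x) := s_1(\xi x^{1/\beta_0^1})^{\beta_0^1}$ ranges over the $\beta_0^1$ Galois conjugates of $s_1$ (one for each $\beta_0^1$-th root of unity $\xi$), then
\[
[f_1,f_2] \;=\; \sum_{\xi}\operatorname{ord}_t\!\Bigl(s_2(t^{\beta_0^2})-s_1^{(\xi)}(t^{\beta_0^2})\Bigr).
\]
The claim then becomes a matter of reading off this order from the truncation structure of $s_1$ and $s_2$, and comparing with the closed-form expressions $e'_{t-1}\obeta_t+ce_te'_t$, $p(t)$ given in the definition of the contact pair that immediately precedes the statement.

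The first step will be translation: rewrite the candidate cut-off exponents $k_i$ using the recursion $\obeta_{j+1}=n_j\obeta_j+\beta_{j+1}-\beta_j$ together with $e_{j-1}=n_je_j$, so that in the case $c\neq 0$ one sees that $\beta_0^i\, k_i = \beta_q^i+(c-1)e_q^i$ is exactly one $e_q^i$-step below the first disagreement exponent $\beta_q^i+ce_q^i$, and in the case $c=0$ that $\beta_0^i\,k_i=\beta_{q-1}^i+\ell^i e_{q-1}^i$ is the last exponent in the $(q-1)$-st stratum (so the first disagreement is forced to occur at $\beta_q^i$ itself, i.e.\ with two distinct characteristic exponents at level $q$). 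This makes the numerology compatible with the three cases of the definition of $(q,c)$.

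The second step is the combinatorial core: among the $\beta_0^1$ conjugates $s_1^{(\xi)}$, exactly $\beta_0^1/e_j^1$ of them share the first $j$ characteristic summands with $s_1$, while at each jump from stratum $j-1$ to stratum $j$ precisely $\beta_0^1/e_{j-1}^1-\beta_0^1/e_j^1$ of them first differ from $s_1$ at the exponent $\beta_j^1/\beta_0^1$. Since $s_2$, by assumption, shares the same initial segment with $s_1$ up to $k_i-1$, the order of $s_2(t^{\beta_0^2})-s_1^{(\xi)}(t^{\beta_0^2})$ is governed for each $\xi$ by the minimum of the exponent where $s_1^{(\xi)}$ branches off $s_1$ and the exponent $(\beta_q^i+ce_q^i)/\beta_0^i$ where $s_1$ branches off $s_2$. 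Carrying out the stratified sum gives, after multiplication by $\beta_0^2$, exactly $e'_{t-1}\obeta_t+ce_te'_t$ in the case $c\neq 0$, and exactly $p(t)$ (discriminating the subcases $e'_t\obeta_{t+1}\neq e_t\obeta'_{t+1}$ vs.\ $=$) in the case $c=0$. This proves the direct implication.

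For the converse, the same stratified computation shows that the map from the first-disagreement position of $(s_1,s_2)$ to the value $[f_1,f_2]$ is strictly monotone within each stratum and across strata, so the Puiseux-level position is uniquely determined by the contact pair; hence the assumed contact pair $(q,c)$ forces the described coincidence/disagreement pattern. The main obstacle is the careful bookkeeping in the stratified sum of Step 2, especially at the boundary case $c=0$, where the coincidence extends through the very end of the $(q-1)$-st stratum and the first disagreement lies at a genuine characteristic exponent of both branches; one must check that the borderline equality $e'_t\obeta_{t+1}=e_t\obeta'_{t+1}$ in the third case of the definition corresponds precisely to the two Puiseux characteristic exponents at level $q$ being equal, which is what forces $(q,c)=(t+1,0)$ rather than $(t,\ell_t+1)$.
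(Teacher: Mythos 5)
Your proposal is correct in outline, but it follows a genuinely different route from the paper's one-line proof. The paper converts the intersection multiplicity into the sum over shared infinitely-near points via Noether's formula (Proposition~\ref{noehterformula}), and then cites the classical identification of free infinitely-near points with Puiseux coefficients (cf.~Casas, Sec.~5.7) to translate the definition of the contact pair into a statement about shared initial segments of the Puiseux series. You instead work entirely on the parametrization side, using the Halphen/Zariski formula $[f_1,f_2]=\sum_{\xi}\operatorname{ord}_t\bigl(s_2(t^{\beta_0^2})-s_1^{(\xi)}(t^{\beta_0^2})\bigr)$ and the stratified count of conjugates ($\beta_0^1/e_{j-1}^1-\beta_0^1/e_j^1$ conjugates branching off at $\beta_j^1/\beta_0^1$); your monotonicity argument for the converse is the parametrization-side analogue of the bijectivity that Noether's formula gives automatically. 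Both are standard and equivalent; the paper's choice is shorter only because Noether's formula has already been set up, whereas yours is more self-contained and makes the ``first disagreement exponent $\mapsto$ intersection multiplicity'' dictionary explicit. One subtlety you should spell out rather than leave implicit: the order of $s_2(t^{\beta_0^2})-s_1^{(\xi)}(t^{\beta_0^2})$ is the \emph{minimum} of the two branch-off exponents only when those exponents differ; when they coincide one must rule out cancellation between the leading coefficient of $s_2-s_1$ and the root-of-unity twist in $s_1^{(\xi)}-s_1$, or else observe that any such cancellation would only \emph{increase} the intersection multiplicity, pushing $(q,c)$ upward, which your monotonicity argument already handles once noted. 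The borderline case $e'_t\obeta_{t+1}=e_t\obeta'_{t+1}$ (forcing $(q,c)=(t+1,0)$) that you flag is indeed exactly where this care is needed.
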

\begin{proof}
%Let us write Puiseux series of \(C_i\) as
%\[
%s_i(x)=\sum_{\tiny\begin{array}{c}
%		j\in (\beta^i_0)\\1\leq j<\beta^i_1
%\end{array}}a_jx^{j/\beta^i_0}+\cdots+\sum_{\tiny\begin{array}{c}
%		j\in (e^i_{k-1})\\\beta^i_{k-1}\leq j<\beta^i_k
%\end{array}}a_jx^{j/\beta^i_0}+\cdots+\sum_{\tiny\begin{array}{c}
%		j\in \mathbb{Z}\\\beta^i_{g_i}\leq j
%\end{array}}a_jx^{j/\beta^i_0}.
%\]
By definition, if \(C_1,C_2\) have contact pair \((q,c)\), then their intersection multiplicity is
$[f_1,f_2]=e^{2}_{q-1}\obeta^{1}_{q}+ce^{1}_qe^{2}_q$. A straighforward application of Noether's formula \ref{noehterformula} (see also \cite[Sec. 5.7]{casas} allows us to conclude.
\end{proof}

Proposition \ref{prop:contactPuiseux} yields a tool to interpret the contact pair in terms of the Puiseux series. Thus, we are ready to present the topological Puiseux series of the branches of a curve.

\begin{theorem}\label{thm:topologicalPuiseuxseries}
Let \(C=\cup_{i=1}^{r} C_i\) be a curve whose topological type is described by the Puiseux exponents \(\{\beta_0^{i},\dots,\beta_{g_i}^{i}\}\) of every branch together with their contact pairs \((q_{i,j},c_{i,j})=(f_i\mid f_j).\) Then, there exists a plane curve singularity \(C'=\cup_{i=1}^{r} C_i'\) which is topologically equivalent to \(C\) such that the Puiseux series of each branch is

\[
s'_i(x)=\sum_{k=1}^{g_i}a^{(i)}_kx^{\beta^i_k/\beta^i_0}+\sum_{j\in\ind\setminus\{i\}}b^{(i)}_jx^{(\beta^i_{q_{i,j}}+c_{i,j}e^i_{q_{i,j}})/\beta^i_0},
\]
where \(b_j^{(i)}\neq b_i^{(j)}\) for all \(i,j\) and \(a^{(i)}_k\neq 0\) for all \(i,k.\) 
\end{theorem}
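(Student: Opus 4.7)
The plan is to construct $C'$ branch-by-branch by letting each $C_i'$ be the branch with Puiseux series $s_i'(x)$ exactly of the form displayed in the statement, and then invoke Proposition~\ref{prop:equisingularity} to conclude. Two conditions must therefore be verified: \textbf{(i)} $\Gamma(C_i')=\Gamma(C_i)$ for every $i\in\ind$, and \textbf{(ii)} $[f_i',f_j']=[f_i,f_j]$ for every pair $i\neq j$. The coefficients are chosen as follows: the $a^{(i)}_k$ are nonzero complex numbers subject to the compatibility condition $a^{(i)}_k=a^{(j)}_k$ whenever $k\leq q_{i,j}$, and the $b^{(i)}_j$ are arbitrary complex numbers subject only to $b^{(i)}_j\neq b^{(j)}_i$. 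This compatibility is meaningful because $q_{i,j}$ measures the number of Puiseux characteristic exponents shared by $C_i$ and $C_j$, so that $\beta^i_0=\beta^j_0$, $\beta^i_k=\beta^j_k$ and $e^i_k=e^j_k$ whenever $k\leq q_{i,j}$.

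Condition \textbf{(i)} is verified as follows. Since the semigroup of a branch is determined by its Puiseux characteristic, it suffices to check that $s_i'(x)$ has characteristic $\{\beta^i_0,\dots,\beta^i_{g_i}\}$. The nonzero terms $a^{(i)}_k x^{\beta^i_k/\beta^i_0}$ manifestly exhibit each $\beta^i_k$ as a candidate exponent; and the added $b$-term at $(\beta^i_{q_{i,j}}+c_{i,j}e^i_{q_{i,j}})/\beta^i_0$ cannot introduce any spurious characteristic exponent. Indeed, its numerator is divisible by $e^i_{q_{i,j}}$ by construction, and since the $e$-sequence satisfies $e^i_{g_i}\mid e^i_{g_i-1}\mid\cdots\mid e^i_0$, the numerator is also divisible by $e^i_k$ for every $k\geq q_{i,j}$. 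Consequently this exponent is never the minimal exponent non-divisible by $e^i_k$ for $k\geq q_{i,j}$; for $k<q_{i,j}$ it simply exceeds $\beta^i_{q_{i,j}}$ and so cannot lower the earlier characteristics either.

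For condition \textbf{(ii)}, Proposition~\ref{prop:contactPuiseux} together with Noether's formula~\ref{noehterformula} reduces the task to verifying that $(f_i'\mid f_j')=(q_{i,j},c_{i,j})$ for every pair of branches. By construction, the truncations of $s_i'$ and $s_j'$ agree through the shared $a$-coefficients up to the exponent $\beta^i_{q_{i,j}}/\beta^i_0$, while at the crucial exponent $(\beta^i_{q_{i,j}}+c_{i,j}e^i_{q_{i,j}})/\beta^i_0$ the series $s_i'$ bears the coefficient $b^{(i)}_j$ and $s_j'$ bears $b^{(j)}_i$; the hypothesis $b^{(i)}_j\neq b^{(j)}_i$ then forces the two series to disagree first at precisely this term, which is exactly the criterion of Proposition~\ref{prop:contactPuiseux} for contact pair $(q_{i,j},c_{i,j})$. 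The main obstacle is the bookkeeping for three or more branches: one must check that the constraints $a^{(i)}_k=a^{(j)}_k$ are mutually consistent across all pairs $(i,j)$, which follows from the ultrametric-like inequality $q_{i,k}\geq\min(q_{i,j},q_{j,k})$ (itself a direct consequence of the ``first-disagreement'' definition of the contact pair); and one must verify that several $b$-terms accumulating at coincident exponents of a single $s_i'$ (when multiple $j$ produce the same $(q_{i,j},c_{i,j})$) cause no unintended cancellation, which is guaranteed by choosing the $b^{(i)}_j$ generically.
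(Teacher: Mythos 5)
Your proposal is correct and follows the paper's route: reduce via Proposition~\ref{prop:equisingularity} to matching branch semigroups and intersection multiplicities, then invoke eq.~\eqref{eq:definbetabarra} and Proposition~\ref{prop:contactPuiseux}. Your additional bookkeeping --- the compatibility constraints $a^{(i)}_k=a^{(j)}_k$ for $k\leq q_{i,j}$, the ultrametric inequality $q_{i,k}\geq\min(q_{i,j},q_{j,k})$, and generic choice of $b^{(i)}_j$ to avoid accidental cancellations when exponents coincide --- merely spells out what the paper's proof compresses into ``it follows straightforward.''
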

\begin{proof}
According to Proposition \ref{prop:equisingularity} it would be enough to show that \(\Gamma(C_i)=\Gamma(C_i')\) and that \([f_i,f_j]=[f'_i,f'_j].\) From the expression of \(s'_i(x)\) it follows straightforward that the Puiseux characteristics of \(C_i'\) are identical to the Puiseux characteristics of \(C_i.\) Therefore the equality \(\Gamma(C_i)=\Gamma(C_i')\) follows from the relation between the Puiseux characteristics and the minimal generators of the semigroup (\ref{eq:definbetabarra}). On the other hand, by Proposition \ref{prop:contactPuiseux} we have that \((q'_{i,j},c'_{i,j})=(q_{i,j},c_{i,j})\) from where we deduce the equality \([f_i,f_j]=[f'_i,f'_j]\). 
\end{proof}
%\begin{rem}
%    By definition of the Puiseux exponents, the terms $a_k^{(i)}$ do not vanish for every \(k\) and every \(i\), In addition, since the number of branches is finite, we may choose \(b_j^{(i)}\neq 0\) for all \(j\) and all \(i\) without loss of generality; it might obviously happen that $b^{(i)}_j=a^{(i)}_k$ for some $i,j,k$.  
%\end{rem}

From Theorem \ref{thm:topologicalPuiseuxseries}, we have a one-to-one correspondence between the terms in the topological Puiseux series and the star points in the geodesic to the arrow of the corresponding branch of the dual graph of \(C.\) We will refer to the truncation of the topological Puiseux series at a star point to the truncation up to the term defining the star point (including it); thus we will write the truncations as 

\[
y^{i}_{k}=\sum_{l=1}^{k}a_l^{(i)}x^{\beta^{i}_{l}/\beta^i_0}+\sum_{\tiny \begin{array}{c} 
j\in\ind\setminus\{i\}\\
q_{i,j}\leq k\\
\end{array}}b^{(i)}_jx^{(\beta^i_{q_{i,j}}+c_{i,j}e^i_{q_{i,j}})/\beta^i_0}.
\]

In the topological Puiseux series of a branch \(C_j\) of a non--irreducible plane curve \(C\), two different types of Puiseux pairs appear: on the one hand, there are exactly \(g_j\) Puiseux pairs for which \(p_{i,j}\neq 1\), and for those we have that \[(p_{i_1,j},m_{i_1,j})=\bigg(\frac{e^j_0}{e^j_1},\frac{\beta^j_1}{e^j_1}\bigg),\;(p_{i_2,j},m_{i_2,j})=\bigg(\frac{e^j_1}{e^j_2},\frac{\beta^j_2}{e^j_2}\bigg),\dots,(p_{i_g,j},m_{i_g,j})=(e^j_{g-1},\beta^j_g).
\]

On the other hand, if two branches separate at a free point, then there are  Puiseux pairs with \(p_{i,j}=1\) satisfying the following: 

\begin{align*}
    (p_{k,j},m_{k,j})=   & (1,m_{k,j}) \  \mbox{with}\;m_{k,j}\in \Bigg[1,\bigg\lfloor\frac{\beta^j_1}{\beta^j_0}\bigg\rfloor \Bigg]\cap\mathbb{N}\; \ \mbox{for all} \ k<i_1.\\[5pt]
   (p_{k,j},m_{k,j})=& \Big(1,\frac{\beta^j_{\ell-1}}{e^j_{\ell-1}}+c\Big)\;\text{with}\;c\in \Bigg[1,\bigg\lfloor\frac{\beta^j_{\ell}-\beta^j_{\ell-1}}{e^j_{\ell-1}}\bigg\rfloor\Bigg]\cap\mathbb{N}\\
    &\mbox{for all} \ k<i_{\ell} \;\mbox{and}\ \ell=2,\dots,g_j+1.
\end{align*}

\begin{rem}
    The topological Puiseux series of the branches is a canonical representative of the topological class of \(C\). Since we are interested only in topological aspects, we will assume from now on that the Puiseux series of the branches of \(C\) are topological Puiseux series.
\end{rem}

\begin{rem}
    In \cite{LeCarousels} Lé provides a way to define characteristic exponents for a Puiseux development of a non-irreducible plane curve. This allows to treat the Puiseux series of each branch as a ``single parametrization". In that case, one needs to consider a Puiseux parametrization of the form \((t^k,s(t^k))\) with \(k=\obeta_0^1\cdots\obeta_0^{r}.\) However, the ordering in the set of branches defined by Lé is inverse to our ordering. We will provide further explanation of this fact in Section \ref{sec:topology}.
\end{rem}

\subsection{Values at the proper star points}\label{subsec:valuesproperstar}
As we have seen in the previous section, the set of maximal contact curves is enough to determine the equisingularity class of a branch, as it determines the minimal generators of the semigroup. In contrast to this case, we need further information to determine the equisingularity class of a non--irreducible curve; more precisely, we need to consider the values at the proper star points of the dual graph.
\medskip

In the same spirit as done with the maximal contact values, we would like to remark that the values of the proper star points have also a geometrical interpretation. For \(i,j\in\ind\), let $\Gamma_i$ resp.~$\Gamma_j$ be the geodesic in \(G(C)\) joining the origin with the arrow corresponding to  \(f_i\) resp.~\(f_j.\) The point \(R\in\Gamma_i\cap\Gamma_j\) with maximal weight in \(\Gamma_i \cap \Gamma_j\) is called a \textit{separation point}; then any proper star point is in fact a separation point. %On the other hand, by the very definition, a proper star point is a separation point since the resolution process shows that an infinitely near point which is not a separation point cannot give rise to a vertex of valency bigger than three and the only vertices with valence three belong to dead arcs. In conclusion, the values at the proper star points contain information about the intersection multiplicity between the different branches. 
\medskip

According to the definition, we can distinguish among three different types of proper star points: 
\begin{enumerate}
    \item\label{valuepropertype1} \(R\) is the star point associated to a \(q+1\)--th dead arc of \(G(C_i)\); in this case \([f_i,f_j]=e^j_q\obeta^i_{q+1}\leq e^i_q\obeta^j_{q+1}.\) 
    \item\label{valuepropertype2} \(R\) is an ordinary point on \(G(C_i)\) and \(G(C_j)\); in this case \([f_i,f_j]=e^i_q\obeta^j_{q+1}+ce^i_qe^j_q\) with \(c>0.\) 
    \item \label{valuepropertype3}\(R\) is an ordinary point on \(G(C_i)\) and a dead end in \(G(C_j)\); in this case \([f_i,f_j]=e^j_q\obeta^i_{q+1},\) \(c_{i,j}=\min\{l_q+1,l'_q+1\}\).
\end{enumerate}

The analysis of each of these three cases will provide the values at the proper star points we are looking for. For a start, assume we are in case \eqref{valuepropertype1}, and let \(J_1\subset \ind\) be such that for all \(i,i'\in J_1\) we have \((q_{i,i'},c_{i,i'})\geq (q+1,0)\) and for any \(j\notin J_1\) we have \((q_{i,j},c_{i,j})\leq (q+1,0).\) Then, Noether's formula \ref{noehterformula} (see also \cite[Sect.~2.2]{Delgadoari} yields

\begin{equation}\label{eqn:valueproperspecial}
    \pr_i(\underline{v}^R)=\left\{\begin{array}{cl}
         n_{q+1}^{i}\obeta^{i}_{q+1}& \text{if}\;i\in J_1  \\[.3cm]
         \frac{[f_i,f_j]}{e_{q+1}^{j}}& \text{if}\;i\notin J_1 \ \ \text{and}\ \ j\in J_1
    \end{array}\right.
\end{equation}

Next we assume the case (2); let \(J_1\subset \ind\) be such that for all \(i,i'\in J_1\) we have \((q_{i,i'},c_{i,i'})\geq (q,c)\) and for any \(j\notin J_1\) we have \((q_{i,j},c_{i,j})\leq (q,c).\) Again Noether's formula \ref{noehterformula} (see also \cite[Sect.~2.2]{Delgadoari} yields

\begin{equation}\label{eqn:valueproperordinary}
    \pr_i(\underline{v}^R)=\left\{\begin{array}{cl}
         \frac{[f_i,f_j]}{e_{q}^{i}}& \text{if}\;i,j\in J_1  \\[.3cm]
         \frac{[f_i,f_j]}{e_{q}^{j}}& \text{if}\;i\notin J_1 \ \ \text{and}\ \  j\in J_1
    \end{array}\right.
\end{equation}

Finally, observe that the case \eqref{valuepropertype3} can be treated in the same manner as the case \eqref{valuepropertype2}, since by Noether's formula, the intersection multiplicity for the branches in \(J_1\) will coincide with \(e^j_q\obeta^{i}_q.\)
\medskip

In this way, we can define recursively \(D^1,\dots D^{r-1}\in \Gamma\) values such that \(\pr_j(D^i)=\frac{[f_i,f_j]}{e_{q}^{j}}\) for some $i,j,q$; these values are the values attained by curvettes at the proper star points of \(G(C)\) and we have, analogously to the case of maximal contact values, that
\[
\{D^1,\dots,D^{r-1}\}=\{\vu\big ( \pi (\theta_{P(L)}) \big ) \in \Gamma : L \in \mathcal{R}\}.
\]

The set \(\mathcal{B}\cup \{D^1,\dots,D^{r-1}\} \) is called set of principal values (see \cite[Sect.~2.2]{Delgadoari}) and obviously it contains all the necessary information to recover the equisingularity type of the curve. Moreover, Proposition 2.2.6 in \cite{Delgadoari} shows that

\begin{proposition}\label{prop:numbersq}
    For each proper star point \(Q\) the corresponding \(D_Q=\underline{v}(\pi(\theta_Q))\) appears \(s_Q\) times, where \(s_Q=\nu(Q)-3\) if \(Q\) belongs to a dead arc and \(s_Q=\nu(Q)-2\) otherwise.
\end{proposition}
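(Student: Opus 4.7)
The plan is to prove the proposition by a combinatorial argument on the dual graph $G(C)$, so the first task is to unpack the recursive construction of the values $D^1,\dots,D^{r-1}$ sketched in the excerpt. At each proper star point $Q$ the tree branches into several subtrees; those subtrees that contain at least one branch arrow (equivalently, are not dead arcs) represent the disjoint groups of branches that are separated at $Q$. The single value $D_Q$ is attached to $Q$ via Noether's formula as in \eqref{eqn:valueproperspecial}--\eqref{eqn:valueproperordinary}, and I would read the assertion as saying that $D_Q$ enters the multiset $\{D^1,\dots,D^{r-1}\}$ once per ``independent separation event'' at $Q$.

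The second step is to count the branch-containing subtrees at a proper star point $Q$. Of the $\nu(Q)$ edges incident to $Q$, exactly one lies in the root direction (for $Q=\mathbf{1}$ the convention $\nu(\mathbf{1})=\#\{\text{edges incident to }\mathbf{1}\}+1$ keeps the count uniform), and at most one is the initial edge of a dead arc, which carries no branch arrow. Hence the number of distinct branch-containing subtrees emanating outward from $Q$ is $\nu(Q)-1$ if $Q$ has no attached dead arc and $\nu(Q)-2$ if it does. The case $Q=\sigma_0$ is absorbed by the same formula, with the usual refinements depending on whether $\sigma_0$ itself is attached to a dead arc.

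In the third step one observes that splitting a single connected group of branches into $k$ groups requires precisely $k-1$ separation events; equivalently, the number of independent separations occurring at $Q$ is one less than the number of branch-containing subtrees leaving $Q$. Consequently $D_Q$ must be recorded $\nu(Q)-2$ times when $Q$ carries no dead arc and $\nu(Q)-3$ times when it does, which is exactly the formula for $s_Q$ in the statement.

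Finally, the global count $\sum_{Q}s_Q=r-1$ must coincide with the total number of values produced by the recursive construction; this is a standard telescoping identity on trees, reflecting the elementary fact that fully separating $r$ branches into singletons requires exactly $r-1$ splits. The main obstacle is the first step: the excerpt only sketches the recursive definition of the $D^j$'s, so the real content of the proof lies in making the ``separation-event'' interpretation rigorous. I would formalize it by induction on $r$, at each step peeling off one separation at the innermost proper star point and applying the inductive hypothesis to each of the resulting smaller curves; this reduces the count to the single-separation case, where the bookkeeping is transparent. Care must also be taken with the boundary case $Q=\sigma_0=\mathbf{1}$ so that the valence convention delivers the same formula without ad hoc adjustments.
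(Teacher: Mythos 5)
Note first that the paper does not actually prove this proposition: it is imported verbatim from Delgado's work, cited as Proposition~2.2.6 of~\cite{Delgadoari}, so there is no internal proof to compare against. Your combinatorial reading is nonetheless the natural one, and the counting does hang together: at a proper star point $Q$ exactly one incident edge points rootward (or is absorbed by the convention $\nu(\mathbf{1})=\#\{\text{incident edges}\}+1$ when $\mathbf{1}=\sigma_0$), at most one further edge begins a dead arc, and so $\nu(Q)-1$ (resp.\ $\nu(Q)-2$) branch-bearing subtrees emanate outward; splitting one group into $k$ pieces accounts for $k-1$ of the $D^j$'s, so $s_Q=\nu(Q)-2$ or $\nu(Q)-3$, and the telescoping check $\sum_Q s_Q=r-1$ confirms the bookkeeping. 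This is consistent with the worked cases in the paper, e.g.\ the base case of Proposition~\ref{prop:poincarelemm1} where the only proper star point is $\sigma_0$ with $\nu(\sigma_0)=r+1$ and $s_{\sigma_0}=r-1$.

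That said, the gap you flag yourself is genuine and is the whole content of the statement. The multiset $\{D^1,\dots,D^{r-1}\}$ is only sketched in the paper via eqs.~\eqref{eqn:valueproperspecial}--\eqref{eqn:valueproperordinary}, so ``one $D^j$ is recorded per separation event'' is not a premise you can appeal to but rather the assertion to be proved from Delgado's actual recursive construction; without pinning that definition down, the argument identifies the correct count but does not yet verify that the construction realizes it. A secondary unjustified step is the claim that at most one dead arc is attached to any given vertex: this is a standard structural fact about dual resolution graphs of plane curve singularities (each dead end arises from a unique maximal chain of free points over a fixed satellite point, and distinct maximal contact curves at the same vertex would force equal maximal-contact values), but it should be cited or argued rather than asserted. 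Your proposed induction peeling off one separation at the innermost proper star point is exactly the right way to close the first gap, but as written the proof remains a plausible sketch rather than a complete argument.
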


\begin{rem}
    The discussion at the end of Subsection \ref{subsec:dualgraph} shows that \(s_Q=s(Q)-1\) if \(Q\neq \sigma_0\) and \(s_{\sigma_0}=s(\sigma_0)-2\).
\end{rem}

% Observe that by the definition of the equivalence relation, we have the following situation in the dual graph:

%%%%%%%%%%%%%%%%%%%%%%%%%%%%%%%%%%%%%%%%%%%%%%%%%%%%%%%%%%%%%%%%%%
\subsection{A guide tour through the star points}\label{subsec:totalorderstar}
%Let us denote by \(\mathcal{S}:=\mathcal{E}\cup\mathcal{R}\) the set of star points of \(G(C).\) In this part, we introduce a total ordering in \(\mathcal{S}\), which will induce a total ordering in the set of proper star points $\mathcal{R}$. Without loss of generality, we assume that the branches of \(C\) are ordered by the good order of the topological Puiseux series of branches. Since the proper star points mark those terms of the topological Puiseux series in which two series differ, the ordering of the proper star points will allow us to refine the good order in the set of branches by using the contact pair at each proper star point.
Let us denote by \(\mathcal{S}:=\mathcal{E}\cup\mathcal{R}\) the set of star points of \(G(C).\) In this part, we introduce a total ordering in \(\mathcal{S}\), which will induce a total ordering in the set of proper star points $\mathcal{R}$. Without loss of generality, we assume that the branches of \(C\) are ordered by the good order of the topological Puiseux series of branches. Since the proper star points mark those terms of the topological Puiseux series in which two series differ (Theorem \ref{thm:topologicalPuiseuxseries}), the total order in the set of star points allows us to provide an ordered way to compare the topological Puiseux series of the different branches. Let us describe how this order translates into the dual graph of \(C.\)
\medskip

%After Theorem \ref{thm:topologicalPuiseuxseries}, there is a one-to-one correspondence between the star points in a geodesic \(\Gamma_i\) and the terms in the topological Puiseux series of the branch of \(C_i.\) Therefore, the ordering \(<\) in \(\mathcal{S}\) can be understood as follows: if \(\mathcal{S}=\mathcal{S}_1\) then the topological Puiseux series of a branch \(C_j\) with \(j>1\) can be seen as a truncation of the topological Puiseux series of the branch \(C_1.\) Recursively, assume that \(\mathcal{S}\neq\mathcal{S}_1\) and let \(i_0\in \ind\) be the smallest index bigger than \(1\) such that \(\mathcal{S}_{i_0}\neq \emptyset\). Then the topological Puiseux series of a branch \(1<j<i_0\) can be seen as truncations of the topological Puiseux series of \(C_1.\) Obviously, the topological Puiseux series of \(C_1\) and \(C_{i_0}\) coincides up to certain order given by a star point \(\alpha_{j}\in\mathcal{S}_1\) and they differ after that. 
%As we will see, the total order in the set of star points allows us to provide an ordered way to compare the topological Puiseux series of the different branches. Thanks to Theorem \ref{thm:topologicalPuiseuxseries} the only terms where two topological Puiseux series can be different are those corresponding to proper star points. Moreover, we will describe that we can further assume without loss of generality that the indexing of the branches  which is compatible with the good order and translates into a canonical total order in the star points. Now let us describe how this order translates into the dual graph of \(C.\)
\medskip

We start with \(\sigma_0,\) which is the first proper star point. By Theorem \ref{thm:topologicalPuiseuxseries} this is the first term in the topological Puiseux series of the branches where at least two of them differ. Since \(\sigma_0\) is the first proper star point, it cannot be a dead end for any branch, hence we distinguish two cases:
\medskip

\noindent \textbf{(A)} First, assume \(\sigma_0\) is the star point associated to the $(q+1)$--death arc of some \(G(C_i),\) as in case \eqref{valuepropertype1} of Subsection \ref{subsec:valuesproperstar}. Let \((q,c)=(f_1\mid\cdots\mid f_r)\) be the contact pair of the curve \(C=\cup_{i=1}^{r}C_i\) and define 

\[
l_{\ind}:=\min\left\{\left\lfloor\frac{\obeta^i_{q+1}-n^i_q\obeta^i_q}{e^i_q}\right\rfloor\;:\;i\in \ind=\{1,\dots,r\}\right\}.
\]

Since \(\sigma_0\) is a star point to the \(q+1\)--death arc of some \(G(C_i),\) which we denote by \(L_{q+1}^{i},\) we know that \((q,c)\in \{(q,l_\ind +1),(q+1,0)\}.\) Let us denote by \(T\) the end point of  \(L_{q+1}^{i}.\) Thus, we define a partition of \(\ind=(\bigcup_{p=1}^{t} I_{p})\cup(\bigcup_{p=t+1}^{s} I_p)\) as follows:
\begin{enumerate}
    \item [(\(\star\))] \(i\in\bigcup_{p=1}^{t}I_p\) if and only if 
    \(\left\lfloor\frac{\obeta^i_{q+1}-n^i_q\obeta^i_q}{e^i_q}\right\rfloor>l_\ind;\)
    \item[(\(\star\star\))]  \label{pack2}
    \(i\in I_{p}\) and \(j\in I_{p'}\) with \(p\neq p'\) if and only if \(T\) is the separation point of \(f_i,f_j\);
    \item[(\(\star\star\star\))] \label{pack3} setting \(\bigcup_{p=t+1}^{s}I_p:=\ind\setminus(\bigcup_{p=1}^{t}I_p)\), for \(p\geq t+1\) we have \(i\in I_{p}\) and \(j\in I_{p'}\)  with \(p\neq p'\) if and only if \(T\) is not the separation point of \(f_i,f_j\) and \((f_i\mid f_j)\in\{(q,c),(q+1,0)\}.\)
\end{enumerate}
The packages \(I_{t+1},\dots,I_s\) will be called ``singular packages" and the packages \(I_1,\dots,I_t\) will be called ``smooth packages"; see Figure \ref{fig:partitionatsigma}.

\begin{rem}
    Observe that if \(i,i'\in I_p\) with \(p\geq t+1\) then  \((f_i\mid f_j)>(q+1,0),\) if \(i\in \bigcup_{p=1}^{t} I_{p} \) and \(j\in \bigcup_{p=t+1}^{s} I_{p}\) then \((f_i\mid f_j)=(q,l+1),\) and for all \(j\in \bigcup_{p=t+1}^{s} I_{p}\) we have \(\left\lfloor\frac{\obeta^i_{q+1}-n^i_q\obeta^i_q}{e^i_q}\right\rfloor=l_\ind.\)
\end{rem}

 \begin{lemma}\label{lem:orderbetaiscompatible} 
 Under the previous notation, set \(\kappa:=\sum_{p=1}^{t}|I_p|\). Assume that the branches are good ordered. Then, the singular packages \(I_{t+1},\cdots,I_s\) are ordered as 
\begin{equation}\label{eq:refinedorder}
\frac{\obeta^{j_{t+1}}_{q+1}}{e^{j_{t+1}}_q}\geq\cdots\geq \frac{\obeta^{j_s}_{q+1}}{e^{j_s}_q}.
\end{equation}

Moreover, without loss of generality we can set \(\big\{\kappa+1,\dots, \kappa+|I_{t+1}|\big\}=I_{t+1},\dots,\) \(\big\{\kappa+|I_{t+1}|+1,\dots,\kappa+|I_{t+1}|+|I_{t+2}|\big\}=I_{t+2}\) and \(\displaystyle\Big\{\kappa+\sum_{p=t+1}^{s-1}|I_p|+1,...,r\Big\}=I_{s}.\)
 \end{lemma}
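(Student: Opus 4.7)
The starting observation is that, since $(q,c)=(f_1\mid\cdots\mid f_r)$ is the minimum pairwise contact, Proposition \ref{prop:contactPuiseux} forces the topological Puiseux series of every branch to agree at least through the characteristic term of exponent $\beta^j_q/\beta^j_0$. Consequently, $\beta^j_0,\ldots,\beta^j_q$, the $e$-sequence $e^j_0,\ldots,e^j_q$, and the maximal-contact values $\obeta^j_0,\ldots,\obeta^j_q$ are the same for every $j\in\ind$. Injecting this uniformity into the recursion \eqref{eq:definbetabarra} (or $\obeta^j_1=\beta^j_1$, $e^j_0=\beta^j_0$ in the $q=0$ case) and dividing by the common value of $e^j_q$ yields
\[
\frac{\obeta^j_{q+1}}{e^j_q}=\frac{n^j_q\obeta^j_q-\beta^j_q}{e^j_q}+\frac{\beta^j_{q+1}}{e^j_q}=K+\frac{\beta^j_{q+1}}{e^j_q},
\]
with $K$ independent of $j\in\bigcup_{p=t+1}^{s}I_p$. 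Ordering by $\obeta^j_{q+1}/e^j_q$ is therefore equivalent to ordering by $\beta^j_{q+1}/e^j_q$.

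The second step is to match $\beta^j_{q+1}/e^j_q$ with the ratio $m_{\alpha+1,j}/p_{\alpha+1,j}$ that drives the good order. At the first term where two singular branches from different packages truly diverge, the corresponding generalised Puiseux pair $(p_{\alpha+1,j},m_{\alpha+1,j})$ has $m_{\alpha+1,j}/p_{\alpha+1,j}=\beta^j_{q+1}/e^j_q$: indeed, for pairs with contact $(q+1,0)$ the divergence takes place at the $(q+1)$-st characteristic term with pair $(e^j_q/e^j_{q+1},\beta^j_{q+1}/e^j_{q+1})$, while for pairs with contact $(q,l_\ind+1)$ the defining condition $\lfloor\beta^j_{q+1}/e^j_q\rfloor=\beta^j_q/e^j_q+l_\ind$ of singular packages is what fixes the floor of the comparison quantity; in both subcases, a direct computation of ratios reaches the same conclusion. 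The good-order hypothesis on the branches then yields \eqref{eq:refinedorder}.

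The reindexing statement follows from the partition rule (\(\star\star\star\)): two branches $i,j$ lying in the same singular package $I_p$ have contact pair strictly greater than $(q+1,0)$, so their topological Puiseux series agree through the $(q+1)$-st characteristic term and in particular $\obeta^i_{q+1}/e^i_q=\obeta^j_{q+1}/e^j_q$. Each singular package is hence a block of indices with a common value of the ordering quantity, and in the good order these blocks appear consecutively. The smooth packages satisfy $\lfloor(\obeta^i_{q+1}-n^i_q\obeta^i_q)/e^i_q\rfloor>l_\ind$, giving them strictly larger values of $\obeta^i_{q+1}/e^i_q$ than any singular branch, so they occupy the initial integers $\{1,\ldots,\kappa\}$; the singular packages then fill $\{\kappa+1,\ldots,r\}$ in the order prescribed by \eqref{eq:refinedorder}, which permits the relabeling in the statement.

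The main obstacle will be the bookkeeping needed to handle the two possible shapes $(q,l_\ind+1)$ and $(q+1,0)$ for contact pairs between branches in different singular packages, verifying in both cases that the comparison done by the good-order convention—phrased via generalised Puiseux pairs including free terms with $p_{k,j}=1$—singles out $\beta^j_{q+1}/e^j_q$ as the deciding quantity, and checking the $q=0$ edge case, where the recursion \eqref{eq:definbetabarra} is replaced by the initial identities $\obeta^j_1=\beta^j_1$ and $e^j_0=\beta^j_0$, separately.
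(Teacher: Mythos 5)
Your proposal takes essentially the same route as the paper's proof: the decisive step, namely rewriting
\(\obeta^j_{q+1}/e^j_q = K + \beta^j_{q+1}/e^j_q\)
with \(K\) constant across the singular packages so that ordering by \(\obeta^j_{q+1}/e^j_q\) is reduced to ordering by the divergence Puiseux pair \(m/p=\beta^j_{q+1}/e^j_q\), is exactly the cancellation the paper carries out after invoking eq.~(2.1). The blocking argument via \((\star\star\star)\) and the placement of smooth packages at the initial indices also match the paper.

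Two points merit attention. First, the ``uniformity'' preamble is misstated: after Proposition~3.4 what is common to all branches with contact \(\geq (q,c)\) is not the data \(\beta^j_0,\dots,\beta^j_q\), \(e^j_0,\dots,e^j_q\), \(\obeta^j_0,\dots,\obeta^j_q\) themselves (these scale with the polydromy order, which differs between branches), but the \emph{normalized} quantities \(\beta^j_k/e^j_q\), \(\obeta^j_k/e^j_q\) and \(n^j_k=e^j_{k-1}/e^j_k\) for \(k\le q\) — which is exactly how the paper phrases it (``Recall that $\obeta^{I_p}/e_q^{I_p}$ is independent of $p$\dots then $n^{I_p}_j$ is independent of $p$''). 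Your formula for \(K\) only uses these normalized quantities, so the argument is unaffected, but the claim as written is false and should be restated.

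Second, in the reindexing paragraph you justify the placement of the smooth packages by a comparison of the (formally larger or infinite) \(\obeta^i_{q+1}/e^i_q\). The good order for a smooth-vs-singular pair is decided by the first divergence exponent, which for such a pair is the free exponent \(\beta_q/e_q + (l_\ind+1)\) rather than a characteristic exponent. The comparison the argument actually needs — and the one the paper makes — is that this free divergence exponent exceeds the singular branches' characteristic divergence exponent \(\beta^j_{q+1}/e^j_q\), which follows directly from the floor condition defining the singular packages, i.e. \(\lfloor(\beta^j_{q+1}-\beta^j_q)/e^j_q\rfloor = l_\ind\) gives \(\beta^j_{q+1}/e^j_q < \beta_q/e_q + l_\ind + 1\). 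Your conclusion is correct, but the intermediate step conflates the comparison quantity used for different pair types and should be tightened to match the paper's explicit inequality.
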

 \begin{proof}
 We need only to check that the good order in the topological Puiseux series is compatible with the order in the packages defined by eq.~ \eqref{eq:refinedorder}. First, we show that the packages in \(J:=\displaystyle\bigcup_{p=t+1}^{s}I_p\) are ordered following the good order in the topological Puiseux series. 
 \medskip
 
 As a consequence of Proposition \ref{prop:contactPuiseux}, the topological Puiseux series of the branches in \(J\) are exactly the same up to the terms which are strictly smaller than the term corresponding to \(\beta^j_{q+1}/\beta^j_0\) and all of them differ at that term. Then, the first Puiseux pair in which they become different is of the form \((e^j_q/e^j_{q+1},\beta^j_{q+1}/e^j_{q+1}).\) Therefore, if \(j_k\in I_k\subset J\), then we need to check that 
 
 \[
 \frac{\beta^{j_k}_{q+1}/e^{j_k}_{q+1}}{e^{j_{k}}_q/e^{j_{k}}_{q+1}}\geq \frac{\beta^{j_{k+1}}_{q+1}/e^{j_{k+1}}_{q+1}}{e^{j_{k+1}}_q/e^{j_{k+1}}_{q+1}}.
 \] 

Combining eq.~\eqref{eq:definbetabarra} and eq.~ \eqref{eq:refinedorder} we have 
\[\frac{\obeta^{j_k}_{q+1}}{e^{j_k}_{q}}=\frac{\beta^{j_k}_{q+1}}{e^{j_k}_{q}}-\frac{\beta^{j_k}_{q}}{e^{j_k}_{q}}+n^{j_k}_{q}\frac{\obeta^{j_k}_{q}}{e^{j_k}_{q}}\geq \frac{\beta^{j_{k+1}}_{q+1}}{e^{j_{k+1}}_{q}}-\frac{\beta^{j_{k+1}}_{q}}{e^{j_{k+1}}_{q}}+n^{j_{k+1}}_{q}\frac{\obeta^{j_{k+1}}_{q}}{e^{j_{k+1}}_{q}}.\]

Recall that \(\obeta^{I_p}/e_q^{I_p}\) is independent of \(p\) for \(p=t+1,\dots,s\). Then it is easily seen that \(n^{I_p}_j\) is independent of \(p\) for $j=1,\ldots , q$. Hence the first Puiseux pair in which they become different is of the form \((e^j_q/e^j_{q+1},\beta^j_{q+1}/e^j_{q+1})\), and we have  
\[
-\frac{\beta^{j_k}_{q}}{e^{j_k}_{q}}+n^{j_k}_{q}\frac{\obeta^{j_k}_{q}}{e^{j_k}_{q}}=-\frac{\beta^{j_{k+1}}_{q}}{e^{j_{k+1}}_{q}}+n^{j_{k+1}}_{q}\frac{\obeta^{j_{k+1}}_{q}}{e^{j_{k+1}}_{q}};
\]
this allows us to deduce the desired inequality \(\frac{\beta^{j_k}_{q+1}}{e^{j_k}_{q}}\geq \frac{\beta^{j_{k+1}}_{q+1}}{e^{j_{k+1}}_{q}}.\)
\medskip

On the other hand, by the definition of the packages in \(\ind\setminus J,\) the first term which is different in the topological Puiseux series of a branch in \(I_l\subset \ind\setminus J\) with respect to the topological Puiseux series of a branch not in \(I_p \subset \ind\setminus J\) is of the form \((1,\beta^{I_p}_q/e^{I_p}_q+c)\) with \(c=l_\ind+1\) and the intersection multiplicity  \([f_{I_p},f_{I_{t+1}}]=e_{q-1}^{I_{t+1}}\obeta_q^{I_p}+(l_\ind+1)e_q^{I_{t+1}}e_q^{I_p}\); combining eq.~\eqref{eq:definbetabarra} and eq.~ \eqref{eq:refinedorder} as before we obtain

\[
\beta^{I_p}_q/e^{I_p}_q+c>\beta^{I_{t+1}}_{q+1}/e_q.
\]

Thus if \(i\in \ind\setminus J\) and \(j\in J\), then  we have \(i<j.\)
 \end{proof}

Moreover, we can also set
 \(\big \{1,\dots,|I_1|\big\}=I_1,\) \(\big\{| I_1|+1,\dots,|I_2|\big\}=I_2,\dots,\) \(\displaystyle\Big\{\sum_{i=1}^{t-1}|I_i|+1,\dots,\kappa\Big\}=I_t\); this holds because the term \(\beta_q^{I_p}/e_q^{I_p}+c\) is equal for all \(p=1,\dots,t.\) Therefore, they trivially satisfy the good ordering in the topological Puiseux series. Furthermore, the topological Puiseux series of two branches \(i\in I_p\) and \(j\in I_{p'}\) with \(p\neq p'\) and \(p,p'\leq t\) have different coefficients for the term \(\beta_q^{I_p}/e_q^{I_p}+c.\) Figure \ref{fig:partitionatsigma} describes the dual graph at this stage.

\begin{figure}[h]
$$
\unitlength=0.50mm
\begin{picture}(70.00,110.00)(0,-50)
\thinlines

\put(-60,30){\line(1,0){24}}
\put(-60,30){\circle*{2}}

\put(-40,30){\circle*{2}}
\put(-40,30){\line(0,-1){15}}
\put(-40,15){\circle*{2}}

\put(-17,30){\circle*{2}}
\put(-17,30){\line(0,-1){15}}
\put(-17,15){\circle*{2}}

\put(-34,30){$\ldots$}

\put(-22,30){\line(1,0){18}}

%\put(-8,35){\scriptsize{$\sigma_0$}}
%\put(13,39){\scriptsize{$I_s$}}
%\put(18,33){\scriptsize{$I_{s-1}$}}
%\put(32,23){$\ddots$}

\put(-4,30){\circle*{3}}
\put(-4,30){\line(1,-0.5){70}}

\put(-4,30){\line(-0.2,1){3}}
\put(-4,30){\line(0.2,0.5){5}}
\put(-4,30){\line(1,0.2){15}}

%\put(12,33.3){\circle*{2}}
%\put(1.2,43){\circle*{2}}

\put(4,43){\scriptsize{$I_{s-1}$}}

\put(-6,47){\scriptsize{$I_{s}$}}

\put(-6,23){\scriptsize{$\sigma_0$}}

%\put(4.5,21){\circle*{2}}
%\put(4.5,21){\line(1,-0.2){9}}
%\put(4.5,21){\line(1,0.5){9}}
%\put(4.5,21){\line(1,1){6}}

%\put(15.5,10){\circle*{2}}
%\put(15.5,10){\line(1,1.3){9}}
%\put(15.5,10){\line(1,0.4){9}}
%\put(15.5,10){\line(1,2){8}}

%\put(26,0){\circle*{2}}

\put(31,13){\circle*{2}}
\put(31,13){\line(0,1){14}}
\put(31,13){\line(1,1.3){9}}
\put(31,13){\line(1,0){14}}

%\put(45,13){\circle*{2}}
%\put(31,27){\circle*{2}}
%\put(40.5,25){\circle*{2}}

\put(66,-5){\circle*{2}}
\put(66,-5){\line(0.1,1){1.5}}
\put(66,-5){\line(1,1.4){9}}
\put(66,-5){\line(1,-0.2){14}}

%put(67.5,10){\circle*{2}}
%\put(75,8){\circle*{2}}
%\put(80,-7.5){\circle*{2}}
\put(83,-7){\scriptsize{$I_{t+1}$}}

\put(66,-5){\line(-1,-1){20}}

\put(46,-25){\circle*{2}}
\put(46,-25){\line(1,0.3){15}}
\put(46,-25){\line(1,-0.4){16}}
\put(46,-25){\line(1,-1.4){11}}

%\put(62,-20){\circle*{2}}
%\put(62,-31.5){\circle*{2}}
%\put(57,-40){\circle*{2}}
\put(64,-30){\scriptsize{$I_{2}$}}
\put(60,-42){\scriptsize{$I_{1}$}}

%\put(41,5){\scriptsize{$I_{t+1}$}}
%\put(35,-10){\scriptsize{$I_{t}$}}
%\put(29,-32){\scriptsize{$I_{1}$}}
%\put(69,-34){\scriptsize{$\sigma_0^{I_1}$}}

%\put(26,0){\line(1,-1){25}}

%\put(26,-25){\circle*{2}}
%\put(26,-25){\line(1,1.9){7.5}}
%\put(26,-25){\line(1,1.3){9}}
%\put(26,-25){\line(1,1){10}}
%\put(26,-25){\line(1,.7){10}}

%\put(26,-25){\line(1,0){25}}

%\put(54,-25){$\ldots$}

%\put(65,-25){\line(1,0){10}}

%\put(38,-25){\circle*{2}}
%\put(38,-25){\line(0,-1){15}}
%\put(38,-40){\circle*{2}}

%\put(46,-25){\circle*{2}}
%\put(46,-25){\line(0,-1){15}}
%\put(46,-40){\circle*{2}}

%\put(75,-25){\circle*{3}}
%\put(75,-25){\line(1,1.9){7.5}}
%\put(75,-25){\line(1,1){10}}
%\put(75,-25){\line(1,0.3){12}}
%\put(75,-25){\line(1,-.2){12}}

%\put(75,-25){\line(1,-0.5){12}}

%\put(80,-8){\scriptsize{$I_{1,s_1}$}}

%\put(-67,28){{\scriptsize ${\bf 1}$}}

\end{picture}
$$
\caption{Case (A): Dual graph with \(\sigma_0\) being star point of some branch.}
\label{fig:partitionatsigma}
\end{figure}
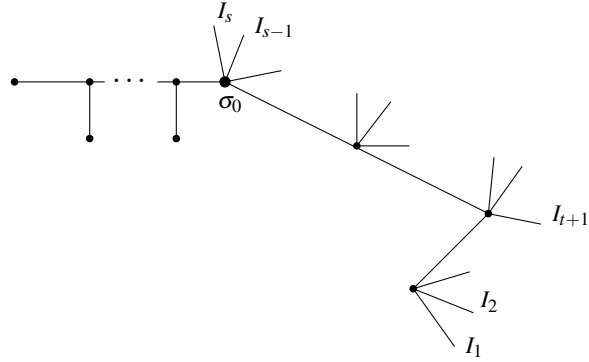

\noindent \textbf{(B)} Now, let us consider the case where \(\sigma_0\) is an ordinary point for all \(G(C_i).\) This case can be treated as the previous case if we consider \(\ind=\bigcup_{p=1}^{t}I_p\). Observe that if \(\sigma_0\) is an ordinary point in the dual graph of all the branches then \(\sigma_0\) is a separation point. Therefore, we can define the partition of \(\ind\) as \(i,j\in I_p\) if and only if \((f_i\mid f_j)>(q,c).\) Thus, the topological Puiseux series of all the branches are the same for order strictly less than \(\beta_q^{I_p}/e_q^{I_p}+c\). At the term \(\beta_q^{I_p}/e_q^{I_p}+c\), which---we recall---is independent of \(p,\) the series have a different coefficient if and only if they belong to a different package. The description of the dual graph is now easier: 

\begin{figure}[h]
$$
\unitlength=0.50mm
\begin{picture}(80.00,40.00)(50,10)
\thinlines

\put(20,30){\line(1,0){30}}
\put(20,30){\circle*{2}}
\put(40,30){\circle*{2}}
\put(40,30){\line(0,-1){15}}
\put(40,15){\circle*{2}}

\put(54,30){$\ldots$}

\put(70,30){\circle*{2}}
%\put(68,33){\scriptsize{$XX$}}
\put(70,15){\circle*{2}}
\put(70,30){\line(0,-1){15}}
\put(67,30){\line(1,0){23}}
\put(90,30){\circle*{2}}
\put(83,34){{\scriptsize$\sigma_0$}}

\put(90,30){\line(1,0.5){12}}
\put(90,30){\line(1,1){10}}
\put(90,30){\line(1,-0.1){13}}
\put(90,30){\line(1,-0.4){12}}
\put(90,30){\line(1,-1){10}}

%\put(70,5){XXX}

\end{picture}
$$
\caption{Case (B): $\sigma_0$ is an ordinary point of every $G(C_i)$.}
\label{fig4}
\end{figure}
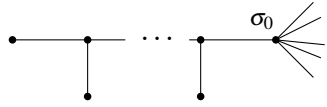

Again, we can put \(\big\{1,\dots,|I_1|\big\}=I_1,\) \(\big\{| I_1|+1,\dots,|I_2|\big\}=I_2,\dots,\) \(\displaystyle\Big\{\sum_{i=1}^{t-1}|I_i|+1,\dots,r\Big\}=I_t\), since this is compatible with the good order. 
\medskip

Once we have ordered the packages at \(\sigma_0,\) we shall continue decomposing each package until we arrive to a decomposition of packages with cardinal exactly one. There are two options: either there is at least one package with cardinal strictly bigger than one or all the packages have cardinal exactly one and then we are done. Assume that there is a package with cardinal strictly bigger than one. We will first consider the case where there is a package in \(\bigcup_{p=1}^{t} I_p\) with \(|I_p|>1.\) Furthermore, assume for simplicity that \(|I_1|>1\) and abusing a bit of notation, let us denote the contact pair of the package as \((q,c)=(f_1\mid\cdots\mid f_{|I_1|}).\) Let  \(\sigma_0^{I_1}\) be the first proper star point of the dual graph of \(C^{I_1},\) where \(C^{I_1}\) is the curve defined by \(f_{I_1}=f_1\cdots f_{|I_1|}.\) For \(\sigma_0^{I_1}\) we have now two different situations to be considered:

%We have that \(\sigma_0^{I_1}\in\mathcal{S}_1\); however this is not (in general) the next proper star point in \(\mathcal{S}_1\) after \(\sigma_0,\) more concretely \(\sigma_0^{I_1}\in\mathcal{S}_1\) is not the next proper star point in \(\mathcal{S}_1\) after \(\sigma_0\) if only if \(\bigcup_{p=1}^{t} I_p\neq \ind.\) As we already mentioned, the proper star points are the guides to study the terms where the topological Puiseux series of the branches become different. If \(\bigcup_{p=1}^{t} I_p\neq \ind\), then each package \(I_{t+1},\dots,I_s\) is associated to a proper star point belonging to \(\mathcal{S}_1\) and we have \(\sigma_0\prec P_1\prec \cdots\prec P_{\epsilon}\) consecutive proper star points in \(\mathcal{S}_1.\) In fact, it is easy to see that \(\epsilon=s-t-\sum(s(P_i)-1)\), where \(s(P_i)\) is the number of times that the value of the proper star point appears in the semigroup. Observe that we have already analyzed the compatibility of the order in the packages with the good order in the branches in Lemma \ref{lem:orderbetaiscompatible}, thus those proper star points have been also investigated. After them, one may find star points which are non-proper: There is no need of analyzing them since they do not generate any new ramification in the dual graph. Hence we continue to the first proper star point in \(\mathcal{S}_1\) after \(P_{s-1},\) i.e. \(\sigma_0^{I_1}.\) 

\begin{enumerate}
    \item Assume that \(\sigma_0^{I_1}\) is the star point associated to the \(q'\)--death arc of some \(G(C^i),\) \(i\in I_1\) and \(q'>q.\) In this case, \(\sigma_0^{I_1}\neq T,\) where \(T\) is the end point of \(L^r_{q+1}.\) We proceed in the same way as in the case of \(\sigma_0\) to define a partition of \(I_1=(\bigcup_{p=1}^{t_1}I_{1,p})\cup(\bigcup_{t_1+1}^{s_1} I_{1,p}).\) The partition is defined and ordered as in the case of \(\sigma_0;\) to do so, we have to take into account that \(\sigma_0^{I_1}\) plays the role of \(\sigma_0\) in the dual graph of \(G(C^{I_1}).\) Again by Lemma \ref{lem:orderbetaiscompatible} we have that the ordering of the subpackages of \(I_1\) is compatible with the good order in the topological Puiseux series.

    \begin{figure}[h]
$$
\unitlength=0.50mm
\begin{picture}(100.00,110.00)(0,-70)
\thinlines

\put(-60,30){\line(1,0){24}}
\put(-60,30){\circle*{2}}

\put(-40,30){\circle*{2}}
\put(-40,30){\line(0,-1){15}}
\put(-40,15){\circle*{2}}

\put(-17,30){\circle*{2}}
\put(-17,30){\line(0,-1){15}}
\put(-17,15){\circle*{2}}

\put(-34,30){$\ldots$}

\put(-22,30){\line(1,0){18}}

\put(-8,35){\scriptsize{$\sigma_0$}}
\put(13,39){\scriptsize{$I_s$}}
\put(18,33){\scriptsize{$I_{s-1}$}}
\put(32,23){$\ddots$}

\put(-4,30){\circle*{3}}
\put(-4,30){\line(1,-1){30}}

\put(-4,30){\line(1,0.5){15}}
\put(-4,30){\line(1,0.2){20}}
\put(-4,30){\line(1,0){15}}

\put(4.5,21){\circle*{2}}
\put(4.5,21){\line(1,-0.2){9}}
\put(4.5,21){\line(1,0.5){9}}
\put(4.5,21){\line(1,1){6}}

\put(15.5,10){\circle*{2}}
\put(15.5,10){\line(1,1.3){9}}
\put(15.5,10){\line(1,0.4){9}}
\put(15.5,10){\line(1,2){8}}

%\put(26,0){\circle*{2}}

\put(26,0){\circle*{2}}
\put(26,0){\line(1,1.9){7.5}}
\put(26,0){\line(1,1.3){9}}

\put(26,0){\line(1,.7){14}}
\put(41,5){\scriptsize{$I_{t+1}$}}
\put(35,-10){\scriptsize{$I_{t}$}}
\put(29,-32){\scriptsize{$I_{1}$}}
\put(69,-34){\scriptsize{$\sigma_0^{I_1}$}}

\put(26,0){\line(0,-1){25}}

\put(26,-25){\circle*{2}}
\put(26,-25){\line(1,1.9){7.5}}
\put(26,-25){\line(1,1.3){9}}
\put(26,-25){\line(1,1){10}}
\put(26,-25){\line(1,.7){10}}

\put(26,-25){\line(1,0){25}}

\put(54,-25){$\ldots$}

\put(65,-25){\line(1,0){10}}

\put(38,-25){\circle*{2}}
\put(38,-25){\line(0,-1){15}}
\put(38,-40){\circle*{2}}

\put(46,-25){\circle*{2}}
\put(46,-25){\line(0,-1){15}}
\put(46,-40){\circle*{2}}

\put(75,-25){\circle*{3}}
\put(75,-25){\line(1,1.9){7.5}}
\put(75,-25){\line(1,1.3){9}}
\put(75,-25){\line(1,1){10}}
\put(75,-25){\line(1,.7){10}}

\put(75,-25){\line(1,-0.5){40}}

\put(100,-37.5){\circle*{2}}

\put(100,-37.5){\line(1,1){10}}
\put(100,-37.5){\line(1,1.3){9}}
\put(100,-37.5){\line(1,0.7){10}}
\put(100,-37.5){\line(1,1.7){8}}

\put(115,-45){\circle*{2}}
\put(115,-45){\line(1,0){20}}
\put(115,-45){\line(1,0.2){20}}
\put(115,-45){\line(1,.4){20}}

\put(115,-45){\line(-1,-0.5){20}}

\put(95,-55){\circle*{3}}
\put(89,-55){\scriptsize{$T$}}
\put(95,-55){\line(1,-.7){20}}
\put(95,-55){\line(1,-.5){20}}
\put(95,-55){\line(1,-.3){20}}
\put(95,-55){\line(1,0){20}}

\put(120,-55){\scriptsize{$I_{1,t_1}$}}
\put(120,-65){$\vdots$}
\put(120,-75){\scriptsize{$I_{1,1}$}}

\put(80,-8){\scriptsize{$I_{1,s_1}$}}
\put(140,-39){$\vdots$}
\put(140,-48){\scriptsize{$I_{1,t_1+1}$}}

\put(-67,28){{\scriptsize ${\bf 1}$}}

%\put(53,33){\scriptsize{$\sigma_0$}}
\end{picture}
$$
\caption{Situation (1): $\sigma_0^{I_1}$ is proper.}
\label{figC}
\end{figure}
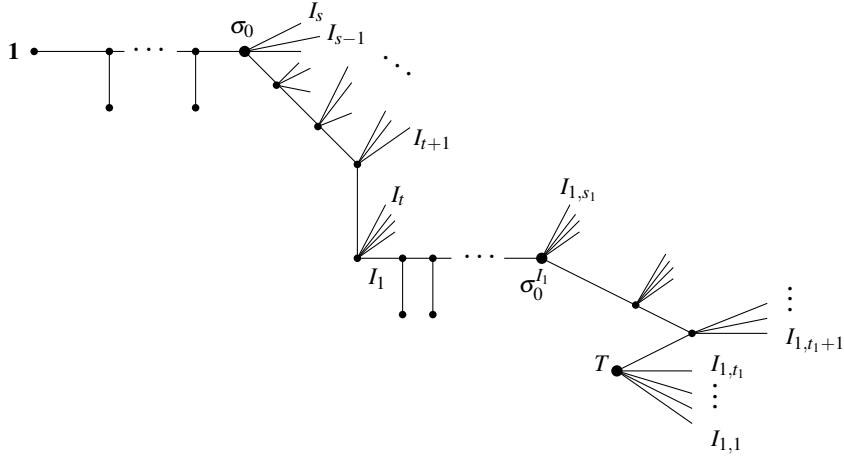

    \item Assume that \(\sigma_0^{I_1}\) is an ordinary point for all \(G(C^i)\), \(i\in I_1.\) Then again the partition of \(I_1\) is defined and ordered as in the case of \(\sigma_0.\) All the packages generated in this partition are smooth packages.

    %\item Assume that \(\sigma_{I_1}\) is an ordinary point for all \(G(C_i)\) \(i\in I_1\) and \(\sigma_{I_1}= T.\) Observe that this case cannot happen since by the definition of the package \(I_1\) we have that if \(i,j\in I_1\) then \(T\) is not the separation point of \(f_i\) and \(f_j.\)
\end{enumerate}

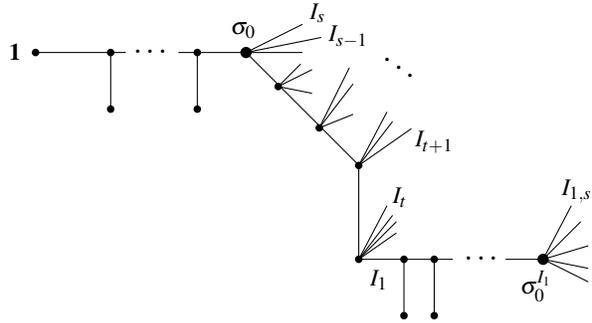
\begin{figure}[h]
$$
\unitlength=0.50mm
\begin{picture}(70.00,110.00)(0,-50)
\thinlines

\put(-60,30){\line(1,0){24}}
\put(-60,30){\circle*{2}}

\put(-40,30){\circle*{2}}
\put(-40,30){\line(0,-1){15}}
\put(-40,15){\circle*{2}}

\put(-17,30){\circle*{2}}
\put(-17,30){\line(0,-1){15}}
\put(-17,15){\circle*{2}}

\put(-34,30){$\ldots$}

\put(-22,30){\line(1,0){18}}

\put(-8,35){\scriptsize{$\sigma_0$}}
\put(13,39){\scriptsize{$I_s$}}
\put(18,33){\scriptsize{$I_{s-1}$}}
\put(32,23){$\ddots$}

\put(-4,30){\circle*{3}}
\put(-4,30){\line(1,-1){30}}

\put(-4,30){\line(1,0.5){15}}
\put(-4,30){\line(1,0.2){20}}
\put(-4,30){\line(1,0){15}}

\put(4.5,21){\circle*{2}}
\put(4.5,21){\line(1,-0.2){9}}
\put(4.5,21){\line(1,0.5){9}}
\put(4.5,21){\line(1,1){6}}

\put(15.5,10){\circle*{2}}
\put(15.5,10){\line(1,1.3){9}}
\put(15.5,10){\line(1,0.4){9}}
\put(15.5,10){\line(1,2){8}}

%\put(26,0){\circle*{2}}

\put(26,0){\circle*{2}}
\put(26,0){\line(1,1.9){7.5}}
\put(26,0){\line(1,1.3){9}}

\put(26,0){\line(1,.7){14}}
\put(41,5){\scriptsize{$I_{t+1}$}}
\put(35,-10){\scriptsize{$I_{t}$}}
\put(29,-32){\scriptsize{$I_{1}$}}
\put(69,-34){\scriptsize{$\sigma_0^{I_1}$}}

\put(26,0){\line(0,-1){25}}

\put(26,-25){\circle*{2}}
\put(26,-25){\line(1,1.9){7.5}}
\put(26,-25){\line(1,1.3){9}}
\put(26,-25){\line(1,1){10}}
\put(26,-25){\line(1,.7){10}}

\put(26,-25){\line(1,0){25}}

\put(54,-25){$\ldots$}

\put(65,-25){\line(1,0){10}}

\put(38,-25){\circle*{2}}
\put(38,-25){\line(0,-1){15}}
\put(38,-40){\circle*{2}}

\put(46,-25){\circle*{2}}
\put(46,-25){\line(0,-1){15}}
\put(46,-40){\circle*{2}}

\put(75,-25){\circle*{3}}
\put(75,-25){\line(1,1.9){7.5}}
\put(75,-25){\line(1,1){10}}
\put(75,-25){\line(1,0.3){12}}
\put(75,-25){\line(1,-.2){12}}

\put(75,-25){\line(1,-0.5){12}}

\put(80,-8){\scriptsize{$I_{1,s_1}$}}

\put(-67,28){{\scriptsize ${\bf 1}$}}

\end{picture}
$$
\caption{Situation (2): $\sigma_0^{I_1}$ is ordinary.}
\label{figD}
\end{figure}

Continuing with this process we will finally obtain an ordering of the first \(|I_1|\) branches which is compatible with the good ordering of the set of branches induced by the topological Puiseux series. Once we finish with \(I_1\) we repeat this process with each package of \(\bigcup_{p=1}^t I_p\) with \(|I_p|>1.\) In this way we obtain an ordering of the first \(\kappa\) branches which is compatible with the good ordering of the set of branches induced by the topological Puiseux series. Now we shall continue with the package \(I_{t+1}.\) As in the case of \(I_1\) we only need to deal with the packages with cardinal strictly bigger than one. The procedure is the same as in the cases developed for \(I_1.\) After all the iterations, we obtain a partition of \(\ind\) in packages of cardinal one such that the indexing of the packages is compatible with the good order induced by the topological Puiseux series in the set of branches. 
\medskip

Once the branches are ordered with this process, let us denote by \(\prec\) the natural order induced by a geodesic in the vertices of \(G(C)\). Let \(\Gamma_1\) be the geodesic joining \(\mathbf{1}\) with the arrow corresponding to \(f_1\), and define 

\[
\mathcal{S}_1:=\Gamma_1\cap \mathcal{S}=\{\alpha_1\prec\alpha_2\cdots\prec\alpha_{t_1}\}.
\]

Recursively, for \(2\leq i\leq r\) we consider the geodesic \(\Gamma_i\) joining \(\mathbf{1}\) with the arrow corresponding to \(f_i\) and define 

\[
\mathcal{S}_i:=\Gamma_i\cap\bigg(\mathcal{S}\setminus \big (\bigcup_{k<i}\mathcal{S}_k\big)\bigg)=\{\alpha_{t_k+1}\prec \cdots \prec \alpha_{t_i}\}.
\]

From now on we will assume that the set of star points in \(G(C)\) is ordered by the relation \(<\) as follows: if \(\alpha_i,\alpha_j\in \mathcal{S}_k\) for some \(k\), then \(\alpha_i<\alpha_j\) if and only if \(\alpha_i\prec \alpha_j\); if \(\alpha_i\in \mathcal{S}_k\) and \(\alpha_j\in \mathcal{S}_{k'}\), then \(\alpha_i<\alpha_j\) if and only if \(k<k'.\) Obviously, \(<\) produces a total order in \(\mathcal{S}\) by the construction of the sets \(\mathcal{S}_i.\)
\medskip

Summarizing, the set of star points \(\mathcal{S}\) is totally ordered and this order is compatible with the good order in the topological Puiseux series of \(C.\) In fact, given the equisingularity data we have provided a way to compute this order. We will see in Example \ref{ex:examplerefinedgoodorder} that this ordering is a bit more restrictive than just the good order in the topological Puiseux series. From now on we will refer as good order to our refined good order and not just the good order.

\subsubsection{Approximations associated to the star points}\label{subsubsec:truncationstar}

To finish this section, let us define a sequence of truncated plane curves which can be associated to the star points following their total ordering. This sequence will allow us to better understand the ordering \(<\) in \(\mathcal{S}.\) Let \((q,c)=(f_1\mid\cdots\mid f_r)\) be the contact pair of \(C,\) then for \(i=1,\dots,q-1\) we define \(C_{\alpha_i}\) as the irreducible plane curve given by the Puiseux parametrization
\[y_{\alpha_i}:=y^{1}_{i}=\sum_{k=1}^{i} a_k^{(1)}x^{\beta^1_i/\beta^1_0}.\]
Recall that, for \(i=1,\dots,q-1\), the quotient \(\beta^j_i/\beta^j_0\) is independent of \(j=1,\dots,r\) and then \(y_{\alpha_i}\) is a maximal contact curve which is common to all the branches. Now, let us denote by \(T=\alpha_{q-1}\) and \(\sigma=\sigma_0\); we must distinguish two cases: 
\begin{enumerate}
    \item [\(\diamond\)] If \(c>0\) then define \(C_{\alpha_q}\) as the irreducible plane curve with Puiseux series \(y_{\alpha_q}:=y^1_q.\)
    \item [\(\diamond\)] If \(c=0\) then \(\alpha_q=\sigma_0\) and consider the partition  \(\ind=(\bigcup_{p=1}^{t} I_{p})\cup(\bigcup_{p=t+1}^{s} I_p)\) explained before. Then, we define \(C_{\alpha_q}=C_{\sigma_0}\) as the plane curve singularity with \(s\) branches defined by the Puiseux series:
    \[y^{I_p}_{\alpha_q}:= \sum_{l=1}^{q}a_l^{(i)}x^{\beta^{i}_{l}/\beta^i_0}+\sum_{\tiny \begin{array}{c} 
j\notin I_p\\
\end{array}}b^{(i)}_jx^{(\beta^i_{q_{i,j}}+c_{i,j}e^i_{q_{i,j}})/\beta^i_0}\quad\text{with}\quad i\in I_p.\]
\end{enumerate}
In the case \(c>0\) we have \(\sigma_0=\alpha_{q+1}\), and we define \(C_{\alpha_{q+1}}=C_{\sigma_0}\) analogously to the case \(c=0.\)
\medskip

Following the procedure described to order \(\mathcal{S},\) let \(\ind=(\bigcup_{p=1}^{t} I_{p})\cup(\bigcup_{p=t+1}^{s} I_p)\) be the partition created at \(\sigma_0.\) By definition there are \(\epsilon\) proper star points (in fact, \(s-t-\sum (s_Q-1)\)) in \(\mathcal{S}_1\) between \(\sigma_0\) and the separation point of \(I_1\) with \(I_2.\) Let us denote them by \(\osigma_1,\dots,\osigma_\epsilon\) and let \(P\) be the separation point of \(I_1\) and \(I_2.\) We put \(C_{\osigma_i}=C_{\sigma_0}\) for all \(i.\) At this point we have 
\[
\alpha_1\prec \cdots\prec\alpha_q\preceq \sigma_0\preceq \osigma_1\preceq\cdots\preceq\osigma_\epsilon\preceq P.
\]

The consideration of further approximations of $C$ requires to define a recursive procedure which distinguishes several cases. To do so, we rename the distinguished points \(T,\sigma\): Let \(T=\sigma_0\) be a star point where the process start, and let \(\sigma\) be the next star point to be considered in order to define an approximating curve. Then,

\begin{enumerate}
    \item Assume \(|I_1|=1:\)
    \begin{enumerate}
        \item The semigroup \(\Gamma^1\) of the first branch \(C^1\) of \(C\) has \(q\) minimal generators. This implies that \(\mathcal{S}_1=\{\alpha_1\prec \cdots\prec\alpha_q\preceq \sigma_0\preceq \osigma_1\preceq\cdots\preceq\osigma_\epsilon\}\) and \(y^{I_1}_{\sigma_0}\) is the topological Puiseux series of the branch \(C^1.\) Then, we have finished with \(\mathcal{S}_1\) and we move to the package \(I_2.\) We set \(T=\sigma_0\) 
 and write \(\sigma\) for the star point from which \(I_2\) emanates, i.e. \(\sigma=\osigma_i\) for some \(i.\)
        \item The semigroup \(\Gamma^1\) of the first branch \(C^1\) of \(C\) has \(g_1>q\) minimal generators. Then,
        \[\mathcal{S}_1=\{\alpha_1\prec \cdots\prec\alpha_q\preceq \sigma_0\preceq \osigma_1\preceq\cdots\preceq\osigma_\epsilon\prec \alpha_1^{I_1}\prec\cdots \alpha_{g_1-q}^{I_1}\}\]
        where \(\alpha_1^{I_1}\prec\cdots \alpha_{g_1-q}^{I_1}\) are the non-proper star points defining the maximal contact values associated to the remaining generators of the semigroup \(\Gamma^1\). For each \(\alpha^{I_1}_i\) we define the plane curve \(C_{\alpha^{I_1}_i}\)  with \(s\) branches, where the branches \(j=2,\dots,s\) have Puiseux series \(y_{\alpha^{I_1}_i}^{j}=y_{\sigma_0}^{I_j},\) i.e. the branches \(j=2,\dots,s\) are the same as the ones of \(C_{\sigma_0}\), and for \(j=1\) the Puiseux series is
        \[
        y_{\alpha^{I_1}_i}^{1}:=y_{\sigma_0}^{I_1}+\sum_{k=q+1}^{q+i} a_k^{(1)}x^{\beta_k^1/\beta_0^1}.
        \]
        Once we have completed \(\mathcal{S}_1\), we move to the package \(I_2.\) We make \(\sigma\) the star point from which \(I_2\) goes through and \(T=\alpha_{g_1-q}^{I_1}.\)
    \end{enumerate}
    \item Assume \(|I_1|>1.\) There are two cases to be distinguished:
    \begin{enumerate}
        \item For all \(j\in I_1\) we  have \(g_j=q,\) i.e. the semigroups \(\Gamma^{j}\) have \(q\)--minimal generators. This implies that all the star points in \(\mathcal{S}_1\) after \(T\) are proper star points of \(G(C)\) and they are ordinary points of the individual dual graphs \(G(C^j)\) of the branches. Let \(\sigma_0^{I_{1}}\prec\cdots\prec \sigma_{0}^{I_{1,\cdots,1}}\) be the \(l\leq |I_1|\) proper star points from \(T\) to the arrow of \(C^1\) in \(G(C).\) We only need to analyze the situation at the first one, \(\sigma_0^{I_1}\); for the remainder it follows by the recursive process we are defining. For \(\sigma_0^{I_1},\) we have a partition into smooth packages of \(I_1=\bigcup_{k=1}^{s_1} I_{1,k}\) and we define the plane curve \(C_{\sigma_0^{I_1}}\) with \(s_1+s-1\) branches where the first \(s_1\) branches have Puiseux series of the form
        \[y^{I_{1,k}}_{\sigma_1^{I_1}}:=y^{I_1}_{\sigma_0}+\sum_{i\in I_1\setminus{I_{1,k}}}b_j^{j}x^{(\beta^j_{q_{i,j}}+c_{i,j}e^j_{q_{i,j}})/\beta^j_0}\quad\text{with} \quad j\in I_{1,k},\]
        and the last \(s-1\) branches are equal to the branches of \(C_{\sigma_0}.\) We set \(\sigma\) the star point from which \(I_2\) goes through and \(T=\sigma_0^{I_{1,1}}.\)
        
        \item We assume that \(g_j>q\) for some \(j\in I_1.\) We distinguish again two subcases:
        \begin{enumerate}
            \item \(C^1\) has \(g_1\geq q\) and \((f_1|\cdots|f_{|I_1|})\leq (q+1,0)\). Denote by \(\sigma_{0}^{I_1}\) the first proper star point of the package \(I_1.\) %Since \(g_1\geq q\) then \(\sigma_{0}^{I_1}\) is either an ordinary point for the dual graph \(G(C^1)\) (and then for all the branches in \(I_1\)), or it is a star point for some other \(j\in I_1.\) 
            Let \(I_1=\bigcup_{k=1}^{s_1} I_{1,k}\) be the partition associated to the proper star \(\sigma_{0}^{I_1}.\) Then, we define \(C_{\sigma_{0}^{I_1}}\) as the plane curve with \(s_1+s-1\) branches, where the last \(s-1\) branches are equal to the last  \(s-1\) branches of \(C_{\sigma_0}\), and the first \(s_1\) branches are defined in the same way we defined \(C_{\sigma_0}\) from \(C_{\alpha_q}.\) In this case \(\sigma_0^{I_1}\) plays the role of \(\sigma_0\) and \(T\) plays the role of \(\alpha_q.\) Thus, we have again a sequence
            \[
            \alpha_1\prec \cdots\prec\alpha_q\preceq \sigma_0\preceq \osigma_1\preceq\cdots\preceq\osigma_\epsilon\prec \sigma_0^{I_1}\preceq \osigma^{I_1}_1\preceq\cdots\preceq\osigma^{I_1}_\epsilon
            \]
            and set \(\sigma=\osigma_{\epsilon_1}^{I_1}\) and \(T=\sigma_0^{I_1}\) to continue the process.   
            \item \(C^1 \) has \(g_1\geq q\) and \((f_1|\cdots|f_{|I_1|})> (q+1,0),\) i.e \((f_1|\cdots|f_{|I_1|})\geq (q+1,c)\) with \(c\neq 0.\) Write \((q_{I_1},c_{I_1}):= (f_1|\cdots|f_{|I_1|}).\) Since \((q_{I_1},c_{I_1})> (q+1,0),\) there are \(q_{I_1}-q\) star points which are non-proper between \(\osigma_\epsilon\) and \(\sigma_0^{I_1},\) i.e. 
            \[\osigma_{\epsilon}\prec \alpha_{q+1}\prec \cdots \prec \alpha_{q_{I_1}}\preceq \sigma_0^{I_1}.\] 
            Then for each \(\alpha_i\) with \(i=1,\dots,q_{I_1}-q\) we define \(C_{\alpha_i}\) as a plane curve with the same number of branches \(s\) as \(C_{\sigma_0}\), where the last \(s-1\) branches are the same as those of \(C_{\sigma_0}\) and the first branch is defined as 
            \[
        y^{1}_{\alpha_i}:=y^{I_1}_{\sigma_0}+ +\sum_{k=q+1}^{q_{I_1}} a_k^{(1)}x^{\beta_k^1/\beta_0^1}
        \]
            similarly to the case \((1)(b).\) 
            Moreover we define \(C_{\sigma_{0}^{I_1}}\) as the plane curve with \(s_1+s-1\) branches, where the last \(s-1\) branches are equal to the last \(s-1\) branches branches of \(C_{\sigma_0}\) and the first \(s_1\) branches are defined in the same way we defined \(C_{\sigma_0}\) from \(C_{\alpha_q}.\) In this case \(\sigma_0^{I_1}\) plays the role of \(\sigma_0\) and \(T\) plays the role of \(\alpha_{q_{I_1}}.\) We set \(T=\sigma_0^{I_1}\) and \(\sigma\) for the next star point that must be considered to define an approximating curve, i.e. it is defined from the partition associated to \(\sigma_0^{I_1}\) in the same way as in the previous cases.
        \end{enumerate}
    \end{enumerate}
\end{enumerate}

We run this process until \(\sigma=\max\{\alpha\in\mathcal{S}\}\); in that case \(C_\sigma=C\) and then we have obtained the given plane curve.
\medskip

Observe that the ordering that we have introduced in the set of branches is now a canonical order for the branches on a fixed equisingularity class and it can be described only by using the dual graph of \(C.\) Moreover, we have showed that this ordering introduced in the dual graph implies the good ordering of the topological Puiseux series of the branches. As we will see in Section \ref{subsec:iterativepoincare}, the total order in the star points of the dual graph together with the order in the set of branches is crucial to provide an iterative construction of the Poincaré series.

\begin{ex}\label{ex:examplerefinedgoodorder}
    Consider one of the plane curves defined in Example \ref{example:goodorder}. Assume first the curve is given by the Puiseux series of the branches \(C_i\) ordered as  \(y_1=2x^2+x^{14/3},\)
  \(y_2=x^4,\) \(y_3=x^{5/2},\)  \(y_4=2x^2\) and \(y_5=x^2.\) The equisingularity class is given by the value semigroups   
  $$
  \Gamma^1=\langle 3,14\rangle, \ \ \Gamma^3=\langle2,5\rangle, \ \ \Gamma^2=\Gamma^4=\Gamma^5=\mathbb{N},
  $$
  and the intersection multiplicities: \([f_1,f_2]=[f_1,f_5]=6,\) \([f_1,f_3]=12,\) \([f_1,f_4]=14,\) \([f_2,f_3]=5,\) \([f_2,f_4]=[f_2,f_5]=[f_4,f_5]=2\) and \([f_3,f_4]=[f_3,f_5]=4.\)
\medskip

  The topological Puiseux series are
  \begin{equation*}
        \begin{array}{ccc}
      y_1=2x^2+x^{14/3}+1/2x^5,& \quad y_2=5x^2+x^{3},&\quad y_3=5x^2+x^{5/2}+3x^3,\\
      y_4=2x^2+x^5 &{\text{and}}  &\quad y_5=x^2.
  \end{array}
  \end{equation*}
  The dual graph of \(C\) with the branches in this order is given in Figure \ref{fig:ex_0}. 
  \begin{figure}[H]
$$
\unitlength=0.50mm
\begin{picture}(80.00,40.00)(50,5)
\thinlines
\put(30,30){\line(1,0){20}}
\put(30,30){\circle*{2}}
\put(50,30){\circle*{3}}
\put(43,34){{\scriptsize$\sigma_0$}}
\put(50,30){\vector(1,1){10}}
\put(50,30){\line(1,0){80}}
\put(70,30){\circle*{2}}
\put(90,30){\circle*{2}}
\put(110,30){\circle*{2}}
\put(130,30){\circle*{2}}
\put(130,30){\vector(1,1){10}}
\put(130,30){\line(0,-1){20}}
\put(130,20){\circle*{2}}
\put(130,10){\circle*{2}}
\put(130,10){\vector(1,0){15}}
\put(50,30){\line(1,-1){15}}
\put(65,15){\circle*{2}}
\put(65,15){\vector(1,0){12}}
\put(65,15){\line(-1,-1){10}}
\put(55,5){\circle*{2}}
\put(55,5){\vector(1,0){12}}

\put(80,15){{\scriptsize$y_{1}$}}
\put(70,3){{\scriptsize$y_{4}$}}
\put(130,40){{\scriptsize$y_{3}$}}
\put(145,15){{\scriptsize$y_{2}$}}

\put(60,45){{\scriptsize$y_5$}}
%\put(65,26){{\scriptsize$I'_2$=\{3,4\}}}
%\put(103,16){{\scriptsize$I'_1$=\{1,4\}}}
\end{picture}
$$
\caption{Dual graph of \(C\).}
\label{fig:ex_0}
\end{figure}
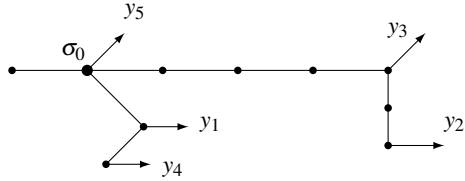

 The given ordering does not coincide with the good ordering we defined, so let us show how to order the topological Puiseux series according to the refined good ordering. The first separation point occurs at \(y_5=x^2\) and the partition at \(\sigma_0\) with this ordering is \(\ind=I_1\cup\ I_2\cup I_3\), where \(I_1=\{1,4\},\) \(I_2=\{2,3\}\) and \(I_3=\{5\}\). As we mentioned in Lemma \ref{lem:orderbetaiscompatible}, we can reorder the branches so that \(I'_1=\{1,2\},\) \(I'_2=\{3,4\}\) and \(I_3=\{5\}\). In doing so, our new ordering at this point is \(y_1'=y_1,\) \(y_2'=y_4,\) \(y_3'=y_3,\) \(y_4'=y_2\) and \(y_5'=y_5.\) Since the separation point is an ordinary point for all the branches, the dual graph of the truncation \(C_{\sigma_0}\) is
  \begin{figure}[h]
$$
\unitlength=0.50mm
\begin{picture}(80.00,40.00)(115,5)
\thinlines
\put(70,30){\line(1,0){20}}
\put(70,30){\circle*{2}}
\put(90,30){\circle*{2}}
\put(83,34){{\scriptsize$\sigma_0$}}
\put(90,30){\vector(1,0.5){12}}
\put(90,30){\vector(1,-0.1){13}}
\put(90,30){\vector(1,-1){10}}
\put(105,36){{\scriptsize$I_3$=\{5\}}}
\put(105,26){{\scriptsize$I_2$=\{2,3\}}}
\put(103,16){{\scriptsize$I_1$=\{1,4\}}}

\put(140,28){$\rightsquigarrow$}

\put(165,30){\line(1,0){20}}
\put(165,30){\circle*{2}}
\put(185,30){\circle*{2}}
\put(178,34){{\scriptsize$\sigma_0$}}
\put(185,30){\vector(1,0.5){12}}
\put(185,30){\vector(1,-0.1){13}}
\put(185,30){\vector(1,-1){10}}
\put(200,36){{\scriptsize$I_3$=\{5\}}}
\put(200,26){{\scriptsize$I_2'$=\{3,4\}}}
\put(198,16){{\scriptsize$I_1'$=\{1,2\}}}

\end{picture}
$$
\caption{Dual graph of \(C_{\sigma_0}\) and reordering of the packages.}
\label{fig:ex_1}
\end{figure}
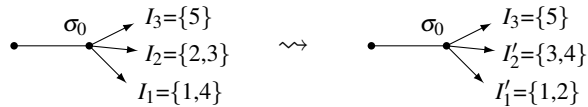

  and \(C_{\sigma_0}\) has three branches defined by \(y_{I_1}=2x^2,\) \(y_{I_2}=5x^2\) and \(y_{I_3}=x^2.\)
  We continue following the \(I_1\) package. The next term where the branches belonging to \(I_1\) separate is \(5>14/3\). Hence, to be good ordered, we must permute the indexing of both branches, namely \(y''_1=y_2'\) and \(y''_2=y_1'.\) Then the truncation at \(C_{\sigma_0^{I_1}}\) has four branches ordered as \(y_{I_{1,1}}=y_4=2x^2+x^5,\) \(y_{I_{1,2}}=y_1=2x^2+x^{14/3}+1/2x^5,\) \(y_{I_2}=5x^2\) and \(y_{I_3}=x^2.\) The dual graph is given in Figure \ref{fig:ex_2}.
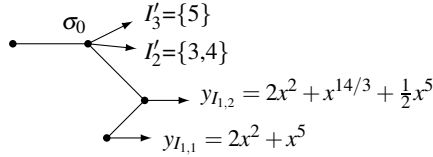
\begin{figure}[H]
$$
\unitlength=0.50mm
\begin{picture}(80.00,40.00)(85,4)
\thinlines
\put(70,30){\line(1,0){20}}
\put(70,30){\circle*{2}}
\put(90,30){\circle*{2}}
\put(83,34){{\scriptsize$\sigma_0$}}
\put(90,30){\vector(1,0.5){12}}
\put(90,30){\vector(1,-0.1){13}}
\put(90,30){\line(1,-1){15}}
\put(105,15){\circle*{2}}
\put(105,15){\vector(1,0){12}}
\put(105,15){\line(-1,-1){10}}
\put(95,5){\circle*{2}}
\put(95,5){\vector(1,0){12}}

\put(120,15){{\scriptsize$y_{I_{1,2}}=2x^2+x^{14/3}+\frac{1}{2}x^5$}}
\put(110,3){{\scriptsize$y_{I_{1,1}}=2x^2+x^5$}}

\put(105,36){{\scriptsize$I'_3$=\{5\}}}
\put(105,26){{\scriptsize$I'_2$=\{3,4\}}}
%\put(103,16){{\scriptsize$I'_1$=\{1,4\}}}
\end{picture}
$$
\caption{Dual graph of \(C_{\sigma_0^{I_1}}\).}
\label{fig:ex_2}
\end{figure}
As we have finished with the star points belonging to the geodesics of the branches of \(I_1,\) we move to the branches of \(I_2.\) The separation point of both branches is \(3>5/2\) so again we need to permute the branches \(y''_3=y'_4\) and \(y''_4=y'_3.\) Then, the truncation \(C_{\sigma_0^{I_2}}=C\) is our original curve and the branches are good ordered as \(Y_1:=y_{I_{1,1}}=y_4,\) \(Y_2:=y_{I_{1,2}}=y_1,\) \(Y_3:=y_{I_{2,1}}=y_2,\) \(Y_4:=y_{I_{2,2}}=y_3\) and \(Y_5:=y_{I_3}=y_5.\) Then we obtain the dual graph of \(C\) with the branches good ordered, as depicted in Figure \ref{fig:ex_22}.
  \begin{figure}[H]
$$
\unitlength=0.50mm
\begin{picture}(80.00,40.00)(50,5)
\thinlines
\put(30,30){\line(1,0){20}}
\put(30,30){\circle*{2}}
\put(50,30){\circle*{3}}
\put(43,34){{\scriptsize$\sigma_0$}}
\put(50,30){\vector(1,1){10}}
\put(50,30){\line(1,0){80}}
\put(70,30){\circle*{2}}
\put(90,30){\circle*{2}}
\put(110,30){\circle*{2}}
\put(130,30){\circle*{2}}
\put(130,30){\vector(1,1){10}}
\put(130,30){\line(0,-1){20}}
\put(130,20){\circle*{2}}
\put(130,10){\circle*{2}}
\put(130,10){\vector(1,0){15}}
\put(50,30){\line(1,-1){15}}
\put(65,15){\circle*{2}}
\put(65,15){\vector(1,0){12}}
\put(65,15){\line(-1,-1){10}}
\put(55,5){\circle*{2}}
\put(55,5){\vector(1,0){12}}

\put(80,15){{\scriptsize$Y_2$}}
\put(70,3){{\scriptsize$Y_1$}}
\put(130,40){{\scriptsize$Y_{4}$}}
\put(145,15){{\scriptsize$Y_{3}$}}

\put(60,45){{\scriptsize$Y_5$}}
%\put(65,26){{\scriptsize$I'_2$=\{3,4\}}}
%\put(103,16){{\scriptsize$I'_1$=\{1,4\}}}
\end{picture}
$$
\caption{Dual graph of \(C\) with the good order in the branches.}
\label{fig:ex_22}
\end{figure}
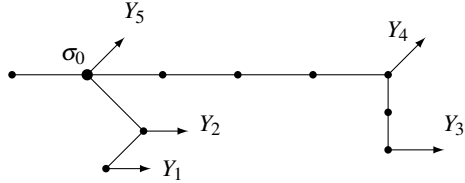
\end{ex}

%%%%%%%%%%%%%%%%%%%%%%%%%%%%%%%%%%%%%%%%%%%

%%%%%%%%%%%%%%%%%%%%

%%%%%%%%%%%%%%%%%%%%%%%%%%%%%%%%%%%%%%%%%%%%%%%%%%% Poincare Series

\section{Poincaré series in terms of the minimal resolution}\label{sec:Poincareseries}

In this section we will first define the Poincar\'e series associated to a plane curve singularity $C=\bigcup_{i=1}^{r} C_i$, and describe it in terms of the dual graph of the minimal embedded resolution of $C$. After that, we will use the ordered sequence of approximating curves defined in the previous section to provide an iterative method to compute the Poincaré series of \(C.\)

%For that, we need first to define the Poincar\'e series associated to the value semigroup of $C$, and then to consider an extension of the value semigroup of $C$.

\subsection{Poincaré series associated to the curve}\label{subsec:poincarebasic}
In the context of a discrete valuation, it is common to work with the Poincaré series associated to a filtration on the local ring. For \(\vu \in\mathbb{Z}^r\), set \(J_C(\vu)=J(\vu):=\{g\in\mathcal{O}\;|\;\vu(g)\geq \vu\}\). These are ideals which yield a multi-index filtration, and it makes sense to consider the quotiens $J(\vu)/J(\vu + \underline{1})$ which turn to be finite-dimensional $\mathbb{C}$-vector spaces of dimension $c(\vu)$; this leads to the consideration of 

$$
L_C(t_1,\dots,t_r)=\sum_{\vu\in\mathbb{Z}^r}c(\vu)\cdot \underline{t}^{\vu}.
$$

The dimensions $c(\underline{v})$ depend on $\Gamma(C)$ \cite[(3.5)]{MFjpaa}, hence $L_C(\underline{t})$ does so. We will abuse of notation and write $L_C$ rather than $L_{\Gamma(C)}$; this will be consistently done with all the objects occurring in the sequel.
\medskip

In the case of \(r=1,\) the series \(L_C(t)\) is the generating series of the value semigroup of $C$; however, for \(r>1\), this is not a (formal) power series, but  \(L(t_1,\dots,t_r)\in\mathbf{Z}[[t_1,\dots,t_r,t^{-1}_1,\dots,t^{-1}_r]]\); i.e. this is a Laurent series infinitely long in all directions since \(c(\vu)\) can be positive for \(\vu\) with some negative components \(v_i\) as well. As in \cite{CDGduke} we may check that 

$$
P'_C(t_1,\dots,t_r)=L_C(t_1,\dots,t_r)\cdot \prod_{i=1}^{r}(t_i-1)
$$

is a polynomial (if $r>1$) and moreover, it is divisible by \((t_1\cdots t_r-1)\). This lead to the definition of the Poincar\'e series associated to the multi-index filtration given by the ideals $J(\nu)$, thus to the curve $C$, as
\[
P_C(t_1,\dots,t_r)=P'_C(t_1,\dots,t_r)/(t_1\cdots t_r-1)
\]

which is in fact a polynomial if $r>1$.
\medskip

The univariate Poincar\'e series $P_C(t)$ is easily computed from the value semigroup $\Gamma(C)$, since $c(v)=1$ if and only if $v\in \Gamma(C)$. This is not longer true if $r>1$, and the computation of $P_C(t_1,\ldots , t_r)$ becomes complicated. There is a way to compute it in terms of the embedded resolution of $C$ due to Campillo, Delgado and Gusein-Zade \cite{CDG03a} by a formula which is analogous to that of A'Campo \cite{ACampo1,ACampo2} for the zeta function of the monodromy transformation of $C$; it may be also computed by using techniques of integration with respect the Euler characteristic again by Campillo, Delgado and Gusein-Zade, see e.g. \cite{CDG00, CDG02}. In fact, they show that the Poincar\'e series $P_{\widehat{\Gamma}_C}(t_1,\ldots , t_r)$ associated to the (projectivization of the) extended semigroup $\widehat{\Gamma}_C$, which is

$$
\chi(\mathbb{P}\widehat{\Gamma})=\sum\limits_{\underline{v}\in\mathbb{N}^r}\chi\big(\mathbb{P} F_{\underline{v}}\big)\cdot\underline{t}^{\underline{v}},
$$

(it is certainly possible to construct the projectivizations $\mathbb{P} F_{\underline{v}}=F_{\underline{v}}/\mathbb{C}^{\ast}$ of the fibre $F_{\underline{v}}$ \cite{CDGextended}),
coincides with the product over all the irreducible components of the exceptional divisor of the minimal resolution of $C$ of powers of cyclotomic polynomials of the form $(\underline{t}^{\underline{m}}-1)$, where $\underline{m}$ is the multiplicity of the liftings of the functions $f_i$ corresponding to the branches $C_i$ to the resolution space along the irreducible components of the exceptional divisor; in other words, we have
\begin{equation}\label{eq:duke}
    P_{\widehat{\Gamma}(C)}(t_1,\ldots , t_r) = \prod_{P \in G(C)} (t^{\underline{v}^{P}}-1)^{-\chi(E_{P}^{\circ})},
\end{equation}

where $\underline{v}^{P}$ stands for the value of the germ of a nonsingular curve which is transversal to $E_P$ in a smooth point of $E_P$. We further develop (\ref{eq:duke}) in terms of special points of the dual graph of $C$. First we need some notation. 
\medskip

Denote by $\mathcal{D}$ the set of dead arcs of $G(C)$. If $L \in \mathcal{D}$ then let $\rho_L$ (resp. $\sigma_L$)  be its end (resp. star) point. Also, let $\widetilde{\mathcal{D}} := \{L \in
\mathcal{D} \mid \sigma_L > \sigma_0\}$ be the set of dead arcs occurring after the first separation point $\sigma_0$ of $G(C)$. In addition, write $\widetilde{\mathcal{E}}$ for the set of ends for the dead arcs in $\widetilde{\mathcal{D}}$.
\medskip

For any $L \in \widetilde{\mathcal{D}}$ we know that $\underline{v}^{\sigma_L}	= (n_L + 1)\underline{v}^{\rho_L}$ for some integer $n_L \ge1$. We will denote also $n_{\rho} =n_L$ for $\rho=\rho_L \in \widetilde{\mathcal{E}}$.

\medskip

Let $L_0, \ldots ,L_q$ be the dead arcs of $G(C)$ with $\sigma_i =\sigma_{L_i} \le \sigma^0$ for $i \in \{1,\ldots , q\}$ ordered in such a way that $L_0$ has end point the vertex corresponding to $\mathbf{1}$, and $\sigma_1 < \ldots < \sigma_q$. Note that, if $q \ge 1$, then $\sigma_1$ is also the star point of the dead arc $L_0$ starting with $\mathbf{1}$. We denote also $\rho_i =\rho_{L_i}$ for $1\le i \le q$. As in the above case, let us denote by $n_i =n_{\rho_i}$ (for $i \in \{1, \ldots , q\}$) the integers such that $\underline{v}^{\sigma_i}= (n_i + 1)\underline{v}^{\rho_i}$. For the sake of completeness we set also $n_0 = n_{\mathbf{1}} = -1$. Note to which extend the divisor $\mathbf{1}$ and the integer $n_{\mathbf{1}}$ play a special role: if $q \ge 1$, then $\underline{v}^{\sigma_1}$ is also a multiple of $\underline{v}^{\mathbf{1}}$; this is, of course, different from $(n_{\mathbf{1}} + 1)\underline{v}^{\mathbf{1}}$.
\medskip

We define:
%This section culminates with a formula for the Poincar\'e series in terms of the dual graph $G(C)$:
%{\textcolor{red}{Sustituir \(s(\alpha)>1\) por \(\alpha \in \mathcal{R}.\)}}
\begin{eqnarray}
P_1(\underline{t})  & := &  \frac{1}{\underline{t}^{\underline{v}^{\mathbf{1}}}-1} \cdot \prod_{i=1}^q \frac{\underline{t}^{\underline{v}^{\sigma_{i}}}-1}{\underline{t}^{\underline{v}^{\rho_{i}}}-1}\cdot (\underline{t}^{\underline{v}^{\sigma_0}}-1);  \nonumber \\
P_2(\underline{t})  & := & \prod_{\rho \in \widetilde{\mathcal{E}}}  \frac{\underline{t}^{(n_{\rho}+1)\underline{v}^{\rho}}-1}{\underline{t}^{\underline{v}^{\rho}}-1};  \nonumber \\
P_3(\underline{t})  & := & \prod_{s(\alpha) >1}  (\underline{t}^{\underline{v}^{\alpha}}-1)^{s(\alpha)-1}. \nonumber
\end{eqnarray}

\begin{theorem} \label{thm:p1p2p3}
Let $C$ be a plane curve singularity with dual graph $G(C)$, then
\[
P_C(t_1,\ldots , t_r)=P_1(t_1,\ldots , t_r)\cdot P_2(t_1,\ldots , t_r)\cdot P_3(t_1,\ldots , t_r).
\]
\end{theorem}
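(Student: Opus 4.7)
The plan is to start from the A'Campo-type formula of Campillo--Delgado--Gusein-Zade recalled in \eqref{eq:duke}, namely
\[
P_C(\underline{t}) \;=\; \prod_{P \in G(C)} (\underline{t}^{\underline{v}^P} - 1)^{-\chi(E_P^\circ)},
\]
and to reorganize the right-hand side according to the combinatorial type of each vertex $P$. Since $E_P\cong \mathbb{P}^1$ and $E_P^\circ$ is obtained by removing the $\nu(P)$ points where $E_P$ meets the other components of the total transform (exceptional divisors and strict transforms of branches), one has $\chi(E_P^\circ) = 2 - \nu(P)$. Consequently the ordinary vertices (with $\nu(P)=2$) drop out entirely; each end point of $G(C)$ contributes a factor $(\underline{t}^{\underline{v}^P}-1)^{-1}$; each star point $\alpha$ contributes $(\underline{t}^{\underline{v}^\alpha}-1)^{\nu(\alpha)-2}$; and the root $\mathbf{1}$ is treated with the special convention for $\nu(\mathbf{1})$ recalled in Subsection~\ref{subsub:dualgraph}.

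Second, I would split the exponent at each star point $\alpha$ using the arithmetic of $s(\alpha)$ and $\nu(\alpha)$ given in that subsection. Writing
\[
\nu(\alpha)-2 \;=\; \bigl(s(\alpha)-1\bigr) + \delta(\alpha),
\]
where $\delta(\alpha)$ counts the number of dead arcs $L\in\mathcal{D}$ with $\sigma_L=\alpha$, plus $1$ if $\alpha=\sigma_0$, the contribution $(\underline{t}^{\underline{v}^\alpha}-1)^{s(\alpha)-1}$ assembles, over the proper star points with $s(\alpha)>1$, into precisely $P_3(\underline{t})$. Star points with $s(\alpha)\leq 1$ contribute trivial exponent here and only supply ``defect'' powers to be consumed in the next step.

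Third, I would telescope the leftover terms along dead arcs. For every $L\in\mathcal{D}$ with end $\rho_L$ and star $\sigma_L$, one of the $\delta(\sigma_L)$ powers of $(\underline{t}^{\underline{v}^{\sigma_L}}-1)$ pairs with the dead-end factor $(\underline{t}^{\underline{v}^{\rho_L}}-1)^{-1}$ to produce the ratio $(\underline{t}^{\underline{v}^{\sigma_L}}-1)/(\underline{t}^{\underline{v}^{\rho_L}}-1)$. I would then split $\mathcal{D}$ into $\widetilde{\mathcal{D}}$ (dead arcs with $\sigma_L>\sigma_0$) and the remaining dead arcs $L_0,L_1,\ldots,L_q$: for $L\in\widetilde{\mathcal{D}}$ the identity $\underline{v}^{\sigma_L}=(n_L+1)\underline{v}^{\rho_L}$ turns the ratio into a factor of $P_2(\underline{t})$; for $L_1,\ldots,L_q$ the ratios, together with the dead-end contribution $(\underline{t}^{\underline{v}^{\mathbf{1}}}-1)^{-1}$ supplied by $L_0$ (whose end is $\mathbf{1}$) and the leftover factor $(\underline{t}^{\underline{v}^{\sigma_0}}-1)$ coming from $\delta(\sigma_0)=1$, assemble into $P_1(\underline{t})$.

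The main obstacle will be the bookkeeping at the root $\mathbf{1}$ and at $\sigma_0$: the convention inflating $\nu(\mathbf{1})$ by one when $\mathbf{1}=\sigma_0$ is tailored to make the counting work in every degenerate configuration (including $q=0$, $\mathbf{1}=\sigma_0$, or star points carrying several dead arcs). The heart of the argument is the verification that the defect $\delta(\alpha)$ exactly accounts for the gap between $\nu(\alpha)-2$ and $s(\alpha)-1$ in all cases---and hence that no factor is double-counted or missing in the telescoping---which requires a careful case-by-case analysis of the different shapes that the star point and its incident dead arcs can adopt.
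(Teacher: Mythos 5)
Your strategy is precisely the paper's: start from the A'Campo-type formula \eqref{eq:duke}, use $\chi(E_P^\circ)=2-\nu(P)$, and reorganize the product by vertex type; the paper's own proof is extremely terse (``in view of the previous definitions, the statement follows straightforward''), so your elaboration is in the right spirit. However, the explicit bookkeeping identity you propose, namely $\nu(\alpha)-2 = (s(\alpha)-1) + \delta(\alpha)$ with $\delta(\alpha)$ the number of dead arcs $L$ having $\sigma_L=\alpha$ plus $[\alpha=\sigma_0]$, fails at exactly the places you flag as delicate. Take a non-proper star point $\sigma_i$ with $2\le i\le q$ lying on the common geodesic from $\mathbf{1}$ to $\sigma_0$: there $\nu(\sigma_i)=3$, $s(\sigma_i)=0$, and only the single dead arc $L_i$ is incident, so your formula gives $(0-1)+1=0$ while the exponent that must be distributed is $\nu-2=1$; a full power of $(\underline{t}^{\underline{v}^{\sigma_i}}-1)$ (the numerator of the ratio $\frac{\underline{t}^{\underline{v}^{\sigma_i}}-1}{\underline{t}^{\underline{v}^{\rho_i}}-1}$ inside $P_1$) is lost. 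You happen to get $\sigma_1$ right only because both $L_0$ and $L_1$ are incident there, so the spurious $s-1=-1$ term is absorbed by the extra dead arc; that compensation is absent at $\sigma_2,\dots,\sigma_q$. Dually, when $\sigma_0$ is itself the star of $L_0$ (e.g.~$q=0$ with $\sigma_0$ also carrying a side dead arc), your $\delta$ counts $L_0$ and overshoots by one.

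The underlying issue is that the $(s(\alpha)-1)$ summand only represents the $P_3$ exponent when $s(\alpha)\ge 1$; for $s(\alpha)=0$ it is negative and does not correspond to any factor. The correct split of $\nu(\alpha)-2$ is $\max\bigl(s(\alpha)-1,0\bigr)+\delta'(\alpha)$, where $\delta'(\alpha)$ counts the dead arcs $L\neq L_0$ with $\sigma_L=\alpha$ plus $[\alpha=\sigma_0]$; one checks directly that this agrees with the case formulas relating $s(P)$ and $\nu(P)$ in Subsection~\ref{subsub:dualgraph}, including the convention at $\mathbf{1}$ when $\mathbf{1}=\sigma_0$. With that correction, your telescoping along dead arcs — $L_0$ supplying the standalone $(\underline{t}^{\underline{v}^{\mathbf{1}}}-1)^{-1}$, each $L_i$ ($1\le i\le q$) supplying the ratio in $P_1$, each $L\in\widetilde{\mathcal{D}}$ supplying the corresponding factor of $P_2$ via $\underline{v}^{\sigma_L}=(n_L+1)\underline{v}^{\rho_L}$, and the lone extra $(\underline{t}^{\underline{v}^{\sigma_0}}-1)$ coming from $[\alpha=\sigma_0]$ — does reproduce $P_1\cdot P_2\cdot P_3$.
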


\begin{proof}
Campillo, Delgado and Gusein-Zade showed eq.~ (\ref{eq:duke}). Since 
\[
\chi(E_{P}^{\circ})=2-\big | \{\mbox{singular points of } E_P\}\big |=2-\nu (P),
\]
it remains only to compute those values $\nu (P)\neq 2$ for $P\in G(C)$. In view of the previous definitions, the statement follows straightforward.

\end{proof}

\begin{rem}
Theorem \ref{thm:p1p2p3} may be proven with a lot of effort from the explicit computation of the dimensions of the vector spaces $C(\underline{v})$ from the semigroup of values. This circumvents the use of the extended semigroup. 
\end{rem}

\subsection{Iterative construction of the Poincaré series}\label{subsec:iterativepoincare}

In this subsection, we will show how to compute the Poincaré series iteratively from the star points in the dual graph of the curve. This construction will become an extension of the irreducible case and it is highly inspired on it. 
\medskip

The construction in the irreducible case bases on an algebraic operation called gluing, which is available for both numerical and affine semigroups; since the semigroup of a plane curve with more than one branch is none of them, we do not have to our disposal the gluing operation. However, it is possible to reconstruct the process by following the paths marked by ordered the star points which now correspond with maximal contact values and values at proper star points. In this way, we are going to show that the decomposition of the Poincar\'e series given by Theorem \ref{thm:p1p2p3}  can be seen in terms of products of the following polynomials:
\begin{equation}\label{eqn:defkeypoly}
    \begin{array}{cc}
         P(m,n,x)=&\displaystyle \frac{x^{mn}-1}{x^m-1}\cdot\frac{x-1}{x^n-1}  \\[12pt]
         Q(m,n,x,y)=&\displaystyle \frac{(yx^m)^n-1}{yx^m-1}\\[12pt]
         B(m,n,x,y,z)=&(yx^m)^nz^m-1.
    \end{array}
\end{equation}

First we describe the case of one branch.

\subsubsection{The irreducible case}
In the case of a plane curve with a single branch, we have mentioned in Section \ref{sec:semigroupvalues} that its semigroup of values $\Gamma$ is a numerical semigroup minimally generated by \(\{\obeta_0,\dots,\beta_g\}.\) This numerical semigroup is a complete intersection numerical semigroup, therefore it can be constructed by a process defined by Delorme in \cite{Delormegluing} and called gluing by Rosales (see \cite{rosbook}); we explain it briefly.
\medskip

Let \(A=\{a_1, \dots,a_{g_1}\},B=\{b_1,\dots, b_{g_2}\}\) and \(C=\{c_1,\dots, c_{g_0}\}\) be three subsets of natural numbers. A semigroup \(S=\langle C \rangle\) in \(\mathbb{N}\) is said to be a gluing of \(S_1=\langle A\rangle\) and \(S_2=\langle B \rangle\) if its finite set of generators \(C\) splits into two parts, say \(C=k_1 A\sqcup k_2 B\) with \(k_1,k_2\geq 1\), and the defining ideals of the corresponding semigroup rings satisfy that \(I_C\) is generated by \(I_A+I_B\) and one extra element. We will denote the gluing of \(S_1\) and \(S_2\) via \((k_1,k_2)\) as \(k_1 S_1+ k_2S_2.\)
\medskip

The point is that we can construct $\Gamma$ as an iterated gluing. In the notation of Section \ref{sec:semigroupvalues}, we write \(p_i=n_i\) and \(w_i=\overline{\beta}_i/(n_{i+1}\cdots n_g)=\overline{\beta}_i/e_i\). Since \(\gcd(p_i,w_i)=1,\) we start with the numerical semigroup \(\Gamma_1=\langle p_1,w_1 \rangle.\) 
Now, we perform the gluing of \(\Gamma_1\) and the (trivial) semigroup \(\mathbb{N}\) via \((p_2,w_2)\) in order to obtain \(\Gamma_2=p_2 \Gamma_1+ w_2\mathbb{N}.\) It is easily seen that \(\Gamma_2=\langle p_1p_2, p_2w_1,w_2 \rangle\). Recursively, we define \(\Gamma_i=n_{i-1}\Gamma_{i-1}+ w_{i-1}\mathbb{N}\), and again it is a simple matter to check that
\[
\Gamma_i=\langle p_1\cdots p_i,w_1p_2\cdots p_i,\ldots , w_{i-1}p_i,w_i \rangle;
\]
in the end, we get that the semigroup of values is \(\Gamma=\Gamma_g.\) From this point of view, the knowledge of the minimal generators \(\obeta_0,\dots,\obeta_g\) is enough to provide the construction of \(\Gamma\) by gluing. Moreover, the gluing construction of the semigroup can be identified with the successive truncations of the topological Puiseux series of the branch: The key idea is that each star point in the dual graph of the branch defines a gluing operation in the semigroup in an ordered way.
\medskip

% In the irreducible case, it is easily seen that any element of the semigroup \(\Gamma\) can be written in a canonical form as follows:
%\[
%\nu=a_0\obeta_0+a_1\obeta_1+\cdots a_g\obeta_g
%\]
%where \(a_i\leq n_i\) for \(i=1,\dots,g\) and \(a_0\geq 0.\) From this unique expression it is easily deduced that 
%\[
%P_{\Gamma_{i}}(t)=\frac{\displaystyle\prod_{j=1}^{i}(t^{n_j\overline{\beta}_j/e_i}-1)}{\displaystyle\prod_{j=0}^{i}(t^{\overline{\beta}_j/e_i}-1)}, \quad\text{and}\quad P_{\mathbb{N}}(t)=\frac{1}{t-1},
%\] 
%for $\Gamma_i=\langle \obeta_0,\ldots , \obeta_i \rangle$; the only operation we need to perform is a gluing operation of type \((n_i,\obeta_i/e_i)\), and this operation behaves very well with respect to the unique expression of an element of the semigroup. Moreover, it translates directly to the dual graph, since it follows the star points of the geodesic to the unique arrow in the graph. 
%\medskip

On the other hand, the Poincar\'e series $P_{\Gamma_i}(t)$ of $\Gamma_i=\langle \alpha_1,\ldots , \alpha_i \rangle$ is the Hilbert series $H_{k[\Gamma_i]}(t)$ of the (graded) semigroup ring $k[\Gamma_i]$ for a field $k$; this may be identified in the obvious way with the subalgebra $k[t^{\alpha_1},\ldots , t^{\alpha_i}]$ of the polynomial ring. Therefore
$$
 \Gamma_i=n_{i}\Gamma_{i-1} +  w_{i}\mathbb{N}.
$$
This means that 
\begin{equation}\label{eqn:tensordecompsemigr}
   k[\Gamma_{i}]\cong k[n_i\Gamma_{i-1}]\otimes_k k[w_i\mathbb{N}]/(u_1^{n_iw_i}\otimes 1 - 1 \otimes u_2^{n_iw_i}), 
\end{equation}
with \(u_1\) the uniformizing parameter of \(k[n_i\Gamma_{i-1}]\) and \(u_2\) the uniformizing parameter of \(k[w_i\mathbb{N}].\) Therefore
$$
H_{k[\Gamma_{i}]}(t)= (1-t^{n_iw_i})\cdot H_{k[n_i\Gamma_{i-1}]}(t)\cdot  H_{w_i\mathbb{N}}(t) = (1-t^{n_iw_i})\cdot H_{k[\Gamma_{i-1}]}(t^{n_i})\cdot \frac{1}{1-t^{w_i}}.
$$
This allows us to construct the Poincaré series of the gluing as 

\[
P_{\Gamma_{i}}(t)=(t^{n_{i}\overline{\beta}_{i}/e_{i}}-1)\cdot P_{\Gamma_{i-1}}(t^{n_{i}})\cdot P_{\mathbb{N}}(t^{\overline{\beta}_{i}/e_{i}}),
\]
which provides the well known expression for the Poincaré series of the numerical semigroup \(\Gamma_{i+1}.\) If we write 
 \(b_{l,m}:=\prod_{j=l}^{m}n_j\) with \(b_{l,m}=1\) if \(l>m,\) then the following Proposition is easily checked.

\begin{proposition}\label{prop:poincareirreduciblecase}
Let \(C\) be an irreducible plane curve singularity with semigroup \(\Gamma\). With the previous notation, we have
    \[
    P_C(t)=P_\Gamma(t)=(\prod^{g}_{j=1}P(\obeta_j/e_j,n_j,t^{b_{j+1,g}}))/(t-1).
    \]
\end{proposition}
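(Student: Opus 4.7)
The plan is to reduce the statement to the classical closed-form
\[
P_{\Gamma}(t) \;=\; \frac{\prod_{j=1}^{g}(t^{n_j\obeta_j}-1)}{\prod_{j=0}^{g}(t^{\obeta_j}-1)}
\]
by an algebraic simplification of the right-hand side of the proposition, and then to prove this closed form by induction on the gluing step.

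For the first reduction, I would unwind the definition of $P(m,n,x)$ at $m=\obeta_j/e_j$, $n=n_j$, $x=t^{b_{j+1,g}}$. Since $e_g=1$ we have $b_{j+1,g}\cdot(\obeta_j/e_j)=\obeta_j$, and from $b_{j,g}=n_j\, b_{j+1,g}$ the $j$th factor rewrites as
\[
P(\obeta_j/e_j, n_j, t^{b_{j+1,g}})=\frac{t^{n_j\obeta_j}-1}{t^{\obeta_j}-1}\cdot\frac{t^{b_{j+1,g}}-1}{t^{b_{j,g}}-1}.
\]
Taking the product over $j=1,\dots,g$, the second factor telescopes (using $b_{g+1,g}=1$ and $b_{1,g}=n_1\cdots n_g=\obeta_0$) to $(t-1)/(t^{\obeta_0}-1)$; dividing by $t-1$ therefore identifies the two expressions and reduces the proposition to the closed form above.

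For the closed form I would induct on $i$, proving
\[
P_{\Gamma_i}(t)\;=\;\frac{\prod_{j=1}^{i}(t^{n_j\obeta_j/e_i}-1)}{\prod_{j=0}^{i}(t^{\obeta_j/e_i}-1)}
\]
for every $1\leq i\leq g$; at $i=g$ (where $e_g=1$) this is precisely the closed form sought. The base case $i=1$ is the Hilbert series of $\mathbb{C}[\Gamma_1]=\mathbb{C}[t^{n_1},t^{\obeta_1/e_1}]$, a complete intersection in two coprime generators, computed directly. For the inductive step I invoke the gluing recursion
\[
P_{\Gamma_i}(t) \;=\; \frac{t^{n_i \obeta_i/e_i}-1}{t^{\obeta_i/e_i}-1}\cdot P_{\Gamma_{i-1}}\!\big(t^{n_i}\big)
\]
deduced in the text from the tensor decomposition (\ref{eqn:tensordecompsemigr}). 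Substituting the inductive hypothesis for $P_{\Gamma_{i-1}}(t^{n_i})$ and applying the identity $n_i/e_{i-1}=1/e_i$ inside the exponents collapses the product to the formula at level $i$, completing the induction.

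The main obstacle is the careful bookkeeping of the $e_i$-rescalings, since $\Gamma_i$ is generated by the rescaled elements $\obeta_0/e_i,\ldots,\obeta_i/e_i$ rather than by the $\obeta_k$ themselves; one must verify that this normalisation is consistent at each step and collapses correctly when $e_g=1$. Once this is in place, the argument is purely symbolic: it relies only on the gluing recursion already established in Subsection~\ref{subsec:iterativepoincare} together with elementary manipulation of cyclotomic-type factors, with no further geometric or topological input required.
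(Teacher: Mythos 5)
Your proof is correct and is precisely the ``easy check'' the paper invites after presenting the gluing recursion; the paper itself does not spell out a proof of Proposition~\ref{prop:poincareirreduciblecase} beyond introducing $b_{l,m}$ and asserting the statement follows, so you have supplied exactly the intended argument. The two ingredients you use---(i) unwinding $P(\obeta_j/e_j,n_j,t^{b_{j+1,g}})$ via the identities $b_{j+1,g}=e_j$, $n_jb_{j+1,g}=b_{j,g}$, $b_{1,g}=\obeta_0$, $b_{g+1,g}=1$, and telescoping, and (ii) the induction on $i$ driven by the recursion $P_{\Gamma_i}(t)=\frac{t^{n_i\obeta_i/e_i}-1}{t^{\obeta_i/e_i}-1}P_{\Gamma_{i-1}}(t^{n_i})$, collapsed under $n_i/e_{i-1}=1/e_i$---are exactly the pieces the surrounding text of Subsection~\ref{subsec:iterativepoincare} provides, and your bookkeeping with the $e_i$-rescalings is correct. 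One cosmetic remark: the paper's sign conventions for $P_\mathbb{N}$ and the tensor decomposition are not internally consistent (the Hilbert-series line uses $1-t^{n_iw_i}$ while the Poincaré-series line uses $t^{n_iw_i}-1$), and your formula inherits the same overall sign as the proposition; that is a quirk of the source, not a defect of your argument.
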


\subsubsection{The base cases}

Before continuing, let us introduce some notation. Let \(C\) be a plane curve singularity with \(r\) branches and let \(\{(p_{i,j},m_{i,j})\;|\;1\leq i\leq k_i,\;1\leq j\leq r\}\) be the set of the Puiseux pairs of the topological Puiseux series of the branches. Recursively, we define 
 \begin{equation}\label{eqn:defw_j}
   w_{1,j}=m_{1,j}\quad \text{and}\quad w_{i,j}=m_{i,j}-m_{i-1,j}p_{i,j}+w_{i-1,j}p_{i-1,j}p_{
i,j}.  
 \end{equation}

Observe that, for \(r=1\) it is \(w_{i,1}=\obeta_i/e_i\), and for \(r>1\) we have that \(w_{i,j}\in\{\obeta^{j}_s/e^{j}_s,[f_j,f_l]/e^j_s\}\) for some \(1\leq s\leq g_j\) and \(1\leq l\leq r.\) 
\medskip

As we have seen, the irreducible case shows the computation of the Poincaré series for the sequence of approximating curves in the star points of the dual graph, obviously in the irreducible case all of them are non-proper. To mimic this process for non-irreducible plane curves we need first to prove two base cases (which are those with a single approximating curve which is the curve itself). The first one corresponds to a plane curve with \(r\) smooth branches all of them with the same contact. Its dual graph has a unique star point, which is in fact a proper star point.

\begin{proposition}\label{prop:poincarelemm1}
    Let \(C=\bigcup_{i=1}^{r} C_i\) such that \(C_i\) is smooth for all \(i\in \ind\) and such that the contact pair is equal for all branches, i.e. \((q,c)=(q_{i,j},c_{i,j})\) for all \(i,j\in\ind\). In particular, the topological Puiseux series of the branches are \(s_i(x)=a_ix^{c}\) with \(a_i\neq a_j\) for $i \neq j$. Then,

    \[
    P_C(\underline{t})=Q\big(c,1,t_1,\prod_{i=2}^{r}t_i^c\big)\cdot \Big(\prod_{i=2}^{r-1}B(1,c,t_i,\prod_{k<i}t_k,\prod_{k>i}t_k^c)\Big)\cdot Q\big(1,c,t_r,\prod_{i=1}^{r-1}t_i\big).
    \]
    
 \end{proposition}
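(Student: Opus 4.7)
The plan is to describe the dual graph $G(C)$ explicitly, to compute the curvette valuations at its vertices, and then to apply the A'Campo-type formula~\eqref{eq:duke} in order to extract a closed form for $P_C$; a short algebraic manipulation will show that this closed form coincides with the factored product claimed in the statement.

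First, since every branch is smooth with topological Puiseux series $y = a_i x^c$ and all $a_i$ are distinct, the branches share the same tangent direction at the origin with contact exponent $c$. Hence exactly $c$ successive blowing-ups along the common tangent are needed to separate them; consequently $G(C)$ is a chain $\mathbf{1} = E_1 \prec E_2 \prec \cdots \prec E_c$ with all $r$ arrows $\widetilde{C}_1,\ldots,\widetilde{C}_r$ attached at $E_c$ (the chain degenerating to the single vertex $\mathbf{1} = \sigma_0 = E_c$ when $c=1$). In every case $\sigma_0 = E_c$ is the unique star point of $G(C)$. A curvette at $E_i$ lifts to a smooth germ of the form $y - a x^i = 0$ for generic $a$, and a direct application of Noether's formula (Proposition~\ref{noehterformula}) immediately yields
\[
\underline{v}^{E_i} = (i, i, \ldots, i), \qquad \underline{t}^{\underline{v}^{E_i}} = T^i, \qquad T := t_1 \cdots t_r.
\]

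Next, I would note that every intermediate vertex $E_2, \ldots, E_{c-1}$ is ordinary, so its Euler characteristic vanishes and contributes trivially to~\eqref{eq:duke}. Counting punctures directly on the remaining components gives $\chi(E_1^\circ) = 1$ and $\chi(E_c^\circ) = 1-r$ for $c \geq 2$, and $\chi(E_1^\circ) = 2-r$ in the collapsed case $c=1$; in every case the product in~\eqref{eq:duke} collapses to
\[
P_C(\underline{t}) \;=\; \frac{(T^c - 1)^{r-1}}{T - 1},
\]
a polynomial since $(T^c-1)/(T-1) = 1 + T + \cdots + T^{c-1}$.

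It then remains to verify that this closed form equals the product of $Q$- and $B$-factors from the statement. A direct substitution into~\eqref{eqn:defkeypoly} gives
\[
Q\!\Big(c,\,1,\,t_1,\,\textstyle\prod_{i=2}^{r} t_i^c\Big) = \frac{T^c - 1}{T^c - 1} = 1, \qquad
Q\!\Big(1,\,c,\,t_r,\,\textstyle\prod_{i=1}^{r-1} t_i\Big) = \frac{T^c - 1}{T - 1},
\]
while, for each $i = 2,\ldots,r-1$,
\[
B\!\Big(1,\,c,\,t_i,\,\textstyle\prod_{k<i} t_k,\,\prod_{k>i} t_k^c\Big) = \Bigl(\textstyle\prod_{k \leq i} t_k\Bigr)^{c}\Bigl(\textstyle\prod_{k > i} t_k\Bigr)^{c} - 1 = T^c - 1.
\]
Multiplying these factors together produces $(T^c-1)^{r-2} \cdot (T^c-1)/(T-1) = (T^c-1)^{r-1}/(T-1)$, matching the closed form above.

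The bookkeeping is essentially linear once the dual graph has been identified, so no serious obstacle arises. The only mildly delicate point is the degenerate case $c=1$, where the chain collapses and $\mathbf{1}=\sigma_0$ carries all $r$ arrows; but the closed form handles this uniformly, so the case distinction never enters the algebraic identity to be verified.
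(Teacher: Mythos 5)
Your proof is correct and is essentially the paper's argument: both reduce to the A'Campo-type product formula~\eqref{eq:duke}, identify $\underline{v}^{\mathbf{1}}=(1,\ldots,1)$ and $\underline{v}^{\sigma_0}=(c,\ldots,c)$ with $\sigma_0$ the unique star point of valence $r+1$ (so $s(\sigma_0)=r-1$), and then verify the claimed factorization by direct substitution into the definitions of $Q$ and $B$. The only cosmetic difference is that you unpack~\eqref{eq:duke} from scratch by describing the chain $E_1\prec\cdots\prec E_c$ and its Euler characteristics, while the paper simply invokes the already-packaged Theorem~\ref{thm:p1p2p3}, which is the same computation.
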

\begin{proof}
    By hypothesis, the dual graph has only one star point, \(\sigma_0.\) Moreover, the value at \(\sigma_0\) is \(v^{\sigma_0}=(c,\dots,c)\) and it has \(s(\sigma_0)=r-1.\) Also, \(v^{\mathbf{1}}=(1,\dots,1)\). Then,
\begin{equation*}
    \begin{split}
       & Q\big(c,1,t_1,\prod_{i=2}^{r}t_i^c\big)\cdot \Big(\prod_{i=2}^{r-1}B(1,c,t_i,\prod_{k<i}t_k,\prod_{k>i}t_k^c)\Big)\cdot Q\big(1,c,t_r,\prod_{i=1}^{r-1}t_i\big)\\
       =& \frac{\displaystyle\prod_{i=1}^{r}t_i^c-1}{\displaystyle\prod_{i=1}^{r}t_i^c-1}\cdot\bigg(\displaystyle\prod_{i=2}^{r-1}\Big(\big(t_i \prod_{k<i}t_k\big)^c\prod_{k>i}t_k^c-1\Big)\bigg)\cdot\displaystyle\frac{\displaystyle\prod_{i=1}^{r}t_i^c-1}{\displaystyle\prod_{i=1}^{r}t_i-1}\\
       =&\frac{\displaystyle\prod_{i=1}^{r-1}\big(\prod_{k=1}^{r}t_k^c-1\big)}{\displaystyle\prod_{k=1}^{r}t_k-1}=\frac{\underline{t}^{\vu^{\sigma_0}}-1}{\underline{t}^{\vu^{\mathbf{1}}}-1}\cdot (\underline{t}^{\vu^{\sigma_0}}-1)^{r-2}=P_C(\underline{t}),
    \end{split}
\end{equation*}
 as desired.
\end{proof}
We can extend the proof of Proposition \ref{prop:poincarelemm1} to the case where at least one of the branches has one Puiseux pair. This constitutes the second base case, which is the case of a plane curve singularity with \(r\) branches with at most one characteristic exponent all of them with contact \((q_{i,j},c_{i,j})\in\{(1,0),(0,l)\}.\) The condition on the contact implies that again the curve itself is the only approximating curve.

\begin{proposition}\label{prop:poincarelemm2}
    Let \(C=\bigcup_{i=1}^{r} C_i\) be a curve with \(\Gamma^i=\langle\obeta_0^i,\obeta^i_1\rangle\) or \(\Gamma^i=\mathbb{N}.\) Assume that for every index \(i\in \ind\) such that \(C_i\) is a singular branch we have \(l:=\Big\lfloor\frac{\obeta^i_1}{\obeta^i_0}\Big\rfloor=\Big\lfloor\frac{\obeta^j_1}{\obeta^j_0}\Big\rfloor\) if \(j\neq i\) and \(C_j\) is singular. Moreover, assume that the contact pairs are of the form \((q_{i,j},c_{i,j})\in\{(1,0),(0,l)\}.\) Then the Poincaré series $ P_C(\underline{t})$ is equal to the product
    \[
   Q\big(w_{1,1},p_{1,1},t_1,\prod_{i=2}^{r}t_i^{w_{1,i}}\big)\cdot\bigg(\prod_{i=2}^{r-1}B\big(p_{1,i},w_{1,i},t_i,\prod_{k<i}t_k^{p_{1,k}},\prod_{k>i}t_{k}^{w_{1,k}}\big )\bigg)\cdot Q\big(p_{1,r},w_{1,r},t_r,\prod_{i=1}^{r-1}t_i^{p_{1,i}}\big).
    \]
\end{proposition}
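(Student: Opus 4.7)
The plan is to apply Theorem~\ref{thm:p1p2p3} to express $P_C(\underline t)=P_1\cdot P_2\cdot P_3$ in terms of the dual graph $G(C)$, and then to verify by direct algebraic comparison that this coincides with the prescribed product of $Q$- and $B$-polynomials.

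Under the hypothesis, every pair of branches separates at a common proper star point, so $\sigma_0$ is the only proper star point of $G(C)$ and $s(\sigma_0)=r-1$ by Proposition~\ref{prop:numbersq}. No dead arc lies strictly before $\sigma_0$, and the dead arcs of $\widetilde{\mathcal D}$ come from the singular branches (those with $p_{1,i}>1$); the gluing relation recalled in Proposition~\ref{prop:poincareirreduciblecase} yields $n_{\rho_i}+1 = p_{1,i}$, so that $\underline t^{\underline v^{\sigma_i}}-1 = \underline t^{p_{1,i}\underline v^{\rho_i}}-1$. Theorem~\ref{thm:p1p2p3} then reduces $P_C(\underline t)$ to the ratio
\[
P_C(\underline t) \;=\; \frac{\underline t^{\underline v^{\sigma_0}}-1}{\underline t^{\underline v^{\mathbf 1}}-1}\;\cdot\;\prod_{i:\,p_{1,i}>1}\frac{\underline t^{p_{1,i}\underline v^{\rho_i}}-1}{\underline t^{\underline v^{\rho_i}}-1}\;\cdot\;\bigl(\underline t^{\underline v^{\sigma_0}}-1\bigr)^{r-2}.
\]

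Next, using Noether's formula (Proposition~\ref{noehterformula}) and the formulas in Subsection~\ref{subsec:valuesproperstar}, together with the simplifications $e_1^i=1$ (valid because $g_i\le 1$), $p_{1,i}=\bar\beta_0^i$ and $w_{1,i}=\bar\beta_1^i$ for singular branches (resp.\ $(p_{1,i},w_{1,i})=(1,l)$ for smooth branches), I would compute explicitly the vectors $\underline v^{\mathbf 1}$, $\underline v^{\sigma_0}$ and $\underline v^{\rho_i}$, identifying them with $\underline P:=(p_{1,1},\dots,p_{1,r})$, a suitable combination of $\underline P$ and $\underline W:=(w_{1,1},\dots,w_{1,r})$, and $\underline W$ itself; a key observation here is that the restrictive form of the contact pairs forces all singular branches to share the same characteristic pair, so the common $\underline v^\rho$ is independent of the singular index $i$. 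Expanding the definitions~(\ref{eqn:defkeypoly}), the two boundary $Q$-factors become $(\underline t^{p_{1,1}\underline W}-1)/(\underline t^{\underline W}-1)$ and $(\underline t^{w_{1,r}\underline P}-1)/(\underline t^{\underline P}-1)$; each middle $B$-factor is a single polynomial $\underline t^{\underline B_i}-1$ whose exponent vector $\underline B_i$ has $k$-th coordinate $p_{1,k}w_{1,i}$ for $k\le i$ and $w_{1,k}p_{1,i}$ for $k>i$, which---by the contact-pair identities $p_{1,k}w_{1,i}=p_{1,i}w_{1,k}$ forced by the hypothesis---collapses to $\underline v^{\sigma_0}$.

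The final step is to check equality of both rational expressions. The two denominators $\underline t^{\underline W}-1$ and $\underline t^{\underline P}-1$ on the right-hand side match $\underline t^{\underline v^\rho}-1$ and $\underline t^{\underline v^{\mathbf 1}}-1$; the $r-2$ middle $B$-factors produce the $r-2$ copies of $\underline t^{\underline v^{\sigma_0}}-1$ required by $P_3$; and the numerators of the boundary $Q$-factors together with the telescoping identity $\underline t^{p_{1,i}\underline v^\rho}-1 = (\underline t^{\underline v^\rho}-1)\cdot Q\bigl(\cdot,\cdot,\cdot,\cdot\bigr)$ absorb the remaining interior $P_2$-ratios, giving the claimed equality.

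The main obstacle is precisely this last telescoping, which requires a careful case analysis: the right-hand side contains only two explicit denominators, whereas $P_1\cdot P_2$ contains $1+|\{i:p_{1,i}>1\}|$ denominators. The resolution relies on the fact that all singular $\underline v^{\rho_i}$ coincide with $\underline W$, so that the interior singular ratios can be expanded via the above telescoping identity and absorbed into the middle $B$-factors, leaving exactly the two boundary denominators to be matched case by case depending on whether $C_1$ and $C_r$ are singular or smooth.
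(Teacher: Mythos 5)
Your starting point (apply Theorem~\ref{thm:p1p2p3} and then match with the $Q$- and $B$-polynomials) is the same as the paper's, but the structural simplifications you introduce at the outset do not follow from the hypotheses, and they break the telescoping step.

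The first problem is the claim that ``every pair of branches separates at a common proper star point, so $\sigma_0$ is the only proper star point of $G(C)$.'' This is false: the hypothesis $(q_{i,j},c_{i,j})\in\{(1,0),(0,l)\}$ permits several clusters of branches, each cluster having internal contact $(1,0)$ and cross-contact $(0,l)$. For instance, take $r=4$ with $\Gamma^1=\Gamma^2=\langle 2,3\rangle$, $\Gamma^3=\Gamma^4=\langle 3,4\rangle$, $(f_1\mid f_2)=(f_3\mid f_4)=(1,0)$ and all cross-contacts $(0,1)$; here $l=\lfloor 3/2\rfloor=\lfloor 4/3\rfloor=1$. The dual graph has two proper star points, the separation point $\sigma_0$ of the two clusters and a second one, $R$, where $f_1,f_2$ part company. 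Consequently your reduction of $P_3$ to a single power $(\underline t^{\underline v^{\sigma_0}}-1)^{r-2}$ is not valid: the paper's $P_3$ is genuinely a product over all $R\in\mathcal{R}$, with exponents $s(R)-1$ (this is the content of Proposition~\ref{prop:numbersq}), and the different $\underline v^R$ are \emph{not} equal.

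The second problem is the alleged contact-pair identity $p_{1,k}w_{1,i}=p_{1,i}w_{1,k}$ which you use to collapse the exponent vector $\underline B_i$ of each $B$-factor to $\underline v^{\sigma_0}$. This identity is equivalent to $\obeta_1^i/\obeta_0^i = \obeta_1^k/\obeta_0^k$, which is indeed forced when $(f_i\mid f_k)=(1,0)$, but is \emph{not} forced when $(f_i\mid f_k)=(0,l)$ (only the floors agree). In the example above $p_{1,2}w_{1,3}=2\cdot 4=8\neq 9=3\cdot 3=p_{1,3}w_{1,2}$, the two $B$-factors have exponent vectors $(6,6,8,8)$ and $(8,8,12,12)$ respectively, and neither collapses to a common $\underline v^{\sigma_0}$; instead they equal $\underline v^R$ and $\underline v^{\sigma_0}$ for the two distinct proper star points. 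Similarly, your intermediate factorization indexes the $P_2$-ratios by the singular branches $\{i:p_{1,i}>1\}$, but the correct index set is $\widetilde{\mathcal{E}}$, and it can be much smaller (in the example there are four singular branches but only one dead arc beyond $\sigma_0$). The paper's proof avoids all of these pitfalls by (i) distinguishing carefully whether the dead end $T$ is or is not an end point of $G(C)$ when computing $P_1\cdot P_2$, and (ii) proving the $P_3$-match star-point by star-point via Proposition~\ref{prop:numbersq}, not by a global collapse.
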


\begin{proof}
We can assume that at least one branch is singular (otherwise we would be in the conditions of Proposition \ref{prop:poincarelemm1}). Without loss of generality, we assume further that the branches are ordered with the refinement of the good ordering introduced in Subsection \ref{subsec:totalorderstar}; this means in particular that the last branch \(f_r\) is singular, and that \(\sigma_0\) is the unique star point in \(G(C_r)\); this implies that \(\nu^{\sigma_0}=p_{1,r}\eta^{(r)}\), where 
\[
\pr_i(\eta^{(r)})=\left\{\begin{array}{ll}
     \ \obeta^r_1,& \text{if}\quad (C_i\mid C_r)=(1,0); \\
     \ [C_i,C_r]/p_{1,r},& \text{otherwise}.
\end{array}\right.
\]

By the assumption that the contact pairs are of the form \((q_{i,j},c_{i,j})\in\{(1,0),(0,l)\}\), both the Noether formula \ref{noehterformula} and the ordering in the set of branches imply the equality \([C_i,C_j]=p_{1,i}w_{1,j}\) if \(i<j\). Therefore, we have 
\[
Q\big(p_{1,r},w_{1,r},t_r,\prod_{i=1}^{r-1}t_i^{p_{1,i}}\big)=\frac{\underline{t}^{\nu^{\sigma_0}}-1}{\underline{t}^{\nu^{\mathbf{1}}}-1}.
\]

    Suppose first that all the branches are singular, as in Figure \ref{fig:prop2-case1}. In this case, there exists a unique dead end in the dual graph, which we denote by \(T\). The hypothesis on the contact pairs leads to the existence of a maximal contact value of the form \(\nu^{T}=(\obeta^1_1,\dots,\obeta^r_1).\) Moreover, the total order in the set of star points shows that \(\mathcal{S}=\mathcal{S}_1\) in this case, and the arrow corresponding to \(f_1\) goes through the maximal star point in \(\mathcal{S}\); this means that \((n_T+1)\nu^{T}=p_{1,1}\nu^{T}.\) %there are two possibilities:
 %   \begin{enumerate}
  %      \item \((f_1\mid\cdots\mid f_r)=(1,0)\) and in this case \(P_2(\tu)=1\) and 
        %\[P_1(\tu)=Q(w_{1,1},p_{1,1},t_1,\prod_{i=2}^{r}t_i^{w_{1,i}})Q(p_{1,r},w_{1,r},t_r,\prod_{i=1}^{r-1}t_i^{p_{1,i}}).\] 
%         \item \((f_1\mid\cdots\mid f_r)\neq (1,0)\) and in this case 
 %        \[P_2(\tu)=Q(w_{1,1},p_{1,1},t_1,\prod_{i=2}^{r}t_i^{w_{1,i}}).\]
 %   \end{enumerate}
We have 

\[
P_1(\tu)\cdot P_2(\tu)=Q\big(w_{1,1},p_{1,1},t_1,\prod_{i=2}^{r}t_i^{w_{1,i}}\big)\cdot Q\big(p_{1,r},w_{1,r},t_r,\prod_{i=1}^{r-1}t_i^{p_{1,i}}\big).
\]

Assume now that there is at least one smooth branch, as in Figure \ref{fig:prop2-case2}. The hypothesis on the contact means that there are no dead ends in the dual graph of \(C\); this makes the factor $P_2(\tu)$ trivial, namely  \(P_2(\tu)=1.\) On the other hand, the good order implies that \(f_1\) is smooth so that \(Q(w_{1,1},p_{1,1},t_1,\prod_{i=2}^{r}t_i^{w_{1,i}})=1\), since \(p_{1,1}=1.\) Therefore, again we have 

\[
P_1(\tu)\cdot P_2(\tu)=Q\big(w_{1,1},p_{1,1},t_1,\prod_{i=2}^{r}t_i^{w_{1,i}}\big)\cdot Q\big(p_{1,r},w_{1,r},t_r,\prod_{i=1}^{r-1}t_i^{p_{1,i}}\big).
\]

What is left is to show 
\[
P_3(\tu)=\prod_{i=2}^{r-1}B\big(p_{1,i},w_{1,i},t_i,\prod_{k<i}t_k^{p_{1,k}},\prod_{k>i}t_{k}^{w_{1,k}}\big),
\]
with independence of the existence of a smooth branch. To complete the proof, we proceed as in the case of \(\sigma_0.\)  Let \(T\) be the point in \(G(C)\) such that \(T\) is the unique end point of \(G(C_{\jnd_{sing}})\) where \(C_{\jnd_{sing}}\) is the package of singular branches. As explained in Subsection \ref{subsec:totalorderstar}, we can decompose \(\jnd_{sing}=\bigcup_{t+1}^{s}J_s\) with the packages defined in \((\star\star\star).\) Consider \(R\in\mathcal{R}\) and denote by \(\jnd\subset\ind\) the set of indices such that the geodesic to an arrow associated to \(j\in\jnd\) finishes at \(R\); this means that there are \(s_R\) packages such that \(\jnd=J_{l_1}\cup\cdots\cup J_{l_1+s_R}.\) Then, \(\nu^R=p_{1,j}\nu^T\) for all \(j\in \jnd\) and 

\[
\pr_i(\nu^T)=\left\{\begin{array}{ll}
     \ \obeta^\jnd_1,& \text{if}\quad i\in\jnd \\
     \ [C_i,C_\jnd]/p_{1,\jnd},& \text{if}\quad i\notin\jnd
\end{array}\right.
\]

For each \(j\in\jnd\) we have a factor 
\[
B\big (p_{1,j},w_{1,j},t_j,\prod_{k<j}t_k^{p_{1,k}},\prod_{k>j}t_{k}^{w_{1,k}}\big)=(\tu^{\nu^R}-1).
\] 

Therefore, the claim follows from Proposition \ref{prop:numbersq} taking into account that each of the previous factors appears \(s_R=s(R)-1\) times.
\end{proof}

\begin{rem}\label{rem:QB1}
    Observe that the polynomials \(Q,B\) appear in a natural way. The polynomial \(Q\) is associated to an end point, the easiest examples for the understanding of the \(Q\)--factors are the following:
    \begin{itemize}
        \item[$\bullet$] Assume \(C\) is an irreducible plane curve with semigroup \(\Gamma=\langle\alpha,\beta\rangle=\alpha\mathbb{N}+\beta\mathbb{N}\) with \(\alpha<\beta.\) It has two end points: the one associated to \(\alpha,\) which is the vertex \(\mathbf{1}\), and the one associated to \(\beta\) which defines the unique dead arc of the graph. Then the Poincaré series is decomposed as
        \[
        \frac{1}{t^\alpha-1}\frac{t^{\alpha\beta}-1}{t^{\beta}-1}=Q(\alpha,0,1,t^{\alpha})Q(\beta,\alpha,t,1).
        \]
        \item[$\bullet$] Assume \(C\) is a plane curve with two branches in the conditions of Proposition \ref{prop:poincarelemm2} and semigroups \(\Gamma^1=\langle\obeta_0^{1},\obeta_1^{1}\rangle,\)  \(\Gamma^2=\langle\obeta_0^{2},\obeta_1^{2}\rangle.\) It has two end points: the one associated to the vertex \(\mathbf{1}\) and the one associated to the end point \(T\) of the unique dead arc of the graph. The good ordering implies that the valuation at the free points up to \(T\) are multiple of \(\beta_0^1\) which provide the factor
        \[Q(\obeta_1^{1},\obeta_0^{1},t_1,t_2^{\obeta_1^{2}})=\frac{(t_1^{\obeta_1^{1}}t_2^{\obeta_1^{2}})^{\obeta_0^{1}}-1}{t_1^{\obeta_1^{1}}t_2^{\obeta_1^{2}}-1}.\]
        In the case where one of the branches is smooth then the graph has a single end point, which is the vertex \(\mathbf{1}.\) This is also reflected in the factor, as \(\beta_0^1=1\) and the \(Q\)--factor does not appear effectively. In a similar way, the other \(Q\)--factor is deduced to obtain the formula of Proposition \ref{prop:poincarelemm2}.
        \item[$\bullet$] Assume \(C\) is plane curve in the conditions of Proposition \ref{prop:poincarelemm2}. Again we have at most two end points, so by Noether's formula \ref{noehterformula}--- and following the previous observations of the bibranch case---we have a \(Q\)--factor associated to each of the end points. The factors \(B\) arise naturally in between these two $Q$--factors, and are associated to proper star points. Since proper star points do not produce a geodesic to an end point, $B$-factors cannot contain a denominator. Therefore, we need to associate a polynomial \(B\) to the \(i\)--th variable that---with the help of the good order---encodes the two different types of multiples that we can have at this point of the valuation, namely 
        \[
B\Big(\obeta_0^{i},\obeta_1^{i},x=t_i,y=\prod_{k<i}t_k^{\obeta_0^{k}},z=\prod_{k>i}t_k^{\obeta_1^{k}}\Big)=(yx^{\beta_0^{i}})^{\beta_1^{i}}z^{\beta_0^{i}}-1.
        \]
        Observe what this factor shows the fact that the valuation at each proper star point of each branch is the crossed product of multiplicities and maximal contact values.

    \end{itemize}
\end{rem}

\begin{figure}
     \begin{subfigure}[b]{0.5\textwidth}
         \centering
       $$
\unitlength=0.50mm
\begin{picture}(70.00,110.00)(-70,-30)
\thinlines

\put(-100,30){\line(1,0){38}}
\put(-100,30){\circle*{2}}

\put(-85,30){\circle*{2}}
\put(-70,30){\circle*{2}}

\put(-57,30){\circle*{2}}

\put(-62,30){\line(1,0){18}}

\put(-44,30){\circle*{2}}
\put(-44,30){\line(1,-1){30}}

\put(-44,30){\vector(1,1){15}}
\put(-44,30){\vector(1,0.6){20}}
\put(-44,30){\vector(1,0.2){17}}

\put(-35.5,21){\circle*{2}}
\put(-35.5,21){\vector(1,-0.2){9}}
\put(-35.5,21){\vector(1,0.5){9}}
\put(-35.5,21){\vector(1,1){6}}

\put(-24.5,10){\circle*{2}}
\put(-24.5,10){\vector(1,1.3){9}}
\put(-24.5,10){\vector(1,0.4){9}}
\put(-24.5,10){\vector(1,2){8}}

\put(-14,0){\circle*{2}}
\put(-14,0){\vector(1,1.7){7.5}}
\put(-14,0){\vector(1,-1.3){9}}

\put(-14,0){\vector(1,.7){14}}

\put(-14,0){\line(-1,-1){13}}
\put(-27,-13){\circle*{2}}

\put(-34,-15){\scriptsize{$T$}}

\end{picture}
$$
\caption{All the branches being singular.}
\label{fig:prop2-case1}
\end{subfigure}
     \hfill
\begin{subfigure}[b]{0.5\textwidth}
        \centering
         $$
\unitlength=0.50mm
\begin{picture}(70.00,110.00)(-70,-30)
\thinlines

\put(-100,30){\line(1,0){38}}
\put(-100,30){\circle*{2}}

\put(-85,30){\circle*{2}}
\put(-70,30){\circle*{2}}

\put(-57,30){\circle*{2}}

\put(-62,30){\line(1,0){18}}

\put(-44,30){\circle*{2}}
\put(-44,30){\line(1,-1){30}}

\put(-44,30){\vector(1,1){15}}
\put(-44,30){\vector(1,0.6){20}}
\put(-44,30){\vector(1,0.2){17}}

\put(-35.5,21){\circle*{2}}
\put(-35.5,21){\vector(1,-0.2){9}}
\put(-35.5,21){\vector(1,0.5){9}}
\put(-35.5,21){\vector(1,1){6}}

\put(-24.5,10){\circle*{2}}
\put(-24.5,10){\vector(1,1.3){9}}
\put(-24.5,10){\vector(1,0.4){9}}
\put(-24.5,10){\vector(1,2){8}}

\put(-14,0){\circle*{2}}
\put(-14,0){\vector(1,1.7){7.5}}
\put(-14,0){\vector(1,-0.6){9}}

\put(-14,0){\vector(1,.7){14}}

\put(-14,0){\line(-1,-1){13}}
\put(-27,-13){\circle*{2}}

\put(-27,-13){\vector(1,-1.3){9}}
\put(-27,-13){\vector(1,0){11}}

\put(-34,-15){\scriptsize{$T$}}

\end{picture}
$$
        \caption{At least one singular branch.}
    \label{fig:prop2-case2}
      \end{subfigure}
      \caption{Some base cases.}
    \label{fig:ddd}
\end{figure}
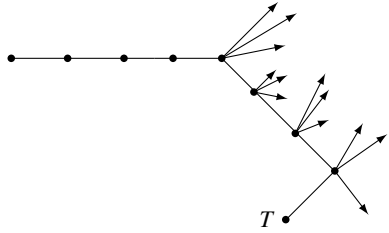
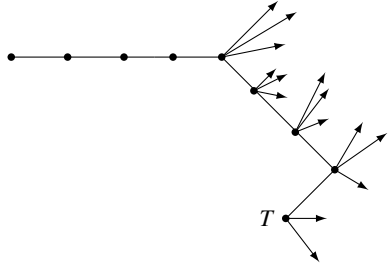

\subsubsection{The general procedure}\label{subsec:generalprocedure}
We are now ready to show the iterative construction of the Poincaré series. Following the notation of Subsection \ref{subsec:totalorderstar}, let \(\mathcal{S}=\mathcal{E}\cup\mathcal{R}\) be the set of star points of the dual graph \(G(C)\) ordered by the total order. We provide an iterative procedure to calculate the Poincaré series based on the computation of the Poincaré series of the truncations at the star points defined in Subsection \ref{subsubsec:truncationstar}. We show that we can compute the Poincaré series of \(C\) iteratively from the Poincaré series of the truncations at the star points.
\medskip

Let \((q,c)=(f_1|\cdots|f_r)\) be the contact pair of \(C.\) Obviously if \(C\) is a plane curve which has only one approximating curve, i.e. \(C\) is the unique approximating curve, then we are in the conditions of Proposition \ref{prop:poincarelemm1} or Proposition \ref{prop:poincarelemm2}, and there is nothing to prove. Therefore, we can assume that \(C\) is a plane curve with at least one approximating curve \(C_{\alpha_1}\neq C.\)
\medskip

 As in Subsection \ref{subsubsec:truncationstar}, let \(\alpha_1\prec\cdots \prec \alpha_{q-1}\) be the first star points in \(\mathcal{S}_1\) which are known to be common to all the branches. For \(1\leq k\leq q-1\) we define the semigroup 
\[
\Gamma^{1}_k=\Big \langle\frac{\obeta^i_0}{e^{i}_k},\dots ,\frac{\obeta^i_k}{e^{i}_k}\Big \rangle,
\]
which is independent of \(i\in\ind\) since the contact pair of \(C\) is \((q,c).\) For \(i=1,\dots, q-1\) let  \(C_{\alpha_i}\) be the irreducible curve associated to the star point \(\alpha_i\) defined in Subsection \ref{subsubsec:truncationstar} and write \(P_{\alpha_i}(t):=P_{C_{\alpha_i}}=P_{\Gamma^{1}_i}\) for the Poincaré series of the plane curve \(C_{\alpha_i}\). By Proposition \ref{prop:poincareirreduciblecase}, we have 
\[
P_{\alpha_i}(t)=\frac{\displaystyle\prod_{k=1}^{i}(t^{n^1_k\obeta^1_k/e^1_i}-1)}{\displaystyle\prod_{k=0}^{i}(t^{\obeta^1_k/e^1_i}-1)}.
\]
Using the gluing property of a numerical semigroup, for \(i=1,\dots,q-2\) we obtain
\[
P_{\alpha_i}(t)=P_{\alpha_{i-1}}(t^{p_{i,1}})\cdot \frac{(t^{p_{i,1}}-1)}{t-1}\cdot P(w_{i,1},p_{i,1},t).
\]

We now proceed as in Subsection \ref{subsubsec:truncationstar} and denote by \(T=\alpha_{q-1}\) and \(\sigma=\alpha_q.\)  We will now show how to compute the Poincar\'e series of \(C_\sigma\) from the Poincaré series of \(C_T.\) As in Subsection \ref{subsubsec:truncationstar}, we need to distinguish two cases:

\begin{enumerate}[wide, labelwidth=!, labelindent=0pt]
    \item [\(\bullet\)] If \(c>0\), then  \(C_{\alpha_q}\) is an irreducible plane curve and we can compute its Poincaré series as in the previous case 
    \[
    P_{\sigma}(t)=P_{T}(t^{p_{q,1}})\cdot \frac{(t^{p_{q,1}}-1)}{t-1}\cdot P(w_{q,1},p_{q,1},t).
    \]
    \item [\(\bullet\)] If \(c=0\), then \(\sigma=\alpha_q=\sigma_0\) and we need to consider the partition  \(\displaystyle\ind=( \bigcup_{p=1}^{t} I_{p})\cup(\bigcup_{p=t+1}^{s} I_p)\). Hence \(C_{\sigma}\) is a plane curve with with \(s\) branches, and we have 

%    \textcolor{red}{OJO ESTE ES EL DELICADO}
    \begin{lemma} \label{lem:aux1}
    \[
    \begin{split}
    P_\sigma(t_1,\dots,t_s)=&P_T(t_1^{p_{q,I_1}}\cdots t_s^{p_{q,I_s}})\cdot Q\big(w_{q,I_1},p_{q,I_1},t_1,\prod_{k=2}^{s} t_k^{w_{q,I_k}}\big)\\
    \cdot&\bigg(\prod_{j=2}^{s-1}B(p_{q,j},w_{q,j},t_j,\prod_{k<j}t_k^{p_{q,k}},\prod_{k>j}t_{k}^{w_{q,k}})\bigg)\cdot \Big((\prod_{k=1}^{s}t_k^{p_{q,k}})^{w_{q,s}}-1\Big).
    \end{split}
    \]
    \end{lemma}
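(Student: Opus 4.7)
The plan is to apply Theorem \ref{thm:p1p2p3} to both \(P_T\) and \(P_\sigma\), and then match the resulting factors by comparing the dual graphs \(G(C_T)\subset G(C_\sigma)\) together with the transformation law of the valuations on the common subgraph. Since \(C_T\) is irreducible, \(P_2^T=P_3^T=1\), so \(P_T(t)=P_1^T(t)\) encodes the vertices along the spine from \(\mathbf{1}\) to \(T=\alpha_{q-1}\) together with the \(q-1\) dead arcs of \(G(C_T)\).

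First I would establish how the valuations at vertices of the common subgraph transform: for any vertex \(P\) with valuation \(v^{P}_{T}\in\mathbb{N}\) in \(G(C_T)\) and valuation \(\underline{v}^{P,\sigma}\in\mathbb{N}^s\) in \(G(C_\sigma)\),
\[
\pr_j(\underline{v}^{P,\sigma})=p_{q,I_{p(j)}}\cdot v^{P}_{T},
\]
where \(p(j)\) denotes the index of the package containing the branch \(j\). This follows from Noether's formula \ref{noehterformula} together with the observation that every branch of \(C_\sigma\) in package \(I_p\) crosses each infinitely near point of \(C_T\) with multiplicity \(p_{q,I_p}\) relative to that of \(C_T\). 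As a consequence, the aggregated contribution of all vertices of \(G(C_T)\) to the product \eqref{eq:duke} applied to \(C_\sigma\) equals \(P_T\) evaluated at \(t=t_1^{p_{q,I_1}}\cdots t_s^{p_{q,I_s}}\), which is precisely the first factor of the target formula.

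Next I would identify the factors coming from the piece of \(G(C_\sigma)\) outside \(G(C_T)\), namely the chain from \(T\) to \(\sigma_0\), the proper star point \(\sigma_0\) itself, and the new dead arcs rooted at \(\sigma_0\). Using the explicit description of \(\underline{v}^{\sigma_0}\) given in \eqref{eqn:valueproperspecial}--\eqref{eqn:valueproperordinary}, one verifies
\[
\underline{t}^{\,\underline{v}^{\sigma_0}}-1=\Bigl(\prod_{k=1}^{s}t_k^{p_{q,k}}\Bigr)^{\!w_{q,s}}-1,
\]
the last factor of the claim; it accounts for the single copy of \((\underline{t}^{\underline{v}^{\sigma_0}}-1)\) in \(P_1^\sigma\). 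The \(B\)-factors for \(j=2,\ldots,s-1\) realize the \(P_3^\sigma\)-contribution \((\underline{t}^{\underline{v}^{\sigma_0}}-1)^{s(\sigma_0)-1}\), with the exponent count matching Proposition \ref{prop:numbersq}. Finally, the \(Q\)-factor \(Q(w_{q,I_1},p_{q,I_1},t_1,\prod_{k\ge 2}t_k^{w_{q,I_k}})\) simultaneously absorbs (i) the new dead arc of \(G(C_\sigma)\) rooted at \(\sigma_0\) along the \(I_1\)-direction, whose contribution goes into \(P_2^\sigma\), and (ii) the fact that the factor \((t^{v^T}-1)\) appearing in \(P_T\) as an end-point term no longer appears in \(P_\sigma\), because \(T\) is an ordinary point of \(G(C_\sigma)\).

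The main obstacle is the bookkeeping in this last step: the asymmetry between the \(Q\)-factor, attached to the package \(I_1\), and the \(B\)-factors, attached to the remaining packages, reflects the canonical ordering of Subsection \ref{subsec:totalorderstar} which singles out \(I_1\) as the package whose arm continues the spine of \(G(C_T)\) towards the arrow of \(C^1\). A case distinction mirroring cases (A) and (B) of Subsection \ref{subsec:totalorderstar}, according to whether \(\sigma_0\) is a dead end for some branch or is purely a separation point, is needed to verify that the new dead arcs combine with the end-point corrections exactly as stated; once that distinction is made, the equality is a direct algebraic simplification using the definitions of \(P,Q,B\) in \eqref{eqn:defkeypoly}.
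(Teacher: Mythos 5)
Your overall strategy—apply Theorem~\ref{thm:p1p2p3} to $P_\sigma$, use Noether's formula to show the valuations at vertices of the common subgraph scale by $p_{q,I_j}$, identify the aggregated contribution of that subgraph with $P_T$ evaluated at the rescaled variable, and then account for the new vertices—is essentially the route the paper takes. The identifications of $(\underline{t}^{\underline{v}^{\sigma_0}}-1)$ with the last factor of $P_1^\sigma$, and of the $B$-product with $P_3^\sigma=(\underline{t}^{\underline{v}^{\sigma_0}}-1)^{s(\sigma_0)-1}$, are correct and match the paper. However, your explanation of what the $Q$-factor accounts for is incorrect in a way that would derail the completion of the sketch.

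First, the claim that ``$T$ is an ordinary point of $G(C_\sigma)$'' is false. $T=\alpha_{q-1}$ is the star point of the $(q-1)$-th dead arc, and since all branches of $C_\sigma$ share the first $q$ characteristic exponents, this dead arc persists in $G(C_\sigma)$; $T$ therefore has valence $3$ (spine in, spine out, dead arc) and $\chi(E_T^\circ)=-1$ in both graphs. There is no ``end-point correction'' to make: the factor coming from $T$, once the variables are rescaled, is literally the same in $P_T(t_1^{p_{q,I_1}}\cdots t_s^{p_{q,I_s}})$ and in the corresponding term of $P_\sigma$. Second, the claim that the new dead arc's contribution goes into $P_2^\sigma$ is also wrong: since $c=0$, the $q$-th dead arc has its star point exactly at $\sigma_q=\sigma_0$, not strictly after $\sigma_0$, so it is counted in $P_1^\sigma$ (as the factor $\frac{\underline{t}^{\underline{v}^{\sigma_q}}-1}{\underline{t}^{\underline{v}^{\rho_q}}-1}$) while $\widetilde{\mathcal{E}}=\emptyset$ and hence $P_2^\sigma=1$. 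The $Q$-factor is precisely that $q$-th dead-arc quotient in $P_1^\sigma$—nothing more—and once this is recognized the identity $P_\sigma = P_T(\cdots)\cdot Q\cdot\prod B\cdot(\underline{t}^{\underline{v}^{\sigma_0}}-1)$ follows at once, with no case distinction needed (your proposed analysis of cases (A)/(B) is unnecessary here, because the hypothesis $c=0$ forces $p_{q,j}$ and $w_{q,j}$ to be independent of $j$, which is the single situation the paper exploits). Finally, your opening remark that Theorem~\ref{thm:p1p2p3} applied to the irreducible $C_T$ gives $P_T=P_1^T$ glosses over a convention issue with $\sigma_0$ in the one-branch case; the paper avoids this by computing $P_T$ directly from Proposition~\ref{prop:poincareirreduciblecase}, and you would be better served doing the same.
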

    \begin{proof}
        The proof goes in an analogous manner as that of Proposition \ref{prop:poincarelemm2}. By Theorem \ref{thm:p1p2p3} we have a decomposition of \(P_\sigma(t_1,\dots,t_s)=P_1\cdot P_2\cdot P_3.\) First, since \(p_{q,I_k}=n_{q,I_k}=e^{I_k}_{q-1}/e^{I_k}_q\) and \(T=\alpha_{q-1}\), we have that 
        \[
        P_T(t_1^{p_{q,I_1}}\cdots t_s^{p_{q,I_s}})=\frac{\displaystyle\prod_{k=1}^{q-1}\big((t_1\cdots t_s)^{n^1_k\obeta^1_k/e^1_q}-1\big)}{\displaystyle\prod_{k=0}^{q-1}\big((t_1\cdots t_s)^{\obeta^1_k/e^1_q}-1\big)}.
    \]
        A similar analysis to that in the part of the proof of Proposition \ref{prop:poincarelemm2} where all the branches are singular, shows that

        \[
    P_1(\underline{t})\cdot P_2(\underline{t})=P_T(t_1^{p_{q,I_1}}\cdots t_s^{p_{q,I_s}})\cdot Q\big(w_{q,I_1},p_{q,I_1},t_1,\prod_{k=2}^{s} t^{w_{q,I_k}}\big) \cdot \Big((\prod_{k=1}^{s}t_k^{p_{q,k}})^{w_{q,s}}-1\Big).
        \]
The only difference with the proof of Proposition \ref{prop:poincarelemm2} lies on the fact that, in the current case, the factor \(\underline{t}^{v^{\mathbf{1}}}-1\) is already contained in \(P_T(t_1^{p_{q,I_1}}\cdots t_s^{p_{q,I_s}}).\) Therefore, instead of adding a factor \(\displaystyle Q\Big(p_{q,s},w_{q,s},t_s,\prod_{k=1}^{s-1}t_k^{p_{q,k}}\Big),\) we only need to add the factor corresponding to \(\underline{t}^{v^{\sigma_0}}-1.\)
\medskip

        It remains to prove that \(P_3(\underline{t})=\displaystyle \prod_{j=2}^{s-1}B\big (p_{q,j},w_{q,j},t_j,\prod_{k<j}t_k^{p_{q,k}},\prod_{k>j}t_{k}^{w_{q,k}}\big).\) To do that, we first observe that since \(c=0\) then \(p_{q,j}=e^{j}_{q-1}/e^{j}_q\) and \(w_{q,j}=\obeta^j_q/e^j_q\) are independent of \(j\) and hence the factor
        \[
        B\Big(p_{q,j},w_{q,j},t_j,\prod_{k<j}t_k^{p_{q,k}},\prod_{k>j}t_{k}^{w_{q,k}}\Big)=\big(\prod_{k<j}t_k^{p_{q,j}}t_k^{p_{q,j}}\big)^{\obeta^j_q/e^j_q}\big(\prod_{k>j}t_{k}^{\obeta^j_q/e^j_q}\big)^{p_{q,j}}-1=(t_1\cdots t_s)^{p_{q,j}\obeta^j_q/e^j_q}-1
        \]
        is repeated \(s-2\) times. By the definition of \(C_\sigma\) and the fact that \(c=0\), we have  \(v^{\sigma}=v^{\sigma_0}=p_{q,j}w_{q,j}(1,\dots,1)\) and the valency of \(\sigma_0\) is \(s+2.\) Since \(\sigma_0\) is the only point proper star point of \(G(C_\sigma)\) and \(s(\sigma_0)-1=v(\sigma_0)-4=s-2\) in this case, the claim follows.
    \end{proof}
\end{enumerate}
If \(c=0,\) then after computing the Poincaré series \(P_\sigma\) we set \(T=\sigma_0\) and \(\sigma\) is the next (with respect to the ordering \(<\) in \(\mathcal{S}\)) star  point to be considered. If \(c\neq 0\), then we set \(T=\alpha_q\) and \(\sigma=\sigma_0.\) In this case, we compute the Poincaré series similarly to the case \(c=0:\)

%\textcolor{red}{OJO ESTE TAMBIÉN ES EL DELICADO}
\begin{lemma} \label{lem:aux2}
    \[\begin{split}
    P_\sigma(t_1,\dots,t_s)=&P_T(t_1^{p_{q,I_1}}\cdots t_s^{p_{q,I_s}})\cdot Q\big(w_{q,I_1},p_{q,I_1},t_1,\prod_{k=2}^{s} t_k^{w_{q,I_k}}\big)\\
    \cdot&\Big(\prod_{j=2}^{s-1}B(p_{q,j},w_{q,j},t_j,\prod_{k<j}t_k^{p_{q,k}},\prod_{k>j}t_{k}^{w_{q,k}})\Big)\cdot \big((\prod_{k=1}^{s}t_k^{p_{q,k}})^{w_{q,s}}-1\big).
    \end{split}
    \]
    \end{lemma}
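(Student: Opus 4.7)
The plan is to proceed in complete analogy with the proof of Lemma~\ref{lem:aux1}, applying Theorem~\ref{thm:p1p2p3} to decompose $P_\sigma(\tu)=P_1(\tu)\cdot P_2(\tu)\cdot P_3(\tu)$, and then to match these three factors against the right-hand side of the claim. The only structural difference between the two lemmas is that here $T=\alpha_q$ is a non-proper star point of $G(C_\sigma)$ strictly preceding $\sigma_0$ (rather than coinciding with it, as happens in Lemma~\ref{lem:aux1}), so the analysis of the factor $P_T$ must include one extra level of contribution before reaching $\sigma_0$.

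First I would use Proposition~\ref{prop:poincareirreduciblecase} to expand $P_T(t)$. Since $c\neq 0$, the curve $C_T=C_{\alpha_q}$ is irreducible by the construction in Subsection~\ref{subsubsec:truncationstar}, and its semigroup is minimally generated by $\obeta_0^1,\ldots,\obeta_q^1$. The substitution $t\mapsto t_1^{p_{q,I_1}}\cdots t_s^{p_{q,I_s}}$ then produces
\[
P_T(t_1^{p_{q,I_1}}\cdots t_s^{p_{q,I_s}})=\frac{\prod_{k=1}^{q}\big((t_1^{p_{q,I_1}}\cdots t_s^{p_{q,I_s}})^{n^1_k\obeta^1_k/e^1_q}-1\big)}{\prod_{k=0}^{q}\big((t_1^{p_{q,I_1}}\cdots t_s^{p_{q,I_s}})^{\obeta^1_k/e^1_q}-1\big)}.
\]
These factors are exactly the contributions to $P_1(\tu)$ coming from the vertex $\mathbf{1}$ and the dead arcs $L_0,\ldots,L_{q-1}$, whose associated star points $\sigma_1,\ldots,\sigma_q$ all lie at or below $T=\alpha_q$ in the total ordering of $\mathcal{S}$. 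The identification is justified by Noether's formula \ref{noehterformula}: all branches of $C_\sigma$ share the infinitely near points up to $T$, so $\vu^{\rho_k}=(\obeta_k^1/e_q^1)\cdot(p_{q,I_1},\ldots,p_{q,I_s})$ and analogously for the $\vu^{\sigma_k}$.

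What remains in $P_1(\tu)\cdot P_2(\tu)$ are the factors contributed by the star point $\sigma_0$ itself: a denominator $(\tu^{\vu^{\rho}}-1)$ coming from the dead arc ending at $\sigma_0$, and the numerators $(\tu^{(n_\rho+1)\vu^{\rho}}-1)$ from $P_2(\tu)$ together with $(\tu^{\vu^{\sigma_0}}-1)$ from $P_1(\tu)$. A direct computation, exploiting that $p_{q,j}w_{q,j}$ is independent of $j$ on each package, identifies this combined contribution with
\[
Q\big(w_{q,I_1},p_{q,I_1},t_1,\prod_{k=2}^{s}t_k^{w_{q,I_k}}\big)\cdot\Big((\prod_{k=1}^{s}t_k^{p_{q,k}})^{w_{q,s}}-1\Big),
\]
in exactly the same manner as in the proof of Lemma~\ref{lem:aux1}. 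Finally, by Proposition~\ref{prop:numbersq} the factor $P_3(\tu)$ consists of $s(\sigma_0)-1=s-2$ copies of $(\tu^{\vu^{\sigma_0}}-1)$, and the same computation carried out at the end of the proof of Lemma~\ref{lem:aux1} shows that these reassemble into the product $\prod_{j=2}^{s-1}B(p_{q,j},w_{q,j},t_j,\prod_{k<j}t_k^{p_{q,k}},\prod_{k>j}t_k^{w_{q,k}})$.

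The main obstacle I expect is the careful bookkeeping needed to verify that the substitution $t\mapsto t_1^{p_{q,I_1}}\cdots t_s^{p_{q,I_s}}$ transports every univariate exponent appearing in $P_T(t)$ to the correct multi-valuation of $P_1(\tu)$ for the star points and dead-arc ends strictly preceding $\sigma_0$, with no omission or double counting at the transition from $T=\alpha_q$ to $\sigma_0$; this amounts to confronting the expressions of $\vu^{\rho_k}$, $\vu^{\sigma_k}$ recalled in Subsection~\ref{subsec:valuesproperstar} against the numbers $p_{q,I_k}$, $w_{q,I_k}$ defined in \eqref{eqn:defw_j}. Once this verification is completed, the remaining identifications of the $Q$-factor, the trailing monomial factor, and the $B$-product are formal and mirror word-for-word the corresponding passages in Lemma~\ref{lem:aux1}.
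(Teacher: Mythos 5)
Your proposal follows the right framework (invoking Theorem~\ref{thm:p1p2p3} and matching the three factors against the claimed formula), and the identification of $P_T(t_1^{p_{q,I_1}}\cdots t_s^{p_{q,I_s}})$ with the contributions coming from $\mathbf{1}$ and the dead arcs below $T=\alpha_q$ is sound. The gap is in the treatment of $P_3$. You assert that $P_3(\tu)$ equals $(\tu^{\vu^{\sigma_0}}-1)^{s-2}$ and that ``the same computation carried out at the end of the proof of Lemma~\ref{lem:aux1}'' reassembles it into the $B$-product. But that computation in Lemma~\ref{lem:aux1} hinges on the fact that $c=0$ forces $p_{q,j}$ and $w_{q,j}$ to be independent of $j$, so that every $B$-factor is literally the same monomial minus one. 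In Lemma~\ref{lem:aux2} the whole point is that $c>0$ and the numbers $p_{q,I_k}$ are \emph{no longer uniform} across the packages; the paper's proof flags this explicitly as the only real difference from Lemma~\ref{lem:aux1} and says one must ``proceed as in the proof of Proposition~\ref{prop:poincarelemm2} to explicitly compute the polynomial $P_3$.''

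That argument is genuinely different from the one you invoke: one must verify, using the value formulas of Subsection~\ref{subsec:valuesproperstar} and the good ordering, that each $B$-factor $B(p_{q,j},w_{q,j},t_j,\prod_{k<j}t_k^{p_{q,k}},\prod_{k>j}t_k^{w_{q,k}})$ evaluates to $(\tu^{\vu^R}-1)$ for a proper star point $R$ depending on $j$, and then invoke Proposition~\ref{prop:numbersq} to match exponents $s(R)-1$; when $\sigma_0$ is a star point of some branch (Case~(A) of Subsection~\ref{subsec:totalorderstar}) the truncation $C_\sigma$ can have several proper star points, so $P_3$ is a product over all of them and not simply a power of $(\tu^{\vu^{\sigma_0}}-1)$. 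Your claim that the reassembly is ``the same'' as the $c=0$ case skips over exactly this verification. (Relatedly, your stated ``structural difference'' --- that $T$ strictly precedes $\sigma_0$ --- is not quite the distinction the paper draws either: in both lemmas $T$ is a stage strictly before $\sigma_0$; what changes is whether $\sigma_0=\alpha_q$, and hence whether the multiplicities $p_{q,I_k}$ coincide.)
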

    \begin{proof}
        The proof is analogous to the proof of Lemma \ref{lem:aux1}~resp.~Lemma \ref{prop:poincarelemm2}. The only difference with respect to the proof of Lemma \ref{lem:aux1} is that the \(p_{q,I_k}\) are now different and we need to proceed as in the proof of Proposition \ref{prop:poincarelemm2} to explicitly compute the polynomial \(P_3.\)
    \end{proof}
    \begin{rem}
        Observe that Lemma \ref{lem:aux2} is the natural interpretation of the base cases when the branches have contact bigger than \((1,0).\) As we explained in Remark \ref{rem:QB1}, the \(Q\)--factors are associated to end points, since in this case the approximating curve \(C_\sigma\) has at most \(1\) more end point than \(C_T\) then only one \(Q\)--factor is required. The last factor is not surprising as it is in fact a \(B\)--factor:
        \[B(p_{q,s},w_{q,s},t_s,\prod_{k<s}t_k^{p_{q,k}},1)=\big((\prod_{k=1}^{s}t_k^{p_{q,k}})^{w_{q,s}}-1\big).\]
    \end{rem}

We shall continue computing the Poincaré series of the approximations of \(C.\) To do that, we follow the procedure to construct the approximations described in Subsection \ref{subsubsec:truncationstar}. The distinguished points \(T,\sigma\) are now at the stage \(T=\sigma_0\) and \(\sigma\) is the next start point to be considered for the computation of the Poincaré series. 
Let \(\ind=(\bigcup_{p=1}^{t} I_{p})\cup(\bigcup_{p=t+1}^{s} I_p)\) be the partition created at \(\sigma_0.\) Denote by \(\osigma_1,\dots,\osigma_\epsilon\) the star points between \(\sigma_0\) and the point $P$ where the geodesics of \(I_1\) go through. At this point we have 

\[
\alpha_1\prec \cdots\prec\alpha_q\preceq \sigma_0\preceq \osigma_1\preceq\cdots\preceq\osigma_\epsilon\preceq P.
\]
Then,

\begin{enumerate}[wide, labelwidth=!, labelindent=0pt]
    \item Assume \(|I_1|=1;\) we distinguish two cases:
 \begin{enumerate}%[wide, labelwidth=!, labelindent=0pt]
 \item  The semigroup \(\Gamma^1\) of the first branch \(C^1\) of \(C\) has \(q\) minimal generators. This implies that \(\mathcal{S}_1=\{\alpha_1\prec \cdots\prec\alpha_q\preceq \sigma_0\preceq \osigma_1\preceq\cdots\preceq\osigma_\epsilon\}\) and \(y^{I_1}_{\sigma_0}\) is the topological Puiseux series of the branch \(C^1.\) Then, there is no need to perform any computation in this step and we have finished with \(\mathcal{S}_1\). We move to the package \(I_2\) and we make \(\sigma\) the star point from which the geodesics of \(I_2\) go through and \(T=\sigma_0.\)
        \item The semigroup \(\Gamma^1\) of the first branch \(C^1\) of \(C\) has \(g_1>q\) minimal generators. Then,
        \[
        \mathcal{S}_1=\{\alpha_1\prec \cdots\prec\alpha_q\preceq \sigma_0\preceq \osigma_1\preceq\cdots\preceq\osigma_\epsilon\prec \alpha_1^{I_1}\prec\cdots \prec \alpha_{g_1-q}^{I_1}\}
        \]
        where \(\alpha_1^{I_1}\prec\cdots \alpha_{g_1-q}^{I_1}\) are the non-proper star points defining the maximal contact values associated to the remaining generators of the semigroup \(\Gamma^1\). We are in the case \(T=\sigma_0\) and \(\sigma=\alpha_1^{I_1};\) for \(i=2,\dots,g_1-q\) we will consider \(T=\alpha^{I_1}_{i-1}\) and \(\sigma=\alpha^{I_1}_i.\) At each stage, we need to compute the Poincaré series \(P_{C_{\alpha^{I_1}_i}}=P_{\alpha^{I_1}_i}=P_\sigma\) from the Poincaré series of \(P_T=P_{C_{\alpha^{I_1}_{i-1}}}.\) At each stage, to simplify notation let us denote by \(p_\sigma:=p_{q+i,I_1}\) and \(w_\sigma:=w_{q+i,I_1}.\) Now, for all \(j\notin I_1,\) i.e.~\(j\in I_k\) for some \(k=2,\dots,s,\) by definition of \(C_\sigma\) we have that \(w_{q+i,I_k}:=w_{q+i,j}=[f_{I_1},f_j]/e^{I_1}_{q+i-1}.\) Then,
        \begin{proposition} \label{prop:case1b}
            \[
            P_{\sigma}(\underline{t})=P_T(t_1^{p_\sigma},t_2,\dots,t_s)\cdot Q\big(w_\sigma,p_\sigma,t_1,\prod_{k=2}^{s} t_k^{w_{q+i,I_k}}\big).
            \]
        \end{proposition}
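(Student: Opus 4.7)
The approach is to apply Theorem \ref{thm:p1p2p3} to both \(C_T\) and \(C_\sigma\) and carefully track what changes. The key structural observation is that in case (1)(b) we pass from \(C_T\) to \(C_\sigma\) by adjoining one new maximal contact value \(\overline{\beta}^1_{q+i}\) to the irreducible first branch \(f_1\), while the Puiseux series of all branches \(f_j\) with \(j\in I_k\), \(k\geq 2\), remain unchanged. At the level of the dual graph, \(G(C_\sigma)\) is obtained from \(G(C_T)\) by extending the geodesic of the \(f_1\)-arrow beyond \(T\) and attaching one new dead arc with star point \(\sigma = \alpha^{I_1}_i\) and new end point \(\rho\). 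Since \(|I_1|=1\), one has \(\nu(\sigma)=3\) and \(s(\sigma)=1\), so \(\sigma\) is non-proper and does not contribute to \(P_3\). All other star points of \(G(C_\sigma)\) coincide (in position and type) with those of \(G(C_T)\); the remaining new vertices along the extended arm are ordinary and have \(\chi(E^{\circ})=0\), so they contribute trivially.

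Next I would track the valuations at vertices shared by both graphs. For such a \(P\), the components \(v_k(P)\) with \(k\geq 2\) are unaffected, since they are intersection multiplicities of the curvette \(\theta_P\) with the unchanged branches \(f_{I_k}\), \(k\geq 2\). The first component scales uniformly:
\[
v_1^{\sigma}(P) = p_\sigma\, v_1^{T}(P),\qquad p_\sigma = n^{I_1}_{q+i}.
\]
This is the valuative shadow of the gluing relation \(\Gamma^1_\sigma = n^{I_1}_{q+i}\,\Gamma^1_T + w_\sigma\,\mathbb{N}\) for the semigroup of the irreducible branch \(f_1\); it follows from Noether's formula (Proposition \ref{noehterformula}) after noting that, at every infinitely near point in the cluster of \(C_T\), the multiplicity of \(f_1^\sigma\) equals \(p_\sigma\) times the corresponding multiplicity of \(f_1^T\). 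This is precisely the valuative counterpart of the substitution \(t\mapsto t^{p_\sigma}\) in the irreducible iterative formula (cf.~Proposition \ref{prop:poincareirreduciblecase}). Consequently, substituting \(t_1 \mapsto t_1^{p_\sigma}\) inside every monomial of the decomposition of \(P_T\) given by Theorem \ref{thm:p1p2p3} exactly reproduces the contributions from the shared part of the graph inside the corresponding decomposition of \(P_\sigma\).

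To conclude, the new dead arc contributes a single \(P_2\)-factor
\[
\frac{\underline{t}^{\,\underline{v}^\sigma}-1}{\underline{t}^{\,\underline{v}^\rho}-1}, \qquad \underline{v}^\sigma = p_\sigma\,\underline{v}^\rho.
\]
Using \eqref{eqn:maximalcontactvalues} together with the recursion \eqref{eqn:defw_j} for the numbers \(w_{q+i,\bullet}\), one verifies that, with the grading conventions used throughout the section, \(\underline{v}^\rho = (w_\sigma,\, w_{q+i, I_2},\dots, w_{q+i, I_s})\); hence the new contribution equals
\[
\frac{\bigl(t_1^{w_\sigma}\textstyle\prod_{k\geq 2}t_k^{w_{q+i,I_k}}\bigr)^{p_\sigma}-1}{t_1^{w_\sigma}\prod_{k\geq 2}t_k^{w_{q+i,I_k}}-1} \;=\; Q\bigl(w_\sigma, p_\sigma, t_1, \textstyle\prod_{k=2}^{s} t_k^{w_{q+i, I_k}}\bigr).
\]
Multiplying this \(Q\)-factor by the scaled series \(P_T(t_1^{p_\sigma},t_2,\dots,t_s)\) yields the announced formula. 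The main obstacle is certifying the scaling identity \(v_1^\sigma(P) = p_\sigma\,v_1^T(P)\) for every shared vertex \(P\); once this is settled, the remainder of the argument is a direct application of Theorem \ref{thm:p1p2p3} and bookkeeping of its factors \(P_1, P_2, P_3\) on each side.
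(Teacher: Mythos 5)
Your proof is correct and follows essentially the same strategy as the paper's: apply Theorem~\ref{thm:p1p2p3} to both $C_T$ and $C_\sigma$, observe that $G(C_\sigma)$ extends $G(C_T)$ by a single dead arc whose non-proper star point $\sigma$ does not affect $P_3$, identify the $t_1\mapsto t_1^{p_\sigma}$ rescaling of the shared valuations via Noether's formula, and recognize the new $Q$-factor as the $P_2$-contribution of the added end point. The scaling identity $v_1^\sigma(P)=p_\sigma\,v_1^T(P)$ that you single out as the ``main obstacle'' is exactly what the paper compresses into its display for $P_T(t_1^{p_\sigma},t_2,\dots,t_s)$, citing Noether's formula and the definition of the approximating curves.
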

        \begin{proof}
        First of all, we observe that the dual graph \(G(C_\sigma)\) can be obtained from \(G(C_T)\) by adding a single dead arc corresponding to \(\sigma,\) thus \(G(C_T)\) is a subgraph of \(G(C_\sigma).\) Therefore, the set of proper star points in \(G(C_\sigma)\) is equal to the set of proper star points in  \(G(C_T)\). Moreover, if we denote by \(\mathcal{S}_\sigma\) the set of star points of  \(G(C_\sigma)\) and \(\mathcal{S}_T\) the set of star points of  \(G(C_T),\) then \(\mathcal{S}_\sigma\setminus\mathcal{S}_T=\{\sigma\}\in \widetilde{\mathcal{E}}_\sigma.\) Therefore, by Noether's formula \ref{noehterformula} and the definition of the curves \(C_\sigma\) and \(C_T\) we have that 
        \begin{align*}
              P_T(&t_1^{p_\sigma},t_2,\dots,t_s)=\\
             &= \frac{1}{\underline{t}^{\underline{v}^1}-1}\cdot  \prod_{i=1}^q \frac{\underline{t}^{\underline{v}^{\sigma_{i}}}-1}{\underline{t}^{\underline{v}^{\rho_{i}}}-1}\cdot (\underline{t}^{\underline{v}^{\sigma^0}}-1) \cdot \prod_{\rho \in \widetilde{\mathcal{E}}\setminus\{\sigma\} } \frac{\underline{t}^{(n_{\rho}+1)\underline{v}^{\rho}}-1}{\underline{t}^{\underline{v}^{\rho}}-1} \cdot \prod_{s(\alpha) >1}  (\underline{t}^{\underline{v}^{\alpha}}-1)^{s(\alpha)-1}.
        \end{align*}

        %\[
       % P_Q(t_1^{p_\sigma},t_2,\dots,t_s)= \frac{1}{\underline{t}^{\underline{v}^1}-1} (\prod_{i=1}^q \frac{\underline{t}^{\underline{v}^{\sigma_{i}}}-1}{\underline{t}^{\underline{v}^{\rho_{i}}}-1})(\underline{t}^{\underline{v}^{\sigma^0}}-1) \cdot \prod_{\rho \in \widetilde{\mathcal{E}}\setminus\{\sigma\} } \frac{\underline{t}^{(n_{\rho}+1)\underline{v}^{\rho}}-1}{\underline{t}^{\underline{v}^{\rho}}-1} \cdot \prod_{s(\alpha) >1}  (1-\underline{t}^{\underline{v}^{\alpha}})^{s(\alpha)-1}.
       % \]

        Therefore, the proof is completed by showing that 
        \[
        Q\big(w_\sigma,p_\sigma,t_1,\prod_{k=2}^{s} t^{w_{q+i,I_k}}\big)=\frac{\underline{t}^{(n_{\sigma}+1)\underline{v}^{\sigma}}-1}{\underline{t}^{\underline{v}^{\sigma}}-1}.
        \]

        Let \(L^1_{q+i}\) be the dead arc associated to the star point \(\sigma\) and \(P(L^1_{q+i})\) its end point. Then, by eq.~ \eqref{eqn:maximalcontactvalues} in Subsection \ref{subsec:maximalcontactdualgrapha}, the Noether formula \ref{noehterformula}, and the definition of \(C_\sigma\) we have that 

        \[
        \pr_j(\vu^\sigma)=\left\{\begin{array}{cc}
    \ \obeta^{1}_{q+i}/e^{1}_{q+i},
    &\text{if}\;j=1; \\[.3cm]
     \frac{[f_1,f_j]}{e^{1}_ {q+i-1}},& \text{if}\;j\notin I_1.
     \end{array}\right.
     \]
\begin{figure}[H]
$$
\unitlength=0.50mm
\begin{picture}(70.00,110.00)(0,-50)
\thinlines

\put(-100,30){\line(1,0){24}}
\put(-100,30){\circle*{2}}

\put(-80,30){\circle*{2}}
\put(-80,30){\line(0,-1){15}}
\put(-80,15){\circle*{2}}

\put(-57,30){\circle*{2}}
\put(-57,30){\line(0,-1){15}}
\put(-57,15){\circle*{2}}

\put(-74,30){$\ldots$}

\put(-62,30){\line(1,0){18}}

\put(-63,35){\scriptsize{$T=\sigma_0$}}
%\put(-27,39){\scriptsize{$I_s$}}
%\put(-22,33){\scriptsize{$I_{s-1}$}}
\put(-8,23){$\ddots$}

\put(-44,30){\circle*{3}}
\put(-44,30){\line(1,-1){30}}

\put(-44,30){\vector(1,1){15}}
\put(-44,30){\vector(1,0.6){20}}
\put(-44,30){\vector(1,0.2){17}}

\put(-35.5,21){\circle*{2}}
\put(-35.5,21){\vector(1,-0.2){9}}
\put(-35.5,21){\vector(1,0.5){9}}
\put(-35.5,21){\vector(1,1){6}}

\put(-24.5,10){\circle*{2}}
\put(-24.5,10){\vector(1,1.3){9}}
\put(-24.5,10){\vector(1,0.4){9}}
\put(-24.5,10){\vector(1,2){8}}

%\put(26,0){\circle*{2}}

\put(-14,0){\circle*{2}}
\put(-14,0){\vector(1,1.9){7.5}}
\put(-14,0){\vector(1,1.3){9}}

\put(-14,0){\vector(1,.7){14}}
%\put(1,5){\scriptsize{$I_{t+1}$}}
%\put(-5,-10){\scriptsize{$I_{t}$}}
\put(-11,-32){\scriptsize{$I_{1}$}}
\put(-11,-39){\scriptsize{with $|I_{1}|=1$}}
%\put(-11,-34){\scriptsize{$\sigma_0^{I_1}$}}

\put(-14,0){\line(0,-1){25}}

\put(-14,-25){\circle*{2}}
\put(-14,-25){\vector(1,1.9){7.5}}
\put(-14,-25){\vector(1,1.3){9}}
\put(-14,-25){\vector(1,1){10}}
\put(-14,-25){\vector(1,.7){10}}

\put(-14,-25){\vector(1,0){13}}

\put(-107,28){{\scriptsize ${\bf 1}$}}
\put(-92,-15){$G(C_T)$}

\put(25,28){$\rightsquigarrow$}

\put(55,28){{\scriptsize ${\bf 1}$}}
\put(70,-15){$G(C_{\sigma})$}

\put(62,30){\line(1,0){24}}
\put(62,30){\circle*{2}}

\put(82,30){\circle*{2}}
\put(82,30){\line(0,-1){15}}
\put(82,15){\circle*{2}}

\put(105,30){\circle*{2}}
\put(105,30){\line(0,-1){15}}
\put(105,15){\circle*{2}}

\put(88,30){$\ldots$}

\put(100,30){\line(1,0){18}}

\put(102,35){\scriptsize{$T=\sigma_0$}}
%\put(140,39){\scriptsize{$I_s$}}
%\put(145,33){\scriptsize{$I_{s-1}$}}
\put(154,23){$\ddots$}

\put(118,30){\circle*{3}}
\put(118,30){\line(1,-1){30}}

\put(118,30){\vector(1,1){15}}
\put(118,30){\vector(1,0.6){20}}
\put(118,30){\vector(1,0.2){17}}

\put(126.5,21){\circle*{2}}
\put(126.5,21){\vector(1,-0.2){9}}
\put(126.5,21){\vector(1,0.5){9}}
\put(126.5,21){\vector(1,1){6}}

\put(137.5,10){\circle*{2}}
\put(137.5,10){\vector(1,1.3){9}}
\put(137.5,10){\vector(1,0.4){9}}
\put(137.5,10){\vector(1,2){8}}

%\put(26,0){\circle*{2}}

\put(148,0){\circle*{2}}
\put(148,0){\vector(1,1.9){7.5}}
\put(148,0){\vector(1,1.3){9}}

\put(148,0){\vector(1,.7){14}}
%\put(168,5){\scriptsize{$I_{t+1}$}}
%\put(162,-10){\scriptsize{$I_{t}$}}
%\put(156,-32){\scriptsize{$I_{1}$}}
%\put(-11,-34){\scriptsize{$\sigma_0^{I_1}$}}

\put(148,0){\line(0,-1){25}}

\put(148,-25){\circle*{2}}
\put(148,-25){\vector(1,1.9){7.5}}
\put(148,-25){\vector(1,1.3){9}}
\put(148,-25){\vector(1,1){10}}
\put(148,-25){\vector(1,.7){10}}

\put(148,-25){\line(1,0){20}}

\put(168,-25){\line(0,-1){15}}

\put(168,-25){\circle*{3}}

\put(162,-31){\scriptsize{$\sigma$}}

\put(168,-40){\circle*{2}}

\put(168,-25){\vector(1,1){10}}

\end{picture}
$$
\caption{Graphs \(G(C_T)\) and \(G(C_\sigma)\) considered in Proposition \ref{prop:case1b}.}
\label{fig47}
\end{figure}

     The claim follows by definition of \(\displaystyle Q\big(w_\sigma,p_\sigma,t_1,\prod_{k=2}^{s} t^{w_{q+i,I_k}}\big)\) and the fact that \((n_{\sigma}+1)\underline{v}^{\sigma}=p_\sigma \vu^\sigma.\)
        \end{proof}
    \end{enumerate}
     \item Assume \(|I_1|>1.\) There are two cases to be distinguished:
    \begin{enumerate}%[wide, labelwidth=!, labelindent=0pt]
        \item For all \(j\in I_1\) we  have \(g_j=q,\) i.e. the semigroups \(\Gamma^{j}\) have \(q\)--minimal generators. In this case, \(\sigma=\sigma_0^{I_1}\) is the first separation point of the branches of \(I_1\) and let \(I_1=\bigcup_{k=1}^{s_1} I_{1,k}\) be the induced index partition (see also Subsection \ref{subsubsec:truncationstar}). Since \(\sigma\) is an ordinary point then \(p_{\sigma,I_{1,k}}=1\) for all \(k=1,\dots,s_1\) and \(w_{\sigma,I_{1,k}}=w_{q+1,I_{1,k}}=[f_{I_{1,j}},f_{I_{1,k}}]\) with \(j\neq k\) is independent of \(j,\) i.e. \(w_{\sigma,I_{1,k}}=w_{\sigma,I_{1,k'}}\) if \(k\neq k'.\) Also, for \(i=2,\dots, \) \(w_{\sigma,I_{i}}=[f_{I_{1,k}},f_{I_{i}}]\) is independent of \(k.\) Then,

        \begin{figure}[H]
$$
\unitlength=0.50mm
\begin{picture}(70.00,110.00)(0,-50)
\thinlines

\put(-100,30){\line(1,0){24}}
\put(-100,30){\circle*{2}}

\put(-80,30){\circle*{2}}
\put(-80,30){\line(0,-1){15}}
\put(-80,15){\circle*{2}}

\put(-57,30){\circle*{2}}
\put(-57,30){\line(0,-1){15}}
\put(-57,15){\circle*{2}}

\put(-74,30){$\ldots$}

\put(-62,30){\line(1,0){18}}

\put(-63,35){\scriptsize{$T=\sigma_0$}}
%\put(-27,39){\scriptsize{$I_s$}}
%\put(-22,33){\scriptsize{$I_{s-1}$}}
\put(-8,23){$\ddots$}

\put(-44,30){\circle*{3}}
\put(-44,30){\line(1,-1){30}}

\put(-44,30){\vector(1,1){15}}
\put(-44,30){\vector(1,0.6){20}}
\put(-44,30){\vector(1,0.2){17}}

\put(-35.5,21){\circle*{2}}
\put(-35.5,21){\vector(1,-0.2){9}}
\put(-35.5,21){\vector(1,0.5){9}}
\put(-35.5,21){\vector(1,1){6}}

\put(-24.5,10){\circle*{2}}
\put(-24.5,10){\vector(1,1.3){9}}
\put(-24.5,10){\vector(1,0.4){9}}
\put(-24.5,10){\vector(1,2){8}}

%\put(26,0){\circle*{2}}

\put(-14,0){\circle*{2}}
\put(-14,0){\vector(1,1.9){7.5}}
\put(-14,0){\vector(1,1.3){9}}

\put(-14,0){\vector(1,.7){14}}
%\put(1,5){\scriptsize{$I_{t+1}$}}
%\put(-5,-10){\scriptsize{$I_{t}$}}
\put(-11,-32){\scriptsize{$I_{1}$}}
\put(-11,-39){\scriptsize{with $|I_{1}|>1$}}
%\put(-11,-34){\scriptsize{$\sigma_0^{I_1}$}}

\put(-14,0){\line(0,-1){25}}

\put(-14,-25){\circle*{2}}
\put(-14,-25){\vector(1,1.9){7.5}}
\put(-14,-25){\vector(1,1.3){9}}
\put(-14,-25){\vector(1,1){10}}
\put(-14,-25){\vector(1,.7){10}}

\put(-14,-25){\vector(1,0){13}}

\put(-107,28){{\scriptsize ${\bf 1}$}}
\put(-92,-15){$G(C_T)$}

\put(25,28){$\rightsquigarrow$}

\put(55,28){{\scriptsize ${\bf 1}$}}
\put(70,-15){$G(C_{\sigma})$}

\put(62,30){\line(1,0){24}}
\put(62,30){\circle*{2}}

\put(82,30){\circle*{2}}
\put(82,30){\line(0,-1){15}}
\put(82,15){\circle*{2}}

\put(105,30){\circle*{2}}
\put(105,30){\line(0,-1){15}}
\put(105,15){\circle*{2}}

\put(88,30){$\ldots$}

\put(100,30){\line(1,0){18}}

\put(102,35){\scriptsize{$T=\sigma_0$}}
%\put(140,39){\scriptsize{$I_s$}}
%\put(145,33){\scriptsize{$I_{s-1}$}}
\put(154,23){$\ddots$}

\put(118,30){\circle*{3}}
\put(118,30){\line(1,-1){30}}

\put(118,30){\vector(1,1){15}}
\put(118,30){\vector(1,0.6){20}}
\put(118,30){\vector(1,0.2){17}}

\put(126.5,21){\circle*{2}}
\put(118,17){\scriptsize{$\overline{\sigma}_1$}}
\put(126.5,21){\vector(1,-0.2){9}}
\put(126.5,21){\vector(1,0.5){9}}
\put(126.5,21){\vector(1,1){6}}

\put(137.5,10){\circle*{2}}
\put(137.5,10){\vector(1,1.3){9}}
\put(137.5,10){\vector(1,0.4){9}}
\put(137.5,10){\vector(1,2){8}}

%\put(26,0){\circle*{2}}

\put(148,0){\circle*{2}}
\put(148,0){\vector(1,1.9){7.5}}
\put(148,0){\vector(1,1.3){9}}

\put(148,0){\vector(1,.7){14}}
%\put(168,5){\scriptsize{$I_{t+1}$}}
%\put(162,-10){\scriptsize{$I_{t}$}}
%\put(156,-32){\scriptsize{$I_{1}$}}
%\put(-11,-34){\scriptsize{$\sigma_0^{I_1}$}}

\put(148,0){\line(0,-1){25}}

\put(148,-25){\circle*{2}}
\put(148,-25){\vector(1,1.9){7.5}}
\put(148,-25){\vector(1,1.3){9}}
\put(148,-25){\vector(1,1){10}}
\put(148,-25){\vector(1,.7){10}}

\put(148,-25){\line(1,0){20}}

%\put(168,-25){\line(0,-1){15}}

\put(168,-25){\circle*{3}}

\put(162,-31){\scriptsize{$\sigma$}}
\put(142,-31){\scriptsize{$\overline{\sigma}_{\varepsilon}$}}

%\put(168,-40){\circle*{2}}

\put(168,-25){\vector(1,1.4){12}}
\put(168,-25){\vector(1,0.5){14}}
\put(168,-25){\vector(1,-0.2){13}}
\put(168,-25){\vector(1,-0.7){14}}

\end{picture}
$$
\caption{Graphs \(G(C_T)\) and \(G(C_\sigma)\) considered in Proposition \ref{prop:case2a}.}
\label{fig48}
\end{figure}
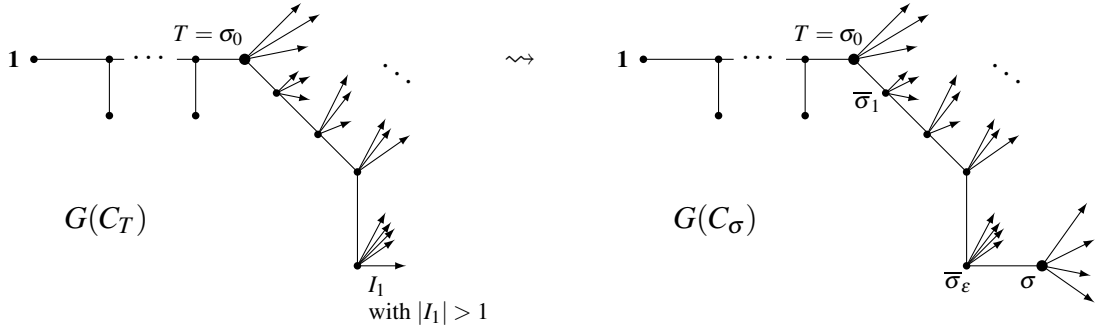

    \begin{proposition} \label{prop:case2a}
    \begin{align*}
    P_\sigma(t_1,\dots,t_{s_1}&,t_{s_1+1},\dots,t_{s_1+s-1})=P_T(t_1\cdots t_{s_1},t_{s_1+1},\dots,t_{s_1+s-1})\\
   \cdot&\bigg(\prod_{j=2}^{s_1}B(w_{q+1,I_{1,j}},1,t_{I_{1,j}},(\prod_{k>j}^{s_1}t_{k}^{w_{q+1,I_{1,k}}})(\prod_{k=s_1+1}^{s_1+s-1}t_{k}^{w_{q+1,I_{k-s_1+1}}}),\prod_{k<j}t_k)\bigg).
    \end{align*}
    \end{proposition}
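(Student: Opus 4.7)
The plan follows the template of Proposition~\ref{prop:case1b}. First I would identify the structural relation between $G(C_T)$ and $G(C_\sigma)$: since by hypothesis $g_j=q$ for every $j\in I_1$, no new dead arcs are created when passing from $C_T$ to $C_\sigma$. The graph $G(C_\sigma)$ is obtained from $G(C_T)$ by replacing the arrow of the first branch of $C_T$ with a path of ordinary vertices terminating at a new vertex $\sigma$, which is now a proper star point of valence $\nu(\sigma)=s_1+1$ with $s_1$ arrows corresponding to the subpackages $I_{1,1},\dots,I_{1,s_1}$; in particular $s(\sigma)-1=s_1-1$. Crucially, every vertex $P\in G(C_T)$ keeps its valence unchanged in $G(C_\sigma)$ (the arrow at the attachment point is simply replaced by an edge to the newly inserted path), so no vertex of $G(C_T)$ produces a different factor in the A'Campo-type product.

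Applying Theorem~\ref{thm:p1p2p3} to both Poincar\'e series, the next step is to match the contributions of the old vertices. Since all branches in $I_1$ share the same topological Puiseux series up to $\sigma$, for every $P\in G(C_T)$ the valuation $\underline{v}^P$ in $G(C_\sigma)$ has its first $s_1$ components all equal to $\pr_1(\underline{v}^P)$ computed in $G(C_T)$, while the last $s-1$ components are unchanged. The substitution $t_1\mapsto t_1\cdots t_{s_1}$ therefore transforms each factor $\underline{t}^{\underline{v}^P}-1$ of $P_T$ into the corresponding factor of $P_\sigma$, yielding
\[
\prod_{P\in G(C_T)}\bigl(\underline{t}^{\underline{v}^P}-1\bigr)^{-\chi(E_P^\circ)}\;=\;P_T\bigl(t_1\cdots t_{s_1},\,t_{s_1+1},\dots,t_{s_1+s-1}\bigr).
\]

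The remaining contribution to $P_\sigma$ comes from the new vertices, of which only $\sigma$ is non-ordinary. Using the description of values at proper star points in Subsection~\ref{subsec:valuesproperstar} case~(2) together with $e_q^j=1$ (since $g_j=q$), one obtains $\pr_k(\underline{v}^\sigma)=w_{q+1,I_{1,k}}=:W$ for $k\le s_1$ (the common value whose independence from $k$ is precisely the hypothesis of case~(2)(a)), and $\pr_{s_1+l-1}(\underline{v}^\sigma)=w_{q+1,I_l}$ for $l\ge 2$. A direct expansion then shows
\[
B\Bigl(W,1,t_{I_{1,j}},\bigl(\textstyle\prod_{k>j}^{s_1}t_k^{W}\bigr)\bigl(\prod_{k=s_1+1}^{s_1+s-1}t_k^{w_{q+1,I_{k-s_1+1}}}\bigr),\prod_{k<j}t_k\Bigr)\;=\;\underline{t}^{\underline{v}^\sigma}-1
\]
for every $j=2,\dots,s_1$, so the product of these $s_1-1$ polynomials equals $(\underline{t}^{\underline{v}^\sigma}-1)^{s(\sigma)-1}$, which by Theorem~\ref{thm:p1p2p3} is exactly the $P_3$-contribution of $\sigma$.

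The main obstacle I anticipate is the careful bookkeeping in the first step: confirming that every vertex of $G(C_T)$ retains the same $\chi(E^\circ)$ in $G(C_\sigma)$, especially the ``boundary'' vertex where the old arrow of the first branch was attached, and that all new intermediate vertices on the path to $\sigma$ are indeed ordinary and therefore contribute trivially. Once this verification is in place, the three steps combine to give exactly the claimed factorisation.
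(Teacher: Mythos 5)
Your proposal is correct and follows essentially the same route as the paper's proof: both identify that $\mathcal{S}_\sigma\setminus\mathcal{S}_T=\{\sigma\}$ with $\sigma$ a proper star point of valence $s_1+1$, match the contributions of the old vertices under the substitution $t_1\mapsto t_1\cdots t_{s_1}$, and then verify that the $s_1-1$ identical $B$-factors equal $(\underline{t}^{\underline{v}^\sigma}-1)^{s(\sigma)-1}$ using the independence of $w_{q+1,I_{1,k}}$ from $k$ and eq.~\eqref{eqn:valueproperordinary}. The bookkeeping point you flag at the end (that the attachment vertex retains its valence and the new intermediate vertices are ordinary) does in fact hold, and the paper treats it the same way by asserting $G(C_T)$ is a subgraph of $G(C_\sigma)$ and tracking only the star-point factors from Theorem~\ref{thm:p1p2p3}.
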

    \begin{proof}
        As in the proof of Proposition \ref{prop:case1b}, \(G(C_T)\) is a subgraph of \(G(C_\alpha)\) and  \(\mathcal{S}_\sigma\setminus\mathcal{S}_T=\{\sigma\},\) but in this case \(\sigma\) is a proper star point so it  only contributes to the factor \(P_3.\) It is then easy to see that  
        
        \begin{align*}
       P_T(t_1\cdots& t_{s_1},t_{s_1+1},\dots,t_{s_1+s-1})=\\
             =& \frac{1}{\underline{t}^{\underline{v}^1}-1} \cdot \big(\prod_{i=1}^q \frac{\underline{t}^{\underline{v}^{\sigma_{i}}}-1}{\underline{t}^{\underline{v}^{\rho_{i}}}-1}\big)\cdot (\underline{t}^{\underline{v}^{\sigma_0}}-1) \cdot \prod_{\rho \in \widetilde{\mathcal{E}} } \frac{\underline{t}^{(n_{\rho}+1)\underline{v}^{\rho}}-1}{\underline{t}^{\underline{v}^{\rho}}-1} \cdot \prod_{\tiny\begin{array}{c}s(\alpha) >1\\ \alpha\neq\sigma\end{array}}  (\underline{t}^{\underline{v}^{\alpha}}-1)^{s(\alpha)-1}.
        \end{align*}
        
        To finish we need to check that    
        \[
\bigg(\prod_{j=2}^{s_1}B(w_{q+1,I_{1,j}},1,t_{I_{1,j}},(\prod_{k>j}^{s_1}t_{k}^{w_{q+1,I_{1,k}}})(\prod_{k=s_1+1}^{s_1+s-1}t_{k}^{w_{q+1,I_{k-s_1+1}}}),\prod_{k<j}t_k)\bigg)=(t^{\vu^\sigma}-1)^{s(\sigma)-1}.
        \]
        
        To do that, first observe that since \(w_{\sigma,I_{1,k}}=w_{\sigma,I_{1,k'}}\) if \(k\neq k'\) we have 
        \[
        (t_{I_{1,j}}\cdot \prod_{k<j}t_k)^{w_{q+1,I_{1,j}}}=\prod_{k\leq j}t_k^{w_{q+1,I_{1,k}}}.
        \]
        In this way, 
        \begin{align*}
         &B\Big(w_{q+1,I_{1,j}},1,t_{I_{1,j}},\Big(\prod_{k>j}^{s_1}t_{k}^{w_{q+1,I_{1,k}}}\Big)\cdot\Big(\prod_{k=s_1+1}^{s_1+s-1}t_{k}^{w_{q+1,I_{k-s_1+1}}}\Big),\prod_{k<j}t_k\Big) \\
            & \hspace{20pt} =\Big(\big(\prod_{k=s_1+1}^{s_1+s-1}t_{k}^{w_{q+1,I_{k-s_1+1}}}\big)\cdot t_j^{w_{q+1,I_{1,j}}}\Big)\cdot\big(\prod_{k<j}t_k^{w_{q+1,I_{1,k}}}\big)-1\Big)
        \end{align*}
        
    %    \[
     %   B(w_{q+1,I_{1,j}},1,t_{I_{1,j}},(\prod_{k>j}^{s_1}t_{k}^{w_{q+1,I_{1,k}}})(\prod_{k=s_1+1}^{s_1+s-1}t_{k}^{w_{q+1,I_{k-s_1+1}}}),\prod_{k<j}t_k)=((\prod_{k=s_1+1}^{s_1+s-1}t_{k}^{w_{q+1,I_{k-s_1+1}}})t_j^{w_{q+1,I_{1,j}}})(\prod_{k<j}t_k^{w_{q+1,I_{1,k}}})-1)
    %    \]
        is independent of \(j\) and hence it is repeated \(s_1-1\) times.
        \medskip
        
        On the other hand, since \(q=g_i\) for all \(i=1,\dots, s_1\) then by eq.~ \eqref{eqn:valueproperordinary} we have
        \begin{equation*}
    \pr_j(\underline{v}^\sigma)=\left\{\begin{array}{ll}
         [f_{I_{1,j}},f_{I_{1,k}}]& \text{if}\;j\neq k\;\text{and}\; j\leq s_1  \\
         
         [f_{I_{1,k}},f_{I_{j-s_1+1}}]& \text{if}\; s_1+1\leq j\leq s
    \end{array}\right.
\end{equation*}
Finally, by definition of \(C_\sigma,\) the valency of \(\sigma\) in \(G(C_\sigma)\) is \(s_1+1.\) Therefore, \(s(\sigma)=s_1\) as there is no dead arc starting with \(\sigma.\) Thus, the claim follows.
    \end{proof}
        
        We then do \(T=\sigma_0^{I_{1}}\) and \(\sigma=\sigma_0^{I_{1,1}}\) to continue the process.
        
        \item We assume that \(g_j>q\) for some \(j\in I_1.\) We distinguish again two subcases:
        \begin{enumerate}[wide, labelwidth=!, labelindent=0pt]
            \item Assume \((f_1|\cdots|f_{|I_1|})\leq (q+1,0)\) and for simplicity assume  \(g_1>q\). Let us denote by \(\sigma_{0}^{I_1}\) be the first proper star point of the package \(I_1.\) %Since \(g_1\geq q\) then \(\sigma_{0}^{I_1}\) is either an ordinary point for the dual graph \(G(C^1)\) (and then for all the branches in \(I_1\)), or it is a star point for some other \(j\in I_1.\) 
            Let \(I_1=\bigcup_{k=1}^{s_1} I_{1,k}\) be the partition associated to the proper star \(\sigma_{0}^{I_1}.\) In this case, the analysis of the first \(s_1\) branches of \(C_{\sigma_{0}^{I_1}}\) is more delicate as there are some of them that have at least \(q+1\) maximal contact values. For this reason, we subdivide the partition of \(I_1\) as in the proof of Proposition \ref{prop:poincarelemm2}. Let \(\jnd_{sing}=\bigcup_{k=l+1}^{s_1}I_{1,k}\) be the branches of \(C_\sigma\) with \(q+1\) maximal contact values. Observe that the good ordering of the branches implies that the first \(l\) branches have \(q\)-maximal contact values \(\jnd_{sm}=\bigcup_{k=1}^{l}I_{1,k}\)  \ (cf. Lemma \ref{lem:orderbetaiscompatible}) and \(l\) could be \(0\). Thus, \(C_\sigma\) is plane curve that by definition have the first \(l\) branches have \(q\) maximal contact values, the next \(s_1-l\) branches have \(q+1\) maximal contact values and the last \(s-1\) branches have \(q\) maximal contact values. Then, 
            
            \begin{proposition}\label{prop:case2bi}
          %    \[
              \begin{align*}
& P_\sigma(t_1,\dots,t_{s_1},t_{s_1+1},\dots,t_{s_1+s-1})= P_T(t^{p_{q+1,I_{1,1}}}_1\cdots t^{p_{q+1,I_{1,s_1}}}_{s_1},t_{s_1+1},\dots,t_{s_1+s-1})\\
& \hspace{20pt} \cdot  Q\Big(w_{q+1,I_{1,1}},p_{q+1,I_{1,1}},t_1,(\prod_{k=2}^{s_1}t_{k}^{w_{q+1,I_{1,k}}})(\prod_{k=s_1+1}^{s_1+s-1}t_{k}^{w_{q+1,I_{k-s_1+1}}})\Big)\\
&\hspace{20pt} \cdot  \bigg(\prod_{j=2}^{s_1}B(w_{q+1,I_{1,j}},p_{q+1,I_{1,j}},t_{I_{1,j}},(\prod_{k>j}^{s_1}t_{k}^{w_{q+1,I_{1,k}}})(\prod_{k=s_1+1}^{s_1+s-1}t_{k}^{w_{q+1,I_{k-s_1+1}}}),\prod_{k<j}t^{p_{q+1,I_{1,k}}}_k)\bigg).
 \end{align*}
   % \]
            \end{proposition}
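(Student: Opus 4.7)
The proof will follow the template established in Propositions \ref{prop:case1b} and \ref{prop:case2a}: apply Theorem \ref{thm:p1p2p3} to both $P_T$ and $P_\sigma$, then identify the extra factors appearing in passing from the A'Campo-type product for $P_T$ to the one for $P_\sigma$ with the factors $Q$ and $B$ on the right-hand side. The \textbf{first step} is to describe precisely the dual graph $G(C_\sigma)$ as obtained from $G(C_T)$: the $I_1$-arrow of $G(C_T)$ is replaced by a subtree rooted at $T = \sigma_0$, containing exactly one new proper star point $\sigma = \sigma_0^{I_1}$ from which $s_1$ geodesics emerge toward the new arrows corresponding to $I_{1,1},\ldots,I_{1,s_1}$ and, provided $\jnd_{sing}\neq \emptyset$, one new dead arc whose end point I denote $\rho$. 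The hypothesis $(f_1\mid\cdots\mid f_{|I_1|})\leq (q+1,0)$ forbids non-proper star points between $T$ and $\sigma$, so that $\mathcal{S}_\sigma\setminus \mathcal{S}_T\subseteq\{\sigma,\rho\}$.

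The \textbf{second step} is to show that $P_T(t_1^{p_{q+1,I_{1,1}}}\cdots t_{s_1}^{p_{q+1,I_{1,s_1}}},t_{s_1+1},\dots,t_{s_1+s-1})$ exactly equals the sub-product of the A'Campo expression for $P_\sigma$ ranging over the vertices of the common subgraph $G(C_T)\subseteq G(C_\sigma)$. The verification rests on Noether's formula \ref{noehterformula}: for any $P\leq T$ in $G(C_\sigma)$, the coordinate of $\underline{v}^P$ on the package $I_{1,k}$ equals $p_{q+1,I_{1,k}}\cdot \pr_{I_1}(\underline{v}^P)$ as read in $G(C_T)$. The \textbf{third step} treats the $Q$-factor: using \eqref{eqn:maximalcontactvalues} together with the definitions of $w_{q+1,\bullet}$ and $p_{q+1,\bullet}$, one computes that the denominator of $Q(w_{q+1,I_{1,1}},p_{q+1,I_{1,1}},t_1,\cdots)$ coincides with $\underline{t}^{\underline{v}^\rho}-1$ and its numerator with $\underline{t}^{(n_\rho+1)\underline{v}^\rho}-1=\underline{t}^{\underline{v}^\sigma}-1$, so the $Q$-factor reproduces precisely the new dead-arc contribution to the factor $P_2$ of Theorem \ref{thm:p1p2p3} for $C_\sigma$.

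The \textbf{fourth step} handles the product of $B$-factors. By Proposition \ref{prop:numbersq} (applied to $\sigma$ with valence $s_1+1$ or $s_1+2$), the proper star point $\sigma$ contributes $(\underline{t}^{\underline{v}^\sigma}-1)^{s_1-1}$ to the factor $P_3$ of $P_\sigma$. A direct expansion shows that, for each $j=2,\ldots,s_1$, the polynomial $B(w_{q+1,I_{1,j}},p_{q+1,I_{1,j}},t_j,\ldots)$ equals $\underline{t}^{\underline{v}^\sigma}-1$, once one identifies $\pr_j(\underline{v}^\sigma)=p_{q+1,I_{1,j}}w_{q+1,I_{1,j}}$ for $j\leq s_1$ together with the appropriate Noether-type identities for the remaining projections. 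Combining Steps 2--4 recovers the full factorization supplied by Theorem \ref{thm:p1p2p3} for $C_\sigma$, proving the claim.

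The \textbf{main obstacle} lies in Step 4: one must verify that the $s_1-1$ factors $B(\cdot)$ all collapse to the \emph{same} binomial $\underline{t}^{\underline{v}^\sigma}-1$. This reduces to checking the crossed identities $p_{q+1,I_{1,j}}w_{q+1,I_{1,k}}=p_{q+1,I_{1,k}}w_{q+1,I_{1,j}}$ for $j,k\in\{1,\dots,s_1\}$, as well as the independence of $j$ of the mixed products involving the outer indices. These follow from eq.~\eqref{eqn:valueproperspecial} and eq.~\eqref{eqn:valueproperordinary}, but require a careful case split according to whether each package $I_{1,k}$ is smooth or singular (that is, $k\leq l$ versus $k>l$) and according to whether the contact pair $(f_1\mid\cdots\mid f_{|I_1|})$ equals $(q+1,0)$ or is strictly smaller. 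Once these identifications are secured, the proof proceeds essentially as in Proposition \ref{prop:case2a}, with the additional $P_1$-type contribution coming from the new dead arc accounted for by the $Q$-factor.
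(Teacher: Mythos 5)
The proposal contains a genuine gap in Step 1, and this error propagates through Step 4. You claim that $G(C_\sigma)$ contains exactly one new proper star point $\sigma=\sigma_0^{I_1}$ from which all $s_1$ geodesics toward $I_{1,1},\ldots,I_{1,s_1}$ emerge, so that $\mathcal{S}_\sigma\setminus\mathcal{S}_T\subseteq\{\sigma,\rho\}$. This is false in general. The $s_1$ packages of $I_1$ need not all separate at the single vertex $\sigma_0^{I_1}$: they can diverge along a chain of proper star points $\sigma_0^{I_1}\prec\osigma_1^{I_1}\prec\cdots\prec\osigma_\epsilon^{I_1}$, all of which already appear in the dual graph $G(C_\sigma)$. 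The hypothesis $(f_1\mid\cdots\mid f_{|I_1|})\leq(q+1,0)$ forbids new \emph{non-proper} star points between $T$ and $\sigma$, but places no bound on the number of new \emph{proper} star points. The paper's proof therefore works with the whole set $\mathcal{NS}=\mathcal{S}_\sigma\setminus\mathcal{S}_T$ of new star points, whose cardinality can exceed two, and which contains $\widetilde{\mathcal{E}}_\sigma\setminus\widetilde{\mathcal{E}}_T=\{\osigma_\epsilon^{I_1}\}$ as its sole new dead-arc end.

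This makes your claim in Step 4 that all $s_1-1$ factors $B(\cdot)$ ``collapse to the same binomial $\underline{t}^{\underline{v}^\sigma}-1$'' incorrect, and the crossed identities $p_{q+1,I_{1,j}}w_{q+1,I_{1,k}}=p_{q+1,I_{1,k}}w_{q+1,I_{1,j}}$ you propose to verify do not hold in general (they would force equality of the $p_{q+1,I_{1,j}}$, which is false when the packages have distinct multiplicities). The correct statement, which the paper establishes via eq.~\eqref{eqn:valueproperspecial}, is that the $j$-th $B$-factor equals $\underline{t}^{\underline{v}^{\alpha}}-1$ for some $\alpha\in\mathcal{NS}$ \emph{depending on $j$}, and that each $\alpha\in\mathcal{NS}$ is hit exactly $s(\alpha)-1$ times. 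It is the bookkeeping over the whole chain $\mathcal{NS}$, together with Proposition~\ref{prop:numbersq}, that reconciles the product of $B$-factors with $P_3$. Your Steps 2 and 3 (rescaling the common subgraph via Noether's formula and identifying the $Q$-factor with the new dead-arc contribution to $P_2$, including the degeneration $Q=1$ when a smooth package precedes) agree with the paper's argument; it is the count of proper star points that needs repair.
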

            \begin{proof}
            First of all, observe that if \(\jnd_{sm}=I_1\) then the proof is analogous to the proof of Proposition \ref{prop:case2a}. Therefore, we will assume \(\jnd_{sm}\neq I_1.\) As usual, the dual graph \(G(C_T)\) is a subgraph of \(G(C_\sigma)\) and we need to analyze the new star points.
            \medskip
            
            As in the proof of Proposition \ref{prop:poincarelemm2} we star with the case \(\jnd_{sing}=I_1.\) In this case, there are \(\epsilon+1\) new star points, i.e. \[\mathcal{S}_\sigma\setminus\mathcal{S}_T=\{\sigma_0^{I_1}\preceq \osigma^{I_1}_1\preceq\cdots\preceq\osigma^{I_1}_\epsilon\}:=\mathcal{NS}.\] 
            Since \(\jnd_{sing}=I_1\) there exists a unique end point \(W:=P(L_{q+1}^{1})\) which is in fact common to the dead arcs \(L_{q+1}^{I_{1,k}}\) associated to each star point of \(\mathcal{NS}\) as star point of the corresponding \(G(C_\sigma^{k}).\) Therefore, by the ordering in the set of branches (cf.~proof of Proposition \ref{prop:poincarelemm2}) \(\widetilde{\mathcal{E}}_\sigma\setminus\widetilde{\mathcal{E}}_T=\{\osigma_\epsilon^{I_1}\};\) observe that \((f_1|\cdots|f_{|I_1|})=(q+1,0)\) if and only if \(\osigma_\epsilon^{I_1}=\sigma_0^{I_1}.\) Also, the star points of \(\mathcal{NS}\setminus\{\osigma_\epsilon^{I_1}\}\) are all proper star points. The star point \(\osigma_\epsilon^{I_1}\) may be also a proper star point but it is distinguished since it also belong to \(\widetilde{\mathcal{E}}_\sigma.\) Then, by Noether's formula \ref{noehterformula}, Theorem \ref{thm:p1p2p3} and the definition of the curves \(C_\sigma,C_T\) with a bit of effort one may check that 
            \[
            \begin{split}
            P_T(t^{p_{q+1,I_{1,1}}}_1\cdots t^{p_{q+1,I_{1,s_1}}}_{s_1},t_{s_1+1},\dots,t_{s_1+s-1})= &\frac{1}{\underline{t}^{\underline{v}^1}-1}\cdot  \prod_{i=1}^q \frac{\underline{t}^{\underline{v}^{\sigma_{i}}}-1}{\underline{t}^{\underline{v}^{\rho_{i}}}-1}\cdot (\underline{t}^{\underline{v}^{\sigma^0}}-1)\\
            \cdot& \prod_{\rho \in \widetilde{\mathcal{E}}\setminus\{\osigma_\epsilon\} } \frac{\underline{t}^{(n_{\rho}+1)\underline{v}^{\rho}}-1}{\underline{t}^{\underline{v}^{\rho}}-1} \cdot \prod_{\tiny\begin{array}{c}s(\alpha) >1\\ \alpha\notin\mathcal{NS}\end{array}}  (\underline{t}^{\underline{v}^{\alpha}}-1)^{s(\alpha)-1}.
            \end{split}
            \]
            
            By the good ordering in Subsection \ref{subsec:totalorderstar} we have that \((n_{v^{W}}+1)\vu^{W}=p_{q+1,I_{1,1}}\vu^{W}\), where \(p_{q+1,I_{1,1}}=e^{I_{1,1}}_{q}/e^{I_{1,1}}_{q+1}\) and by eq.~ \eqref{eqn:maximalcontactvalues}
             \[\pr_j(\vu^W)=\left\{\begin{array}{cc}
    \obeta^{I_{1,1}}_{q+1}/e^{I_{1,1}}_{q+1}
    &\text{if}\;j=1 \\[.3cm]
     \frac{[f_1,f_j]}{e^{I_{1,1}}_ {q}}& \text{if}\;j\neq 1
     \end{array}\right.
     \]
     In this way, it is straightforward to check

     \[
     Q\Big(w_{q+1,I_{1,1}},p_{q+1,I_{1,1}},t_1,(\prod_{k=2}^{s_1}t_{k}^{w_{q+1,I_{1,k}}})(\prod_{k=s_1+1}^{s_1+s-1}t_{k}^{w_{q+1,I_{k-s_1+1}}})\Big)=\frac{\underline{t}^{(n_{W}+1)\underline{v}^{W}}-1}{\underline{t}^{\underline{v}^{W}}-1}. 
     \]

     As in the proof of Proposition \ref{prop:poincarelemm2}, let us now assume that \(\jnd_{sm}\neq \emptyset.\) In this case, \(W\) is no longer an end point for \(G(C_{\sigma})\) as the good order of the branches implies that the geodesics of \(I_{1,1}\) pass through \(W.\) In this case \(p_{q+1,I_{1,1}}=1\) and then 
     \[
     Q\Big(w_{q+1,I_{1,1}},p_{q+1,I_{1,1}},t_1,(\prod_{k=2}^{s_1}t_{k}^{w_{q+1,I_{1,k}}})(\prod_{k=s_1+1}^{s_1+s-1}t_{k}^{w_{q+1,I_{k-s_1+1}}})\Big)=1.
     \] 
     Thus, both in the case \(\jnd_{sing}=I_1\) and in the case \(\jnd_{sing}\neq I_1,\) we have proven

\begin{align*}
    & P_1(\underline{t})\cdot P_2(\underline{t})= P_T\big(t^{p_{q+1,I_{1,1}}}_1\cdots t^{p_{q+1,I_{1,s_1}}}_{s_1},t_{s_1+1},\dots,t_{s_1+s-1}\big)\\
& \hspace{20pt} \cdot Q\Big (w_{q+1,I_{1,1}},p_{q+1,I_{1,1}},t_1,(\prod_{k=2}^{s_1}t_{k}^{w_{q+1,I_{1,k}}})(\prod_{k=s_1+1}^{s_1+s-1}t_{k}^{w_{q+1,I_{k-s_1+1}}})\Big).
\end{align*}

%\hspace{20pt}
     
     %P_1(\underline{t})P_2(\underline{t})=P_Q(t^{p_{q+1,I_{1,1}}}_1\cdots t^{p_{q+1,I_{1,s_1}}}_{s_1},t_{s_1+1},\dots,t_{s_1+s-1})Q(w_{q+1,I_{1,1}},p_{q+1,I_{1,1}},t_1,(\prod_{k=2}^{s_1}t_{k}^{w_{q+1,I_{1,k}}})(\prod_{k=s_1+1}^{s_1+s-1}t_{k}^{w_{q+1,I_{k-s_1+1}}})).\]

We are left with the task of checking that

\begin{align*}
&\prod_{j=2}^{s_1}B\Big(w_{q+1,I_{1,j}},p_{q+1,I_{1,j}},t_{I_{1,j}},(\prod_{k>j}^{s_1}t_{k}^{w_{q+1,I_{1,k}}})(\prod_{k=s_1+1}^{s_1+s-1}t_{k}^{w_{q+1,I_{k-s_1+1}}}),\prod_{k<j}t^{p_{q+1,I_{1,k}}}_k\Big)\\
&\hspace{20pt} =\prod_{\alpha\in\mathcal{NS}}  (\underline{t}^{\underline{v}^{\alpha}}-1)^{s(\alpha)-1}.
\end{align*}

     We proceed as in the proof of Proposition \ref{prop:poincarelemm2}. Indeed, eq.~ \eqref{eqn:valueproperspecial} leads to the fact that each factor is of the form 
     \[
     B\Big(w_{q+1,I_{1,j}},p_{q+1,I_{1,j}},t_{I_{1,j}},\big(\prod_{k>j}^{s_1}t_{k}^{w_{q+1,I_{1,k}}}\big)\big(\prod_{k=s_1+1}^{s_1+s-1}t_{k}^{w_{q+1,I_{k-s_1+1}}}\big),\prod_{k<j}t^{p_{q+1,I_{1,k}}}_k\Big)=(t^{\vu^{\alpha}}-1)
     \]
     for some \(\alpha \in \mathcal{NS}\) which depends on \(j.\) By definition, to each \(\alpha\in\mathcal{NS}\) there are \(s(\alpha)-1\) packages (i.e.~branches of \(C_\sigma\)) associated to it; this means that each of the factors is repeated \(s(\alpha)-1\) times, and the proof is complete.
\end{proof}

\begin{figure}[H]
$$
\unitlength=0.50mm
\begin{picture}(70.00,110.00)(-20,-70)
\thinlines

\put(-120,30){\line(1,0){20}}
\put(-120,30){\circle*{2}}

\put(-104,30){\circle*{2}}
\put(-104,30){\line(0,-1){15}}
\put(-104,15){\circle*{2}}

\put(-77,30){\circle*{2}}
\put(-77,30){\line(0,-1){15}}
\put(-77,15){\circle*{2}}

\put(-94,30){$\ldots$}

\put(-82,30){\line(1,0){18}}

%\put(-63,35){\scriptsize{$Q=\sigma_0$}}
%\put(-27,39){\scriptsize{$I_s$}}
%\put(-22,33){\scriptsize{$I_{s-1}$}}
%\put(-8,23){$\ddots$}

\put(-64,30){\circle*{3}}
\put(-64,30){\line(1,-1){30}}

\put(-64,30){\vector(1,1){15}}
\put(-64,30){\vector(1,0.6){20}}
\put(-64,30){\vector(1,0.2){17}}

\put(-34,0){\circle*{2}}
%\put(-14,0){\vector(1,1.9){7.5}}
%\put(-14,0){\vector(1,1.3){9}}

%\put(-14,0){\vector(1,.7){14}}
%\put(1,5){\scriptsize{$I_{t+1}$}}
%\put(-5,-10){\scriptsize{$I_{t}$}}
%\put(-11,-32){\scriptsize{$I_{1}$}}
%\put(-11,-39){\scriptsize{with $|I_{1}|>1$}}
%\put(-11,-34){\scriptsize{$\sigma_0^{I_1}$}}

\put(-34,0){\line(0,-1){25}}

\put(-34,-25){\circle*{2}}
\put(-34,-25){\vector(1,1.9){7.5}}
\put(-34,-25){\vector(1,1.3){9}}
\put(-34,-25){\vector(1,1){10}}
\put(-34,-25){\vector(1,.7){10}}

\put(-34,-25){\vector(1,0){13}}

\put(-127,28){{\scriptsize ${\bf 1}$}}
\put(-112,-15){$G(C_T)$}

\put(-15,28){$\rightsquigarrow$}

\put(15,28){{\scriptsize ${\bf 1}$}}
\put(38,-35){$G(C_{\sigma})$}

\put(22,30){\line(1,0){20}}
\put(22,30){\circle*{2}}

\put(38,30){\circle*{2}}
\put(38,30){\line(0,-1){15}}
\put(38,15){\circle*{2}}

\put(69,30){\circle*{2}}
\put(69,30){\line(0,-1){15}}
\put(69,15){\circle*{2}}

\put(48,30){$\ldots$}

\put(60,30){\line(1,0){18}}

%\put(-63,35){\scriptsize{$Q=\sigma_0$}}
%\put(-27,39){\scriptsize{$I_s$}}
%\put(-22,33){\scriptsize{$I_{s-1}$}}
%\put(-8,23){$\ddots$}

\put(78,30){\circle*{3}}
\put(78,30){\line(1,-1){30}}

\put(78,30){\vector(1,1){15}}
\put(78,30){\vector(1,0.6){20}}
\put(78,30){\vector(1,0.2){17}}

\put(108,0){\circle*{2}}
%\put(-14,0){\vector(1,1.9){7.5}}
%\put(-14,0){\vector(1,1.3){9}}

%\put(-14,0){\vector(1,.7){14}}
%\put(1,5){\scriptsize{$I_{t+1}$}}
%\put(-5,-10){\scriptsize{$I_{t}$}}
%\put(-11,-32){\scriptsize{$I_{1}$}}
%\put(-11,-39){\scriptsize{with $|I_{1}|>1$}}
%\put(-11,-34){\scriptsize{$\sigma_0^{I_1}$}}

\put(108,0){\line(0,-1){25}}

\put(108,-25){\circle*{2}}
\put(108,-25){\vector(1,1.9){7.5}}
\put(108,-25){\vector(1,1.3){9}}
\put(108,-25){\vector(1,1){10}}
\put(108,-25){\vector(1,.7){10}}

\put(108,-25){\line(1,0){20}}

\put(128,-25){\circle*{2}}
\put(128,-25){\vector(1,-0.2){10}}
\put(128,-25){\vector(1,1){13}}

\put(139,-36){\circle*{2}}
\put(139,-36){\vector(1,-0.2){10}}
\put(139,-36){\vector(1,1){13}}

\put(128,-25){\line(1,-1){20}}

\put(149,-46){\circle*{2}}
\put(149,-46){\vector(1,-0.5){10}}
\put(149,-46){\vector(1,-0.1){12}}
\put(149,-46){\vector(1,0.7){13}}

\put(149,-46){\line(-1,-1){15}}

\put(134,-61){\circle*{2}}
\put(134,-61){\vector(2,0.8){10}}
\put(134,-61){\vector(1,-0.4){12}}
\put(134,-61){\vector(1.2,-0.7){11}}
\put(134,-61){\vector(1,0){10}}

\end{picture}
$$
\caption{Graphs \(G(C_T)\) and \(G(C_\sigma)\) considered in Proposition \ref{prop:case2bi}.}
\label{fig49}
\end{figure}

            \item Assume \((f_1|\cdots|f_{|I_1|})> (q+1,0),\) i.e \((f_1|\cdots|f_{|I_1|})\geq (q+1,c)\) with \(c\neq 0,\) and for simplicity \(g_1> q.\) Let us denote by \((q_{I_1},c_{I_1}):= (f_1|\cdots|f_{|I_1|}).\) Since \((q_{I_1},c_{I_1})> (q+1,0),\) there are \(q_{I_1}-q\) star points which are non-proper between \(\osigma_\epsilon\) and \(\sigma_0^{I_1},\) i.e. 
            \[\osigma_{\epsilon}\prec \alpha_{q+1}\prec \cdots \prec \alpha_{q_{I_1}}\preceq \sigma_0^{I_1}.\] 

            At this stage, the case is analogous to the case \((1)(b)\) as we are dealing with a sequence of star points on one branch of \(C_\sigma\) that are non-proper. Therefore, a slight change in the proof of Proposition \ref{prop:case1b} actually shows the following.

            \begin{proposition}\label{prop:case2bii}
 \[
 P_{\sigma}(\underline{t})=P_T(t_1^{p_\sigma},t_2,\dots,t_s)\cdot Q\big(w_\sigma,p_\sigma,t_1,\prod_{k=2}^{s} t_k^{w_{q+i,I_k}}\big).
 \]                
            \end{proposition}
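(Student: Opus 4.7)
The plan is to follow the proof of Proposition \ref{prop:case1b} almost verbatim, iterated over the chain of non-proper star points $\osigma_{\epsilon}\prec \alpha_{q+1}\prec \cdots \prec \alpha_{q_{I_1}}\preceq \sigma_0^{I_1}$. In both situations, passing from $C_T$ to $C_\sigma$ amounts to attaching a single new dead arc to the dual graph at a non-proper star point lying on the geodesic of the first branch, and leaves the rest of the graph unchanged. Concretely, at the $j$-th step of the iteration we set $T=\alpha_{q+j-1}$ (with $\alpha_q:=\osigma_\epsilon$) and $\sigma=\alpha_{q+j}$, and apply the same formula.

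First I would check the graph-theoretic setup: $G(C_T)$ is a subgraph of $G(C_\sigma)$ with $\mathcal{S}_\sigma\setminus\mathcal{S}_T=\{\sigma\}$, where $\sigma\in\widetilde{\mathcal{E}}_\sigma$ and $s(\sigma)=0$ because $\sigma$ is non-proper. Hence, in the decomposition of Theorem \ref{thm:p1p2p3}, the factors $P_1$ and $P_3$ attached to $C_T$ and $C_\sigma$ coincide, and only $P_2$ gains a new factor, namely $\frac{\underline{t}^{(n_\sigma+1)\underline{v}^\sigma}-1}{\underline{t}^{\underline{v}^\sigma}-1}$ corresponding to $\sigma$.

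Second, I would identify this new factor with $Q\bigl(w_\sigma,p_\sigma,t_1,\prod_{k=2}^{s} t_k^{w_{q+j,I_k}}\bigr)$ by computing $\underline{v}^\sigma$ using eq.~\eqref{eqn:maximalcontactvalues}, Noether's formula \ref{noehterformula} and the definition of $C_\sigma$ in Subsection \ref{subsubsec:truncationstar}. Since $C_\sigma$ is obtained from $C_T$ by extending only the Puiseux series of the first branch by the term of order $\beta^1_{q+j}/\beta^1_0$ while leaving the remaining branches untouched, one obtains
\[
\pr_k(\underline{v}^\sigma)=\begin{cases}\obeta^{1}_{q+j}/e^{1}_{q+j}, & k=1, \\[.15cm] [f_1,f_k]/e^{1}_{q+j-1}, & k\notin I_1,\end{cases}
\]
together with the analogous Noether-type formula for $k\in I_1\setminus\{1\}$. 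A direct check yields $(n_\sigma+1)\underline{v}^\sigma=p_\sigma\underline{v}^\sigma$ with $p_\sigma=p_{q+j,1}$ and $w_\sigma=w_{q+j,1}$ in the notation of eq.~\eqref{eqn:defw_j}, which matches the definition of $Q$.

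Third, I would argue that $P_T(t_1^{p_\sigma},t_2,\ldots,t_s)$ captures precisely the remaining contribution to $P_\sigma$: the substitution $t_1\mapsto t_1^{p_\sigma}$ rescales the first coordinate of every valuation by a factor of $p_\sigma$, which is exactly how the valuations at the vertices of $G(C_T)$ change when we view them inside $G(C_\sigma)$, while the remaining coordinates are untouched. The good ordering established in Subsection \ref{subsec:totalorderstar} is crucial here, since it guarantees that the chain $\alpha_{q+1}\prec\cdots\prec\alpha_{q_{I_1}}$ lies on the geodesic of the first branch alone, so no cross-contamination occurs between the first and the remaining branches. The main obstacle, exactly as in Proposition \ref{prop:case1b}, is this bookkeeping of the substitution $t_1\mapsto t_1^{p_\sigma}$ combined with the Noether computation of $\underline{v}^\sigma$; once both are settled, the proposition follows by multiplying the two factors and iterating the construction for $j=1,\ldots, q_{I_1}-q$.
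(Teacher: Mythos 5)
Your proposal is correct and takes the same route the paper intends: the paper's own "proof" of this proposition is simply the remark that it follows from "a slight change in the proof of Proposition~\ref{prop:case1b}", and you have spelled out exactly that argument (only a single dead arc is appended to $G(C_T)$ along the first-branch geodesic, the new star point contributes only a $P_2$-factor, the substitution $t_1\mapsto t_1^{p_\sigma}$ accounts for the rescaled first coordinate of the valuations at the pre-existing vertices, and the new $P_2$-factor is identified with the $Q$-polynomial via eq.~\eqref{eqn:maximalcontactvalues}). One small inaccuracy: for the new non-proper star point $\sigma$ one has $s(\sigma)=\nu(\sigma)-2=1$ rather than $s(\sigma)=0$ (the value $0$ is for points on a dead arc other than its star point, or on $\bigcap_i\Gamma_i\setminus\{\sigma_0\}$); the conclusion that $\sigma$ does not contribute to $P_3$ is still correct since that product ranges over $s(\alpha)>1$.
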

        \end{enumerate}
    \end{enumerate}
\end{enumerate}

We run this process until \(\sigma=\max\{\alpha\in\mathcal{S}\}.\) 
\begin{rem}
    Observe that in the algorithm we have developed, to simplify the exposition, the case where the operations occur in the first package. In the general case, the indexing must be adjusted as follows:
    \begin{enumerate}[wide, labelwidth=!, labelindent=0pt]
        \item [$\diamond$] In the case of Proposition \ref{prop:case1b} if the operation is performed in the \(j\)--th branch one must use the expression \[
            P_{\sigma}(\underline{t})=P_T(t_1,\dots,t_j^{p_\sigma},t_{j+1},\dots,t_s)\cdot Q\big(w_\sigma,p_\sigma,t_j,\prod_{k\neq j}t_k^{w_{q+i,I_k}}\big).
            \]
    \item[$\diamond$] In the case of Proposition \ref{prop:case2bi}, there is an index \(j\) in which the partition is produced \(I_j=\bigcup_{k=1}^{s_j}I_{j,k}\) for the corresponding proper star point. Then the indexing in Proposition \ref{prop:case2bi} is

              \begin{align*}
& P_\sigma(t_1,\dots,t_{j-1},t_{j},\dots,t_{j+s_j},\dots,t_{s_j+s-1})= P_T(t_1,\dots,t_{j-1},t^{p_{q+1,I_{j,1}}}_j\cdots t^{p_{q+1,I_{j,s_j}}}_{s_j+j},t_{j+s_j+1},\dots,t_{s_j+s-1})\\
& \hspace{20pt} \cdot  Q\Big(w_{q+1,I_{j,1}},p_{q+1,I_{j,1}},t_j,(\prod_{k=2}^{s_j}t_{k+j}^{w_{q+1,I_{j,k}}})(\prod_{k\notin\{j,\dots,s_j\}}t_{k}^{w_{q+1,I_{k}}})\Big)\\
&\hspace{20pt} \cdot  \Big(\prod_{l=2}^{s_j}B(w_{q+1,I_{j,l}},p_{q+1,I_{j,l}},t_{j+l},(\prod_{k=l+1}^{s_j}t_{j+k}^{w_{q+1,I_{j,k}}})(\prod_{k\notin\{j,\dots,s_j+j\}}t_{k}^{w_{q+1,I_{k-s_j+1}}}),\prod_{k=j}^{l-1}t^{p_{q+1,I_{j,k}}}_k)\Big).
 \end{align*}

    \end{enumerate}
    The indexing of Proposition \ref{prop:case2a} and Proposition \ref{prop:case2bii} is adjusted analogously to these cases.
\end{rem}

Altogether, we have proven the following result.
\begin{theorem}
    Let \(C\) be a plane curve singularity, and consider the set \(\mathcal{S}\) of star points of the dual graph \(G(C).\) Assume that the branches are ordered by the good order in their topological Puiseux series, and the points in \(\mathcal{S}\) are ordered by the total order \(<\) defined in Subsection \ref{subsec:totalorderstar}. Then, the Poincaré series \(P_C(\underline{t})\) can be computed recursively, via the previous process, from the Poincaré series \(P_\alpha(\underline{t}')\) of the approximations associated to the star points defined in Subsection \ref{subsubsec:truncationstar}.
\end{theorem}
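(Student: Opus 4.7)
The plan is to carry out a finite induction along the total order \(<\) on \(\mathcal{S}\), assembling the step-by-step rules already established in Subsection~\ref{subsec:generalprocedure} into a single iterative procedure that starts at the minimal element of \(\mathcal{S}\) and terminates when \(\sigma=\max\{\alpha\in\mathcal{S}\}\), at which point the current approximating curve coincides with \(C\).

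The base cases cover two situations. First, the minimal star points \(\alpha_1,\dots,\alpha_{q-1}\), where \((q,c)=(f_1\mid\cdots\mid f_r)\), produce a nested sequence of irreducible approximations whose Poincaré series is furnished by Proposition~\ref{prop:poincareirreduciblecase} and the elementary gluing identity just preceding it. Second, in the degenerate situation where \(C\) itself is the only approximating curve, Proposition~\ref{prop:poincarelemm1} or Proposition~\ref{prop:poincarelemm2} applies directly and there is nothing further to prove.

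For the inductive step, assume \(P_T\) has been computed for the predecessor \(T\) of \(\sigma\) in \(<\). I would verify that \(\sigma\) falls into exactly one of the cases enumerated in Subsection~\ref{subsubsec:truncationstar}, each endowed with a corresponding computational rule already proved earlier: the transition from the irreducible block to \(\sigma_0\) is governed by Lemma~\ref{lem:aux1} or Lemma~\ref{lem:aux2} according as \(c=0\) or \(c>0\); the continuation of a singleton package with additional maximal contact values by Proposition~\ref{prop:case1b}; a first separation inside a non-singleton package whose branches have a common number of maximal contact values by Proposition~\ref{prop:case2a}; a separation with new dead arcs when the packages have unequal numbers of maximal contact values and contact \(\leq (q+1,0)\) by Proposition~\ref{prop:case2bi}; and an insertion of further non-proper star points in the contact-\(>(q+1,0)\) regime by Proposition~\ref{prop:case2bii}. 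Each rule expresses \(P_\sigma(\underline{t})\) as \(P_T\) precomposed with an explicit monomial substitution, multiplied by \(Q\)- and \(B\)-factors read off from the minimal generators of the branch semigroups and the contact pairs of the branches touched at \(\sigma\).

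The main obstacle is book-keeping rather than algebra: one must ensure that the above enumeration of cases is exhaustive and that the re-indexing of variables together with the updating rules for the distinguished points \(T,\sigma\) after each operation are consistent with the good ordering on branches and with the nested inclusion \(G(C_T)\subset G(C_\sigma)\) used pervasively in Section~\ref{subsec:generalprocedure}. Once that combinatorial verification is in place, iterating the rules along \(<\) until no further star point remains yields, by construction of the sequence in Subsection~\ref{subsubsec:truncationstar}, the curve \(C\) itself as the final approximation; the Poincaré series thereby obtained is \(P_C(\underline{t})\), completing the recursion.
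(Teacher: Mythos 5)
Your proposal is correct and follows essentially the same path as the paper: the theorem is a capstone statement whose proof is precisely the sequence of results developed in Subsection~\ref{subsec:generalprocedure}, namely the base cases (Propositions~\ref{prop:poincareirreduciblecase}, \ref{prop:poincarelemm1}, \ref{prop:poincarelemm2}), the transition lemmas (Lemmas~\ref{lem:aux1}, \ref{lem:aux2}), and the case-by-case inductive rules (Propositions~\ref{prop:case1b}, \ref{prop:case2a}, \ref{prop:case2bi}, \ref{prop:case2bii}), iterated along the total order~\(<\) until \(\sigma=\max\mathcal{S}\) gives \(C_\sigma=C\). Your explicit organization as a finite induction and your flagging of the exhaustiveness of the case enumeration and the re-indexing consistency is exactly the book-keeping the paper carries out implicitly when it concludes with ``Altogether, we have proven the following result.''
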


\begin{ex}
Let us continue with the Example \ref{ex:examplerefinedgoodorder}. We assume that the branches of \(C\) are good ordered, i.e. \(Y_1=2x^2+x^5,\) \(Y_2=2x^2+x^{14/3}+1/2x^5,\) \(Y_3=5x^2+x^3,\) \(Y_4=5x^2+x^{5/2}+3x^3\) and \(Y_5=x^2.\) Following our procedure the first star point in the dual graph is \(\sigma_0\) (see Figure \ref{fig:ex_2}) and the Poincaré series of \(C_{\sigma_0}\) is, by Proposition \ref{prop:poincarelemm1},
\[
P_{\sigma_0}(t_1,t_2,t_3)=Q(2,1,t_1,t_2^2,t_3^2)\cdot B(1,2,t_2,t_1,t_3^2)\cdot Q(1,2,t_3,t_1t_2)=\frac{(t_1^2t_2^2t_3^2-1)^2}{t_1t_2t_3-1}.
\]
The next star point in the dual graph is \(\sigma_0^{I_1},\) we set \(\sigma=\sigma_0^{I_1}\) and \(T:=\sigma_0.\) We are in the case \((2)(b)(i)\) as the package \(I_1\) has two branches and one of them has one Puiseux pair. Then, application of Proposition \ref{prop:case2bi} yields
\begin{align*}
P_\sigma(t_1,t_2,t_3,t_4)&=P_T(t_1t_2^3,t_3,t_4)\cdot Q(14,1,t_1,t_2^{14}t_3^3t_4^2)\cdot B(14,3,t_2,t_3^2t_4^2,t_1)\\
&=\frac{(t_1^2t_2^6t_3^2t_4^2-1)^2(t_1^{14}t_2^{42}t_3^6t_4^6-1)}{t_1t_2^3t_3t_4-1}.
\end{align*}

Finally, the last star point in the dual graph is \(\sigma_0^{I_2}\) and 

\begin{align*}
P_C(\underline{t})&=P_\sigma(t_1,t_2,t_3,t_4,t_5)=P_T(t_1,t_2,t_3t_4^2,t_5)\cdot Q(5,1,t_3,t_4^{5}t_1^2t_2^6t_5^2)\cdot B(5,2,t_4,t_1^2t_2^6t_5^4,t_3)\\
&=\frac{(t_1^2t_2^6t_3^2t_4^4t_5^2-1)^2(t_1^{14}t_2^{42}t_3^6t_4^{12}t_5^6-1)(t_1^4t_2^{12}t_3^5t_4^{10}t_5^4-1)}{t_1t_2^3t_3t_4^2t_5-1},
\end{align*}
which completes the iterative computation of the Poincaré series.
\end{ex}
%%%%%%%%%%%%%%%%%%%%%%%%%%%%%%%%%%%%%%%%%%%%%

%%%%%%%%%%%%%%%%%%%%%%%%%%%%%%%%%%%%%%%%%%%% Construction of the semigroup

%\input{construction of the semigroup v1}

%%%%%%%%%%%%%%%%%%%%%%%%%%%%%%%%%%%%%%%%%%%

%%%%%%%%%%%%%%%%%%%%

%%%%%%%%%%%%%%%%%%%%%%%%%%%%%%%%%%%%%%%%%%%% Topology
\section{The Alexander polynomial of the link} \label{sec:topology}

The Alexander polynomial (in $r$ variables) is an invariant of a link with $r$ (numbered) components in the sphere $S^3$. The definition of the multivariable Alexander polynomial bases on the notion of universal abelian covering \(\rho:\widetilde{X}\rightarrow X.\) The group of covering transformations \(H_1(X;\mathbb{Z})=\mathbb{Z}^r\) is a free abelian multiplicative group on the symbols \(\{t_1,\dots,t_r\}\) where each \(t_i\) is geometrically associated with an oriented meridian of an irreducible component of the link. In this way, if \(\widetilde{p}\) is a typical fiber of \(\rho\) then the group \(H_1(\widetilde{X},\widetilde{p};\mathbb{Z})\) becomes a module over \(\mathbb{Z}[t_1,t_1^{-1},\dots,t_r,t_r^{-1}].\) The multivariable Alexander polynomial \(\Delta_L(t_1,\dots,t_r)\) is then defined as the greatest common divisor of the first Fitting ideal \(F_1(H_1(\widetilde{X};\mathbb{Z})).\) Observe that  \(\Delta_L(t_1,\dots,t_r)\) is then well-defined up to multiplication by a unit of \(\mathbb{Z}[t_1^{\pm 1},\dots t_r^{\pm 1}].\)
\medskip

To a plane curve singularity $C = \bigcup_{i=1}^r C_i \subset (\mathbb{C}^2, 0)$ we assign the link $L=C \cap S_{\varepsilon}^3$. It is well known that the link complement \(X=S^{3}-L\) fibers over \(S^1\) and it has an iterated torus structure that can be described via Lé's Carousels \cite{LeCarousels}. For general topological properties of links and its numerical invariants we refer to \cite{EN,SW,Shinohara1,Shinohara2}, for an accurate description of the link complement of a plane curve we refer to \cite{LeCarousels}.
\medskip

In this closing section, we will show the topological counterpart of the algebraic operations in the dual graph described in the previous sections. This dictionary between algebraic and topological operations will allow us to provide a new proof of the coincidence between the Poincaré series of a plane curve and the Alexander polynomial of its associated link.

\subsection{Description of the link: satellization}\label{subsec:satellization}
First, let us briefly recall the iterative construction of the link associated to a plane curve singularity. For that we will follow the beautiful exposition of Weber in \cite{Webertop}. 
%One can also see that the satellization construction can be given explicitly in order to obtain the Waldhausen decomposition following \cite{LeCarousels}. As we have already mentioned, the topology of the singularity is encoded in the topology of the algebraic link \(L=C\cap S^3\) on the \(3\)--sphere.  It is well known that this link is oriented and that it can be described by a process called \emph{satellization} \cite[Sect. 6]{Webertop}. 

\medskip
Consider the map $f:S^1\to S^1 \times S^1$ given by $f(z)=(z^p, z^q)$; if we identify $S^1$ with the complex unit circle, the image of $f$ is a closed curve, say $K$, which turns out to be a knot in $\mathbb{R}^3$ obtained by making $q$ winds longitudinally and $p$ winds transversally: this is a torus knot of type $(p,q)$. In general we can construct a link from a given knot by a process called satellization whose input date are:
\begin{enumerate}
    \item An oriented knot \(K\) in \(S^3\) together with a tubular neighbourhood \(N\) around.
    \item An oriented link \(L\) in the interior of the tubular neighborhood \(V\) of the unknot $U$ such that $L$ is not contained in any ball inside $V$.
    \item The choice of an orientation preserving a diffeomorphism \(\varphi:V\rightarrow N\) such that \(\varphi(U)=K\) and carrying parallels to parallels; the diffeomorphism \(\varphi\) is determined by the parallel \(p\) on \(\partial N\) (up to isotopy) such that \(p=\varphi(p').\)
\end{enumerate}

The process of replacing $K$ by $\varphi (L)$ is called the \emph{satellization} of \(L\) around \(K\). One can provide an iterated torus structure by performing the so called \((p,q)\)--satellization: let \(K\) be an oriented knot in \(S^3.\) An oriented knot \(K'\) is a \((p,q)\)--satellite around \(K\) if it has the same smooth type of a torus knot \((p,q)\) on the boundary of a tubular neighborhood of \(K\) on which meridians are chosen to be non-singular closed oriented curves which have a linking number \(+1\) with \(K\), and parallels are non-singular closed oriented curves which do not link \(K\) and have an intersection number \(+1\) with a meridian.
\medskip

With this construction, the idea is to reproduce the iterated torus structure to the case of links. Consider a concentric tubular neighborhood $V'$ inside the interior \(V^{\circ}\) of $V$. A torus link is the link of $s\geq 1$ torus knots of type $(\alpha, \beta)$ with $\alpha \geq 1$ placed on the boundary $\partial V'$, and possibly together with $U$. We will write $\mathrm{TL}(s,\delta; (\alpha, \beta))$ for the torus link of $s$ torus knots of type $(\alpha, \beta)$; here $\delta$ is equal to 1 if $U$ is chosen as a component of the link, and 0 otherwise.
\medskip

It is certainly possible to build an iterated torus link from a set $L_1,\ldots L_g$ of torus links as follows. First, we select a component $K_1$ of $L_1$ and satellizate $L_2$ around $K_1$. This yields a link, and we select again a component $K_2$ of this link, and satellizate $L_3$ around $K_2$. We repeat this process until we satellizate $L_g$ around the chosen component of the link just obtained. Observe that it is possible to select a same component several times: in this case, each new satellization takes place inside a smaller tubular neighborhood. 
\medskip

In the case of the link $L=C \cap S_{\varepsilon}^3$ associated to a plane curve, it is well known that this link is oriented and that it can be described by successive satellizations \cite{Brauner28} (see also \cite[Sect. 6]{Webertop}). The iterated torus structure of the knot \(K\) associated with an irreducible plane curve singularity with \(g\) Puiseux pairs is described by a theorem of K. Brauner \cite{Brauner28} (see also \cite[Theorem 2.3.2]{LeCarousels}): the knot \(K_g\) is computed recursively from successive $(p_i,w_i)$-satellizations around $K_{i-1}$, where \(K_0\) is the unknot and the \(p_i,w_i\) are defined from the Puiseux pairs and  the expressions in eq.~ \eqref{eqn:defw_j}. Similarly, Brauner \cite{Brauner28} also described the successive \((p_i,w_i)\)--satellizations describing the iterated torus link structure in the non irreducible case (see also \cite{Webertop,LeCarousels}).

\subsubsection{Iterative homology}\label{subsubsec:iterativehomology}
A key part of our new proof of the coincidence of the Alexander polynomial and the Poincaré series are the results of Sumners and Woods \cite{SW}, which allow the computation of the Alexander polynomial iteratively. Those results are based on an iterative homology computation. Let us recall briefly their construction.
\medskip

Given an algebraic link \(L:=L_1\cup\cdots\cup L_r\) with \(r\geq 1\) components,  they consider a tubular neighborhood \(\bigcup_{i=1}^{r}\{V_i\}\) of \(L\) such that each \(V_i\) is a tubular neighborhood of each component \(L_i\) and the \(V_i\)'s are pairwise disjoint. Let \(V'\) be a tubular neighborhood of the unknot \(K_0\) and assume \(K'\) is a knot contained in \(V'\) such that \(K'\) is homologous to \(p\) times \(K_0.\) Let \(\varphi:V' \rightarrow V_r\) be an orientation preserving onto diffeomorphism taking longitude to longitude. Define \(K:=\varphi(K)'\simeq p L_r\). Sumners and Woods show that only the following two types of operations are needed in order to understand the structure of the link:

\begin{enumerate}
    \item  Satellization along one component giving rise to a new link \(L'=L_1\cup\cdots\cup L_{r-1}\cup K\) with the same number of components as the old link \(L\). If \(U'\) is a tubular neighborhood of \(K'\) which is contained in the interior of \(V'\) then the link exterior of \(L'\) is 
    \[X=S^3\setminus \operatorname{Int}(V_1\cup V_{r-1}\cup \varphi(U')).\]
    \item  Adding one new branch giving rise to a new link \(\widehat{L}=L_1\cup\cdots\cup L_{r}\cup L_{r+1}\) with one more component than the old link \(L\). If we denote by \(V^\ast\) a small tubular neighborhood of \(K_0\) which is contained in \(V'\) and misses \(U',\) in this case the link exterior of \(\widehat{L}\) has the form
    \[X=S^3\setminus \operatorname{Int}(V_1\cup V_{r-1}\cup \varphi(U')\cup\varphi(V^\ast)).\]
\end{enumerate}

In order to provide an iterative computation of the Alexander polynomial, Sumners and Woods \cite[Sect. V]{SW} provided a method to compute the homology of the abelian cover of the exterior of a new link created by one of the previous two operations in terms of the old link. To do so, they observed that the previous operations naturally provide a Mayer-Vietoris decomposition of the link exterior which will lift to the abelian cover. Let us denote by \(Y:=S^3\setminus \operatorname{Int}(V_1\cup \cdots\cup V_{r})\) the link exterior of the old link \(L\) and define 
\[W:=\left\{\begin{array}{lc}
    V_r\setminus \operatorname{Int}(\varphi(U')) &\text{in the case \((1)\),}  \\
     V_r\setminus \operatorname{Int}(\varphi(U')\cup\varphi(V^\ast)) &\text{in the case \((2)\).}
\end{array}
\right.\]
If \(T=\partial V_r\) then we obtain the Mayer-Vietoris splitting for \(X=Y\cup_T W.\) 
\medskip

On account of Sumners and Woods \cite{SW} this Mayer-Vietoris splitting lifts to a splitting on the abelian cover spaces of \(X\). We denote by \(\rho:\widetilde{X}\rightarrow X\) the universal abelian cover and \(\Lambda:=\mathbb{Z}[t_1^{\pm 1},\dots,t_r^{\pm 1}].\) Since we will be interested in the computation of the Alexander polynomial, we must describe the \(\Lambda\)--module structure of \(H_1(\widetilde{X},\widetilde{p}).\) To do so, it is convenient to use the Mayer-Vietoris splitting; write  \(\widetilde{Y}=\rho^{-1}(Y),\) \(\widetilde{W}=\rho^{-1}(W)\) and \(\widetilde{T}=\rho^{-1}(T).\) Then, Sumners and Woods \cite[Sect. V]{SW} showed that \(H_1(\widetilde{X},\widetilde{p})\) decomposes as \(\Lambda\)-module in the following forms:
\begin{enumerate}
    \item [(a)] Assume \(r=1\) and we have performed an operation of type \((1)\) or \((2)\), then
    \[H_1(\widetilde{X})\simeq_\Lambda H_1(\widetilde{Y})\oplus H_1(\widetilde{W})/H_1(\widetilde{T}).\]
    \item [(b)] Assume \(r\geq 2\) and we have performed an operation of type \((1)\) or \((2)\), then
    \[H_1(\widetilde{X})\simeq_\Lambda H_1(\widetilde{Y})\oplus H_1(\widetilde{W}).\]
\end{enumerate}

\subsection{Gluing as a topological operation: the irreducible case}\label{subsec:alexirreducible}
Zariski \cite{Zar32} (independently of Burau \cite{Burau33}) shows that knots coming from irreducible plane curve singularities are topologically distinguishable when the involved singularities have different Puiseux pairs. Remarkably, he succeeds in describing the fundamental group of the complement of a knot using algebraic identities. This is our starting point to go one step further in the understanding of the relation between the fundamental group and the value semigroup $\Gamma$ associated to the singularity.
\medskip

Let \(G:=\pi_1(X)\) be the fundamental group of the knot complement $X$. Then, Zariski \cite[\S 4]{Zar32} proved
\[G=\langle b_1,u_1,\dots,u_g\;|\; u_i^{n_i}=b_i^{q_i}u_{i-1}^{n_{i-1}n_i}\; \ \text{for}\ \;i=1,\dots,g\rangle,\]
where \(u_0=1,\) the elements \(b_2,\dots,b_g\) are determined by the relations
\[b_{i+1}b_i^{y_i}u_{i-1}^{n_{i-1}x_i}=u_i^{x_i}\quad\text{for}\;i=1,\dots,g-1,\]
in which the positive integers \(x_i,y_i\) are defined such that \(x_iq_i=y_in_i+1\), and the numbers \(q_i,n_i\) are defined from the Puiseux characteristics as \(q_i=(\beta_i-\beta_{i-1})/e_i,\) \(n_i=e_{i-1}/e_i\) and \(e_i=\gcd(\beta_0,\dots,\beta_i).\) 
\begin{rem}
    Observe that one can also obtain that presentation of the fundamental group taking into account the iterative structure of the knot. One can use the Mayer-Vietoris decomposition to apply Seifert-Van Kampen theorem in order to iteratively compute the fundamental group of the knot complement. We refer to \cite[Chapter 4]{BurdeKnots} for the details.
\end{rem}

Now, if $G$ is abelianized by factoring out its commutator subgroup, the abelianization \(\operatorname{Ab}(G)\) of \(G\) is the infinite free group generated by one element, see e.g.\cite[\S 5]{Zar32}. Let us denote it by \(\langle t\rangle=\operatorname{Ab}(G).\) Hence, in \(\operatorname{Ab}(G)\) all the elements become powers of \(t\) and it is easily checked that 
\[b_i=t^{n_i\cdots n_g},\quad u_i=t^{\obeta_i}.\]
We recall that \(n_1\cdots n_g=\obeta_0,\obeta_1,\dots,\obeta_g\) are the minimal generators of the semigroup of values of the curve and they are related to the Puiseux characteristic by eq.~ \eqref{eq:definbetabarra}. Thus, the semigroup algebra is obtained from the fundamental group $G$ of the knot complement and its abelianization as follows.
\medskip

Let \(R:=\mathbb{K}[b_1,u_1\dots,u_g]\) be the polynomial ring in the generators of the fundamental group. Let us endow \(R\) with the grading induced by the abelianization of the group, i.e. \(\deg(b_1)=\obeta_0\) and \(\deg(u_i)=\obeta_i\) for $i=1,\ldots , g$. Then, we have the following morphism induced by the defining relations fundamental group

\[\begin{array}{ccccccccc}
0&\longrightarrow& \mathbb{K}[b_1,u_1,\dots,u_g]^{g}&\longrightarrow& \mathbb{K}[b_1,u_1,\dots,u_g]&\overset{\varphi}{\longrightarrow}& \mathbb{K}[t^\nu\;:\;\nu\in \Gamma]&\longrightarrow& 0\\
& &&& b_1&\mapsto& t^{\obeta_0}& &\\
& &&& u_i&\mapsto& t^{\obeta_i}& &
\end{array}
\]
Here \(\ker(\varphi)\) is generated by the relations defining the fundamental group \(G\) i.e. we have \(\ker(\varphi)=(u_i^{n_i}=b_i^{q_i}u_{i-1}^{n_{i-1}n_i})_{i=1}^{g}.\) Moreover, each relation is homogeneous of degree \(n_i\obeta_i\) as, by eq.~\eqref{eq:definbetabarra}, we have
\[
q_in_i\cdots n_g+\obeta_{i-1}n_{i-1}n_i=n_i\big (q_in_{i+1}\cdots n_g+\obeta_{i-1}n_{i-1}\big )=n_i\Big(\frac{\beta_i-\beta_{i-1}}{e_i}e_i+\obeta_{i-1}n_{i-1}\Big)=n_i\obeta_i.
\]

 In this way we obtain the monomial curve associated to the semigroup algebra of the semigroup of values of the irreducible plane branch. Therefore it is clear that the gluing construction comes from the satellization via the application of the Seifert-Van Kampen theorem. This shows that the amalgamated decomposition of the fundamental group modulo a relation translates into the tensor product decomposition modulo the relation of the semigroup algebra, which is the algebraic interpretation of the gluing construction on the semigroup structure. 

In short, adding a characteristic exponent in the Puiseux expansion is translated to the gluing operation on the algebraic side, and to the satellization operation on the topological side. But these two operations are reflected in the corresponding polynomial invariant (namely, $(t-1)P_C(t)$ in the algebraic side resp. $\Delta_C(t)$ in the topological side), in the very same manner. Therefore, the coincidence between the Alexander polynomial and the Poincaré series in the irreducible case is a natural consequence of the coincidence of both operations.
\medskip

The sequence of iterations on the topological side corresponding to the algebraic gluing process is provided by knots obtained from the approximations of \(C\) defined by the \(g\) star points of the dual graph of \(C\) associated with the maximal contact values. Indeed, if \(K_1,\dots,K_g\) is the sequence of approximating knots, then Theorem 5.1 \cite{SW} (see also \cite[Theorem II]{Seifert50}) shows
\begin{proposition}\label{prop:alexirre}
\[
\Delta_{K_i}(t)=\Delta_{K_{i-1}}(t^{n_i})\cdot P(\obeta_i/e_i,n_i,t).
\]
\end{proposition}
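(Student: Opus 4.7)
The plan is to apply the iterative homology computation of Sumners--Woods recalled in Subsection \ref{subsubsec:iterativehomology} directly to the sequence of approximating knots. By Brauner's description (Subsection \ref{subsec:satellization}), since $C$ is irreducible the knot $K_i$ is obtained from $K_{i-1}$ by a satellization of type $(n_i, w_i)$ with $w_i = \obeta_i/e_i$; in the Sumners--Woods terminology this is precisely an operation of type (1) with $r=1$, so the exterior $X_i$ of $K_i$ admits a Mayer--Vietoris splitting $X_i = Y \cup_T W$, where $Y$ is the exterior of $K_{i-1}$ and $W$ is a solid torus neighborhood of the newly embedded torus knot minus the core of $K_{i-1}$.

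The first step is to lift this splitting to the universal abelian cover via $\rho\colon \widetilde{X_i}\to X_i$. By part (a) of Subsection \ref{subsubsec:iterativehomology} (case $r=1$), we obtain the $\Lambda$-module decomposition
\[
H_1(\widetilde{X_i}) \simeq_\Lambda H_1(\widetilde{Y}) \oplus H_1(\widetilde{W})/H_1(\widetilde{T}),
\]
where $\Lambda = \mathbb{Z}[t^{\pm 1}]$. Since the Alexander polynomial is the generator of the first Fitting ideal of $H_1(\widetilde{X_i})$ as a $\Lambda$-module, it is multiplicative with respect to this direct sum and we obtain $\Delta_{K_i}(t) = \Delta_1(t)\cdot \Delta_2(t)$, where $\Delta_1, \Delta_2$ are the orders of the two summands.

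The second step is the identification of each factor. For $\Delta_1$: a meridional disc of $K_{i-1}$ meets $K_i$ in $n_i$ points because $K_i$ wraps $n_i$ times longitudinally around $K_{i-1}$; hence the inclusion $Y\hookrightarrow X_i$ sends the meridian generator of $H_1(Y;\mathbb{Z})$ to $n_i$ times the meridian generator of $H_1(X_i;\mathbb{Z})$. This forces the $\Lambda$-module structure on $H_1(\widetilde Y)$ to be obtained from the Alexander module of $K_{i-1}$ via the variable substitution $t\mapsto t^{n_i}$, yielding $\Delta_1(t) = \Delta_{K_{i-1}}(t^{n_i})$. For $\Delta_2$: the piece $W$ is diffeomorphic to the complement of a $(n_i, w_i)$-torus knot inside a solid torus, so the standard computation of the Alexander polynomial of a torus knot gives
\[
\Delta_2(t) = \Delta_{T(n_i,w_i)}(t) = \frac{(t^{n_i w_i}-1)(t-1)}{(t^{n_i}-1)(t^{w_i}-1)},
\]
which coincides with $P(\obeta_i/e_i, n_i, t)$ by the definition in eq.~\eqref{eqn:defkeypoly}.

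The main obstacle I anticipate is the bookkeeping around the $\Lambda$-module identifications: one must verify that the Sumners--Woods splitting, the substitution $t\mapsto t^{n_i}$ on the first summand, and the torus knot Alexander module computation on the second summand all carry the $\Lambda$-action induced by the deck transformations of $\widetilde{X_i}$ (and not merely of $\widetilde Y$ or $\widetilde W$ considered in isolation). Once this is settled, the formula follows by assembling the two factors. Conceptually, the appearance of the substitution $t\mapsto t^{n_i}$ is the exact topological counterpart of the gluing operation $\Gamma_i = n_i\Gamma_{i-1} + w_i\mathbb{N}$ on the semigroup side discussed in Subsection \ref{subsec:alexirreducible}, which is what makes the algebraic iteration of Proposition \ref{prop:poincareirreduciblecase} and the topological iteration of this proposition match term by term.
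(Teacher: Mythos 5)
The paper does not give a proof of this proposition: it is stated as a direct consequence of \cite[Theorem 5.1]{SW} (equivalently, Seifert's satellite formula \cite[Theorem II]{Seifert50}). Your argument reconstructs the Sumners--Woods proof of that theorem from the Mayer--Vietoris splitting recalled in Subsection~\ref{subsubsec:iterativehomology}, so the approach coincides with the paper's, only unpacked. The one step a self-contained version would still need to justify --- and which you correctly flag --- is that the order of $H_1(\widetilde W)/H_1(\widetilde T)$, with its $\Lambda$-action induced from the deck transformations of $\widetilde{X_i}$ rather than from a cover of $W$ in isolation, equals the Alexander polynomial $P(\obeta_i/e_i,n_i,t)$ of the $(n_i,\obeta_i/e_i)$-torus knot in $S^3$; since $W$ is a torus-knot complement inside a solid torus, not inside $S^3$, this is not quite the ``standard computation'' you invoke, and it is precisely what \cite[Theorem 5.1]{SW} establishes.
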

Therefore Proposition \ref{prop:poincareirreduciblecase} and Proposition \ref{prop:alexirre} imply the relation \((t-1)P_C(t)=\Delta_K(t).\) This identification is indeed very deep: its ultimate reason is the correspondence between the presentations of the semigroup algebra and the first homology group of the link exterior.

\subsection{Iterative computation of the Alexander polynomial}
Our ordering in the set of branches implies an ordering in the set of the link components from the innermost to the outermost component. This allows us to compute the Alexander polynomial by means of the algorithm provided by Sumners and Woods in \cite{SW}. In recalling their procedure we will show its algebraic counterpart with our iterative computation of the Poincaré series explained in Section \ref{subsec:iterativepoincare}. Here we provide a more detailed description than the one in \cite{SW}: they only write down the explicit factorization in the case of a curve with 2 and 3 branches, whereas our explicit expressions for the factorizations of the Poincaré series are valid in the case with more than three branches.
\medskip

Before starting with the topological interpretation of our algebraic procedure, we need to present the building blocks of Sumners and Woods computation for the Alexander polynomials. Thanks to the iterative homology computations done in \cite[Sect.~V]{SW} (see also Subsection \ref{subsubsec:iterativehomology}), Sumners and Woods introduce a decomposition of the Alexander polynomial in each of the cases:

\begin{theorem} \cite[Theorems 5.2, 5.3 and 5.4]{SW}\label{thm:SW5}
  With the notation of Subsection \ref{subsubsec:iterativehomology}, let $\langle L,L'\rangle$ denote the homological linking number of \(L\) and \(L'\).
  \begin{enumerate}
      \item Assume \(L\) is a link with \(r\geq 2\) components and \(L'\) be the link obtained from an iteration of type \((1)\) via the knot \(K'\) with winding number \(p\neq 0.\) Then,
    \[
    \Delta_{L'}(t_1,\dots,t_r)=\Delta_L(t_1,\dots,t_r^p)\cdot\Delta_M\big(t_r,\prod_{i=1}^{r-1}t_i^{\langle L_i,L_r\rangle}\big),
    \]
    where \(M\) denotes the model link of two components formed by \(K'\) and the unknotted meridian curve on the boundary torus containing \(K'\).
      \item Assume \(L\) is a knot and \(\widehat{L}\) is the link with two components obtained from an iteration of type \((2)\) via the knot \(K'\) with winding number \(p\neq 0.\) Then,
    \[
    \Delta_{\widehat{L}}(t_1,t_2)=\Delta_L(t_1,t_2^p)\cdot \Delta_N(t_1,t_2),
    \]
    where \(N\) denotes the model link of two components formed by \(K'\) and the unknotted core of the torus containing \(K'.\)
      \item Assume \(L\) is a link with \(r\geq 2\) components and \(\widehat{L}\) be the link obtained from an iteration of type \((2)\) via the knot \(K'\) with winding number \(p\neq 0.\) Then,
   \[
   \Delta_{\widehat{L}}(t_1,\dots,t_r)=\Delta_L(t_1,\dots,t_rt_{r+1}^p)\cdot \Delta_P\big(t_r,t_{r+1},\prod_{i=1}^{r-1}t_i^{\langle L_i,L_r\rangle}\big),
   \]
    where \(P\) denotes the model link of three components formed by \(K',\) the unknotted meridian curve on the boundary of the torus containing \(K'\) and the unknotted core of the torus containing \(K'.\)
  \end{enumerate}
\end{theorem}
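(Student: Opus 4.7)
The plan is to exploit the Mayer-Vietoris decomposition of the link exterior described in Subsection~\ref{subsubsec:iterativehomology}. In each of the three cases the new link exterior $X$ splits along a torus $T = \partial V_r$ as $X = Y \cup_T W$, where $Y$ is the exterior of the old link $L$ and $W$ is the solid-torus piece carrying the new geometric data (empty core in case~(1), the newly added branch in case~(2), both in case~(3)). This decomposition lifts to the universal abelian cover $\widetilde{X} = \widetilde{Y} \cup_{\widetilde{T}} \widetilde{W}$, and the Mayer-Vietoris sequence together with the splitting identities already recalled in Subsection~\ref{subsubsec:iterativehomology} yield a $\Lambda$-module decomposition $H_1(\widetilde{X},\widetilde{p}) \simeq H_1(\widetilde{Y}) \oplus H_1(\widetilde{W})$ (or the appropriate quotient variant in the knot-to-knot case).

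Since the multivariable Alexander polynomial equals, up to units of $\Lambda$, the generator of the first Fitting ideal of this module, the direct-sum decomposition would yield a factorization
\[
\Delta_{L'}(t_1,\dots,t_r) \;\doteq\; \mathrm{ord}\big(H_1(\widetilde{Y})\big)\cdot \mathrm{ord}\big(H_1(\widetilde{W})\big),
\]
so the task reduces to identifying each factor as an Alexander polynomial evaluated at the right monomial arguments. The first factor I would handle by observing that $Y$ is the exterior of $L$, but it sits inside $\widetilde{X}$ rather than inside its own universal abelian cover: the deck transformation associated with the meridian of the new component $K'$ acts on $\widetilde{Y}$ as the $p$-th power of the deck transformation corresponding to the meridian of $L_r$, because $K'$ is homologous to $p\cdot L_r$ inside $V_r$. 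This realizes $H_1(\widetilde{Y})$ as the Alexander module of $L$ with $t_r$ substituted by $t_r^p$ (or by $t_r t_{r+1}^p$ in case~(3), where the new branch introduces a fresh free variable), producing precisely the $\Delta_L$-factor in the statement.

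The identification of the second factor is where I expect the main obstacle. The piece $W$ deformation retracts onto the exterior of the corresponding model link ($M$, $N$, or $P$) inside $V_r$, so up to homotopy its homology is controlled by two- or three-variable Alexander data. However, the meridians of the external components $L_1,\dots,L_{r-1}$ act on the cover $\widetilde{W}$ through their linking numbers with the core $L_r$ of $V_r$: a meridian of $L_i$ encircles $V_r$ in a $\langle L_i,L_r\rangle$-fold manner, so the longitudinal variable of the model link must be substituted by the monomial $\prod_{i=1}^{r-1} t_i^{\langle L_i,L_r\rangle}$. The delicate point is to verify that the induced $\Lambda$-module structure on $H_1(\widetilde{W})$ is obtained from the model Alexander module precisely by this monomial base change, with no hidden kernel or cokernel arising from the imperfect matching of abelian covers between $W$ and its retract; this is essentially a transfer argument between the two different coverings of the same solid torus. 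Once this is resolved, substituting into the factorization yields the three displayed formulas, the differences between the cases being bookkeeping about the number of boundary tori of $W$ and whether the new cover has $r$ or $r+1$ variables.
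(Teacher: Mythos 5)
The paper does not prove this theorem itself: it is stated as a direct citation of Sumners and Woods, with the supporting Mayer--Vietoris decomposition of $H_1(\widetilde{X})$ merely recalled in Subsection~\ref{subsubsec:iterativehomology}. Your proposal reconstructs the Sumners--Woods argument along precisely those lines --- the $Y\cup_T W$ splitting, its lift to the universal abelian cover, the resulting $\Lambda$-module direct sum, and the two variable substitutions ($t_r\mapsto t_r^p$ for the inner piece; $\prod_i t_i^{\langle L_i,L_r\rangle}$ for the model-link piece) --- and you correctly single out the transfer/base-change identification of $H_1(\widetilde{W})$ with the model Alexander module as the genuinely delicate step. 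So your route is the same as the one the paper defers to, and your outline is sound.

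One small imprecision to watch: writing $\Delta_{L'}\doteq\mathrm{ord}\,H_1(\widetilde{Y})\cdot\mathrm{ord}\,H_1(\widetilde{W})$ glosses over the fact that for a link the Alexander polynomial is the gcd of the \emph{first} Fitting ideal of the relative module $H_1(\widetilde{X},\widetilde{p})$, not the order of $H_1(\widetilde{X})$ (which has a nontrivial free part and hence order zero). Fitting ideals are not naively multiplicative over direct sums in the way orders are, so the factorization has to be run either through the relative module with its presentation matrix, or through the torsion submodule, as Sumners and Woods do. This is a bookkeeping point rather than a conceptual gap, but it is the place where a careless use of ``ord'' would break.
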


Moreover, Sumners and Woods \cite[Sect.~VI]{SW} show that the Alexander polynomials of the model links can be computed as follows:
\begin{theorem}\cite[Theorems 6.1, 6.2, 6.3]{SW} \label{thm:SW1} Let \(L_1\) be a torus knot of type \((\alpha,\beta),\) let \(M\) be a link of two components formed by the torus knot \(L_1\) linked with its unknotted exterior core, let \(N\) denote a link of two components formed by \(L_1\) and the unknotted core of the torus containing \(L_1\) and let \(P\) be a link of three components formed by the torus knot \(L_1\) linked with both its exterior unknotted core and its interior unknotted core. Let \(P(m,n,x),Q(m,n,x,y),B(m,n,x,y,z)\) be the polynomials defined in \eqref{eqn:defkeypoly}. Then, their Alexander polynomials are
\[\begin{split}
    \Delta_{L_1}(t)=P(\alpha,\beta,t),&\quad \Delta_{M}(t_1,t_2)=Q(\alpha,\beta,t_2,t_1),\\
    \Delta_N(t_1,t_2)=Q(\beta,\alpha,t_2,t_1)&\quad\text{and}\quad \Delta_{P}(t_1,t_2,t_3)=B(\alpha,\beta,t_2,t_1,t_3).
\end{split} \]
\end{theorem}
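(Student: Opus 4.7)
The plan is to prove all four identities by direct Fox-calculus computation of the Alexander polynomial, starting from explicit fundamental-group presentations obtained via the Heegaard torus structure that all four links share. Fix $S^{3} = V_{1} \cup_{T} V_{2}$ with interior core $K_{0} \subset V_{1}$ and exterior core $K_{\infty} \subset V_{2}$, and place $L_{1}$ as an $(\alpha,\beta)$-curve on $T$, so that $\operatorname{lk}(L_{1}, K_{0}) = \alpha$ and $\operatorname{lk}(L_{1}, K_{\infty}) = \beta$. Throughout, $t_{i}$ denotes the meridian of the $i$-th removed component and the abelianization sends a loop $\gamma$ to $\prod_{i} t_{i}^{\operatorname{lk}(\gamma, L_{i})}$.

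For the torus knot $L_{1}$ alone, Seifert--Van Kampen applied to the decomposition $S^{3} \setminus L_{1} = (V_{1} \setminus \operatorname{nbhd} L_{1}) \cup (V_{2} \setminus \operatorname{nbhd} L_{1})$---each piece deformation retracting to its core circle, the pieces glued along an annulus---yields the classical presentation $\langle a, b \mid a^{\beta} = b^{\alpha}\rangle$, with $a = [K_{0}]$, $b = [K_{\infty}]$, and abelianization $a \mapsto t^{\alpha}$, $b \mapsto t^{\beta}$. Fox calculus on the single relator produces a $1 \times 2$ Alexander matrix whose entries are, up to units, $(t^{\alpha\beta} - 1)/(t^{\alpha} - 1)$ and $(t^{\alpha\beta} - 1)/(t^{\beta} - 1)$; their gcd telescopes, using $\gcd(\alpha,\beta) = 1$, to $(t^{\alpha\beta}-1)(t-1)/((t^{\alpha}-1)(t^{\beta}-1)) = P(\alpha,\beta,t)$.

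For $M$ and $N$, note that $S^{3} \setminus \operatorname{nbhd} K_{\infty} \simeq V_{1}$ and $S^{3} \setminus \operatorname{nbhd} K_{0} \simeq V_{2}$, so $S^{3} \setminus M \simeq V_{1} \setminus L_{1}$ and $S^{3} \setminus N \simeq V_{2} \setminus L_{1}$; the Heegaard swap $V_{1} \leftrightarrow V_{2}$ exchanges them while swapping $\alpha \leftrightarrow \beta$, so it suffices to treat $M$. Reading the amalgamation $\pi_{1}(S^{3} \setminus L_{1}) = \pi_{1}(S^{3} \setminus M) *_{\mathbb{Z}^{2}} \pi_{1}(N(K_{\infty}))$ backwards yields the presentation $\langle a, b, \mu \mid a^{\beta} = b^{\alpha}\mu^{\beta},\ [b,\mu]\rangle$, where $b = \lambda_{\infty}$ is the boundary longitude, $\mu = \mu_{\infty}$ its meridian, and the $\mu^{\beta}$ correction enforces that the regular Seifert fibre has linking number $\beta$ with $K_{\infty}$. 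With abelianization $a \mapsto t_{1} t_{2}^{\alpha}$, $b \mapsto t_{2}^{\beta}$, $\mu \mapsto t_{1}$ (so $t_{1}$ is the meridian of $K_{\infty}$ and $t_{2}$ that of $L_{1}$), a direct computation shows that all three $2 \times 2$ minors of the resulting $2 \times 3$ Alexander matrix factor as the common polynomial $((t_{1} t_{2}^{\alpha})^{\beta}-1)/(t_{1} t_{2}^{\alpha} - 1)$ times one of $(1-t_{1})$, $(t_{2}^{\beta}-1)$, $-(t_{1} t_{2}^{\alpha}-1)$, whose gcd in $\Lambda$ is $1$; hence $\Delta_{M}(t_{1},t_{2}) = Q(\alpha,\beta,t_{2},t_{1})$, and the swap gives $\Delta_{N}$. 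The link $P$ is handled in the same spirit: $S^{3} \setminus P \simeq (T^{2} \times \mathbb{R}) \setminus L_{1}$ admits a three-generator presentation with two boundary-torus commutators and one doubly-corrected Seifert-fibre identity, and Fox calculus collapses, after an analogous telescoping, to the single polynomial $(t_{1} t_{2}^{\alpha})^{\beta} t_{3}^{\alpha} - 1 = B(\alpha,\beta,t_{2},t_{1},t_{3})$.

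The main obstacle will be the bookkeeping. One must carefully identify how the torus-knot relation $a^{\beta} = b^{\alpha}$ lifts across each newly opened boundary torus---picking up one $\mu$-power per additional removed component, dictated by the corresponding linking number---verify that every relator abelianizes to $1$, and handle the gcd-of-minors computation. Once the presentations and abelianizations are aligned, the collapse of the Fox minors into the closed-form products $P$, $Q$ and $B$ is essentially automatic and follows the same telescoping pattern in all four cases.
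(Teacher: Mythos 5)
The paper does not prove this theorem; it cites it as Theorems 6.1--6.3 of Sumners--Woods \cite{SW}, so there is no in-paper argument to compare against, and your task is really to reconstruct (or replace) the Sumners--Woods computation. Your route---Seifert--Van Kampen presentations read off the Heegaard torus decomposition, followed by Fox calculus and a gcd-of-minors computation---is the standard method for these model links and is close to what Sumners and Woods actually do. The key steps for $M$ check out: once $b=\lambda_\infty$ is identified with the meridian of $V_1$ and $\mu=\mu_\infty$ with its longitude, the relator $a^{\beta}=b^{\alpha}\mu^{\beta}$ is the correct Seifert-fibre relation, the abelianization $a\mapsto t_1t_2^{\alpha}$, $b\mapsto t_2^{\beta}$, $\mu\mapsto t_1$ passes both Torres consistency checks (which also pins down $\operatorname{lk}(L_1,K_\infty)=\beta$ in the convention the theorem requires), and the three $2\times2$ minors of the $2\times3$ Fox Jacobian are $Q\cdot(1-t_1)$, $Q\cdot(t_2^{\beta}-1)$, $-Q\cdot(t_1t_2^{\alpha}-1)$, whose residual factors have gcd $1$ in $\Lambda$ precisely because $\gcd(\alpha,\beta)=1$. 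The Heegaard-swap argument for $N$ and the classical torus-knot case are both fine.

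The genuine gap is the $P$ case. The Seifert--Van Kampen decomposition of $T^2\times\mathbb{R}\setminus L_1$ into $T^2\times(-\infty,0]$ and $T^2\times[0,\infty)$ glued along the annulus $T^2\setminus N(L_1)$ naturally yields a presentation on \emph{four} generators and three relators, namely $\langle\ell_0,m_0,\ell_1,m_1\mid[\ell_0,m_0],\ [\ell_1,m_1],\ \ell_0^{\beta}m_0^{\alpha}=\ell_1^{\beta}m_1^{\alpha}\rangle$, not three generators as written; one must then compute the four $3\times3$ minors of a $3\times4$ Jacobian and show their gcd is the single polynomial $B(\alpha,\beta,t_2,t_1,t_3)=(t_1t_2^{\alpha})^{\beta}t_3^{\alpha}-1$. ``Collapses after an analogous telescoping'' is plausible but is not a proof; for a three-component link the argument that the residual gcd is a unit needs the same care you exercised for $M$. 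Spell that out and the reconstruction is complete.
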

Now, we will use these results to show the topological counterpart of our algebraic iterative computation of the Poncaré series described in Section \ref{subsec:iterativepoincare}. We will see that as in the irreducible case, the coincidence between the Poincaré series and the Alexander polynomial comes from the equivalence between the algebraic process and the topological one. As a consequence, we provide an alternative proof of the Theorem of  Campillo, Delgado and Gusein-Zade \cite{CDGduke} and this new proof shows the intrinsic reason for the coincidence between both invariants. Moreover, we improve the computations of Sumners and Woods \cite{SW} for the algebraic link associated with a plane curve singularity as they provide closed formulas only for the cases of two and three branches and we describe the process in its full generality for any number of branches in terms of the value semigroup.
\subsubsection{The base cases}
As in Section \ref{subsec:iterativepoincare}, we start with the two base cases. First we consider the case where all the branches are smooth with the same contact. 
\begin{proposition}\label{prop:alexlemm1}
    Let \(C=\bigcup_{i=1}^{r} C_i\) such that \(C_i\) is smooth for all \(i=1,\dots,r\) and such that the contact pair is equal for all branches, i.e. \((q,c)=(q_{i,j},c_{i,j})\) for all \(i,j\in\ind\). Then,
     \[
     \Delta_C(\underline{t})=P_C(\underline{t}).
    \]
    \end{proposition}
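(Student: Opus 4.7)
\medskip

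I plan to mirror the Poincar\'e-series factorization of Proposition \ref{prop:poincarelemm1} at the topological level, proceeding by induction on the number $r\ge 2$ of branches and using the Sumners--Woods decomposition theorems (Theorem \ref{thm:SW5}) together with the explicit model polynomials of Theorem \ref{thm:SW1}. The idea is to describe $L$ as an iteratively constructed satellite link and to match each algebraic factor of $P_C(\underline{t})$ with the corresponding Alexander polynomial contribution, in perfect analogy with the irreducible case treated in Subsection \ref{subsec:alexirreducible}.

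For the base case $r=2$, the link $L$ consists of two unknotted components with linking number $c$. I describe it as obtained from a single unknot via Sumners--Woods' operation (2) of Subsection \ref{subsubsec:iterativehomology}: the second branch is inserted in a tubular neighborhood of the first as a $(c,1)$-torus cable (which is itself unknotted in $S^3$ and links the core $c$ times). Since $\Delta_{\text{unknot}}=1$, Theorem \ref{thm:SW5}(2) reduces to $\Delta_L(t_1,t_2)=\Delta_N(t_1,t_2)$, and Theorem \ref{thm:SW1} yields $\Delta_N(t_1,t_2)=Q(1,c,t_2,t_1)=\frac{(t_1t_2)^c-1}{t_1t_2-1}$. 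On the algebraic side, the initial factor $Q(c,1,t_1,t_2^c)$ in Proposition \ref{prop:poincarelemm1} collapses to $1$, and $P_C(\underline{t})$ reduces to the same expression.

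For the inductive step, assuming the identity for the link $L'$ of $r-1$ such branches, I build $L$ from $L'$ by a further application of operation (2): place the new branch $L_r$ in a tubular neighborhood of $L_{r-1}$ as a cable with winding number $p=1$ and parameters chosen so that the resulting linking numbers are all $c$. Since $\langle L_i,L_{r-1}\rangle=c$ for every $i\le r-2$, Theorem \ref{thm:SW5}(3) combined with Theorem \ref{thm:SW1} (applied to the model link $P$ with the parameters that reproduce the $B$-factor of Proposition \ref{prop:poincarelemm1}) gives
\[
\Delta_L(\underline{t})=\Delta_{L'}(t_1,\ldots,t_{r-2},t_{r-1}t_r)\cdot\big((t_1\cdots t_r)^c-1\big).
\]
Inserting the inductive hypothesis $\Delta_{L'}(\underline{t}')=\frac{((t_1\cdots t_{r-1})^c-1)^{r-2}}{t_1\cdots t_{r-1}-1}$ under the substitution $t_{r-1}\mapsto t_{r-1}t_r$ produces $\Delta_L(\underline{t})=\frac{((t_1\cdots t_r)^c-1)^{r-1}}{t_1\cdots t_r-1}$, which is precisely $P_C(\underline{t})$ after simplifying the $r-2$ identical $B$-factors and the terminal $Q$-factor of Proposition \ref{prop:poincarelemm1}.

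The hardest part is not the polynomial bookkeeping but the geometric identification: I have to verify that the iterated $(c,1)$-satellite construction just described produces a link ambient-isotopic to the genuine algebraic link of $r$ smooth branches with common contact $c$, i.e.\ to the $r$ parallel $(1,c)$-cables of a common unknotted axis sitting in $S^3$ as an $(r,rc)$-torus link. Equivalently, I must check that the JSJ/Waldhausen decompositions agree, so that Sumners--Woods' iterative homology computation of Subsection \ref{subsubsec:iterativehomology} indeed applies with the stated parameters. Once this matching is secured, the equality $\Delta_L=P_C$ is a direct factor-by-factor translation of Proposition \ref{prop:poincarelemm1} into the Sumners--Woods framework.
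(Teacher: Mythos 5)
Your proposal follows essentially the same route as the paper's own proof: both regard the link $L$ as built up from the innermost component outward by iterated applications of Sumners--Woods' ``add a branch'' operation, and both combine Theorem \ref{thm:SW5}(2)--(3) with the model polynomials of Theorem \ref{thm:SW1} and the algebraic factorization of Proposition \ref{prop:poincarelemm1}. The paper's proof is a one-paragraph sketch that simply cites these ingredients; your write-up supplies the bookkeeping (base case $r=2$ via $\Delta_N$, the inductive step via $\Delta_P$ with winding number $p=1$, and the cancellation under the variable substitution $t_{r-1}\mapsto t_{r-1}t_r$), and all of those computations check out and reproduce $\frac{((t_1\cdots t_r)^c-1)^{r-1}}{t_1\cdots t_r-1}$ on both sides.

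Two remarks, neither fatal. First, between your base case and inductive step the parameters $(\alpha,\beta)$ of the cable implicitly switch from $(c,1)$ (so that $\Delta_N = Q(1,c,t_2,t_1)$) to $(1,c)$ (so that $\Delta_P$ reproduces $B(1,c,\ldots)$). This is the same looseness the paper itself commits---its proof speaks of ``knots of type $(c,1)$'' while its Proposition \ref{prop:poincarelemm1} has $B(1,c,\ldots)$ factors---so it reflects the paper's ambiguous rendering of the Sumners--Woods conventions rather than a flaw peculiar to your argument, but it would be worth pinning down which index records the winding number before committing the argument to paper. Second, the geometric identification you flag at the end (that the iterated $(c,1)$-satellite really produces the $(r,rc)$-torus link of $r$ smooth branches with common contact $c$) is a genuine prerequisite, and the paper's own proof takes it for granted without comment; acknowledging it explicitly, as you do, is a point in your favor rather than a gap relative to the original.
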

    \begin{proof}
    We start with the oriented unknot \(K_1\) and first linking it with a knot of type \((c,1).\) After that, we consider the resulting link and start linking knots of type \((c,1)\) proceeding by induction on the number of branches, i.e. the number of components of the link. By combining \cite[Theorem 5.3 and Theorem 5.4]{SW}, Theorem \ref{thm:SW1} and Lemma \ref{prop:poincarelemm1}, and taking into account that we are operating from the innermost to the outer component, the claim follows.
    
    %\[\Delta_C(\underline{t})=Q(c,1,t_1,\prod_{i=2}^{r}t_i^c)\left(\prod_{i=2}^{r-1}B(1,c,t_i,\prod_{k<i}t_k,\prod_{k>i}t_k^c)\right)Q(1,c,t_r,\prod_{i=1}^{r-1}t_i)\]
    %the claim follows by Lemma \ref{prop:poincarelemm1}.
    \end{proof}
We continue with the second base case,
    \begin{proposition}\label{prop:alexlemm2}
    Let \(C=\bigcup_{i=1}^{r} C_i\) be such that \(\Gamma^i=\langle\obeta_0^i,\obeta^i_1\rangle\) or \(\Gamma^i=\mathbb{N}.\) Assume that for each \(i\in \ind=\{1,\dots,r\}\) such that \(C_i\) is a singular branch we have \(l:=\Big\lfloor\frac{\obeta^i_1}{\obeta^i_0}\Big\rfloor=\Big\lfloor\frac{\obeta^j_1}{\obeta^j_0}\Big\rfloor\) if \(j\neq i\) and \(C_j\) is singular. Moreover, assume that the contact pairs are of the form \((q_{i,j},c_{i,j})\in\{(1,0),(0,l)\}.\) Then,

    \[
    \Delta_L(\underline{t})=P_C(\underline{t}).
    \]
\end{proposition}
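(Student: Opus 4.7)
The plan is to construct the link $L=C\cap S^3_\varepsilon$ by iterated satellization, adding components one at a time starting from the innermost, and then to apply Theorems \ref{thm:SW5} and \ref{thm:SW1} at each step, showing that the accumulated factors match the product produced by Proposition \ref{prop:poincarelemm2}. Since every branch $C_i$ has at most one characteristic exponent and the contact pairs are confined to $\{(1,0),(0,l)\}$, each satellization has a simple torus block of type $(p_{1,i},w_{1,i})$ — with $p_{1,i}=1$ whenever $C_i$ is smooth — and the entire iterated torus structure of $L$ can be read off from the Puiseux pairs of the individual branches together with the common floor $l$.

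First I would set up the induction. Order the branches according to the refined good order from Subsection~\ref{subsec:totalorderstar}, so that the first branch $C_1$ is innermost. I would let $L^{(k)}=L_1\cup\cdots\cup L_k$ denote the link formed by the first $k$ components and prove by induction on $k$ that $\Delta_{L^{(k)}}(t_1,\dots,t_k)$ coincides with the truncated product obtained by stopping the Poincaré series expression of Proposition \ref{prop:poincarelemm2} at the $k$-th variable, i.e.\ keeping the $Q$-factor in $t_1$, the $B$-factors in $t_2,\dots,t_{k-1}$ and a final $Q$-factor in $t_k$ (with the implicit bookkeeping that the last $Q$-factor will be absorbed into a $B$-factor once a further component is satellized on top). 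The base $k=1$ is the irreducible case already handled in Proposition \ref{prop:alexirre} (or is trivial if $C_1$ is smooth). By Noether's formula \ref{noehterformula} together with the assumption $(q_{i,j},c_{i,j})\in\{(1,0),(0,l)\}$, the linking numbers are $\langle L_i,L_j\rangle=[f_i,f_j]=p_{1,i}w_{1,j}$ for $i<j$, which is exactly the exponent pattern appearing inside the $Q$ and $B$ polynomials.

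Next comes the inductive step. I would distinguish whether the new branch $L_{k+1}$ is satellized inside a tubular neighbourhood of an existing component (type $(1)$ of Subsection~\ref{subsubsec:iterativehomology}) or is added as a new branch of the exterior (type $(2)$). In both situations, Theorem \ref{thm:SW5} produces a recursion of the form
\[
\Delta_{L^{(k+1)}}(\underline{t})=\Delta_{L^{(k)}}(t_1,\dots,t_k^{p_{1,k+1}})\cdot \Delta_{\text{model}}\big(t_{k+1},\ldots\big),
\]
where the arguments of the model polynomial collect the products $\prod_{i\le k}t_i^{\langle L_i,L_{k+1}\rangle}=\prod_{i\le k}t_i^{p_{1,i}w_{1,k+1}}$ and, when applicable, $\prod_{i>k+1}t_i^{w_{1,i}}$. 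The key computation is then to substitute the arguments into the model Alexander polynomials given by Theorem \ref{thm:SW1}: the two-component models $M,N$ reproduce exactly the $Q$-polynomials of \eqref{eqn:defkeypoly}, while the three-component model $P$ reproduces the $B$-polynomial. The final $Q$-factor in the previous stage is converted into a $B$-factor at the current stage through the substitution $t_k\mapsto t_k\prod_{k>k}\cdots$ induced by satellizing a further component on top, which parallels the way the factor $Q(p_{1,r},w_{1,r},t_r,\cdots)$ in the statement of Proposition \ref{prop:poincarelemm2} takes over from the previous $B$-factor.

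The main obstacle I anticipate is the careful bookkeeping at the transition between smooth and singular packages: when $p_{1,k+1}=1$ the satellization degenerates (the new component is a cable of winding number one) and the corresponding $Q$- or $B$-factor simplifies, exactly mirroring the simplifications already handled in the proof of Proposition \ref{prop:poincarelemm2}; moreover, the distinction between type $(1)$ and type $(2)$ iterations in Subsection \ref{subsubsec:iterativehomology} must be matched correctly with the position of $L_{k+1}$ in the dual graph (whether it creates a new end point or sits inside an existing torus), and the dummy variables appearing in the model polynomials of Theorem \ref{thm:SW1} must be consistently identified with the products $\prod_{k<i}t_k^{p_{1,k}}$ and $\prod_{k>i}t_k^{w_{1,k}}$ of \eqref{eqn:defkeypoly}. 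Once this dictionary is fixed, a direct term-by-term comparison with the product in Proposition \ref{prop:poincarelemm2} yields $\Delta_L(\underline{t})=P_C(\underline{t})$.
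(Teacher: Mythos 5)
Your proposal is correct and follows essentially the same route as the paper: you order the components from innermost to outermost, build the link by iterated satellization, invoke the Sumners--Woods recursion theorems (Theorems~\ref{thm:SW5} and \ref{thm:SW1}) to express the Alexander polynomial through the $Q$- and $B$-polynomials, use Noether's formula to identify the linking numbers $\langle L_i,L_j\rangle=[f_i,f_j]=p_{1,i}w_{1,j}$, and match the resulting product with the algebraic expression of Proposition~\ref{prop:poincarelemm2}. The paper's proof is in fact more terse than yours (it simply cites these ingredients); your explicit induction on the number of components and your attention to the type~(1)/type~(2) dichotomy and the degeneration when $p_{1,k+1}=1$ are welcome elaborations, although the description of the last $Q$-factor being ``converted into a $B$-factor'' is a slightly loose way to phrase the substitution $t_r\mapsto t_r t_{r+1}^p$ coming from Theorem~\ref{thm:SW5}(3); this is a presentational, not a mathematical, imprecision.
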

\begin{proof}
    Since the branches of \(C\) are ordered in such a way the components of the link go from the innermost to the outermost, the first components are those corresponding to smooth branches, hence their corresponding link components are of type \((k,1)\), where \(k\) depends on the contact between them. The last components are those corresponding to the singular branches, which have associated components of type \((\obeta_0,\obeta_1).\) Therefore, by using \cite[Theorem 5.3 and Theorem 5.4]{SW}, Theorem \ref{thm:SW1} and Lemma \ref{prop:poincarelemm2}, and taking into account that we are operating from the innermost to the outermost component, the claim follows.
\end{proof}
\begin{rem}
We have explicitly given the expression of those base cases as in \cite{SW} the Alexander polynomial is not explicitly computed.
\end{rem}
    
\subsubsection{The general procedure}

We now proceed to show the general procedure of the coincidence between both invariants. To do so we will follow the structure of Section \ref{subsec:iterativepoincare}. Let \((q,c)\)
be the contact pair of \(C\) and let \(C_{\alpha_{q-1}}\) be the corresponding approximating curve as in Subsection \ref{subsec:generalprocedure}. The curve \(C_{\alpha_{q-1}}\) is irreducible and then we can use the results of Section \ref{subsec:alexirreducible} to check that \((t-1)P_{C_{\alpha_{q-1}}}(t)=\Delta_{K_{\alpha_{q-1}}}(t).\) We will simplify notation as in Section \ref{subsec:iterativepoincare} and we will denote \(\Delta_\sigma(t)\) to \(\Delta_{L_\sigma}(t)\) where \(L_\sigma\) is the link of the curve \(C_\sigma.\) Now we distinguish the cases \(c>0\) and \(c=0,\) as done in Section \ref{subsec:iterativepoincare}. There is nothing to prove if \(c>0\) as the approximating curve is still irreducible: then, let us assume \(c=0.\) In this case the corresponding approximating curve \(C_{\sigma_{0}}\) has \(s\)~branches, hence its associated link has \(s\) components and we can compute its Alexander polynomial as follows:

\begin{proposition}\label{prop:aux1alex}
    \[\begin{split}
        \Delta_\sigma(t_1,\dots,t_s)=&\Delta_T(t_1^{p_{q,I_1}}\cdots t_s^{p_{q,I_s}})\cdot Q\Big(w_{q,I_1},p_{q,I_1},t_1,\prod_{k=2}^{s} t_k^{w_{q,I_k}}\Big)\\ \cdot & \bigg ( \prod_{j=2}^{s-1}B\big(p_{q,j},w_{q,j},t_j,\prod_{k\cdot n<j}t_k^{p_{q,k}},\prod_{k>j}t_{k}^{w_{q,k}}\big)\bigg)\cdot Q\Big(p_{q,s},w_{q,s},t_s,\prod_{k=1}^{s-1}t_k^{p_{q,k}}\Big).
    \end{split}\]
    In particular,
    \(P_\sigma(t_1,\dots,t_s)=\Delta_\sigma(t_1,\dots,t_s)\).
\end{proposition}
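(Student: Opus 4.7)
The plan is to realize the passage from the knot $K_T$ (associated to the irreducible approximation $C_T$) to the $s$--component link $L_\sigma$ as an explicit sequence of Sumners--Woods operations, and then apply Theorem \ref{thm:SW5} together with the identifications of Theorem \ref{thm:SW1}. Since $c=0$, topologically the $s$ branches of $C_\sigma$ separate simultaneously at $\sigma_0$: their link components $L_{I_1},\dots,L_{I_s}$ all sit on the boundary of a single tubular neighborhood of $K_T$, each being a torus knot of type $(p_{q,I_k},w_{q,I_k})$ on that boundary torus.

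First I would replace $K_T$ by $L_{I_1}$ via one type-(1) operation. Since the source is a knot, this step is governed by Proposition \ref{prop:alexirre} and yields
\[
\Delta_{L_{I_1}}(t_1)=\Delta_T(t_1^{p_{q,I_1}})\cdot P(w_{q,I_1},p_{q,I_1},t_1).
\]
Then, for $k=2,\dots,s$, I would add $L_{I_k}$ as a new branch on the same boundary torus via a type-(2) operation: the first of these uses Theorem \ref{thm:SW5}(2) (passing from a knot to a $2$--component link) and the remaining $s-2$ use Theorem \ref{thm:SW5}(3). At each step the previous argument is substituted $t\mapsto t\cdot t_k^{p_{q,I_k}}$, so after all $s$ steps the leading factor is exactly $\Delta_T(t_1^{p_{q,I_1}}\cdots t_s^{p_{q,I_s}})$. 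The multiplicative factor contributed at each step by the model link ($N$ in the first addition, $P$ in each subsequent one) is identified by Theorem \ref{thm:SW1} with a polynomial of type $Q$ or $B$; combined with the initial $P$ from the knot case, one reads off the leading $Q$ and the intermediate product of $B$'s in the statement, while the last addition contributes the final factor $Q(p_{q,s},w_{q,s},t_s,\prod_{k<s}t_k^{p_{q,k}})$.

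The main obstacle I foresee is the bookkeeping required to match the linking exponents $\langle L_{I_j},L_{I_k}\rangle$ entering the $Q$-- and $B$--factors of Theorem \ref{thm:SW5} with the exponents $w_{q,I_k}$ and $p_{q,I_k}$ appearing in the statement. For this I would use that the linking number of two components of $L_\sigma$ equals the intersection multiplicity of the corresponding branches, and then apply Noether's formula \ref{noehterformula} together with eq.~\eqref{eqn:valueproperordinary}: at $\sigma_0$ with $c=0$ one has $\underline{v}^{\sigma_0}=(p_{q,I_k}w_{q,I_k})\cdot\underline{1}$, whence $[f_{I_j},f_{I_k}]=p_{q,I_j}w_{q,I_k}$ for $j<k$. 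With this dictionary the cumulative substitutions in the SW iteration fall into place exactly as the exponents $\prod_{k>j}t_k^{w_{q,k}}$ and $\prod_{k<j}t_k^{p_{q,k}}$ inside each $B$--factor of Lemma \ref{lem:aux1}. Finally, the equality $P_\sigma=\Delta_\sigma$ follows immediately from $\Delta_T(t)=(t-1)\,P_T(t)$ (Subsection \ref{subsec:alexirreducible}): the factor $(\prod_k t_k^{p_{q,k}}-1)$ produced by this identity cancels the denominator of the trailing $Q$--factor, leaving precisely $(\prod_k t_k^{p_{q,k}})^{w_{q,s}}-1$, which is the last factor in Lemma \ref{lem:aux1}.
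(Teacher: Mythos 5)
Your final paragraph — deducing $P_\sigma=\Delta_\sigma$ from $\Delta_T(t)=(t-1)P_T(t)$ together with Lemma~\ref{lem:aux1}, via cancellation of the denominator of the trailing $Q$--factor — is exactly the paper's argument and is correct. However, there is a genuine gap in your derivation of the Alexander polynomial formula: the order of the Sumners--Woods operations is reversed. You begin with a type-(1) operation on the knot $K_T$ (invoking Proposition~\ref{prop:alexirre}), which produces a factor $P(w_{q,I_1},p_{q,I_1},t_1)$. But the statement contains no $P$--factor; it has two $Q$--factors (one of shape $\Delta_N=Q(\beta,\alpha,\ldots)$, one of shape $\Delta_M=Q(\alpha,\beta,\ldots)$) and $s-2$ $B$--factors (of shape $\Delta_P$). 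Since by Theorem~\ref{thm:SW1} the model link $M$ contributes the $Q(\alpha,\beta,\ldots)$--type factor and, by Theorem~\ref{thm:SW5}(1), $\Delta_M$ arises only from a type-(1) operation on a link with $r\geq 2$ components, the single type-(1) operation (replacing the knot $K_T$ by $L_{I_s}$) must be performed last, not first. This is also reflected in the paper's proof, which cites only SW Theorems 5.2, 5.3, 5.4 (our Theorem~\ref{thm:SW5}(1)--(3)) and never SW Theorem 5.1: $K_T$ must remain the ``active'' core throughout the $s-1$ type-(2) operations and only be replaced at the very end.

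This mis-ordering produces two concrete problems. First, your claim that ``the last addition contributes the final factor $Q(p_{q,s},w_{q,s},t_s,\prod_{k<s}t_k^{p_{q,k}})$'' is inconsistent: in your scheme the last addition is a type-(2) operation on a link, governed by Theorem~\ref{thm:SW5}(3), which contributes $\Delta_P$, i.e.\ a $B$--polynomial, not a $Q$. Second, carrying out the actual SW substitutions from your first step $\Delta_T(t_1^{p_{q,I_1}})$ compounds exponents: the argument of $\Delta_T$ after step $k$ acquires a factor $t_k$ raised to a product $p_{q,I_1}p_{q,I_2}\cdots$, rather than the single exponent $p_{q,I_k}$ appearing in the target $\Delta_T(t_1^{p_{q,I_1}}\cdots t_s^{p_{q,I_s}})$. (Your description of the substitution as being applied directly to ``the previous argument'' of $\Delta_T$ rather than to the last variable of the previous link's Alexander polynomial glosses over exactly this point.) The correct sequence is: first a type-(2) operation on the knot $K_T$ (Theorem~\ref{thm:SW5}(2), giving $\Delta_N=Q(w_{q,I_1},p_{q,I_1},\ldots)$), then $s-2$ type-(2) operations on the resulting links (Theorem~\ref{thm:SW5}(3), giving the $B$--factors), and finally one type-(1) operation on the $s$--component link (Theorem~\ref{thm:SW5}(1), giving $\Delta_M=Q(p_{q,s},w_{q,s},\ldots)$); this produces exactly the $Q\cdot\prod B\cdot Q$ shape of the claimed formula.
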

\begin{proof}
    The formula for the Alexander polynomial follows from the application of \cite[Theorems 5.2, 5.3 and 5.4]{SW}. The equality between the Poincaré series and the Alexander polynomial can be now deduced from Lemma \ref{lem:aux1} as follows: we have seen that
    \[
    \Delta_T(t_1^{p_{q,I_1}}\cdots t_s^{p_{q,I_s}})=P_{C_{\alpha_{q-1}}}(t_1^{p_{q,I_1}}\cdots t_s^{p_{q,I_s}})\cdot (t_1^{p_{q,I_1}}\cdots t_s^{p_{q,I_s}}-1).\]
By Lemma \ref{lem:aux1}, we only need to check that 

\[\Delta_T(t_1^{p_{q,I_1}}\cdots t_s^{p_{q,I_s}})\cdot Q\big(p_{q,s},w_{q,s},t_s,\prod_{k=1}^{s-1}t_k^{p_{q,k}}\big)=P_{C_{\alpha_{q-1}}}(t_1^{p_{q,I_1}}\cdots t_s^{p_{q,I_s}})\cdot \big((\prod_{k=1}^{s}t_k^{p_{q,k}})^{w_{q,s}}-1\big),\]
which follows by the previous identity and the definition of the \(Q\)--polynomial.
\end{proof}
Despite of the fact that we do not have the gluing operation at our disposal in the case of more than one branch, this equality shows that the algebraic construction produced in Section \ref{sec:Poincareseries} is the exact analogue to the topological construction: The main idea is to realize that the polynomial \(Q\) will be used each time when a maximal contact value is added to the value semigroup of the plane curve; observe that, if there is no maximal contact curve, i.e. the case where there exist branches in the smooth packages, then \(Q=1\) since \(p=1.\) On the other hand, the polynomial \(B\) will be used each time when a proper star point is added to the dual graph. In appending each type of value to the semigroup we need to be careful with the order, as our process extremely depends on the total order of the star points, i.e. the set of principal values of the semigroup. To go on showing the equivalence between the algebraic and topological constructions is now an easy routine. 
\medskip

Let us continue with the case where \(c>0\). In this case, the same proof of Proposition \ref{prop:aux1alex} but with the use of Lemma \ref{lem:aux2} shows that the Poincaré series coincides with the Alexander polynomial for the approximate curve associated with the proper star point \(\sigma_0.\) Once \(\sigma_0\) is already computed, we continue with the procedure of Subsection \ref{subsec:generalprocedure}. The distinguished points \(T,\sigma\) are now at the stage \(T=\sigma_0\), and \(\sigma\) the next start point to be considered to compute the Poincaré series. 
\medskip

Let \(\ind=(\bigcup_{p=1}^{t} I_{p})\cup(\bigcup_{p=t+1}^{s} I_p)\) be the partition created at \(\sigma_0.\) Denote by \(\osigma_1,\dots,\osigma_\epsilon\) the star points between \(\sigma_0\) and the point where the geodesics of \(I_1\) goes through. Recall that at this point we have 
\[
\alpha_1\prec \cdots\prec\alpha_q\preceq \sigma_0\preceq \osigma_1\preceq\cdots\preceq\osigma_\epsilon\preceq P.
\]
Then,
\begin{enumerate}[wide, labelwidth=!, labelindent=0pt]
    \item Assume \(|I_1|=1,\) we have two cases to consider:
 \begin{enumerate}%[wide, labelwidth=!, labelindent=0pt]
 \item  The semigroup \(\Gamma^1\) of the first branch \(C^1\) of \(C\) has \(q\) minimal generators. As in Subsection \ref{subsec:generalprocedure} in this case there is nothing to prove.
        \item The semigroup \(\Gamma^1\) of the first branch \(C^1\) of \(C\) has \(g_1>q\) minimal generators. Then,
        \[\mathcal{S}_1=\{\alpha_1\prec \cdots\prec\alpha_q\preceq \sigma_0\preceq \osigma_1\preceq\cdots\preceq\osigma_\epsilon\prec \alpha_1^{I_1}\prec\cdots \prec \alpha_{g_1-q}^{I_1}\}.\]
        We are in the case \(T=\sigma_0\) and \(\sigma=\alpha_1^{I_1};\) for \(i=2,\dots,g_1-q\) we will consider \(T=\alpha^{I_1}_{i-1}\) and \(\sigma=\alpha^{I_1}_i.\) At each stage, we need to show that the Poincaré series \(P_{C_{\alpha^{I_1}_i}}=P_{\alpha^{I_1}_i}=P_\sigma\) equals the Alexander polynomial. At each stage, for simplicity of notation we write \(p_\sigma:=p_{q+i,I_1}\) and \(w_\sigma:=w_{q+i,I_1}.\) By definition of \(C_\sigma\) its associated link \(L_\sigma=L_1\cup\cdots \cup L_s\) is obtained from the link of \(C_T\) from an operation of type \((1)\) about \(L_1\) via a torus knot of type \((p_\sigma,w_{q+i,I_k})\) with winding number \(p_\sigma.\) Then, applying Theorem \ref{thm:SW5}(1), Theorem \ref{thm:SW1}, Proposition \ref{prop:case1b} and Proposition \ref{prop:aux1alex} we have 
        \begin{proposition} \label{prop:case1balex}
            \[P_{\sigma}(\underline{t})=\Delta_T(t_1^{p_\sigma},t_2,\dots,t_s)\cdot Q\big(w_\sigma,p_\sigma,t_1,\prod_{k=2}^{s} t^{w_{q+i,I_k}}\big)=\Delta_\sigma(\underline{t}).\]
        \end{proposition}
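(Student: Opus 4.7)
The plan is to establish both equalities by combining the algebraic recursion already proved for the Poincar\'e series in Proposition~\ref{prop:case1b} with the topological recursion of Sumners--Woods applied to the satellization by which \(L_\sigma\) is obtained from \(L_T\). The setting is ideal for an inductive argument in the total order \(<\) on \(\mathcal{S}\): Propositions~\ref{prop:alexlemm1}, \ref{prop:alexlemm2} and \ref{prop:aux1alex} establish the coincidence \(P = \Delta\) for the approximating curves processed before reaching \(\sigma\), and in particular yield the inductive hypothesis \(P_T(\underline{t}) = \Delta_T(\underline{t})\).

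First I would invoke Proposition~\ref{prop:case1b} to rewrite
\[
P_{\sigma}(\underline{t})=P_T(t_1^{p_\sigma},t_2,\dots,t_s)\cdot Q\big(w_\sigma,p_\sigma,t_1,\prod_{k=2}^{s} t_k^{w_{q+i,I_k}}\big),
\]
which yields the first equality. For the second equality, I would describe \(L_\sigma\) as obtained from \(L_T\) by an iteration of type \((1)\) performed on the component \(L_1\), where the new knot \(K'\) is a torus knot of type \((p_\sigma, w_\sigma)\) placed inside a tubular neighborhood of \(L_1\), with winding number \(p_\sigma\); this is precisely the topological translation of adding the non-proper star point \(\alpha_i^{I_1}\) corresponding to the \((q+i)\)-th maximal contact value of the first branch (cf.\ Subsection~\ref{subsec:satellization}). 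Then Theorem~\ref{thm:SW5}(1), applied with the roles of the innermost and outermost components interchanged (so that the substitution occurs in the variable \(t_1\) rather than \(t_r\)), gives
\[
\Delta_\sigma(t_1,\dots,t_s)=\Delta_T(t_1^{p_\sigma},t_2,\dots,t_s)\cdot \Delta_M\big(t_1,\prod_{k=2}^{s} t_k^{\langle L_k,L_1\rangle}\big),
\]
where \(M\) is the two-component model link formed by \(K'\) together with the unknotted exterior core of the torus containing \(K'\).

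Next I would identify the second factor with the \(Q\)-polynomial. By Theorem~\ref{thm:SW1}, the Alexander polynomial of \(M\) is \(\Delta_M(x,y)=Q(p_\sigma,w_\sigma,x,y)\) up to the (harmless) relabelling of arguments that matches our convention for \(Q\); it remains to check that the linking numbers coincide with the exponents appearing in the \(Q\)-factor of Proposition~\ref{prop:case1b}, i.e.\
\[
\langle L_k,L_1\rangle=w_{q+i,I_k}\qquad\text{for every }k\in\{2,\dots,s\}.
\]
This is the content I expect to be the most delicate point of the proof: the homological linking number \(\langle L_k,L_1\rangle\) equals the intersection multiplicity of the corresponding branches (or truncations) of the curve \(C_\sigma\), and one has to match it with the value \(w_{q+i,I_k}=[f_1,f_{I_k}]/e^{1}_{q+i-1}\) obtained by Noether's formula~\ref{noehterformula} together with the computation of the value \(\underline{v}^\sigma\) at the new star point carried out in the proof of Proposition~\ref{prop:case1b}. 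Once this identification is in place, the combination of Proposition~\ref{prop:case1b}, the inductive hypothesis \(P_T=\Delta_T\), Theorem~\ref{thm:SW5}(1) and Theorem~\ref{thm:SW1} yields simultaneously both equalities of the statement, completing the inductive step.
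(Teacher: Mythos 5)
Your proposal takes essentially the same route as the paper: the paper's ``proof'' of Proposition~\ref{prop:case1balex} is nothing more than the sentence preceding it, which identifies $L_\sigma$ as obtained from $L_T$ by a type-(1) satellization about $L_1$ via a torus knot of cable type $(p_\sigma,w_\sigma)$ and then cites Theorem~\ref{thm:SW5}(1), Theorem~\ref{thm:SW1}, Proposition~\ref{prop:case1b} and Proposition~\ref{prop:aux1alex}. You spell out the same chain of implications in more detail, making explicit the inductive hypothesis $P_T=\Delta_T$, the re-indexing of the Sumners--Woods theorem from the outermost to the innermost component, and the identification $\langle L_k,L_1\rangle=w_{q+i,I_k}$ via Noether's formula, which you rightly flag as the point requiring care.

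One small caveat: in matching $\Delta_M$ with the $Q$-factor, the paper's conventions yield $\Delta_M(t_1,t_2)=Q(\alpha,\beta,t_2,t_1)$, so the arguments and the order of $(p_\sigma,w_\sigma)$ versus $(w_\sigma,p_\sigma)$ should be tracked carefully against the exponent of the new meridian variable $t_1$ in the cover; the paper itself is terse here and your ``harmless relabelling'' remark implicitly subsumes this bookkeeping, which is acceptable given that the source is equally implicit.
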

       
    \end{enumerate}
     \item Assume \(|I_1|>1.\) There are two cases to be distinguished:
    \begin{enumerate}%[wide, labelwidth=!, labelindent=0pt]
        \item For all \(j\in I_1\) we  have \(g_j=q,\) i.e. the semigroups \(\Gamma^{j}\) have \(q\)--minimal generators. In this case, \(\sigma=\sigma_0^{I_1}\) is the first separation point of the branches of \(I_1\) and let \(I_1=\bigcup_{k=1}^{s_1} I_{1,k}\) be the induced index partition (see Subsection \ref{subsubsec:truncationstar}). By definition of \(C_\sigma\) its associated link \(L_\sigma=L_1\cup\cdots \cup L_{s_1}\cup L_{s_1+1}\cup\cdots \cup L_{s_1+s-1}\) is obtained from the link \(L_T=L'_1\cup L_2\cup \cdots \cup L_{s_1+s-1}\) of \(C_T\) from successive operations of type \((2)\) about \(L'_1\) with winding number \(p_{\sigma,I_{1,k}}=1\) since \(\sigma\) is an ordinary point and thus the term in the topological Puiseux series is not a characteristic exponent. Then, the application of Theorem \ref{thm:SW5}(3), Theorem \ref{thm:SW1}, Proposition \ref{prop:case2a} and Proposition \ref{prop:aux1alex} yields
        \begin{proposition} \label{prop:alexcase2a}
    \[
    \begin{split}
    P_\sigma(t_1,\dots,t_{s_1},t_{s_1+1},\dots,&t_{s_1+s-1})=\Delta_T(t_1\cdots t_{s_1},t_{s_1+1},\dots,t_{s_1+s-1})\\
    \cdot&\Big(\prod_{j=2}^{s_1}B(w_{q+1,I_{1,j}},1,t_{I_{1,j}},(\prod_{k>j}^{s_1}t_{k}^{w_{q+1,I_{1,k}}})(\prod_{k=s_1+1}^{s_1+s-1}t_{k}^{w_{q+1,I_{k-s_1+1}}}),\prod_{k<j}t_k)\Big)\\
    &=\Delta_\sigma(t_1,\dots,t_{s_1},t_{s_1+1},\dots,t_{s_1+s-1}).
    \end{split}
    \]
    \end{proposition}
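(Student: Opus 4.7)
The plan is to establish this proposition by the same three-step combination that yielded Proposition \ref{prop:aux1alex}: invoke the algebraic factorization already known from Proposition \ref{prop:case2a}, apply the topological iterative formula of Sumners--Woods to describe $\Delta_\sigma$ recursively in terms of $\Delta_T$, and match the resulting factors one by one by using the base identification $P_T = \Delta_T$ and the model-link Alexander polynomials of Theorem \ref{thm:SW1}. Since the statement asserts both an explicit product formula for $P_\sigma$ and its coincidence with $\Delta_\sigma$, the first identity is in fact a direct rewriting of Proposition \ref{prop:case2a}, and the substantive content of the proof is the second one.

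For the topological side, the passage from $L_T$ to $L_\sigma$ consists of $s_1 - 1$ successive operations of type (2) applied about the component $L'_1$ of $L_T$ associated with the package $I_1$. The key input is that $\sigma = \sigma_0^{I_1}$ is an ordinary point on every individual graph $G(C_j)$ with $j \in I_1$, so the winding number at each such step is $p_{\sigma, I_{1,k}} = 1$. Because of this, Theorem \ref{thm:SW5}(3) applied iteratively does not alter the substitution inside $\Delta_T$ (the argument remains $t_1 \cdots t_{s_1}, t_{s_1+1}, \ldots, t_{s_1+s-1}$), and each step contributes exactly one Alexander polynomial of the three-component model link $P$. By Theorem \ref{thm:SW1}, each such $\Delta_P$ is precisely a $B$-polynomial in the variables determined by the linking numbers with the previously created components.

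The remaining work is to match, factor by factor, the $s_1 - 1$ model-link contributions with the $B$-polynomials appearing in the statement. This is where the good ordering of branches fixed in Subsection \ref{subsec:totalorderstar} becomes essential: it guarantees that the homological linking numbers $\langle L_i, L_j\rangle$ split into the crossed products of multiplicities and maximal-contact values encoded by the $B$-polynomials, exactly as was verified on the algebraic side in the proof of Proposition \ref{prop:case2a} via Noether's formula and equation~\eqref{eqn:valueproperordinary}. Combining this identification with the inductive hypothesis $P_T = \Delta_T$ and Proposition \ref{prop:case2a} then yields $P_\sigma = \Delta_\sigma$. I expect the main obstacle to be a careful bookkeeping of the arguments of the successive $B$-factors, since at the $j$-th step the topological model link uses the components created in the first $j-1$ previous steps as its outer meridian and the remaining not yet satellized components as its core linking; ensuring that the monomial $\prod_{k<j} t_k \cdot \prod_{k>j}^{s_1} t_k^{w_{q+1,I_{1,k}}} \cdot \prod_{k=s_1+1}^{s_1+s-1} t_k^{w_{q+1,I_{k-s_1+1}}}$ matches the exponentiated linking numbers demanded by Theorem \ref{thm:SW5}(3) is the delicate but purely combinatorial part of the argument.
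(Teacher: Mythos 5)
Your plan is essentially the paper's proof: the paper likewise obtains this proposition by observing that $L_\sigma$ arises from $L_T$ through $s_1-1$ successive type-(2) operations about $L'_1$ with winding number $1$ (because $\sigma=\sigma_0^{I_1}$ is ordinary), and then applies Theorem~\ref{thm:SW5}(3) and Theorem~\ref{thm:SW1} to produce the $B$-factors, matching them against the algebraic factorization of Proposition~\ref{prop:case2a} with the inductive identification $P_T=\Delta_T$ from Proposition~\ref{prop:aux1alex}. The paper states this even more tersely than you do, so your extra discussion of the winding-number-$1$ consequence for the substitution inside $\Delta_T$ and of the linking-number bookkeeping is a faithful expansion, not a divergence.
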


        We then do \(T=\sigma_0^{I_{1}}\) and \(\sigma=\sigma_0^{I_{1,1}}\) to continue the process.
        
        \item We assume that \(g_j>q\) for some \(j\in I_1.\) We distinguish again two subcases:
        \begin{enumerate}[wide, labelwidth=!, labelindent=0pt]
            \item[\textbf{(i)}] Assume \((f_1|\cdots|f_{|I_1|})\leq (q+1,0)\) and for simplicity assume  \(g_1>q\). Let us denote by \(\sigma_{0}^{I_1}\) be the first proper star point of the package \(I_1.\) %Since \(g_1\geq q\) then \(\sigma_{0}^{I_1}\) is either an ordinary point for the dual graph \(G(C^1)\) (and then for all the branches in \(I_1\)), or it is a star point for some other \(j\in I_1.\) 
            Let \(I_1=\bigcup_{k=1}^{s_1} I_{1,k}\) be the partition associated with the proper star \(\sigma_{0}^{I_1}.\) In this case, the link of \(C_\sigma\) is obtained from the link of \(C_T\) by successive operations of type \((2)\) together with one operation of type \((1)\) all of them performed along the first component at each step. Then, applying Theorem \ref{thm:SW5}(1), Theorem \ref{thm:SW5}(3), Theorem \ref{thm:SW1}, Proposition \ref{prop:case2bi} and Proposition \ref{prop:aux1alex} we have 
            
            \begin{proposition}\label{prop:alexcase2bi}
          %    \[
              \begin{align*}
& P_\sigma(t_1,\dots,t_{s_1},t_{s_1+1},\dots,t_{s_1+s-1})= \Delta_T(t^{p_{q+1,I_{1,1}}}_1\cdots t^{p_{q+1,I_{1,s_1}}}_{s_1},t_{s_1+1},\dots,t_{s_1+s-1})\\
& \hspace{20pt} \cdot  Q\Big(w_{q+1,I_{1,1}},p_{q+1,I_{1,1}},t_1,(\prod_{k=2}^{s_1}t_{k}^{w_{q+1,I_{1,k}}})(\prod_{k=s_1+1}^{s_1+s-1}t_{k}^{w_{q+1,I_{k-s_1+1}}})\Big)\\
&\hspace{20pt} \cdot  \Big(\prod_{j=2}^{s_1}B(w_{q+1,I_{1,j}},p_{q+1,I_{1,j}},t_{I_{1,j}},(\prod_{k>j}^{s_1}t_{k}^{w_{q+1,I_{1,k}}})(\prod_{k=s_1+1}^{s_1+s-1}t_{k}^{w_{q+1,I_{k-s_1+1}}}),\prod_{k<j}t^{p_{q+1,I_{1,k}}}_k)\Big)\\
&=\Delta_\sigma(t_1,\dots,t_{s_1},t_{s_1+1},\dots,t_{s_1+s-1}).
 \end{align*}
   % \]
            \end{proposition}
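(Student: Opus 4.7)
The strategy is parallel to the proofs of Proposition~\ref{prop:case1balex} and Proposition~\ref{prop:alexcase2a}: combine the algebraic identity already established in Proposition~\ref{prop:case2bi} with an iterative topological computation of $\Delta_\sigma$ built out of Sumners--Woods' splittings, and invoke an inductive hypothesis of the form $\Delta_T(\underline{t}) = P_T(\underline{t})$ obtained by running the recursive procedure up to the previous star point. By Proposition~\ref{prop:case2bi} the Poincar\'e series $P_\sigma(\underline{t})$ already equals the displayed product, so the only remaining point is to identify that product with $\Delta_\sigma(\underline{t})$.

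To produce $L_\sigma$ from $L_T$ I would read off the surgery description from the local picture of $G(C_\sigma)$ around the proper star point $\sigma = \sigma_0^{I_1}$ (cf.~Figure~\ref{fig49}). The hypothesis $(f_1\mid\cdots\mid f_{|I_1|})\le (q+1,0)$ together with $g_1 > q$ forces the following iterated satellite pattern: along the component $L_1'$ of $L_T$ corresponding to the $I_1$-branch of $C_T$, we first perform $s_1 - 1$ operations of type~(2) (adding one new outer meridian component for each subpackage $I_{1,k}$ with $k\ge 2$), and then one operation of type~(1) which replaces $L_1'$ by a torus knot of type $(p_{q+1,I_{1,1}},w_{q+1,I_{1,1}})$ inside a tubular neighborhood, with winding number $p_{q+1,I_{1,1}} = e^{I_{1,1}}_q/e^{I_{1,1}}_{q+1}$. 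When $\jnd_{sm} = I_1$ (all branches of $I_1$ still carry a $(q+1)$--th characteristic exponent) the last operation uses a nontrivial winding number; when $\jnd_{sm}\neq I_1$ we have $p_{q+1,I_{1,1}} = 1$ and the type~(1) operation degenerates, which is precisely the reason the $Q$--factor becomes trivial in the algebraic formula as well.

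Once the sequence of satellite operations is identified, I would apply Theorem~\ref{thm:SW5}(1) and~(3) in the order dictated by the topology, starting from $L_T$ and accumulating factors at each step. Theorem~\ref{thm:SW5}(3) applied $s_1 - 1$ times contributes one factor of the form $\Delta_P(\underline{t}_j,\dots)$ per new component, and Theorem~\ref{thm:SW5}(1) contributes one factor $\Delta_M$. Identifying each of these model--link Alexander polynomials via Theorem~\ref{thm:SW1} produces exactly one $B$--polynomial per type~(2) operation and one $Q$--polynomial for the type~(1) operation. The exponents of the $t_k$ inside those polynomials are governed by the homological linking numbers $\langle L_i, L_r\rangle$ appearing in the Sumners--Woods formulas, which by Noether's formula~\ref{noehterformula} and formula~\eqref{eqn:valueproperspecial} at the proper star point $\sigma$ equal the $w_{q+1,\bullet}$ and $p_{q+1,\bullet}$ exponents appearing in the statement.

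The main obstacle is the bookkeeping: one has to verify that the order in which the type~(2) and type~(1) operations are iterated matches the good ordering of the branches introduced in Subsection~\ref{subsec:totalorderstar}, so that at every stage the ``previous link'' used by Sumners--Woods indeed corresponds to the algebraic curve produced by the truncation procedure of Subsection~\ref{subsubsec:truncationstar}. In particular one must check that the partition $I_1 = \bigcup_k I_{1,k}$ is traversed in the canonical order induced by the topological Puiseux series (Lemma~\ref{lem:orderbetaiscompatible}), and that the $s-1$ spectator components indexed by $I_{t+1},\dots$ contribute only through the factor $\prod_{k=s_1+1}^{s_1+s-1} t_k^{w_{q+1,I_{k-s_1+1}}}$ via their linking numbers with the inner torus. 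Once these matchings are confirmed, the resulting product for $\Delta_\sigma(\underline{t})$ coincides term by term with the algebraic product of Proposition~\ref{prop:case2bi}, and the inductive step $\Delta_\sigma = P_\sigma$ follows.
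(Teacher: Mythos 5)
Your proposal matches the paper's approach essentially verbatim: the paper's ``proof'' is the preceding paragraph, which observes that $L_\sigma$ is obtained from $L_T$ by a sequence of type~(2) operations together with one type~(1) operation along the first component, and then cites Theorems~\ref{thm:SW5}(1),~(3), Theorem~\ref{thm:SW1}, Proposition~\ref{prop:case2bi} and Proposition~\ref{prop:aux1alex}; your plan spells out the same chain of reductions in more detail. One small notational slip to repair: you write ``when $\jnd_{sm} = I_1$ (all branches of $I_1$ still carry a $(q+1)$--th characteristic exponent) the last operation uses a nontrivial winding number; when $\jnd_{sm}\neq I_1$ we have $p_{q+1,I_{1,1}}=1$.'' In the paper's convention $\jnd_{sm}$ collects the subpackages of $I_1$ with only $q$ maximal contact values and $\jnd_{sing}$ the ones that do carry a $(q+1)$--th characteristic exponent, so the dichotomy should read ``$\jnd_{sing}=I_1$ (equivalently $\jnd_{sm}=\emptyset$)'' for the nontrivial winding and ``$\jnd_{sm}\neq\emptyset$'' for the degenerate case $p_{q+1,I_{1,1}}=1$; as written your first condition is self-contradictory and your second is vacuously true under the standing hypothesis $g_j>q$ for some $j\in I_1$. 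Your parenthetical shows you have the substance right, so this is a labelling error rather than a gap in reasoning.
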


            \item[\textbf{(ii)}] Assume \((f_1|\cdots|f_{|I_1|})> (q+1,0),\) i.e \((f_1|\cdots|f_{|I_1|})\geq (q+1,c)\) with \(c\neq 0,\) and for simplicity \(g_1> q.\) Let us denote by \((q_{I_1},c_{I_1}):= (f_1|\cdots|f_{|I_1|}).\) Since \((q_{I_1},c_{I_1})> (q+1,0),\) there are \(q_{I_1}-q\) star points which are non-proper between \(\osigma_\epsilon\) and \(\sigma_0^{I_1},\) i.e. 
            \[\osigma_{\epsilon}\prec \alpha_{q+1}\prec \cdots \prec \alpha_{q_{I_1}}\preceq \sigma_0^{I_1}.\] 
The situation, in this case, is the same as in the case \((1)(b),\) and analogous reasoning yields the following: 

            \begin{proposition}
 \[P_{\sigma}(\underline{t})=\Delta_T(t_1^{p_\sigma},t_2,\dots,t_s)\cdot Q\big(w_\sigma,p_\sigma,t_1,\prod_{k=2}^{s} t_k^{w_{q+i,I_k}}\big)=\Delta_{\sigma}(\underline{t}).\]                
            \end{proposition}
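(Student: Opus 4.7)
The plan is to imitate the proof of Proposition~\ref{prop:case1balex} almost verbatim, noting that the only structural change is that the operation now takes place inside a package $I_1$ of several branches rather than a singleton package. The key observation is that each of the $q_{I_1}-q$ non\nobreakdash-proper star points $\alpha_{q+1},\ldots,\alpha_{q_{I_1}}$ strictly between $\osigma_\epsilon$ and $\sigma_0^{I_1}$ lies on the geodesic of \emph{only one} branch of $C_\sigma$ (by definition of ``non\nobreakdash-proper'' and because the contact pair of the package exceeds $(q+1,0)$). Hence passage from $C_T$ to $C_\sigma$ at each intermediate stage only adds a fresh maximal contact value to this single branch, while all the other branches of $C_T$ are carried over unchanged.

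On the topological side this translates into precisely the satellization operation of type~(1) described in Subsection~\ref{subsubsec:iterativehomology}: the link $L_\sigma$ is obtained from $L_T$ by replacing the component corresponding to the distinguished branch with a torus knot of type $(p_\sigma,w_\sigma)$ of winding number $p_\sigma$, the remaining components being left untouched. Then I would apply Theorem~\ref{thm:SW5}(1) to obtain
\[
\Delta_\sigma(\underline t)=\Delta_T(t_1^{p_\sigma},t_2,\ldots,t_s)\cdot \Delta_M\Big(t_1,\prod_{k=2}^{s} t_k^{\langle L_k,L_1\rangle}\Big),
\]
where $M$ is the two\nobreakdash-component model link formed by the new torus knot together with the unknotted meridian on the boundary torus. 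Theorem~\ref{thm:SW1} identifies $\Delta_M(x,y)=Q(w_\sigma,p_\sigma,x,y)$. The Noether formula~\ref{noehterformula}, together with the expression for $\pr_j(\vu^\sigma)$ derived in Subsections~\ref{subsec:maximalcontactdualgrapha} and~\ref{subsubsec:truncationstar}, shows that the homological linking numbers satisfy $\langle L_k,L_1\rangle=w_{q+i,I_k}$ for every $k\in\{2,\ldots,s\}$, exactly as they appear in the $Q$-factor of the statement.

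Finally, by the inductive hypothesis on the total order $<$ of $\mathcal{S}$ (with base case furnished by the irreducible situation, Proposition~\ref{prop:alexirre}, together with Propositions~\ref{prop:alexlemm1}~and~\ref{prop:alexlemm2}) one has $P_T(\underline t)=\Delta_T(\underline t)$. Substituting this equality in the first displayed identity, and combining it with Proposition~\ref{prop:case2bii}, which yields the very same algebraic factorization for $P_\sigma$, produces the chain $P_\sigma(\underline t)=\Delta_T(t_1^{p_\sigma},t_2,\ldots,t_s)\cdot Q\big(w_\sigma,p_\sigma,t_1,\prod_{k=2}^{s} t_k^{w_{q+i,I_k}}\big)=\Delta_\sigma(\underline t)$. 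The only genuine subtlety—mild but worth stating—is to verify that the winding number of the satellization equals $p_\sigma$ and that the meridional exponents in the $Q$-factor match the linking numbers; both facts follow once one observes that the truncation of the topological Puiseux series at $\alpha_{q+i}$ produces exactly a branch whose $i$\nobreakdash-th Puiseux pair is $(p_\sigma,\beta^{1}_{q+i}/\beta^{1}_0)$, and that the intersection multiplicities of this branch with the remaining branches of $C_T$ are preserved under the enlargement from $C_T$ to $C_\sigma$.
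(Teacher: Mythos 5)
Your proof is correct and takes essentially the same route as the paper: the authors simply declare this case "analogous to case (1)(b)" and invoke the same toolkit (Theorem SW5(1), Theorem SW1, the algebraic Proposition \ref{prop:case2bii}, and the inductive equality $P_T=\Delta_T$ bootstrapped from Proposition \ref{prop:aux1alex}), which is precisely what you spell out. Your extra verification that the homological linking numbers of $L_T$ agree with the exponents $w_{q+i,I_k}$ appearing in the $Q$-factor, and that the winding number equals $p_\sigma$, is a useful elaboration the paper leaves implicit.
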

        \end{enumerate}
    \end{enumerate}
\end{enumerate}

Our method shows the coincidence between both invariants from a very explicit point of view; moreover our proof makes no appeal to the results of Eisenbud and Neumann results \cite{EN}. Thus, it constitutes an alternative proof of the one given by Campillo, Delgado and Gusein-Zade \cite{CDGduke}. This new proof provides the intrinsic topological nature of the Poincaré series of the value semigroup: it shows that the algebraic operation of adding one maximal contact value is in correspondence with a topological operation of satellization along one component of the link; the algebraic operation of adding one value associated to a proper star point is in correspondence with the topological operation of adding one branch to the associated link. Moreover, those are the only operations to be aware of in order to construct the value semigroup of a plane curve singularity. The associated Mayer-Vietoris splitting is translated into the algebraic setting via a generalization of the gluing construction valid in the irreducible case. From this perspective, it is reasonable to ask whether the property of being a complete intersection isolated singularity is merely an algebraic characteristic or it possesses a deeper, intrinsic topological significance.

\section{Historical remarks}
Connections between knot theory and plane curve singularities were apparently realized for the first time by Poul Heegaard at the end of the 19th century. He pursued to develop topological tools for the investigation of algebraic surfaces \cite[\S76,\S 77]{Epple}. Wilhelm Wirtingen was aware of this, and proposed his student Karl Brauner the following problem: the description of the singularities of an algebraic function of two variables by means of the intersection of its discriminant curve with the spherical boundary in a small neighborhood of a singular point, as well as the associated branched covering. Brauner solved the problem both for irreducible and reducible discriminant curves \cite{Brauner28}. He actually described the topology of the link in terms of repeated cabling, as well as an explicit presentation of the fundamental group of the complement of the link \cite{Neu03}. Erich K\"ahler reproved this using a more modern approach based on Puiseux pairs \cite{Kahler}. However, Brauner left an open question: Is it possible that two plane curve singularities with different Puiseux pairs could have topological equivalent neighborhoods? At this point, the fact that the knot associated to the singularity is an iterated torus knot characterized by the Puiseux pairs was known.
\medskip

Werner Burau answered the above question to the positive in both the irreducible \cite{Burau33} and the particular reducible case of two branches \cite{Burau34}; at the same time, Oskar Zariski solved the question out in the irreducible case by computing the Alexander polynomial of the knot \cite{Zar32}. He had already investigated the connections between knots and plane curves in \cite{Zar29}. The Alexander polynomial is an important invariant in knot theory; it was introduced by J.W. Alexander in order to determine the knot type \cite{Alexander}. Indeed, a crucial question in the mathematical atmosphere at that time was about the invariants which completely determine the topology of a plane curve singularity. As already mentioned, the results of Burau \cite{Burau33, Burau34} showed that the Alexander polynomial completely determines the topology in the irreducible case and in the reducible case of two branches. Surprisingly, it was not until the 80's when a full answer to this question was given by Yamamoto \cite{Yamamoto}, who proved that the Alexander polynomial classifies the topological type of plane curve singularities. It is interesting to point out here that the clue of Yamamoto's result is also the use of Sumners and Woods \cite{SW} description of the iterative computation of the Alexander polynomial we used for our purpose.
\medskip

A key fact to understand the theory of this topic developed in the second half of the twentieth century is the connection of algebraic links and \(3\)--manifolds: the classical example of \(3\)--manifold is the exterior of an algebraic link embedded in the \(3\)-sphere. In 1967, Friedhelm Waldhausen \cite{waldhausenII} first introduced the concept of graph manifold, from which link exteriors are the canonical example. Waldhausen \cite{waldhausenI,waldhausenII} provided in fact a decomposition of the \(3\)--manifold with very nice geometrical properties further explored by Jaco-Shalen \cite{JacoShalen} and Johannson \cite{Johannson} independently. This decomposition is at the core of the theoretical approach to the study of link exteriors proposed by Eisenbud and Neumann \cite{EN} in 1985.
\medskip

Eisenbud and Neumann's theory provides a ``good" setting to compute invariants of a link in an additive way deepening the pioneering results on the Alexander polynomial of Seifert \cite{Seifert50} and Torres \cite{Torres} from a modern perspective. In fact, another advantage of the Eisenbud and Neumann theory is that their construction allows to compute some other invariants of the link and not only the Alexander polynomial. The occurrence of the Eisenbud and Neumann theory in the context of Thurston developments on the geometry of \(3\)--manifolds \cite{Thurston1, Thurston2, Thurston3} and its connection with the Poincaré conjecture may suggest why Eisenbud and Neumann had such an impact in the research perspective of the topic at the end of the twentieth century.   
\medskip

From the algebraic side, the value semigroup of an irreducible plane curve singularity was introduced by Roger Apèry \cite{apery} in 1946. However, the milestone in the study of singularities from a purely algebraic point of view was the foundational articles of Zariski \cite{zarequiI,zarequiII,zarequiIII,zarsaturationI,zarsaturationII,zarsaturationIII} about the notion of equisingularity and the saturation of local rings. Combining the new concept of equisingularity with his previous results \cite{Zar32}, Zariski showed \cite{zarequiII} the first connection between both approaches, the topological and the algebraic. He proved that the semigroup of values is in fact a topological invariant of an irreducible plane curve; thus equisingularity becomes topological invariance for irreducible plane curves. 
\medskip

As pointed out by Félix Delgado \cite{Delgmanuscripta1}, R. Waldi \cite{Waldi} proved that the value semigroup of a plane curve with several branches is invariant in an equisingularity class. After that, Delgado himself \cite{Delgmanuscripta1,Delgmanuscripta2} (see also the works of A. Garc\'ia \cite{Garcia} and V. Bayer \cite{Bayer} for the bibranch case) provided the full combinatorial algebraic description of this semigroup together with the natural generalization of some properties of the irreducible case. After Delgado's description of the value semigroup for a reducible plane curve singularity, A. Campillo, K. Kiyek and himself introduced a sort of generating function \cite{CDKmanuscr} which can be associated with the value semigroup, the so-called Poincar\'e series. This has been profusely studied by Campillo, Delgado and Gusein-Zade later on, e.g. in \cite{CDG99a, CDG99b, CDG00, CDG02, CDG03a, CDG04, CDG05, CDG07}; surprisingly, in one of his investigations they showed that the Poincar\'e series ---which turns out to be a polynomial in the case of a plane curve singularity with more than one branch--- coincides with the Alexander polynomial associated to the link of the singularity \cite{CDGduke}. They left however open the ultimate reason that could explain this fortunate circumstance, whose answer is sketched---we believe---in our paper.

%%%%%%%%%%%%%%%%%%%%%%%%%%%%%%%%%%%%%%%%%%%

%%%%%%%%%%%%%%%%%%%%

\printbibliography
\end{document}